\documentclass[9pt]{amsart}
\usepackage{amsmath}
\usepackage{amsthm}
\usepackage{amsbsy}
\usepackage{amssymb}
\usepackage{amsfonts}
\usepackage[dvips]{lscape}
\usepackage{xcolor}
\usepackage{amscd}
\usepackage[all,cmtip]{xy}
\usepackage{euscript}
\usepackage{parskip}
\usepackage{enumerate}
\usepackage[all,cmtip]{xy}
\usepackage{tikz-cd}

\usepackage[margin=1in]{geometry}
\geometry{letterpaper}

\usepackage[expansion=false]{microtype}

\usepackage[pdfusetitle,unicode,hidelinks]{hyperref}

\theoremstyle{plain}

\setlength{\parskip}{0pt}
\setlength{\parsep}{0pt}
\setlength{\partopsep}{0pt}

\setlength{\parindent}{0in}

\theoremstyle{plain}
\newtheorem{theorem}{Theorem}[section]

\newtheorem{proposition}[theorem]{Proposition}
\newtheorem{lemma}[theorem]{Lemma}
\newtheorem{corollary}[theorem]{Corollary}

\newtheorem{conjecture}[theorem]{Conjecture}

\theoremstyle{remark}
\newtheorem{remark}[equation]{Remark}

\newtheorem{question}[theorem]{Question}

\theoremstyle{definition}
\newtheorem{definition}[theorem]{Definition}

\setcounter{tocdepth}{2}
\setlength{\parindent}{0in}
\newcommand{\cal}{\EuScript}
\newcommand{\Spec}{\textup{Spec}}

\newcommand{\Sym}{\textup{Sym}}

\newcommand{\Hom}{\textup{Hom}}

\newcommand{\id}{\textup{id}}

\newcommand{\supp}{\textup{supp}}
\let\lim=\relax

\DeclareMathOperator*{\lim}{lim}

\newcommand{\diag}{\textup{diag}}

\newcommand{\tr}{\textup{tr}}

\renewcommand{\contentsname}{}

\newcommand{\PSL}{\textup{PSL}}
\newcommand{\SL}{\textup{SL}}

\newcommand{\GL}{\textup{GL}}
\newcommand{\End}{\textup{End}}
\newcommand{\vol}{\textup{vol}}
\newcommand{\fix}{\textup{fix}}

\newcommand{\area}{\textup{area}}
\newcommand{\blambda}{\underline{\lambda}}

\begin{document}
\title{Random flat bundles and equidistribution}
\author{Masoud Zargar}
\address{Max Planck Institute for Mathematics, 53111 Bonn, Germany}
\address{Department of Mathematics, University of Southern California, 3620 S. Vermont
Ave., Los Angeles, CA 90089-2532, U.S.A.}
\email{mzargar@usc.edu}
\renewcommand{\contentsname}{}
\maketitle
\vspace{-0.5cm}
\begin{center}
\end{center}
\begin{abstract}
Each signature $\blambda(n)=(\lambda_1(n),\hdots,\lambda_n(n))$, where $\lambda_1(n)\geq\dots\geq\lambda_n(n)$ are integers, gives an irreducible representation $\pi_{\blambda(n)}:U(n)\rightarrow\GL(V_{\blambda(n)})$ of the unitary group $U(n)$. Suppose $X$ is a finite-area cusped hyperbolic surface, $\chi$ is a random surface representation in $\Hom(\pi_1(X),U(n))$ equipped with a Haar unitary probability measure, and $(\blambda(n))_{n=1}^{\infty}$ is a sequence of signatures. Let $|\blambda(n)|:=\sum_i|\lambda_i(n)|$. We show that there is an absolute constant $c>0$ such that if $0\neq |\blambda(n)|\leq c\frac{\log n}{\log\log n}$ for sufficiently large $n$, then the Laplacians $\Delta_{\chi,\blambda(n)}$ acting on sections of the flat unitary bundles associated to the surface representations
\[\pi_1(X)\xrightarrow{\chi} U(n)\xrightarrow{\pi_{\blambda(n)}}\GL(V_{\blambda(n)})\]
have the property that for every $\varepsilon>0$ 
\[\mathbb{P}\left[\chi:\inf\Spec(\Delta_{\chi,\blambda(n)})\geq\frac{1}{4}-\varepsilon\right]\xrightarrow{n\rightarrow\infty}1,\]
where $\Spec(\Delta_{\chi,\blambda(n)})$ is the spectrum of $\Delta_{\chi,\blambda(n)}$. A special case of this is that flat unitary bundles associated to $\chi:\pi_1(X)\rightarrow U(n)$ asymptotically almost surely as $n\rightarrow\infty$ have least eigenvalue at least $\frac{1}{4}-\varepsilon$, irrespective of the spectral gap of $X$ itself. This is proved using the Hide--Magee method~\cite{HM}. The general spectral theorem above leads to a probabilistic equidistribution theorem for the images of closed geodesics under surface representations $\chi$. Using the spectral theorem above and proving a probabilistic prime geodesic theorem, we also obtain a probabilistic equidistribution theorem for the images under $\chi$ of geodesics of lengths dependent on the rank $n$.
\end{abstract}
\setcounter{tocdepth}{1}
\tableofcontents
\section{Introduction}\label{intro}
A topic of great interest in mathematics, physics, and computer science is the study of spectral gaps of graphs and Riemannian manifolds. Those finite simple $d$-regular graphs whose eigenvalues different from $\pm d$ are bounded in absolute value from above by $2\sqrt{d-1}$ are known as Ramanujan graphs. Infinite families of $d$-regular Ramanujan graphs are known to exist. For prime powers $d-1$, they have been explicitly constructed by Margulis~\cite{Margulis}, Lubotzky--Phillips--Sarnak~\cite{LPS}, and Morgenstern~\cite{Morgenstern} using number-theoretic methods. For other values of $d$, using probabilistic methods combined with interlacing polynomials, Marcus--Spielman--Srivastava~\cite{MSS} have proved the existence of infinite families of $d$-regular Ramanujan graphs.\\
\\
$2\sqrt{d-1}$ is the spectral radius of the universal covering tree of a $d$-regular graph. For hyperbolic surfaces, their universal cover $\mathbb{H}$ plays an analogous role, with its Laplacian having least eigenvalue $\frac{1}{4}$ as the analogue of $2\sqrt{d-1}$ for graphs. As of the writing of this paper, it is not known if infinite families of closed connected hyperbolic surfaces with enlarging genus exist each of whose Laplace-Beltrami operators have least positive eigenvalue, i.e. spectral gap, at least $\frac{1}{4}$. However, a recent result of Hide--Magee~\cite{HM} shows that infinite families of closed connected hyperbolic surfaces with enlarging genera exist with spectral gaps \textit{converging} to $\frac{1}{4}$. Their proof is probabilistic and, in contrast to graphs, no constructive proof is known as of the writing of this paper---even for this near-optimal spectral gap result. The theorem proved by Hide--Magee~\cite{HM} should be compared to a conjecture and theorem of Friedman~\cite{Fri03} stating that random covers of any fixed graph are almost Ramanujan with high probability. Recently, a new simpler proof of Friedman's conjecture was given Bordenave--Collins~\cite{BC19} using new methods. On the other hand, one of the central conjectures in number theory, Selberg's conjecture, states the following.
\begin{conjecture}[Selberg's conjecture]\label{Selbergconj}
Arithmetic hyperbolic surfaces have spectral gaps $\lambda_1$ at least $\frac{1}{4}$.
\end{conjecture}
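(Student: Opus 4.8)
This statement is one of the central open problems of the subject, so rather than a completed argument the honest ``proof proposal'' is a description of the standard line of attack and of the single input that is missing. The plan is to pass to the congruence subgroups $\Gamma=\Gamma_0(N)\le\PSL(2,\mathbb{Z})$ (and their conjugates/commensurables), where the word \emph{arithmetic} acquires its sharpest meaning, and to reinterpret $\lambda_1$ representation-theoretically. An exceptional eigenvalue $\lambda=s(1-s)<\frac{1}{4}$ of the hyperbolic Laplacian on $\Gamma\backslash\mathbb{H}$ corresponds to a Maass cusp form whose associated cuspidal automorphic representation $\pi=\bigotimes_v\pi_v$ of $\GL_2$ over $\mathbb{Q}$ has archimedean component $\pi_\infty$ in the complementary series, with parameter $\theta=s-\tfrac12\in(0,\tfrac12)$ and $\lambda=\frac{1}{4}-\theta^2$. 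In this language Selberg's conjecture is exactly the assertion $\theta=0$, i.e.\ temperedness of $\pi_\infty$, which is the archimedean case of the Ramanujan--Petersson conjecture for $\GL_2/\mathbb{Q}$.

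First I would recall Selberg's own bound $\theta\le\frac14$ (hence $\lambda_1\ge\frac{3}{16}$): one feeds Weil's square-root cancellation estimate for Kloosterman sums into the Kuznetsov/Petersson trace formula to control the Fourier coefficients of Maass forms, and an exceptional eigenvalue would force those coefficients to grow faster than permitted. Next I would invoke the known functorial symmetric-power liftings --- Gelbart--Jacquet for $\Sym^2$, Kim--Shahidi for $\Sym^3$, and Kim for $\Sym^4$ --- combined with Rankin--Selberg $L$-function theory and the Luo--Rudnick--Sarnak amplification argument, which yields the current record $\theta\le\frac{7}{64}$ of Kim--Sarnak, i.e.\ $\lambda_1\ge\frac{975}{4096}$. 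More generally, if one knew that every symmetric power lift $\Sym^k\pi$ is cuspidal automorphic on $\GL_{k+1}$ --- a special case of Langlands functoriality --- then a standard convexity-plus-positivity argument applied to the corresponding $L$-functions as $k\to\infty$ would drive $\theta$ to $0$ and deliver $\lambda_1\ge\frac{1}{4}$.

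The hard part --- and the reason the conjecture is still open --- is precisely this last input: the Ramanujan--Petersson conjecture, or equivalently enough of Langlands functoriality, for Maass forms on $\GL_2/\mathbb{Q}$. For \emph{holomorphic} cusp forms the analogous statement is Deligne's theorem, proved by realizing the forms in the \'etale cohomology of Kuga--Sato varieties and applying the Weil conjectures; but Maass forms with eigenvalue $\neq\frac{1}{4}$ are not known to be of motivic origin, so no geometric input of that kind is available, and all current analytic techniques --- trace formulas, $L$-function amplification, the presently known functorial lifts --- only approach $\theta=0$ without reaching it. A genuine proof would therefore require either a new source of algebraicity for Maass forms or a substantial advance on functoriality beyond $\Sym^4$, and I do not expect to be able to supply either here; accordingly this statement is recorded as a conjecture and used only as motivation for the unconditional, probabilistic results of the present paper.
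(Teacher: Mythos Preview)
Your response is entirely appropriate: the statement is labeled a \emph{conjecture} in the paper, and the paper offers no proof of it whatsoever---indeed, immediately after stating it the paper remarks that ``Selberg's conjecture is beyond the reach of current methods'' and records only the Kim--Sarnak bound $\lambda_1\ge\frac14-(7/64)^2$ as the state of the art. There is therefore nothing to compare your proposal against; you correctly recognized that no proof exists, and your sketch of the Langlands-functoriality/Ramanujan--Petersson route together with the Selberg $3/16$ and Kim--Sarnak $7/64$ milestones is accurate and more detailed than anything the paper itself provides on this point.
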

It is known that Conjecture~\ref{Selbergconj} implies that there should be an infinite sequence of connected closed hyperbolic surfaces with increasing genera each of which has a spectral gap of at least $\frac{1}{4}$. However, Selberg's conjecture~\ref{Selbergconj} is beyond the reach of current methods. The best known spectral gap for arithmetic hyperbolic surfaces is due to Kim--Sarnak~\cite{KimSarnak}:
\[\lambda_1\geq\frac{1}{4}-\left(\frac{7}{64}\right)^2.\]
Spectral gaps are closely related to equidistribution results, for example by the prime geodesic theorem. A goal of this paper is the proof of a probabilistic equidistribution theorem for images of geodesics on cusped hyperbolic surfaces under unitary surface representations. This is intimately connected to the smallest eigenvalue of the Laplacian acting on sections of families of flat unitary bundles on such surfaces.\\
\\
Our setup is as follows. Suppose $X$ is a non-compact, complete, connected, finite area hyperbolic surface. 
\[\textit{Henceforth, we simply refer to such surfaces as \underline{cusped} hyperbolic surfaces}.\] We consider surface representations $\chi:\pi_1(X)\rightarrow U(n)$ into the unitary group. Since $X$ is a non-compact surface, $\pi_1(X)$ is a free group, and so the space $\Hom(\pi_1(X),U(n))$ of surface representations may be identified with a product of copies of $U(n)$, one copy for each generator of $\pi_1(X)$. Therefore, we naturally endow $\Hom(\pi_1(X),U(n))$ with a Haar unitary probability measure. Every sequence of signatures $\blambda(n)=(\lambda_1(n),\hdots,\lambda_n(n))$, $n$ varying, where $\lambda_1(n)\geq\hdots\geq\lambda_n(n)$ are integers, gives an infinite family of irreducible unitary representations
\[\pi_{\blambda(n)}:U(n)\rightarrow\GL(V_{\blambda(n)}).\]
For each sequence of signatures $\blambda(n)$, we have an infinite family of normalized characters
\[\chi_{\blambda(n)}(-):=\frac{\tr(\pi_{\blambda(n)}(-))}{\dim V_{\blambda(n)}}.\]
The surface representation
\[\pi_1(X)\xrightarrow{\chi}U(n)\xrightarrow{\pi_{\blambda(n)}}\GL(V_{\blambda(n)})\]
gives rise to a flat unitary vector bundle $E_{\chi,\blambda(n)}$ over $X$ of rank $\dim V_{\blambda(n)}$. The Laplacian $\Delta_{\chi,\blambda(n)}$ acts on sections of $E_{\chi,\blambda(n)}$, whose smallest eigenvalue we want to study. We denote by $\Spec(\Delta_{\chi,\blambda(n)})$ the spectrum of $\Delta_{\chi,\blambda(n)}$. In this paper, we abuse language and speak of the eigenvalues of surface representations to mean the eigenvalues of the Laplacian acting on sections of the associated flat bundles. Our first theorem is the following analogue of~\cite{HM} concerning minimal eigenvalues for flat unitary vector bundles whose proof follows ideas of~\cite{HM} with the appropriate generalizations and modifications to the case of irreducible representations of the unitary groups. It serves as a lemma in the proof of the equidistribution Theorem~\ref{geodesicdep} below.
\begin{theorem}\label{Thm1}There is an absolute constant $c>0$ such that if $X$ is any cusped hyperbolic surface and $\blambda(n)$ is a sequence of signatures with $0\neq|\blambda(n)|\leq c\frac{\log n}{\log\log n}$ for sufficiently large $n$, for every $\varepsilon>0$ we have
\[\mathbb{P}\left[\chi:\inf\Spec(\Delta_{\chi,\blambda(n)})\geq\frac{1}{4}-\varepsilon\right]\xrightarrow{n\rightarrow\infty}1.\]
\end{theorem}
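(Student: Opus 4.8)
The plan is to adapt the Hide--Magee strategy~\cite{HM}: reduce the spectral statement to an operator-norm estimate for an explicit random operator built from the pushed-forward monodromies, and then control that norm by proving quantitative strong convergence of those monodromies to a free Haar unitary tuple. \emph{Step 1 (reduction to a resolvent statement).} Since $X$ is a finite-area cusped surface, a standard computation on cusp neighbourhoods shows that $\Spec_{\mathrm{ess}}(\Delta_{\chi,\blambda(n)})\subseteq[\tfrac14,\infty)$ regardless of the monodromies: on the invariant-vector part of the monodromy around a cusp one gets the usual continuous spectrum $[\tfrac14,\infty)$, while on the complementary part the nonzero phase adds a positive confining ``potential'' which only pushes the spectrum up past $\tfrac14$. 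Hence $\inf\Spec(\Delta_{\chi,\blambda(n)})<\tfrac14-\varepsilon$ if and only if $\ker\big(\Delta_{\chi,\blambda(n)}-s(1-s)\big)\neq 0$ in $L^2$ for some real $s\in(s_\varepsilon,1]$, where $s_\varepsilon:=\tfrac12+\sqrt\varepsilon$; so it suffices to show this kernel vanishes for all such $s$ with probability tending to $1$. The endpoint $s=1$ (the eigenvalue $0$) is harmless: an $L^2$ null section would be a flat section, hence a vector fixed by $\rho_n:=\pi_{\blambda(n)}\circ\chi$, but $\pi_{\blambda(n)}$ is a \emph{nontrivial} irreducible representation because $|\blambda(n)|\neq 0$ and a random $\chi$ has dense image in $U(n)$ asymptotically almost surely, so $\rho_n$ has no nonzero invariant vector a.a.s. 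This is where $|\blambda(n)|\neq 0$ first enters, and it is why no hypothesis on the spectral gap of $X$ itself is needed.

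\emph{Step 2 (parametrix and reduction to a random norm estimate).} Write $X=\mathbb H/\Gamma$ with $\Gamma:=\pi_1(X)$ free of rank $k\ge 2$ on generators $\gamma_1,\dots,\gamma_k$, and fix a fundamental domain with cutoff functions adapted to a decomposition of $X$ into a compact core and cusp regions. Following~\cite{HM}, one builds a parametrix for $\big(\Delta_{\chi,\blambda(n)}-s(1-s)\big)^{-1}$ out of the resolvent $R_{\mathbb H}(s)$ of the Laplacian on $L^2(\mathbb H)$---which for real $s\in(s_\varepsilon,1]$ is bounded and self-adjoint, with $\|R_{\mathbb H}(s)\|=\big(\tfrac14-s(1-s)\big)^{-1}\le 1/\varepsilon$, and Green's function decaying like $e^{-s\,d(x,y)}$---together with the cutoffs and the monodromy operators $\rho_n(\gamma_i)=\pi_{\blambda(n)}(\chi(\gamma_i))$ on $V_{\blambda(n)}$. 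Organising the method-of-images sum over $\Gamma$ by word length in the $\gamma_i$ and truncating at length $L=L(n)\asymp_{X,\varepsilon}\log n$ (the truncation error is $\lesssim e^{-(s-1/2)\,\mathrm{const}(X)\,L}$, hence polynomially small in $n$, which is enough), one obtains that $\ker\big(\Delta_{\chi,\blambda(n)}-s(1-s)\big)=0$ is implied by $1\notin\Spec\big(T_n(s)\big)$, where
\[
T_n(s)\ :=\ \sum_{1\le|w|\le L} c_w(s)\otimes\rho_n(w)
\]
is, for real $s$, a self-adjoint operator on $L^2(K)\otimes V_{\blambda(n)}$ ($K\subset X$ a fixed compact set), the sum running over reduced words $w$ in $\gamma_1^{\pm1},\dots,\gamma_k^{\pm1}$ and the $c_w(s)\in B(L^2(K))$ being explicit trace-class operators assembled from $R_{\mathbb H}(s)$ and the cutoffs. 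So it suffices to show $1\notin\Spec(T_n(s))$, uniformly over a $\mathrm{poly}(n)$-net of real $s\in(s_\varepsilon,1]$, with probability tending to $1$ (continuity in $s$ and a resolvent identity then cover all such $s$).

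\emph{Step 3 (strong convergence of the monodromies).} Let $u_1,\dots,u_k$ be a free Haar unitary $k$-tuple, namely the images of $\gamma_1,\dots,\gamma_k$ in the left-regular representation of $\Gamma=F_k$, and let $T_\infty(s):=\sum_{1\le|w|\le L}c_w(s)\otimes u_w$ be the same expression evaluated on them. One checks that $T_\infty(s)$ is precisely the truncated parametrix operator for the resolvent on the universal cover $\mathbb H$ (the flat bundle attached to the regular representation of $\Gamma$ being $L^2(\mathbb H)$ itself), whose Laplacian has $\inf\Spec=\tfrac14$; hence for real $s\in(s_\varepsilon,1]$ and $L$ large, $1\notin\Spec(T_\infty(s))$ with $\mathrm{dist}\big(1,\Spec(T_\infty(s))\big)\ge\delta(\varepsilon)>0$, using $\|R_{\mathbb H}(s)\|\le 1/\varepsilon$ and self-adjointness. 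For self-adjoint operators, strong convergence is equivalent to convergence of the norms $\|P(\,\cdot\,)\|$ of all polynomials $P$ and implies Hausdorff convergence of spectra; so it is enough to prove that the tuple $\big(\pi_{\blambda(n)}(\chi(\gamma_i))\big)_{i=1}^k$ converges \emph{strongly} to $(u_i)_{i=1}^k$ in the quantitative form that, for reduced words $w$ of length $\ell$,
\[
\Big|\ \mathbb E\Big[\tfrac{1}{\dim V_{\blambda(n)}}\tr\big(\pi_{\blambda(n)}(\chi(w))\big)\Big]-\delta_{w=e}\ \Big|\ \lesssim\ \ell^{\,O(|\blambda(n)|)}\,n^{-\kappa},
\]
together with a matching bound for the variance (to pass from traces to operator norms); here $\kappa>0$ is absolute and one uses $\log\dim V_{\blambda(n)}\le(1+o(1))\,|\blambda(n)|\log n$. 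This estimate is obtained by integrating over $\chi\in U(n)^k$ with Weingarten calculus combined with the weight/branching structure of $\pi_{\blambda(n)}$: each Weingarten pairing with a defect costs $n^{-\Omega(1)}$, while the combinatorial overcount per letter is bounded by the branching multiplicities of $V_{\blambda(n)}\subset(\mathbb C^n)^{\otimes|\blambda(n)|}$ (after a determinant twist), giving the factor $\ell^{\,O(|\blambda(n)|)}$; operator-valued coefficients $c_w(s)$ are handled by finite-rank approximation and amplification. Since the parametrix and the extraction of $\|T_n(s)\|$ only need words of length up to $L\cdot m\asymp_{X,\varepsilon}(\log n)^{O(1)}$, the right-hand side tends to $0$ exactly when $0\neq|\blambda(n)|\le c\,\tfrac{\log n}{\log\log n}$ for an absolute $c>0$, the dependence of ``how large $n$ must be'' on $X$ and $\varepsilon$ being absorbed into the $\log\log n$. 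A Chebyshev inequality and a union bound over the net then give $1\notin\Spec(T_n(s))$ for all real $s\in(s_\varepsilon,1]$ with probability tending to $1$, and Steps 1--2 conclude.

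\emph{Main obstacle.} The crux is Step 3: establishing the quantitative strong-convergence estimate for $\big(\pi_{\blambda(n)}(\chi(\gamma_i))\big)_{i=1}^k$ with $|\blambda(n)|$ \emph{allowed to grow}, i.e.\ carefully tracking how the length of admissible words (forced to grow like a power of $\log n$ to resolve the resolvent) trades off against $\dim V_{\blambda(n)}\approx n^{|\blambda(n)|}$ through the branching of $\pi_{\blambda(n)}$, and extracting the sharp threshold $\log n/\log\log n$ with an \emph{absolute} constant $c$ valid for every cusped $X$ (hence for every rank $k$ and every geometry). Subsidiary points---routine but not entirely formal---are the construction and uniform control of the parametrix on the genuinely non-compact $X$, and the verification that the truncation error and the endpoint $s=1$ are harmless, the latter again using $|\blambda(n)|\neq 0$.
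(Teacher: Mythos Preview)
Your overall architecture---parametrix for the resolvent plus strong convergence of the pushed-forward monodromies to free Haar unitaries---matches the paper's approach. The substantive difference is that the paper does \emph{not} attempt Step~3 from scratch: the required strong-convergence statement, with exactly the threshold $|\blambda(n)|\le c\,\tfrac{\log n}{\log\log n}$, is the content of Bordenave--Collins~\cite{BC} (stated in the paper as Theorem~\ref{ThmBC}) and is used as a black box. So what you identify as the ``main obstacle'' is already available as a citation; the constant $c$ in Theorem~\ref{Thm1} is literally the constant in~\cite{BC}. Your Weingarten/branching sketch is a plausible outline of \emph{how} one might prove that theorem, but it is a serious undertaking (the whole of~\cite{BC}) and you should not present it as a routine calculation inside this proof.

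This black-boxing also lets the paper avoid your $n$-dependent truncation. In Step~2 you cut the method-of-images sum at word length $L(n)\asymp\log n$, which then forces you in Step~3 to control polynomials of growing degree and to trade that growth off against $\dim V_{\blambda(n)}$. The paper instead truncates the $\mathbb H$-resolvent kernel at a \emph{fixed} radius $T=T(s_0)$ depending only on $\varepsilon$ (not on $n$), so that the error operator has the form $\sum_{\gamma\in S} a_\gamma^{(T)}(s)\otimes\pi_{\blambda(n)}(\chi(\gamma^{-1}))$ with $S=S(T)$ a \emph{fixed finite} set of group elements. One then approximates the $a_\gamma^{(T)}(s)$ by finite-rank operators and applies Theorem~\ref{ThmBC} directly to a single fixed matrix-coefficient polynomial, comparing with the same expression evaluated on the regular representation $\pi_\infty$; the latter is bounded by $1/5$ by the Hide--Magee estimate for $L^{(T)}_{\mathbb H}(s)$. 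A Lipschitz bound in $s$ and a finite net over $[s_0,1]$ finish the argument. No union bound over a $\mathrm{poly}(n)$-net, no variance/Chebyshev step, and no separate treatment of $s=1$ are needed: once $\|Q^{\chi,\blambda(n)}_{\mathrm{int}}(s)+Q^{\chi,\blambda(n)}_{\mathrm{cusp}}(s)\|<1$, the operator $\Delta_{\chi,\blambda(n)}-s(1-s)$ has a bounded right inverse for every $s\in[s_0,1]$, which rules out all eigenvalues below $\tfrac14-\varepsilon$ at once.
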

In particular, if our irreducible representations are the standard representations $U(n)\xrightarrow{\id}U(n)$ corresponding to the signatures
\[\blambda(n)=(1,\underbrace{0,\hdots,0}_{n-1\ \textup{times}}),\]
Theorem~\ref{Thm1} implies that asymptotically almost surely as $n\rightarrow\infty$, flat unitary rank $n$ bundles over $X$ have smallest eigenvalue at least $\frac{1}{4}-\varepsilon$, irrespective of the spectral gap of $X$ itself. The following corollary to Theorem~\ref{Thm1} follows from the prime geodesic theorem for flat unitary bundles.
\begin{corollary}\label{Cor1}Suppose $X$ is a cusped hyperbolic surface, and let $\blambda_1(n),\hdots,\blambda_k(n)$ be  $k$ sequences of signatures. Let $f_n:=\sum_{i=1}^ka_i\chi_{\blambda_i(n)}$, $a_i\in\mathbb{C}$ fixed. Then if for each $i$ we have $|\blambda_i(n)|\leq c\frac{\log n}{\log\log n}$ for $n$ large enough (where $c>0$ is the absolute constant in Theorem~\ref{Thm1}), then asymptotically almost surely as $n\rightarrow \infty$, $\chi:\pi_1(X)\rightarrow U(n)$ satisfies
\[\frac{1}{\pi_X(T)}\sum_{\ell(\gamma)\leq T}f_n(\chi(\gamma))=\int_{U(n)}f_n(g)d\mu(g)+O(e^{-T/4})\]
as $T\rightarrow\infty$. Here, $\pi_X(T)$ is the number of closed oriented hyperbolic geodesics $\gamma$ on $X$ of length $\ell(\gamma)\leq T$, and $d\mu$ is the probability Haar measure on $U(n)$.
\end{corollary}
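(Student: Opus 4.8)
The plan is to reduce to a single normalized character by linearity, dispose of the trivial constituents by inspection, and for each nontrivial constituent feed the spectral gap supplied by Theorem~\ref{Thm1} into the classical prime geodesic theorem for the corresponding flat unitary bundle. By $\mathbb{C}$-linearity of both sides in the coefficients $a_i$ and in the characters, and a union bound over the finitely many $i$, it suffices to prove the identity for $f_n=\chi_{\blambda(n)}$ attached to a single sequence of signatures with $|\blambda(n)|\le c\tfrac{\log n}{\log\log n}$ for $n$ large, and then reassemble. If $\blambda(n)=0$ for the given $n$, then $\pi_{\blambda(n)}$ is trivial, $\chi_{\blambda(n)}\equiv 1$, both sides equal $1$, and the identity holds for \emph{every} $\chi$ with zero error. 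Hence assume $\blambda(n)\ne 0$; then $\pi_{\blambda(n)}$ is irreducible and nontrivial, so $\int_{U(n)}\chi_{\blambda(n)}\,d\mu=\dim (V_{\blambda(n)})^{U(n)}/\dim V_{\blambda(n)}=0$, and the goal is to show that asymptotically almost surely
\[\frac{1}{\pi_X(T)}\sum_{\ell(\gamma)\le T}\chi_{\blambda(n)}(\chi(\gamma))=O(e^{-T/4}),\qquad T\to\infty.\]

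Set $\rho:=\pi_{\blambda(n)}\circ\chi$, a finite-dimensional unitary representation of $\pi_1(X)$ on $V_{\blambda(n)}$, and $N:=\dim V_{\blambda(n)}$, so that $\sum_{\ell(\gamma)\le T}\chi_{\blambda(n)}(\chi(\gamma))=\tfrac1N\sum_{\ell(\gamma)\le T}\tr\rho(\gamma)$ is, up to the factor $\tfrac1N$, the geodesic count of $X$ twisted by the flat unitary bundle $E_{\chi,\blambda(n)}$. The input I would use is the prime geodesic theorem for $E_{\chi,\blambda(n)}$ on the finite-area surface $X$: letting $\lambda_j=s_j(1-s_j)$, $s_j\in(\tfrac12,1]$, range over the eigenvalues of $\Delta_{\chi,\blambda(n)}$ lying strictly below $\tfrac14$, the Selberg trace formula for $\Delta_{\chi,\blambda(n)}$ (equivalently, the analytic continuation of the twisted Selberg zeta function $Z(s,\rho)$) together with a standard Tauberian and partial-summation argument gives
\[\sum_{\ell(\gamma)\le T}\tr\rho(\gamma)=\dim (V_{\blambda(n)})^{\rho(\pi_1 X)}\cdot\li(e^{T})+\sum_{\tfrac12<s_j\le 1}\li(e^{s_j T})+O\!\left(\frac{e^{3T/4}}{T}\right),\]
with the implied constant depending on $X$, $n$ and $\chi$, and with the cusps of $X$ --- the continuous spectrum and Eisenstein series of $\Delta_{\chi,\blambda(n)}$ together with its scattering matrix --- affecting only the $O$-term, which keeps this shape. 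Dividing by the classical prime geodesic theorem for $X$ itself, $\pi_X(T)=(1+o(1))\cdot\li(e^T)=(1+o(1))\,e^T/T$, one sees that the desired bound follows as soon as $\dim (V_{\blambda(n)})^{\rho(\pi_1 X)}=0$ and $s_j\le\tfrac34$, i.e.\ $\lambda_j\ge\tfrac3{16}$, for every $j$.

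This is where I would apply Theorem~\ref{Thm1}. Fix once and for all $\varepsilon_0=\tfrac1{16}$. Applying Theorem~\ref{Thm1} to the sequence $\blambda(n)$ --- and over the finitely many constituents $i=1,\dots,k$ taking a union bound, which costs nothing; if some $\blambda_i(n)$ vanishes for particular values of $n$, apply the theorem instead to the sequence in which each such zero signature is replaced by $(1,0,\dots,0)$, of size $1$, which still satisfies the hypothesis for $n$ large, noting that for those $n$ the corresponding term contributes exactly $1$ to both sides --- we obtain $\mathbb{P}\bigl[\inf\Spec(\Delta_{\chi,\blambda(n)})\ge\tfrac14-\tfrac1{16}=\tfrac3{16}\bigr]\to 1$. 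On this event: (i) $0\notin\Spec(\Delta_{\chi,\blambda(n)})$, so $\rho$ has no nonzero invariant vector --- such a vector would define an $L^2$ flat section of $E_{\chi,\blambda(n)}$ (using $\area(X)<\infty$), hence a $0$-eigensection --- which kills the main term; and (ii) every eigenvalue $\lambda_j<\tfrac14$ satisfies $\lambda_j\ge\tfrac3{16}$, i.e.\ $s_j\le\tfrac34$, so each of the finitely many terms $\li(e^{s_jT})$ is $O(e^{3T/4}/T)$. Hence on this event $\sum_{\ell(\gamma)\le T}\tr\rho(\gamma)=O(e^{3T/4}/T)$, and dividing by $N\pi_X(T)=(1+o(1))\,Ne^T/T$ gives $O(e^{-T/4})$. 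Reassembling the linear combination over $i$ finishes the proof.

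The only genuine obstacle I anticipate is assembling the correct input prime geodesic theorem in the \emph{twisted, non-compact} setting: one has to check that the cuspidal and continuous-spectrum contributions in the trace formula for $\Delta_{\chi,\blambda(n)}$ enter only through the $O(e^{3T/4}/T)$ error and produce no spectral term with $\mathrm{Re}(s)>\tfrac12$ beyond the discrete eigenvalues already listed --- when residues of Eisenstein series occur, they are square-integrable eigenfunctions and are among the $\lambda_j$. This is classical (it goes back to Hejhal's treatment of the Selberg trace formula for non-cocompact lattices and its twisted refinements), but it is the one place where the finite-area rather than closed hypothesis genuinely has to be handled with care; the remaining steps are the representation theory of $U(n)$ and a direct appeal to Theorem~\ref{Thm1}.
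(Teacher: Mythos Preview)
Your proposal is correct and follows the same approach the paper takes: the paper gives no detailed argument for this corollary, merely stating (in the introduction and again in Section~\ref{applications}) that it ``follows from the prime geodesic theorem for flat unitary bundles'' combined with Theorem~\ref{Thm1}. You have correctly filled in the details --- the linearity reduction, the choice $\varepsilon_0=\tfrac{1}{16}$ so that $s_j\le\tfrac34$ matches the classical $O(e^{3T/4}/T)$ error, and the observation that the continuous-spectrum contributions and any residual-spectrum terms are absorbed either into the discrete eigenvalues $\lambda_j$ or into the error term (with constants allowed to depend on $n$ and $\chi$).
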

In turn, we also obtain the following corollary by combining Corollary~\ref{Cor1} with the main result of Diaconis--Shahshahani~\cite{DS}.
\begin{corollary}\label{Cor2}Suppose $r\geq 1$ and $\boldsymbol{a}=(a_1,\hdots,a_r)$ and $\boldsymbol{b}=(b_1,\hdots,b_r)$ are $r$-tuples of non-negative integers. Then for $X$ any cusped hyperbolic surface, asymptotically almost surely as $n\rightarrow\infty$, $\chi:\pi_1(X)\rightarrow U(n)$ satisfies
\[\frac{1}{\pi_X(T)}\sum_{\ell(\gamma)\leq T}\prod_{j=1}^r\tr(\chi(\gamma^j))^{a_j}\overline{\tr(\chi(\gamma^j))}^{b_j}=\delta_{\boldsymbol{a},\boldsymbol{b}}\int_{\mathbb{C}^r}|z_1|^{2a_1}\hdots|z_r|^{2a_r}\prod_{j=1}^r\frac{1}{j}e^{-\pi|z_j|^2/j}dx_jdy_j+O(e^{-T/4})\]
as $T\rightarrow\infty$, where $\delta_{\boldsymbol{a},\boldsymbol{b}}$ is the Kronecker delta.
\end{corollary}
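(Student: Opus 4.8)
The plan is to expand the class function $g\mapsto\prod_{j=1}^r\tr(g^j)^{a_j}\overline{\tr(g^j)}^{b_j}$ on $U(n)$ into irreducible characters whose signature sizes are bounded independently of $n$, to apply Corollary~\ref{Cor1} (equivalently, the prime geodesic theorem together with Theorem~\ref{Thm1}) to each summand, and then to read off the resulting $U(n)$-average from Diaconis--Shahshahani~\cite{DS}.

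Set $\Phi_n(g):=\prod_{j=1}^r\tr(g^j)^{a_j}\overline{\tr(g^j)}^{b_j}$. For unitary $g$, $\tr(g^j)$ is the $j$-th power sum in the eigenvalues of $g$ and $\overline{\tr(g^j)}=\tr(g^{-j})$, so $\Phi_n$ is an integer combination of rational irreducible characters of $U(n)$. The first step is to make this uniform in $n$: by the classical stability of the rational representation ring of the unitary groups (Littlewood's modification rules; Koike's universal characters), with $M:=\sum_{j=1}^r j(a_j+b_j)$ one has, for all $n\ge M$, a decomposition
\[\Phi_n=\sum_{i=1}^{k}c_i\,\tr\bigl(\pi_{\blambda_i(n)}(-)\bigr)\]
in which $k$, the integers $c_i$, and the signatures $\blambda_i(n)$ (up to relabelling) do not depend on $n$, and $0\le|\blambda_i(n)|\le M$ for every $i$. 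Consequently, once $n$ is large enough that $M\le c\,\tfrac{\log n}{\log\log n}$ (with $c$ the constant of Theorem~\ref{Thm1}), each signature $\blambda_i(n)$ lies in the range to which Theorem~\ref{Thm1} and Corollary~\ref{Cor1} apply.

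Since $\chi$ is a homomorphism, $\tr(\chi(\gamma^j))=\tr(\chi(\gamma)^j)$, hence $\prod_{j=1}^r\tr(\chi(\gamma^j))^{a_j}\overline{\tr(\chi(\gamma^j))}^{b_j}=\Phi_n(\chi(\gamma))$. Next I would run the prime geodesic theorem on each of the $k$ flat unitary bundles $E_{\chi,\blambda_i(n)}$. By Theorem~\ref{Thm1}, and using that for $\blambda_i(n)\neq0$ the bundle $E_{\chi,\blambda_i(n)}$ a.a.s.\ carries no trivial sub-bundle (a random surface representation of the non-abelian free group $\pi_1(X)$ has no invariant vectors), a union bound over $i=1,\dots,k$ produces a single event of probability tending to $1$ as $n\to\infty$ on which
\[\frac{1}{\pi_X(T)}\sum_{\ell(\gamma)\le T}\Phi_n(\chi(\gamma))=\int_{U(n)}\Phi_n(g)\,d\mu(g)+O\bigl(e^{-T/4}\bigr)\qquad(T\to\infty),\]
because each summand with $\blambda_i(n)\neq0$ contributes $O(e^{-T/4})$ by the spectral gap of Theorem~\ref{Thm1}, while the surviving main term is the multiplicity of the trivial representation in $\Phi_n$, which equals $\int_{U(n)}\Phi_n(g)\,d\mu(g)$ since $\int_{U(n)}\tr(\pi_{\blambda}(g))\,d\mu(g)=\delta_{\blambda,0}$. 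This is exactly the argument behind Corollary~\ref{Cor1}, here applied to the $n$-independent integer combination $\Phi_n$ of unnormalized characters.

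Finally, $\int_{U(n)}\Phi_n(g)\,d\mu(g)$ is precisely the average evaluated by Diaconis--Shahshahani~\cite{DS}: for $n$ in the stable range it vanishes unless $\boldsymbol{a}=\boldsymbol{b}$, and when $\boldsymbol{a}=\boldsymbol{b}$ it equals the corresponding mixed moment of independent complex Gaussians, that is, the right-hand side of the asserted identity. Substituting this into the previous display gives the corollary. The step I expect to require the most care is the first one: showing that the irreducible-character expansion of $\Phi_n$ has complexity bounded independently of $n$ (finitely many terms, $n$-independent coefficients, signatures of size $\le M$). It is exactly this uniformity that legitimizes the union bound over the $k$ summands, and hence the interchange of the ``asymptotically almost surely as $n\to\infty$'' and ``$T\to\infty$'' limits; granting it, Corollary~\ref{Cor1} and~\cite{DS} supply everything else.
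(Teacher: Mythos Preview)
Your proposal is correct and follows essentially the same route as the paper: the paper first expands $\prod_j\tr(g^j)^{a_j}\overline{\tr(g^j)}^{b_j}$ as a finite $n$-independent linear combination of irreducible $U(n)$-characters (via the power-sum/Schur change of basis and then decomposing $s_{\nu_1}(g)s_{\nu_2}(g^{-1})$), applies Corollary~\ref{Cor1} to that combination, and invokes Diaconis--Shahshahani for the resulting $U(n)$-integral. The only cosmetic difference is that you phrase the stable decomposition in the language of Koike's universal characters, whereas the paper argues directly through symmetric functions; both yield the same uniform bound $|\blambda_i(n)|\le M=\sum_j j(a_j+b_j)$ that makes the union bound and Corollary~\ref{Cor1} applicable.
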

This says essentially that the $(\tr(\chi(\gamma)),\tr(\chi(\gamma^2)),\hdots,\tr(\chi(\gamma^r)))$ distribute according to the law of $r$ independent complex Gaussians with a good error term. The main term on the right hand side is the expectation of $\prod_{j=1}^r\tr(g^j)^{a_j}\overline{\tr(g^j)}^{b_j}$ over $U(n)$. Explicitly, we have
\[\int_{\mathbb{C}^r}|z_1|^{2a_1}\hdots|z_r|^{2a_r}\prod_{j=1}^r\frac{1}{j}e^{-\pi|z_j|^2/j}dx_jdy_j=\prod_{j=1}^rj^{a_j}a_j!.\]
A more difficult problem is the study of the rate at which equidistribution happens. This could be made precise by bounding the lengths of the geodesic in terms of $n$. We could ask the following question.
\begin{question}\label{question}Given signature $\blambda\neq 0$, is there a constant $C(X,\blambda)>0$ such that
\[\mathbb{P}\left[\chi:\pi_1(X)\rightarrow U(n):\left|\frac{1}{\pi_X(T(n))}\sum_{\ell(\gamma)<T(n)}\chi_{\blambda}(\chi(\gamma))\right|\leq C(X,\blambda)e^{-c(n)}\right]\xrightarrow{n\rightarrow \infty}1\]
for some functions $T(n),c(n)>0$ going to infinity as $n\rightarrow\infty$? What are the optimal such functions?
\end{question}
Note that given a signature $\blambda$ of length $m$, we may extend it by zeros to obtain signatures of lengths $n$ for every $n\geq m$. This is how a fixed signature $\blambda$ gives rise to an infinite family of irreducible unitary representations of $U(n)$ for large enough $n$.
\begin{definition}\label{unbalanced}A signature $\blambda$ is \textit{unbalanced} if the sum of its entries is nonzero. Otherwise, $\blambda$ is \textit{balanced}.
\end{definition}
In relation to Question~\ref{question}, we prove the following theorem whose proof occupies the bulk of this paper.
\begin{theorem}\label{geodesicdep}Suppose $X$ is a cusped hyperbolic surface, and suppose $\blambda(n)$ is sequence of unbalanced signatures such that $|\blambda(n)|\leq c\frac{\log n}{\log\log n}$ for large enough $n$, where $c>0$ is the universal constant in Theorem~\ref{Thm1}. Suppose $T(n)$ is a function of $n$ such that $T(n)\rightarrow\infty$ as $n\rightarrow\infty$. For every $\varepsilon>0$,
\begin{equation}
\mathbb{P}\left[\chi:\left|\frac{1}{\pi_X(\log T(n))}\sum_{\ell(\gamma)\leq \log T(n)}\chi_{\blambda(n)}(\chi(\gamma))\right|\leq n^2\dim V_{\blambda(n)}T(n)^{-\frac{1}{4}+\varepsilon}\right]\xrightarrow{n\rightarrow\infty} 1.
\end{equation}
\end{theorem}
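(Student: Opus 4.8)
The plan is to make the implied constant in Corollary~\ref{Cor1} explicit in $n$ and in $\dim V_{\blambda(n)}$ — i.e.\ to prove a quantitative (``probabilistic'') prime geodesic theorem for the random flat bundles $E_{\chi,\blambda(n)}$ — and then feed in Theorem~\ref{Thm1}. Write $d_n:=\dim V_{\blambda(n)}$, $E_n:=E_{\chi,\blambda(n)}$, $\Delta_n:=\Delta_{\chi,\blambda(n)}$, $\chi_n:=\chi_{\blambda(n)}$, and $L:=\log T(n)$. Since $\pi_X(L)\asymp_X e^{L}/L$ as $L\to\infty$ (the prime geodesic theorem for $X$ itself, whose leading term comes from the constant eigenfunction) and $\int_{U(n)}\chi_n\,d\mu=0$ because $\blambda(n)\neq 0$, the asserted inequality is equivalent to a bound of the form $\bigl|\sum_{\ell(\gamma)\le L}\chi_n(\chi(\gamma))\bigr|\le C_X\, n^2 d_n\, e^{(3/4+\varepsilon)L}/L$ on an event of probability tending to $1$. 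This is exactly Corollary~\ref{Cor1} with the $(n,d_n)$-dependence of the error carried through, so the whole task is to run the prime geodesic theorem for $E_n$ with bookkeeping.

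The unbalanced hypothesis enters decisively. For $U(n)$ every weight $\mu$ of $V_{\blambda}$ satisfies $\sum_i\mu_i=\sum_i\lambda_i$, so when $\blambda(n)$ is unbalanced $V_{\blambda(n)}$ has no zero weight; equivalently $\pi_{\blambda(n)}$ sends a nontrivial scalar to a nontrivial scalar, whence $\{g\in U(n):\pi_{\blambda(n)}(g)\ \text{has eigenvalue}\ 1\}$ is a proper, hence Haar-null, real-analytic subset of $U(n)$. The holonomy of $E_n$ around each cusp loop of $X$ is a fixed nontrivial word in independent Haar-random unitaries, whose law is absolutely continuous for $n\ge 2$; so for each $n$, almost surely no cusp monodromy of $E_n$ has eigenvalue $1$, and therefore almost surely $\Delta_n$ has purely discrete $L^2$-spectrum — no continuous spectrum, no Eisenstein/scattering term in the trace formula. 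I would then work on the intersection of this probability-$1$ event with $\mathcal E_n:=\{\chi:\inf\Spec(\Delta_n)\ge \tfrac14-\varepsilon'\}$ from Theorem~\ref{Thm1} (valid since $0\neq|\blambda(n)|\le c\log n/\log\log n$), for a small $\varepsilon'=\varepsilon'(\varepsilon)$ to be fixed, and with an auxiliary event $\mathcal F_n$ on which the eigenvalue phases of the cusp monodromies avoid a $\delta_n$-neighborhood of $0$, $\delta_n$ a fixed negative power of $nd_n$; a one-line union bound over the $\le d_n$ weights per cusp gives $\mathbb P[\mathcal F_n]\to1$, and $\mathbb P[\mathcal E_n]\to1$ by Theorem~\ref{Thm1}.

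On $\mathcal E_n\cap\mathcal F_n$ I would apply the Selberg trace formula for $E_n$ — now carrying only the identity term, the closed-geodesic sum, and an elementary cusp term — against a one-parameter family of smoothed test functions tuned to extract $\psi_{E_n}(T):=\sum_{\ell(\gamma)\le T}\Lambda_{E_n}(\gamma)$, where $\Lambda_{E_n}(\gamma_0^k)=\ell(\gamma_0)\tr\bigl(\pi_{\blambda(n)}(\chi(\gamma_0))^k\bigr)$, via the usual Tauberian/contour argument. The ingredients: the identity term is $O_X(d_n\cdot\textup{poly}(T))$; Weyl's law for $E_n$ has $d_n$-proportional leading term, so the ``large eigenvalue'' part of the error is $O_X\!\bigl((\textup{poly in }n,d_n)\cdot e^{(3/4)T}\bigr)$ after the standard optimization of the smoothing width; the cusp term — a sum over the nonzero monodromy phases — is bounded uniformly in those phases, using uniform digamma estimates on $\mathcal F_n$, by $O_X\!\bigl(d_n\,e^{(1/2+\varepsilon')T}\log(1/\delta_n)\bigr)$; and, crucially, the eigenvalues below $\tfrac14$ — of which there are at most $C_X d_n$ for a rank-$d_n$ unitary bundle over $X$, and which on $\mathcal E_n$ all exceed $\tfrac14-\varepsilon'$ — contribute the ``main term'' $\sum_{\lambda_j<1/4}m_j e^{s_jT}/s_j$, at most $C_X d_n\, e^{(1/2+\sqrt{\varepsilon'})T}$. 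Choosing $\varepsilon'$ so small that $1/2+\sqrt{\varepsilon'}<3/4$, and absorbing the explicit powers of $n,d_n$ into the convenient (non-optimized) envelope $n^2 d_n$, one gets $\psi_{E_n}(T)\le C_X\,n^2 d_n\, e^{(3/4+\varepsilon)T}$ uniformly for $T\ge1$.

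To conclude, partial summation removes the weight $\ell(\gamma_0)$ from $\psi_{E_n}$, the non-primitive geodesics contribute $O(d_n e^{T/2})$ by the trivial bound $|\tr\pi_{\blambda(n)}|\le d_n$, and dividing by $d_n$ (the normalization in $\chi_n$) and by $\pi_X(T)\asymp_X e^T/T$ with $T=L=\log T(n)$ gives the theorem for any $T(n)\to\infty$. The main obstacle is the third paragraph: producing a genuinely effective prime geodesic theorem for the random bundles $E_{\chi,\blambda(n)}$ over a \emph{cusped} surface with error uniformly polynomial in $n$ and linear in $\dim V_{\blambda(n)}$ — this requires the purely-discrete-spectrum reduction above, an $n$-explicit bound on the number of eigenvalues below $\tfrac14$, the Weyl-law and contour-shift estimates with every constant carried, and the uniform control of the elementary cusp contribution on $\mathcal F_n$. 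Everything else — the passage from a spectral gap to equidistribution — is routine.
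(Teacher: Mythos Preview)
Your proposal is essentially correct and follows the same route as the paper: reduce to purely discrete spectrum via the unbalanced hypothesis (the paper's Corollary~\ref{cor:genericbundle}), control the cusp contribution via a quantitative lower bound on $|1-\alpha|$ for the monodromy eigenvalues (the paper's Proposition~\ref{logbound}), prove explicit Weyl-type counting bounds (the paper's Propositions~\ref{Weylbound}--\ref{diffeigen}), and then run an explicit prime geodesic theorem invoking Theorem~\ref{Thm1} for the spectral gap. The paper implements the last step via contour integration of $Z'_{\chi,\blambda(n)}/Z_{\chi,\blambda(n)}$ against $x^{s+1}/s(s+1)$ on a rectangle rather than via the trace formula with smoothed test functions, but these are standardly equivalent; the subsequent differencing in $x$ and partial summation are the same in both.

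One point where your sketch is over-optimistic: the event $\mathcal F_n$ with $\delta_n$ a fixed negative power of $nd_n$ is not obtainable by the one-line union bound you describe. The joint density of eigenvalue phases under Haar measure on $U(n)$ is the CUE density $\frac{1}{n!}\prod_{i<j}|e^{i\theta_i}-e^{i\theta_j}|^2$, whose supremum is of order $4^{\binom{n}{2}}/n!$, so the crude bound on $\mathbb P[|h(\theta)|<\delta_n]$ for a single linear form $h$ is only useful for $\delta_n$ of size roughly $(2+\delta)^{-n^2}$; this is exactly what the paper proves in Proposition~\ref{logbound}. Fortunately this does not damage your argument: with that $\delta_n$ one still has $\log(1/\delta_n)=O(n^2)$, which is precisely the factor absorbed into your envelope $n^2d_n$. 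Relatedly, the $n^2$ in the final bound is not decoration --- it enters the Weyl-type count $N(0\le r_k\le T)=O_X(n^2 d_n T^2)$ through this same cusp term $\sum\log|1-e^{2\pi i\alpha_{hj}}|$ in the trace formula, so your ``$d_n$-proportional Weyl law'' should read ``$n^2d_n$-proportional''. (The cusp term here, in the $K_0=0$ case, is the elementary $-g(0)\sum_j\log|\det(I-\pi_{\blambda(n)}(\chi(T_j)))|$; no digamma is needed.) With these two corrections your outline matches the paper's proof.
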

A reason we impose the condition that the signatures $\blambda(n)$ are unbalanced is the following. For unbalanced signatures $\blambda(n)$, we show that almost surely, $\pi_{\blambda(n)}\circ\chi$ gives a flat bundle $E_{\chi,\blambda(n)}$ that is non-singular, that is, whose spectrum has no continuous part. For balanced signatures $\blambda(n)$ whose sum of entries is zero, the irreducible representations $\pi_{\blambda(n)}$ are such that for every $A\in U(n)$, $\pi_{\blambda(n)}(A)$ has $1$ as an eigenvalue. Since $X$ is cusped hyperbolic, this means that for such representations, we always have a continuous part in the spectrum of $E_{\chi,\blambda(n)}$. The existence of a continuous spectrum requires a study of the main term of the hyperbolic scattering determinants of such flat bundles. The study of the hyperbolic scattering determinant in this setting is closely related to currently intractable problems in random matrix theory. Furthermore, in order to obtain Theorem~\ref{geodesicdep} for balanced $\blambda(n)$, we also need a bound in terms of $X$ and $\blambda(n)$ on the real parts of the resonances of $E_{\chi,\blambda(n)}$, at least asymptotically almost surely as $n\rightarrow\infty$. This is also related to understanding the main term of the hyperbolic scattering determinants. We briefly discuss these complications in Section~\ref{comments}.
\subsection{Relation to other works}In Hide--Magee~\cite{HM}, the authors show that a cusped hyperbolic surface is such that, for any $\varepsilon>0$, if one takes $n$-sheeted covers of it, as $n\rightarrow\infty$, asymptotically almost surely they do not introduce \textit{new} eigenvalues less than $\frac{1}{4}-\varepsilon$. In particular, if one starts with a cusped hyperbolic surface with spectral gap at least $\frac{1}{4}$, then one obtains that $n$-sheeted covers asymptotically almost surely have spectral gaps at least $\frac{1}{4}-\varepsilon$. A compactification argument allows them to compactify their sequence of hyperbolic surfaces to show the existence of an infinite family of connected closed hyperbolic surfaces $\{X_n\}_n$ with increasing genera and spectral gaps $\lambda_1(X_n)$ satisfying
\[\limsup_{n\rightarrow\infty}\lambda_1(X_n)=\frac{1}{4}.\]
The existence of such a sequence of hyperbolic surfaces was an important open question in the spectral geometry of hyperbolic surface. Note that by a theorem of Huber~\cite{Huber}, any sequence $\{X_n\}_n$ of closed connected hyperbolic surfaces with increasing genera satisfies
\[\limsup_{n\rightarrow\infty}\lambda_1(X_n)\leq\frac{1}{4}.\]
Note that $n$-sheeted covers of a surface $X$ are parametrized by surface representations
\[\pi_1(X)\rightarrow S_n\]
into the symmetric group $S_n$ on $n$ elements. One could instead consider the larger family of rank $n$ flat bundles over $X$, essentially parametrized by surface representations
\[\chi:\pi_1(X)\rightarrow U(n)\]
into the unitary group $U(n)$ of rank $n$. If one wants to investigate how the images of (homotopy classes of oriented) closed geodesics $\gamma$ on $X$ under surface representations $\chi$ distribute, we are led to considering class functions $f:U(n)\rightarrow\mathbb{C}$ evaluated at $\chi(\gamma)$. Since class functions are generated by irreducible characters, we need to understand the minimal eigenvalue of the Laplacian acting on sections of the flat bundle associated to
\[\pi_1(X)\xrightarrow{\chi}U(n)\xrightarrow{\pi_{\blambda(n)}}\GL(V_{\blambda(n)}),\]
where $\pi_{\blambda(n)}$ are irreducible representations. Our first theorem, Theorem~\ref{Thm1}, addresses this minimal eigenvalue problem. Its proof follows that of the main theorem of~\cite{HM} with the necessary modifications and generalizations to our unitary setting. The proofs of the other results occupy the bulk of this paper and use other ideas.\\
\\
The problem considered in this paper is also related to the expected Wilson loops. Indeed, suppose $\gamma$ is a closed oriented geodesic on a hyperbolic surface $X$. One could consider the \textit{finite} space of surface representations
\[\chi:\pi_1(X)\rightarrow S_n\]
given the uniform probability measure. Then, one could ask about the quantity
\[\mathbb{E}_{\chi}[\fix_{\chi}(\gamma)]:=\frac{1}{|\Hom(\pi_1(X),S_n)|}\sum_{\chi}\fix_{\chi}(\gamma),\]
where $\fix_{\chi}(\gamma)$ is the number of fixed points of the permutation $\chi(\gamma)\in S_n$. In the case of closed orientable surfaces, Magee--Puder~\cite{MP20} proved the following result.
\begin{theorem}\cite[Thm. 1.2]{MP20}\label{MageePuder}
Suppose $X$ is a connected closed orientable surface. If $\gamma\in\pi_1(X)$ is non-trivial and $q\in\mathbb{N}$ is maximal such that $\gamma=\gamma_0^q$ for some $\gamma_0\in\pi_1(X)$, then as $n\rightarrow\infty$,
\[\mathbb{E}_{\chi}[\fix_{\chi}(\gamma)]=d(q)+O(1/n),\]
where $d(q)$ is the number of divisors of $q$.
\end{theorem}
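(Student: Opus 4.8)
The plan is to recast the expectation topologically and then organize it by finitely generated subgroups of $\Gamma:=\pi_1(X)$ containing $\gamma$. We may assume $X$ has negative Euler characteristic, i.e. genus $g\ge 2$ (the sphere is vacuous); throughout write $\chi(\cdot)$ for Euler characteristic, a harmless clash with the random homomorphism $\chi$. A homomorphism $\chi\colon\Gamma\to S_n$ is the same datum as a degree-$n$ cover $p_\chi\colon Y_\chi\to X$ together with a labelling $\{1,\dots,n\}$ of the fibre over the basepoint, and $\fix_{\chi}(\gamma)$ is the number of labels $i$ for which the $p_\chi$-lift of a fixed loop representing $\gamma$, started at the $i$-th fibre point, closes up in $Y_\chi$. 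Hence $\mathbb{E}_{\chi}[\fix_{\chi}(\gamma)]$ is the expected number of closed $\gamma$-lifts in a uniformly random degree-$n$ cover of $X$. Decomposing the $\Gamma$-action into orbits of the marked fibre point yields the exact identity
\[
\mathbb{E}_{\chi}[\fix_{\chi}(\gamma)]=\sum_{m=1}^{n}\frac{n!}{(n-m)!}\,a^{\gamma}_m(\Gamma)\,\frac{|\Hom(\Gamma,S_{n-m})|}{|\Hom(\Gamma,S_{n})|},
\]
where $a^{\gamma}_m(\Gamma)$ is the number of index-$m$ subgroups of $\Gamma$ containing $\gamma$; everything comes down to the asymptotics of these constrained subgroup counts.

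The heart of the argument is a subgroup expansion. Generalising the Stallings core-graph calculus from free groups (used by Puder and collaborators for word measures) to surfaces, one attaches to each subgroup $H$ with $\langle\gamma\rangle\le H\le\Gamma$ that is an \emph{algebraic extension} of $\langle\gamma\rangle$ a compact core surface $W_H$ with $\chi(W_H)=\chi(H)$, such that every closed $\gamma$-lift, together with its essential neighbourhood in $Y_\chi$, is the image of exactly one such $W_H$. Counting the ways to complete an embedded $W_H$ to a full degree-$n$ cover then shows that $\mathbb{E}_{\chi}[\fix_{\chi}(\gamma)]$ has an asymptotic expansion in powers of $1/n$ in which $H$ contributes $c_H\,n^{\chi(H)}$ to leading order, for an explicit $c_H\in\mathbb{Q}_{>0}$ read off from $W_H$. (For free groups this family of $H$ is finite and $\mathbb{E}$ is a rational function of $n$; for surface groups one needs the corresponding finiteness, or effective summability, of the algebraic extensions of a cyclic subgroup --- part of the work of \cite{MP20}.) The constant term is therefore $\sum_{H\,:\,\chi(H)=0}c_H$, while every other relevant $H$ contributes $O(1/n)$, being free of rank $\ge 2$ (so $\chi(H)\le -1$) or of finite index in $\Gamma$ (so $\chi(H)\le -2$).

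It then remains to classify the relevant $H$ with $\chi(H)=0$ and to see there are exactly $d(q)$ of them, each with $c_H=1$. A finitely generated subgroup of a surface group is free if of infinite index and of finite index otherwise, so $\chi(H)=0$ forces $H\cong\mathbb{Z}$; write $H=\langle\delta\rangle$. Since $\gamma=\gamma_0^{\,q}\in\langle\delta\rangle$, the element $\delta$ commutes with $\gamma_0^{\,q}$, hence lies in $C_\Gamma(\gamma_0^{\,q})=C_\Gamma(\gamma_0)$; in the torsion-free hyperbolic group $\Gamma$ this centralizer is infinite cyclic, and since $\gamma_0$ is not a proper power (by maximality of $q$) it equals $\langle\gamma_0\rangle$. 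Thus $\delta=\gamma_0^{\,j}$ for some $j\ge 1$, and $\gamma_0^{\,q}\in\langle\gamma_0^{\,j}\rangle$ forces $j\mid q$; conversely every $\langle\gamma_0^{\,j}\rangle$ with $j\mid q$ contains $\langle\gamma\rangle$ with finite index $q/j$ and is therefore one of the relevant algebraic extensions, its core surface being an annulus ($\chi=0$) with coefficient $c_H=1$ (the unique generic completion of an annular sub-cover). Summing these $d(q)$ contributions gives $\mathbb{E}_{\chi}[\fix_{\chi}(\gamma)]=d(q)+O(1/n)$.

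The main obstacle is the second paragraph: constructing the core-surface calculus for covers of a closed surface, proving the expansion with exponents $\chi(H)$ and coefficients $c_H$ (equivalently, extracting usable asymptotics from the subgroup-count identity above), controlling the a priori infinite family of algebraic extensions of $\langle\gamma\rangle$, and verifying $c_H=1$ for the annular ones. There is no shortcut via classical character theory, because $\gamma$ is an arbitrary word in the generators of $\Gamma$ rather than a product of commutators, so the Frobenius formula for $|\Hom(\Gamma,S_n)|$ does not by itself evaluate $\mathbb{E}_{\chi}[\fix_{\chi}(\gamma)]$ --- which is precisely why \cite{MP20} develops new machinery. Once that is in place, the number-theoretic conclusion $d(q)$ falls out of the uniqueness of roots and the cyclicity of centralizers in $\Gamma$.
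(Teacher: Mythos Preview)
The paper does not prove this statement. Theorem~\ref{MageePuder} appears in the subsection ``Relation to other works'' purely as a citation of \cite[Thm.~1.2]{MP20}; it is quoted for context and no proof is given or attempted in the present paper. There is therefore nothing here to compare your proposal against.

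For what it is worth, your sketch is a reasonable high-level outline of the strategy in \cite{MP20}: reinterpret $\mathbb{E}_\chi[\fix_\chi(\gamma)]$ as an expected count of closed lifts, organize by subgroups $H\supseteq\langle\gamma\rangle$, and identify the leading contribution with the cyclic overgroups of $\langle\gamma\rangle$, which are indexed by divisors of $q$. You are also correct that the substantive work lies in building the surface-group analogue of the Stallings core-graph machinery and in controlling the contributions of the infinitely many algebraic extensions --- and you are right to flag this as the genuine content of \cite{MP20} rather than something one can fill in by hand. But since the present paper simply cites the result, there is no ``paper's own proof'' for you to match or diverge from.
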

This is a special case of a more general result they proved. This result was preceeded by a result in the case when $X$ is not closed, that is, $\pi_1(X)$ is a free group. In this case, Parzanchevski--Puder~\cite{PP15} proved the following.
\begin{theorem}\cite[Thm. 1.8]{PP15}
\[\mathbb{E}_{\chi}[\fix_{\chi}(\gamma)]=1+\frac{c(\gamma)}{n^{\pi(\gamma)-1}}+O(1/n^{\pi(\gamma)}),\]
where $\pi(\gamma)\in\{0,\hdots,r\}\cup\{\infty\}$, $r$ the rank of the free group $\pi_1(X)$, is an algebraic invariant of $\gamma$ called the primitivity rank, and $c(\gamma)\in\mathbb{N}$.
\end{theorem}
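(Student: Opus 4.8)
\emph{Proof proposal.} Since $X$ is non-compact, $\pi_1(X)$ is a free group $F_r=\langle x_1,\dots,x_r\rangle$; fix a reduced word $w\in F_r$ representing $\gamma$ (cyclically reduced after conjugation, which changes nothing as $\fix_\chi(\gamma)$ depends only on the conjugacy class). A uniformly random $\chi\in\Hom(F_r,S_n)$ amounts to an independent uniform tuple $(\sigma_1,\dots,\sigma_r)\in S_n^{\,r}$, and by the $S_n$-symmetry of the construction $\mathbb{E}_\chi[\fix_\chi(\gamma)]=n\cdot\mathbb{P}\big[w(\sigma_1,\dots,\sigma_r)(1)=1\big]$. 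The plan is to model $(\sigma_1,\dots,\sigma_r)$ as a random $n$-sheeted covering of the rose $R_r$ with $r$ petals --- equivalently an $n$-vertex graph whose edges are labelled by and oriented along $x_1,\dots,x_r$ --- so that $w$ fixes the vertex $1$ exactly when the path spelled by $w$ starting at $1$ returns to $1$. The subgraph swept by that path (its ``trace'') is a folded, based, $R_r$-labelled graph which is a quotient of the based labelled cycle $C_w$ of length $|w|$ obtained by successive foldings. Stratifying the event $\{w(\sigma)(1)=1\}$ by the isomorphism type of the trace gives
\[\mathbb{P}\big[w(\sigma)(1)=1\big]=\sum_{\Delta}\mathrm{emb}(\Delta,n),\]
the sum over isomorphism classes of foldings $\Delta$ of $C_w$, where $\mathrm{emb}(\Delta,n)$ is the expected number of label-preserving, vertex-injective immersions of $\Delta$ into the random Schreier graph that send the basepoint to $1$.

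First I would prove the elementary counting lemma underlying everything: if $\Delta$ is connected, folded, based, with $v$ vertices, $e$ edges, and $e_i$ edges labelled $x_i$, then choosing images for the $v-1$ non-basepoint vertices costs a falling factorial $(n-1)(n-2)\cdots(n-v+1)$, while foldedness together with vertex-injectivity forces the $e$ edge-constraints to require, for each $i$, that the uniform permutation $\sigma_i$ send $e_i$ prescribed distinct points to $e_i$ prescribed distinct points --- an event of probability $(n-e_i)!/n!$, independently over $i$. Hence
\[\mathrm{emb}(\Delta,n)=\frac{(n-1)!}{(n-v)!}\prod_{i=1}^{r}\frac{(n-e_i)!}{n!}=n^{\,v-1-e}\bigl(1+O_\Delta(1/n)\bigr)=n^{-\mathrm{rk}(\Delta)}\bigl(1+O_\Delta(1/n)\bigr),\]
using $v-e=1-\mathrm{rk}(\Delta)$ for connected $\Delta$. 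Consequently $\mathbb{E}_\chi[\fix_\chi(\gamma)]=\sum_{\Delta}n^{\,1-\mathrm{rk}(\Delta)}\bigl(1+O(1/n)\bigr)$ is a finite sum of rational functions of $n$, hence itself a rational function of $n$ admitting an asymptotic expansion in powers of $1/n$.

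To locate the first two terms of that expansion I would pass from foldings to subgroups. A folding $\Delta$ of $C_w$ determines $H_\Delta:=\pi_1(\Delta,\mathrm{base})\leq F_r$, which contains $\langle\gamma\rangle$ and has $\mathrm{rk}(H_\Delta)=\mathrm{rk}(\Delta)$; moreover a folding of a cycle has no valence-one vertices, so it is a core (Stallings) graph, hence $\cong\Gamma(H_\Delta)$. By Takahasi's finiteness theorem (also Ventura; Kapovich--Miasnikov) the subgroup $\langle\gamma\rangle$ has only finitely many algebraic extensions, collected in a finite set $\mathcal{A}$, and every overgroup $\langle\gamma\rangle\leq H$ refines as $\langle\gamma\rangle\leq_{\textup{alg}}M\leq_{\textup{ff}}H$ for a unique $M\in\mathcal{A}$ (the algebraic closure of $\langle\gamma\rangle$ in $H$). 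Assigning each folding $\Delta$ to the algebraic closure $M(H_\Delta)$ and summing the contributions above fibrewise yields an exact identity $\mathbb{E}_\chi[\fix_\chi(\gamma)]=\sum_{M\in\mathcal{A}}L_M(n)$ with each $L_M$ rational and $L_M(n)=n^{\,1-\mathrm{rk}(M)}+O(n^{-\mathrm{rk}(M)})$; one shows in addition that the total contribution of the algebraic extensions of rank $\leq 1$ is the constant $1$ when $\gamma$ is not a proper power (and is $d(q)$, the number of divisors of $q$, when $\gamma=\gamma_0^{\,q}$ with $\gamma_0$ not a proper power), i.e.\ that the ``primitive part'' of the fixed-point count has expectation exactly this constant.

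Finally I would read off the answer from the classification of $\mathcal{A}$. The rank-one algebraic extensions of $\langle\gamma\rangle$ are exactly the cyclic groups $\langle u\rangle$ with $u$ a root of $\gamma$, i.e.\ the $\langle\gamma_0^{\,e}\rangle$ with $e\mid q$; they supply the constant term ($1$ for $q=1$, $d(q)$ in general). If $M\in\mathcal{A}$ has $\mathrm{rk}(M)\geq 2$, then $\gamma$ is \emph{not} primitive in $M$: otherwise $\langle\gamma\rangle$ would be a proper free factor of $M$, contradicting $\langle\gamma\rangle\leq_{\textup{alg}}M$. Hence every such $M$ witnesses non-primitivity of $\gamma$, so $\mathrm{rk}(M)\geq\pi(\gamma)$; conversely the $M\in\mathcal{A}$ with $\mathrm{rk}(M)=\pi(\gamma)$ are precisely the critical subgroups of $\gamma$, of which there are $c(\gamma)$, each contributing $n^{\,1-\pi(\gamma)}+O(n^{-\pi(\gamma)})$, while those of rank $\geq\pi(\gamma)+1$ contribute $O(n^{-\pi(\gamma)})$. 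Adding up,
\[\mathbb{E}_\chi[\fix_\chi(\gamma)]=1+\frac{c(\gamma)}{n^{\pi(\gamma)-1}}+O\!\left(\frac{1}{n^{\pi(\gamma)}}\right),\]
with the conventions that $\pi(\gamma)=\infty$ when $\gamma$ is primitive (then $\mathcal{A}=\{\langle\gamma\rangle\}$ and the count is exactly $1$) and $\pi(\gamma)=1$ when $\gamma=\gamma_0^{\,q}$ is a proper power (then $c(\gamma)=d(q)-1$ and the formula reads $d(q)+O(1/n)$, recovering Nica's theorem). The hard part will be the third step: making the stratification over foldings into an honest inclusion--exclusion over the finite set of algebraic extensions, and proving the exact identification of the rank-$\leq 1$ part with the constant $1$ (resp.\ $d(q)$) --- this is where Stallings' folding calculus, Takahasi's theorem, and the very definition of the primitivity rank do the real work; the counting lemma and the combinatorics of $\mathcal{A}$ in steps one, two and four are comparatively routine.
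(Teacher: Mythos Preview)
The paper does not prove this statement: it is quoted verbatim as \cite[Thm.~1.8]{PP15} in the ``Relation to other works'' subsection, with no accompanying argument. There is therefore no proof in the paper to compare your proposal against.

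That said, your sketch is essentially the strategy of Puder and Parzanchevski--Puder themselves: interpret $\mathbb{E}_\chi[\fix_\chi(\gamma)]$ via random Schreier coset graphs of the rose, stratify the event $\{w(\sigma)(1)=1\}$ by the Stallings core of the trace subgraph, use the elementary falling-factorial count to get $n^{-\mathrm{rk}(\Delta)}(1+O(1/n))$ per stratum, and then reorganize the sum over foldings as a sum over the finite set of algebraic extensions of $\langle\gamma\rangle$ via Takahasi's theorem. The identification of the leading nontrivial term with the critical subgroups of rank $\pi(\gamma)$ is exactly their punchline. One point to be careful with: your claim that the rank-$\leq 1$ part contributes \emph{exactly} the constant $1$ (or $d(q)$) --- not merely $1+O(1/n)$ --- is the genuinely delicate step, and in the original it is handled by a M\"obius inversion on the poset of quotients together with the combinatorial identity that the ``free-factor'' contribution $L_M(n)$ equals the expected number of injective lifts of the core of $M$; you correctly flag this as the hard part, but the paragraph as written elides the mechanism. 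If you want a self-contained write-up, that inversion and the exact evaluation of $L_{\langle\gamma\rangle}(n)$ are where the work lives.
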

In Magee--Naud--Puder~\cite{MNP}, the following theorem was used in combination with Selberg's trace formula to prove that asymptotically almost surely as $n\rightarrow\infty$, $n$-sheeted covers of a closed hyperbolic surface introduce no new eigenvalues below $\frac{3}{16}-\varepsilon$.
\begin{theorem}\cite[Thm. 1.11]{MNP}Suppose $X$ is a closed orientable surface of genus $g\geq 2$. For each $g\geq 2$, there is a constant $A=A(g)$ such that for any $c>0$, if $1\neq \gamma\in\pi_1(X)$ is not a proper power of another element in $\pi_1(X)$ with word length $\ell_{w}(\gamma)\leq c\log n$ then
\[\mathbb{E}_{\chi}[\fix_{\chi}(\gamma)]=1+O_{c,g}\left(\frac{(\log n)^A}{n}\right).\]
\end{theorem}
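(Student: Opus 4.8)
The plan is to expand $\mathbb{E}_\chi[\fix_\chi(\gamma)]$ as a finite sum of explicit rational functions of $n$ indexed by a topological invariant of $\gamma$, isolate the leading term as $1$ (which is where the non–proper–power hypothesis enters), and then bound the number and size of the remaining terms in terms of the word length $\ell:=\ell_w(\gamma)$. Fix the standard presentation $\pi_1(X)=\langle x_1,\dots,x_{2g}\mid R\rangle$ with $R=\prod_{i=1}^g[x_{2i-1},x_{2i}]$, and represent $\gamma$ by a cyclically reduced word $w$ of length $\ell$. A Haar–random $\chi$ is a uniformly random tuple $\phi\in S_n^{2g}$ conditioned on $\phi(R)=e$; since $\fix_\chi(\gamma)=\#\{v:\phi(w)v=v\}$ and the constraint $\phi(R)=e$ is invariant under $S_n$–conjugation, symmetry gives $\mathbb{E}_\chi[\fix_\chi(\gamma)]=n\,\mathbb{P}_\chi[\phi(w)\cdot 1=1]$, i.e.\ $n$ times the probability that $\gamma$ lifts to a closed loop based at $1$ in the degree‑$n$ cover attached to $\chi$. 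Decomposing covers over $\chi$–orbits and recording the orbit of the basepoint, following the core–surface formalism of Magee--Puder~\cite{MP20}, rewrites this as a finite sum $\mathbb{E}_\chi[\fix_\chi(\gamma)]=\sum_{\mathbf Y}P_{\mathbf Y}(n)$ over ``core surfaces'' $\mathbf Y$ (compact surfaces with boundary equipped with a combinatorial map to $X$ refining the loop $\gamma$ and satisfying the Magee--Puder minimality conditions, each built from $O_g(\ell)$ cells), where $P_{\mathbf Y}$ is a rational function with $P_{\mathbf Y}(n)=a_{\mathbf Y}n^{\chi(\mathbf Y)}\bigl(1+O_g(1/n)\bigr)$, $\chi(\mathbf Y)$ the Euler characteristic and $|a_{\mathbf Y}|=O_g(1)$; the uniformity of the error and the bound on $a_{\mathbf Y}$ come from the Mednykh formula $|\Hom(\pi_1(X),S_m)|=(m!)^{2g-1}\sum_{\lambda\vdash m}(\dim\lambda)^{2-2g}$ and the rapid decay of $(\dim\lambda)^{2-2g}$ away from $\lambda=(m),(m-1,1),\dots$.

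The distinguished summand is the annular core surface $\mathbf Y_0$ carrying $\gamma$, with $\chi(\mathbf Y_0)=0$ and $P_{\mathbf Y_0}(n)\to 1$: since $\gamma$ is not a proper power the annular immersion is $\pi_1$–injective, so $\mathbf Y_0$ is the unique core surface with $\chi=0$ (for $\gamma=\gamma_0^q$ one would instead collect one annular term per divisor of $q$, recovering $d(q)+O(1/n)$). Every other $\mathbf Y$ has $\chi(\mathbf Y)\le -1$ and contributes $O_g(n^{\chi(\mathbf Y)})$. Grouping these by $k:=-\chi(\mathbf Y)\ge 1$: every core surface appearing in the sum is obtained from the annular model of $\gamma$ (a thickened cycle of $\ell$ edges with one $2g$‑gon relator cell) by a sequence of elementary moves — folds/edge identifications, relator‑cell attachments, completions — each either preserving $\chi$ or lowering it by $1$, and each $\chi$–lowering move selected from at most $O_g(\ell)$ possibilities determined by a pair of cells of the running complex, whose \emph{size} stays $O_g(\ell)$ throughout. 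Hence the number of core surfaces with $-\chi=k$ is at most $(C_g\ell)^{B_g k}$ for constants $C_g,B_g$ depending only on $g$, so that
\[
\mathbb{E}_\chi[\fix_\chi(\gamma)]=1+O_g\!\Bigl(\sum_{k\ge 1}(C_g\ell)^{B_g k}\,n^{-k}\Bigr).
\]
With $\ell\le c\log n$ the $k=1$ term is $(C_gc)^{B_g}(\log n)^{B_g}/n$, and once $(C_gc\log n)^{B_g}/n<\tfrac12$ — true for all large $n$ in terms of $c,g$ — the tail $\sum_{k\ge 2}$ is dominated by it; therefore $\mathbb{E}_\chi[\fix_\chi(\gamma)]=1+O_{c,g}\bigl((\log n)^{A}/n\bigr)$ with $A=A(g):=B_g$.

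The main obstacle is the quantitative count in the second paragraph: one must show that the core surfaces contributing to the expansion are all generated by a resolution process whose branching at each unit drop of Euler characteristic is polynomially bounded in the size of the running complex, and that this size never exceeds $O_g(\ell)$, so that the number of complexity‑$k$ core surfaces is $\ell^{O_g(k)}$ rather than super‑polynomial in $k$. This is the quantitative refinement of the Magee--Puder core–surface calculus that upgrades the per‑word estimate $\mathbb{E}_\chi[\fix_\chi(\gamma)]=1+O_\gamma(1/n)$ (for $\gamma$ not a proper power) to the word‑length‑uniform bound above; it is also where the genus enters the exponent $A=A(g)$, through the combinatorics of attaching and folding across the $2g$‑gon relator cell. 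A secondary point, handled by the Mednykh product formula, is the uniform control of the ratios $|\Hom(\pi_1(X),S_{n-m})|/|\Hom(\pi_1(X),S_n)|$ over the relevant range $m=O_g(\ell)$. One could alternatively start from $\fix_\chi(\gamma)=1+\chi_{(n-1,1)}(\chi(\gamma))$ and the Frobenius formula, but the resulting mixed character sums $\sum_\phi\chi_\lambda(\phi(R))\,\chi_{(n-1,1)}(\phi(w))$ have no closed form and again demand the same word‑combinatorial input, so the core–surface route is the natural one.
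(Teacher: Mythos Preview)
The theorem you are attempting to prove is not proved in this paper at all: it is merely \emph{cited} as \cite[Thm.~1.11]{MNP} in the ``Relation to other works'' subsection, with no proof given. The paper quotes it only as background to motivate the analogy between random $S_n$-covers and random $U(n)$-flat bundles, so there is no ``paper's own proof'' to compare your proposal against.

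That said, your sketch does track the strategy of the original Magee--Naud--Puder argument: the core-surface expansion of $\mathbb{E}_\chi[\fix_\chi(\gamma)]$, the identification of the annular term as the unique $\chi=0$ contribution when $\gamma$ is not a proper power, and the polynomial-in-$\ell$ count of core surfaces at each fixed Euler characteristic. You correctly flag the quantitative enumeration of core surfaces as the heart of the matter and the source of the exponent $A(g)$. What remains a genuine sketch rather than a proof is precisely that enumeration: the claim that every relevant core surface arises by a bounded-branching resolution process with complexity controlled by $\ell$ throughout is the substantive combinatorial content of \cite{MNP}, and your paragraph asserts rather than establishes it. If your goal were to reprove the cited result, that is where the work would lie; but for the purposes of the present paper no such proof is required.
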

One could ask for analogues of the above results for surface representations
\[\chi:\pi_1(X)\rightarrow U(n).\]
In this case, when $\pi_1(X)$ is free, one uses a uniform Haar probability measure. On the other hand, when $X$ is closed orientable, then one uses the Atiyah--Bott--Goldman symplectic volume form on the character variety of surface representations $\chi$. One could then study quantities of the form
\[\mathbb{E}_{\chi}[f_n(\chi(\gamma))],\]
where we are averaging over the surface representations $\chi$ and $f_n:U(n)\rightarrow\mathbb{C}$ are class functions. Magee~\cite{Magee21} has studied this question in the case of closed orientable surfaces of genus $g\geq 2$ for the trace function. In particular, he proved the following.
\begin{theorem}\cite[Cor. 1.2]{Magee21}Suppose $X$ is a closed orientable surface of genus $g\geq 2$. If $\gamma\in\pi_1(X)$, then the limit
\[\lim_{n\rightarrow\infty}\frac{\mathbb{E}_{\chi}[\tr(\chi(\gamma))]}{n}\]
exists.
\end{theorem}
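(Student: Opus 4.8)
The corollary will follow once we know that $\mathbb{E}_\chi[\tr(\chi(\gamma))]$ stays bounded --- in fact, converges --- as $n\to\infty$, since then $\mathbb{E}_\chi[\tr(\chi(\gamma))]/n\to 0$ (and if $\gamma=1$ the ratio is identically $1$); the sharper statement in~\cite{Magee21} is a full asymptotic expansion in powers of $1/n$, but only this much is needed here. So assume $\gamma\neq 1$. Present $\pi_1(X)=\langle a_1,b_1,\dots,a_g,b_g\mid R\rangle$ with $R=\prod_{i=1}^{g}[a_i,b_i]$ and fix a word $w\in F_{2g}$ representing $\gamma$. As the determinant of a commutator is $1$, the commutator-product map $(\mathbf A,\mathbf B)\mapsto R(\mathbf A,\mathbf B)$ carries $U(n)^{2g}$ into $\SU(n)$, and up to a normalising constant the Atiyah--Bott--Goldman measure is the conditioning of Haar measure on $U(n)^{2g}$ onto the fibre over $1$. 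Rescaling each generator of $\pi_1(X)$ by a unit scalar is measure preserving and leaves $R$ fixed; so if $\gamma\notin[\pi_1(X),\pi_1(X)]$ it multiplies $\tr(\chi(\gamma))$ by a nontrivial torus character and forces $\mathbb{E}_\chi[\tr(\chi(\gamma))]=0$ identically, whereas if $\gamma\in[\pi_1(X),\pi_1(X)]$ this same invariance lets us replace $U(n)$ by $\SU(n)$ throughout. Expanding the delta distribution at $1\in\SU(n)$ by Peter--Weyl, $\delta_1=\sum_\rho d_\rho\chi_\rho$, and applying the Schur identity $\int_{\SU(n)}\chi_\rho(AXA^{-1}Y)\,dA=d_\rho^{-1}\chi_\rho(X)\chi_\rho(Y)$ repeatedly along $R$, one arrives at
\[
\mathbb{E}_\chi[\tr(\chi(\gamma))]=\frac{N_n(\gamma)}{D_n},\qquad N_n(\gamma)=\sum_\rho d_\rho\,I_\rho(\gamma),\qquad D_n=\sum_\rho d_\rho^{\,2-2g},
\]
with the sums over the irreducible representations $\rho$ of $\SU(n)$, where $I_\rho(\gamma)=\int_{\SU(n)^{2g}}\tr(w(\mathbf A,\mathbf B))\,\chi_\rho(R(\mathbf A,\mathbf B))\,d\mathbf A\,d\mathbf B$ and $\tr$ is the matrix trace.

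Since $g\geq 2$, every nontrivial $\rho$ has $d_\rho\geq n$, so a crude count of irreducibles of bounded dimension gives $D_n=1+O(n^{2-2g})\to 1$; it therefore suffices to show $N_n(\gamma)$ converges. The contribution of the trivial representation is $I_{\mathrm{triv}}(\gamma)=\int_{\SU(n)^{2g}}\tr(w(\mathbf A,\mathbf B))\,d\mathbf A\,d\mathbf B$, which converges as $n\to\infty$ (to $0$, since $w\neq 1$ in $F_{2g}$) because independent Haar unitaries become asymptotically free. The heart of the matter is a uniform decay estimate for the remaining terms: there is $C(\gamma)$, independent of $n$, with $|I_\rho(\gamma)|\leq C(\gamma)\,d_\rho^{\,1-2g}$ for every $\rho$. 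Granting it, $\sum_{\rho\neq\mathrm{triv}}d_\rho\,|I_\rho(\gamma)|\leq C(\gamma)\sum_{\rho\neq\mathrm{triv}}d_\rho^{\,2-2g}=C(\gamma)(D_n-1)=O_\gamma(n^{2-2g})\to 0$, so $N_n(\gamma)=I_{\mathrm{triv}}(\gamma)+o(1)\to 0$, whence $\mathbb{E}_\chi[\tr(\chi(\gamma))]=N_n(\gamma)/D_n\to 0$ and the corollary follows. (It is convenient to treat proper powers first: if $\gamma=\gamma_0^{q}$, Newton's identity $\tr(g^{q})=\sum_{k=0}^{q-1}(-1)^{k}\chi_{(q-k,1^{k})}(g)$ trades $\tr(\chi(\gamma))$ for a fixed combination of small irreducible-character insertions at the primitive $\gamma_0$, so one may assume $\gamma$ primitive when proving the decay estimate; compare the class-function estimates underlying Theorem~\ref{Thm1}.)

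The decay estimate is proved by cutting $X$ along a geodesic representative of $\gamma$. The surface decomposes into subsurfaces with boundary, and in the two-dimensional lattice-gauge-theory (Migdal) description $I_\rho(\gamma)$ factors, over a sum indexed by representations labelling the cutting curves, as a product of contributions of the pieces; a piece of genus $h$ with $b$ boundary components carries a factor $d_\rho^{\,2-2h-b}$ together with a fusion coefficient recording how $\rho$ and the boundary labels combine, and because $\gamma\neq 1$ the total exponent of $d_\rho$ is at most $1-2g$. The fusion coefficients and the finitely many exceptional low-dimensional pieces are controlled by the $1/n$-asymptotics of polynomial matrix integrals over $U(n)$ --- Weingarten calculus and asymptotic freeness, in the spirit of the moment computations of~\cite{DS} --- which also gives the uniformity of $C(\gamma)$ in $n$.

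The main obstacle is precisely this decay estimate: one must turn the integral $\int_{\SU(n)^{2g}}\tr(w)\,\chi_\rho(R)$ into the combinatorial ``cut along $\gamma$'' object and control it \emph{uniformly in $n$}, which is where the topology of $\gamma$ on $X$ --- separating versus non-separating, the self-intersection pattern, the power structure --- genuinely enters. This analysis, which also yields the finer $1/n$-expansion of~\cite{Magee21}, is the bulk of the work; the reductions above and the passage from the estimate to the limit are comparatively formal.
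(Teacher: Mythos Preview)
This statement is not proved in the present paper; it is quoted from \cite{Magee21} in the survey of related results, so there is no proof here to compare your proposal against.

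That said, your outline overshoots the target. You argue that for $\gamma\neq 1$ the quantity $\mathbb{E}_\chi[\tr(\chi(\gamma))]$ itself converges, which would force $\lim_{n}\mathbb{E}_\chi[\tr(\chi(\gamma))]/n=0$. But immediately after quoting this theorem the paper writes: ``The conjecture is that if $\gamma\neq 1$, then this limit is actually $0$. However, this is not known.'' A correct version of your argument would therefore settle an open problem, which should make you suspicious of it.

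The gap is the decay estimate $|I_\rho(\gamma)|\leq C(\gamma)\,d_\rho^{\,1-2g}$ with $C(\gamma)$ \emph{independent of $n$}. You defer this to the Migdal/cut-along-$\gamma$ picture and ultimately to \cite{Magee21}, but that picture is clean only when $\gamma$ is represented by a simple closed curve; for general $\gamma$ the self-intersection combinatorics, together with the fact that the insertion is the standard trace $\tr$ rather than $\chi_\rho$, forces you through branching and fusion multiplicities whose growth in $n$ is precisely what is not under control. The sentence ``because $\gamma\neq 1$ the total exponent of $d_\rho$ is at most $1-2g$'' is where this difficulty is hidden, not resolved. What \cite{Magee21} actually establishes is an asymptotic expansion of $\mathbb{E}_\chi[\tr(\chi(\gamma))]$ in powers of $1/n$ whose leading coefficient \emph{is} the limit in the statement; it does not show that coefficient vanishes. (A smaller imprecision: asymptotic freeness by itself only gives $I_{\mathrm{triv}}(\gamma)=o(n)$, not convergence of the unnormalized integral to $0$.)
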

The conjecture is that if $\gamma\neq 1$, then this limit is actually $0$. However, this is not known. In the case of cusped surfaces, however, a result of Voiculescu~\cite{Voiculescu} states the following.
\begin{theorem}\cite{Voiculescu}
Suppose $\pi_1(X)$ is free and $1\neq\gamma\in\pi_1(X)$. Then as $n\rightarrow\infty$,
\[\mathbb{E}_{\chi}[\tr(\chi(\gamma))]=o_{\gamma}(n).\]
\end{theorem}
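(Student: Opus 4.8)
The plan is to compute $\mathbb{E}_\chi[\tr(\chi(\gamma))]$ directly by Weingarten calculus and show that the would‑be leading term of order $n$ cancels. Write $\pi_1(X)=F_r=\langle g_1,\dots,g_r\rangle$, so that a Haar‑random $\chi$ is the same datum as a tuple $(U_1,\dots,U_r)$ of i.i.d.\ Haar matrices in $U(n)$ with $\chi(g_a)=U_a$; if $\gamma\neq 1$ has reduced word $\gamma=g_{i_1}^{m_1}\cdots g_{i_\ell}^{m_\ell}$ ($i_j\neq i_{j+1}$, $m_j\in\mathbb{Z}\setminus\{0\}$), then $\chi(\gamma)=U_{i_1}^{m_1}\cdots U_{i_\ell}^{m_\ell}$. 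I would first dispose of an easy case: letting $e_a:=\sum_{j:\,i_j=a}m_j$ be the exponent sum of $g_a$ in $\gamma$, the substitution $U_a\mapsto e^{\mathrm{i}\theta}U_a$ preserves Haar measure but multiplies $\tr(\chi(\gamma))$ by $e^{\mathrm{i}e_a\theta}$, so $\mathbb{E}_\chi[\tr(\chi(\gamma))]=0$ whenever some $e_a\neq0$. One may therefore assume every $e_a=0$, which in particular forces $\ell\geq 4$.

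Next I would run the Weingarten expansion. Expanding each factor $U_a^{m}$ into $|m|$ copies of $U_a$ (if $m>0$) or of $U_a^{\ast}$ (if $m<0$), the scalar $\tr(\chi(\gamma))$ becomes a sum over a cyclic family of row/column indices of a product of $L:=\sum_j|m_j|$ entries of the $U_a$ and their conjugates; generator $a$ contributes $k_a$ entries of $U_a$ and, since $e_a=0$, exactly $k_a$ entries of $\overline{U_a}$, with $\sum_a 2k_a=L$. Because the $U_a$ are independent, $\mathbb{E}_\chi$ factors over $a$ and each factor is evaluated by the unitary Weingarten formula: one sums over $\sigma_a,\tau_a\in S_{k_a}$ of products of Kronecker deltas matching the row (resp.\ column) indices of the $U_a$‑entries to those of the $\overline{U_a}$‑entries, weighted by $\mathrm{Wg}(n,\sigma_a\tau_a^{-1})$. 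Contracting the deltas, the surviving free index sum equals $n^{F}$, where $F$ is the number of cycles of the permutation of the $L$ index‑slots obtained by composing the cyclic word‑adjacency permutation with the identifications induced by the $\sigma_a,\tau_a$. This gives
\[
\mathbb{E}_\chi[\tr(\chi(\gamma))]=\sum_{(\sigma_a,\tau_a)_a}\ \Big(\prod_a \mathrm{Wg}(n,\sigma_a\tau_a^{-1})\Big)\,n^{F(\gamma;(\sigma_a,\tau_a))},
\]
a finite sum for $\gamma$ fixed.

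Now I would estimate. With the standard bound $\mathrm{Wg}(n,\pi)=O(n^{-k-|\pi|})$ on $S_k$ (where $|\pi|=k-\#\mathrm{cyc}(\pi)$), each summand is $O(n^{\,F-\sum_a k_a-\sum_a|\sigma_a\tau_a^{-1}|})$, an integer power of $n$. The identity word shows the exponent can equal $1$, and the usual polygon‑gluing (Euler characteristic) bound shows it never exceeds $1$, with equality only when every $\sigma_a\tau_a^{-1}=\mathrm{id}$ and the induced perfect matching of the $L$ sides of the word‑polygon — which pairs each occurrence of $g_a$ with an occurrence of $g_a^{-1}$ — glues to a genus‑zero surface, i.e.\ is non‑crossing for the cyclic word order. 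But a non‑crossing matching has an innermost pair, and being innermost forces its two sides to be cyclically adjacent; an adjacent $g_a/g_a^{-1}$ pair is precisely a subword $g_ag_a^{-1}$ or $g_a^{-1}g_a$, contradicting that $\gamma$ is reduced. Hence no configuration contributes at order $n^1$, all exponents are $\le 0$, the sum is $O_\gamma(1)$, and in particular $\mathbb{E}_\chi[\tr(\chi(\gamma))]=o_\gamma(n)$, which is the theorem.

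Finally, a remark on context and on what is hard. Conceptually this is exactly the moment computation behind Voiculescu's asymptotic $\ast$‑freeness of i.i.d.\ Haar unitaries: in the limiting model $g_a$ goes to a free Haar unitary $u_a$, and since $\tau(u_a^m)=0$ for $m\neq 0$ the reduced word $u_{i_1}^{m_1}\cdots u_{i_\ell}^{m_\ell}$ is an alternating product of centered elements of free subalgebras, hence has trace $0$ by the very definition of freeness, giving $\tfrac1n\mathbb{E}_\chi[\tr(\chi(\gamma))]\to 0$ immediately; the Weingarten argument above is simply an explicit, self‑contained substitute. The one genuinely non‑formal point — the step I expect to cost the most care — is the equivalence ``$\gamma$ reduced $\iff$ no genus‑zero Weingarten configuration'': one must carefully track, in the polygon‑gluing picture, how the choices $\sigma_a,\tau_a$ interact with the cyclic word order, and verify that the conjugate entries coming from the $g_a^{-1}$‑factors produce orientation‑compatible handle‑type identifications, so that ``leading order'' really is synonymous with ``non‑crossing matching'' and the innermost‑pair argument applies verbatim.
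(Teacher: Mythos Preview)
The paper does not give its own proof of this statement; it is simply quoted from Voiculescu's 1991 paper as background in the ``Relation to other works'' subsection. So there is no paper proof to compare against.

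Your argument is correct and is, in fact, a self-contained proof of (this consequence of) Voiculescu's asymptotic freeness of independent Haar unitaries. The exponent-sum reduction is clean, and the Weingarten/genus-expansion step is the standard route: the leading $n^1$ contributions correspond exactly to non-crossing pairings of the $L$ letters matching each $g_a$ to a $g_a^{-1}$, and an innermost chord of a non-crossing pairing joins cyclically adjacent letters, which for a reduced word are never an inverse pair. One small clarification worth making explicit when you write it up: ``adjacent letters'' here refers to the fully expanded word of length $L$, where adjacent letters are either identical (inside a syllable $g_a^{m}$) or use different generators (at a syllable boundary), so indeed never $g_a,g_a^{-1}$; your parenthetical ``contradicting that $\gamma$ is reduced'' is exactly this observation. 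The only place that genuinely needs care, as you note, is the Euler-characteristic bound $F-\sum_a k_a-\sum_a|\sigma_a\tau_a^{-1}|\le 1$ with equality forcing $\sigma_a=\tau_a$ and planarity; this is standard (it is the unitary analogue of the genus expansion for GUE) but deserves a reference or a short proof if you want the write-up to be fully self-contained. Your final remark is also on point: once one knows asymptotic $*$-freeness, the statement is immediate from $\tau(u_{i_1}^{m_1}\cdots u_{i_\ell}^{m_\ell})=0$ for a reduced word in free Haar unitaries, and your Weingarten computation is precisely the moment calculation underlying that theorem. In short: your proof yields the stronger $\mathbb{E}_\chi[\tr(\chi(\gamma))]=O_\gamma(1)$, which of course implies the stated $o_\gamma(n)$.
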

In the case of non-closed surfaces, Magee--Puder~\cite{MP19} have related the asymptotic expression for such quantities to the topology of the surface. If we consider more general families of class functions $f_n:U(n)\rightarrow\mathbb{C}$, then the problem is much more difficult and very little is known.\\
\\
In the above results, the curve $\gamma\in\pi_1(X)$ was fixed and the average was taken over the different spaces of surface representations. However, one could instead fix the surface representation and average over a family of closed geodesics on a hyperbolic surface $X$. The theorems proved in this paper are the generalized analogues of some of the above theorems in this orthogonal regime.
\subsection{Outline of the paper}In Section~\ref{prelim}, we discuss preliminaries on representation theory, random matrix theory, flat bundles, and Selberg's trace formula. In Section~\ref{minimal}, we prove Theorem~\ref{Thm1} on the minimal eigenvalue of the Laplacian acting on sections of flat unitary bundles. The proof mostly follows that of the main theorem of~\cite{HM} with the necessary modifications and minor deviations. In Section~\ref{random}, we prove results in random matrix theory that allow us to restrict to flat bundles whose spectra do not have continuous parts. In Section~\ref{counting}, we prove probabilistic Weyl-law type results counting eigenvalues of the Laplacian for flat bundles. In Section~\ref{pgt}, we prove Theorem~\ref{geodesicdep}, a probabilistic prime geodesic theorem whose error term is explicit in terms of the ranks of the bundles. In Section~\ref{applications}, we deduce some equidistribution consequences of our spectral Theorem~\ref{Thm1}. In Section~\ref{comments}, we comment on the balanced case and discuss some of its relations to very difficult problems in random matrix theory.
\subsection{Acknowledgments}This project was supported by Max Planck Institute for Mathematics in Bonn and University of Southern California.

\section{Preliminaries}\label{prelim}
In this section, we briefly discuss preliminaries on flat bundles, representation theory of $U(n)$, random matrix theory, and Selberg's trace formula.
\subsection{Flat bundles}\label{flatbundlespre}Suppose $X$ is a surface with universal cover the upper half plane
\[\mathbb{H}:=\left\{z\in\mathbb{C}|\Im(z)>0\right\}\]
which may be equipped with the hyperbolic metric
\[ds^2=\frac{dx^2+dy^2}{y^2}.\]
$X$ also inherits this metric, thus making it into a \textit{hyperbolic} surface. Throughout this paper, our surfaces $X$ are non-compact, connected, hyperbolic, complete, and of finite area. As previously mentioned, we refer to such surfaces simply as \textit{cusped} hyperbolic surfaces. For such $X$, $\pi_1(X)$ is a finitely generated \textit{free} group. Given any surface representation
\[\chi:\pi_1(X)\rightarrow U(n),\]
we have a flat unitary vector bundle of rank $n$ constructed as follows. $\pi_1(X)$ acts on the universal cover $\mathbb{H}$ by deck transformations. Furthermore, $\chi$ gives us a unitary action of $\pi_1(X)$ on $\mathbb{C}^n$. Therefore, we may consider
\[E_{\chi}:=\mathbb{H}\times_{\chi}\mathbb{C}^n\]
which is $\mathbb{H}\times\mathbb{C}^n$ modulo the action
\[\gamma\cdot(z,v):=(\gamma\cdot z,\chi(\gamma)v)\]
for each $\gamma\in\pi_1(X)$. We may consider the space of global sections of the vector bundle $E_{\chi}\rightarrow X$. These are equivalently the space of functions $s:\mathbb{H}\rightarrow\mathbb{C}^n$ satisfying the automorphy condition
\begin{equation}\label{automorphy}s(\gamma\cdot z)=\chi(\gamma)s(z)\end{equation}
for every $\gamma\in\pi_1(X)$ and every $z\in\mathbb{H}$.\\
\\
Of central importance to us are the spectral properties of the flat unitary vector bundles over $X$. The Laplacian on the upper halfplane is the differential operator
\[\Delta=-y^2\left(\frac{\partial^2}{\partial x^2}+\frac{\partial^2}{\partial y^2}\right).\]
This may be extended coordinate-wise to a differential operator on the space of $C^2$-functions $\mathbb{H}\rightarrow\mathbb{C}^n$. This operator is invariant under the action of $\PSL_2(\mathbb{R})$ on $\mathbb{H}$ by M\"obius transformations; this is the group of orientation-preserving isometries of $\mathbb{H}$. By the discussion above, we then have a Laplacian $\Delta_{\chi}$ that acts on the space of $C^{\infty}$ global sections $C^{\infty}(X;E_{\chi})$ of $E_{\chi}\rightarrow X$ once we view each section as a function $\mathbb{H}\rightarrow\mathbb{C}^n$ with the automorphy condition~\eqref{automorphy}.\\
\\
Since $E_{\chi}$ is a unitary bundle, we may define the $L^2$-norm
\begin{equation}\label{L2norm}\|s\|_{L^2}:=\left(\int_X|s(z)|^2\frac{dxdy}{y^2}\right)^{\frac{1}{2}}\end{equation}
for each smooth global section $s\in C^{\infty}(X;E_{\chi})$. Note that this is well-defined as unitarity implies that for each $\gamma\in\pi_1(X)$ and each $z\in\mathbb{H}$, $|s(\gamma\cdot z)|=|\chi(\gamma)s(z)|=|s(z)|$, that is, $z\mapsto |s(z)|$ is a function on $X$. 
We denote by $L^2(X;E_{\chi})$ the completion of the space $C_c^{\infty}(X;E_{\chi})$ of \textit{compactly supported} $C^{\infty}$ global sections of $E_{\chi}$ with respect to the $L^2$-norm~\eqref{L2norm}.\\
\\
In general, given a flat unitary complex vector bundles $E_{\chi}\rightarrow X$, and global sections $s,t:X\rightarrow E_{\chi}$, the map $\mathbb{H}\rightarrow\mathbb{C}$ given by $x\mapsto\left<s(x),t(x)\right>$, the fiberwise Hermitian product, is a function on $X$; for each $\gamma\in\pi_1(X)$, $\left<s(\gamma x),t(\gamma x)\right>=\left<\chi(\gamma)s(x),\chi(\gamma)t(x)\right>=\left<s(x),t(x)\right>$. We simply write $\left<s,t\right>_{E_{\chi}}:X\rightarrow\mathbb{C}$ for this function.\\
\\ 
We may extend the Laplacian $\Delta_{\chi}$ to a differential operator on $L^2(X;E_{\chi})$ in the following sense. Endow $C^{\infty}_c(X;E_{\chi})$ with the Sobolev norm $\|.\|_{W^2}$ given by
\[\|s\|^2_{W^2}:=\|s\|^2_{L^2}+\|\Delta_{\chi}s\|^2_{L^2}.\]
The Sobolev space $W^2(X;E_{\chi})$ is the completion of $C^{\infty}_c(X;E_{\chi})$  with respect to $\|.\|_{W^2}$. The Laplacian $\Delta_{\chi}$ may then be extended to an operator
\[\Delta_{\chi}:L^2(X;E_{\chi})\rightarrow L^2(X;E_{\chi})\]
with domain the Sobolev space $W^2(X;E_{\chi})$.\\
\\
In order to do probability theory on the space of flat bundles, we note that since $\pi_1(X)$ is a finitely generated free group, say on $r$ generators, the space of surface representations $\Hom(\pi_1(X),U(n))$ may be identified with $U(n)^r$. We may consequently naturally endow this space with a probability Haar measure.
\subsection{Representation theory of $U(n)$} For details regarding the material in this subsection, see~\cite{DBump}. By the theory of highest weights, there is a bijection between the equivalence classes of irreducible unitary representations of $U(n)$ and their highest weights, which may be viewed as $n$-tuples of integers $\lambda_1\geq\dots\geq\lambda_n$. We may view them as linear combinations
\[\lambda_1\hat{\omega}_1+\dots+\lambda_n\hat{\omega}_n,\]
where the $\hat{\omega}_i$ are fundamental weights of $U(n)$. $\blambda:=(\lambda_1,\hdots,\lambda_n)$ is called the \textit{signature} of the representation. We write
\[|\blambda|:=\sum_i|\lambda_i|.\]
It will also be convenient for us to write $\blambda$ in the form
\[m_1\omega_1+\dots+m_n\omega_n,\]
where $\omega_i:=(1,\dots,1,0,\dots,0)$ with $i$ $1$s, and $m_i\in\mathbb{Z}_{\geq 0}$ for $1\leq i\leq n-1$ while $m_n\in\mathbb{Z}$. Given a signature $\blambda$, we denote by $l(\blambda)$ the number of nonzero entries of $\blambda$. Given a fixed signature $\blambda$, each $n\geq l(\blambda)$ gives rise to an irreducible unitary representation
\[\pi_{\blambda}:U(n)\rightarrow\GL(V_{\blambda})\]
with normalized character
\[\chi_{\blambda}(-):=\frac{\tr(\pi_{\blambda}(-))}{\dim V_{\blambda}}.\]
Writing $\blambda=m_1\omega_1+\dots+m_n\omega_n$, it is well-known that the representation $\pi_{\blambda}$ is an irreducible subrepresentation of the representation
\[(\mathbb{C}^n)^{\otimes m_1}\otimes(\wedge^2\mathbb{C}^n)^{\otimes m_2}\otimes\dots\otimes(\wedge^n\mathbb{C}^n)^{\otimes m_n}.\]
Of interest to us later in this paper are the power sum functions
\[P_j(x_1,\hdots,x_n):=\sum_{i=1}^nx_i^j.\]
For a partition $\lambda=1^{a_1}2^{a_2}\dots k^{a_k}$, we let $P_{\lambda}:=\prod_{j=1}^kP_j^{a_j}.$ Clearly, $\lambda$ is a partition of $a_1+2a_2+\hdots+ka_k=:K$. For $\lambda$ varying through all partitions of $K$, $P_{\lambda}$ form a basis of homogeneous symmetric polynomials of degree $K$ in $n$ variables, $n\geq K$. There is another basis for this vector space known as Schur functions $s_{\mu}$. There is a change of basis formula given by
\[P_{\lambda}=\sum_{\mu\vdash K}\chi_{\lambda}(\mu)s_{\mu},\]
where $\chi_{\lambda}(\mu)$ is the character of the symmetric group $S_K$ associated to the partition $\lambda$ evaluated on the conjugacy class associated to $\mu$. It is well-known that the Schur functions $s_{\mu}$ give characters of the unitary groups $U(n)$. In fact, if $A\in U(n)$ has eigenvalues $\exp(i\theta_j)$, then $s_{\mu}(A):=s_{\mu}(\exp(i\theta_1),\hdots,\exp(i\theta_n))$ is a character of $U(n)$ for each partition $\mu\vdash K$, $n\geq K$.\\
\\
In his paper, we will use the following Weyl integration formula for $U(n)$ when bounding eigenvalues of random unitary matrices away from $1$ in Proposition~\ref{logbound}.
\begin{theorem}[Weyl integration formula for $U(n)$]Suppose $f:U(n)\rightarrow\mathbb{C}$ is a continuous class function, and let $U(n)$ and the torus $\mathbb{T}^n$ be equipped with probability Haar measures $dg$ and $dt$, respectively. Then
\begin{equation}\label{Weylintegration}\int_{U(n)}f(g)dg=\frac{1}{n!}\int_{\mathbb{T}^n}f(\diag(t_1,\hdots,t_n))\prod_{i<j}|t_i-t_j|^2dt.\end{equation}
\end{theorem}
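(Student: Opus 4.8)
The plan is to realize $U(n)$, up to a set of measure zero, as an $n!$-sheeted cover of $U(n)/\mathbb{T}^n\times\mathbb{T}^n$ via conjugation and then compute the resulting Jacobian. Write $\mathbb{T}^n\subset U(n)$ for the diagonal maximal torus and consider
\[\psi:U(n)/\mathbb{T}^n\times\mathbb{T}^n\longrightarrow U(n),\qquad (g\mathbb{T}^n,t)\longmapsto gtg^{-1},\]
which is well-defined since $\mathbb{T}^n$ is abelian. By the spectral theorem every unitary matrix is diagonalizable, so $\psi$ is surjective. Let $U(n)_{\mathrm{reg}}$ be the open dense set of matrices with $n$ distinct eigenvalues and $\mathbb{T}^n_{\mathrm{reg}}$ the diagonal matrices with distinct entries; over $U(n)_{\mathrm{reg}}$ the map $\psi$ restricts to a covering of degree $n!$, because the fiber over $g_0t_0g_0^{-1}$ (with $t_0$ regular) is exactly the orbit of $(g_0\mathbb{T}^n,t_0)$ under the Weyl group $S_n$ permuting the diagonal entries of $t_0$ and reordering the columns of $g_0$ accordingly, which has $n!$ elements. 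The complements of $U(n)_{\mathrm{reg}}$ and of $U(n)/\mathbb{T}^n\times\mathbb{T}^n_{\mathrm{reg}}$ have measure zero and may be ignored in all integrals below.

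Next I would compute the Jacobian of $\psi$. Using the $U(n)$-invariant form $\langle X,Y\rangle=-\tr(XY)$ on $\mathfrak{u}(n)$, decompose $\mathfrak{u}(n)=\mathfrak{t}\oplus\mathfrak{m}$, where $\mathfrak{t}$ (diagonal skew-Hermitian matrices) is the Lie algebra of $\mathbb{T}^n$ and $\mathfrak{m}=\mathfrak{t}^{\perp}$ (skew-Hermitian matrices with zero diagonal) is identified with the tangent space of $U(n)/\mathbb{T}^n$ at the base point. In the left-trivialization, at a point $(e\mathbb{T}^n,t)$ with $t=\diag(e^{i\theta_1},\dots,e^{i\theta_n})$, the differential $d\psi$ is block diagonal: it is the identity on $\mathfrak{t}$ and equals $\mathrm{Ad}(t^{-1})-\mathrm{id}$ on $\mathfrak{m}$. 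On $\mathfrak{m}_{\mathbb{C}}=\bigoplus_{i\neq j}\mathbb{C}E_{ij}$ this operator is diagonal with eigenvalue $e^{-i(\theta_i-\theta_j)}-1$ on $E_{ij}$; since the complex determinant of the complexified operator equals the real determinant of $(\mathrm{Ad}(t^{-1})-\mathrm{id})|_{\mathfrak{m}}$, and the factors for $(i,j)$ and $(j,i)$ are complex conjugates,
\[\bigl|\det d\psi_{(e\mathbb{T}^n,t)}\bigr|=\prod_{i\neq j}\bigl(e^{-i(\theta_i-\theta_j)}-1\bigr)=\prod_{i<j}\bigl|e^{i\theta_i}-e^{i\theta_j}\bigr|^2=\prod_{i<j}|t_i-t_j|^2.\]
By left invariance the Jacobian at a general point $(g\mathbb{T}^n,t)$ is the same function of $t$ alone, and it is nonzero precisely on the regular locus, consistent with the covering claim.

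Finally I would assemble everything with the change-of-variables (coarea) formula. Normalizing $U(n)$, $\mathbb{T}^n$ and $U(n)/\mathbb{T}^n$ to total mass $1$, for a continuous class function $f$,
\[\int_{U(n)}f(g)\,dg=\frac{C}{n!}\int_{U(n)/\mathbb{T}^n}\int_{\mathbb{T}^n}f\bigl(gtg^{-1}\bigr)\prod_{i<j}|t_i-t_j|^2\,dt\,d(g\mathbb{T}^n),\]
with $1/n!$ accounting for the degree of the cover and $C$ a constant comparing the normalizations of the three measures. Because $f$ is a class function, $f(gtg^{-1})=f(t)$, so the inner integral does not depend on $g\mathbb{T}^n$ and integrating that variable out contributes $1$, giving $\int_{U(n)}f(g)\,dg=\frac{C}{n!}\int_{\mathbb{T}^n}f(\diag(t_1,\dots,t_n))\prod_{i<j}|t_i-t_j|^2\,dt$. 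The constant is pinned down by $\int_{U(n)}1\,dg=1$ together with the elementary evaluation $\frac1{n!}\int_{\mathbb{T}^n}\prod_{i<j}|t_i-t_j|^2\,dt=1$ (expand the Vandermonde determinant $\prod_{i<j}(t_i-t_j)=\det(t_i^{\,j-1})$ and integrate term by term over $\mathbb{T}^n$), forcing $C=1$; alternatively one tracks the normalizations of $dg$, $dt$, $d(g\mathbb{T}^n)$ carefully so that $C=1$ from the outset. This yields the stated formula. The main obstacle is the Jacobian computation: correctly identifying $d\psi|_{\mathfrak{m}}$ with $\mathrm{Ad}(t^{-1})-\mathrm{id}$, handling the passage between $\mathfrak{m}$ and its complexification so that the Weyl discriminant $\prod_{i<j}|t_i-t_j|^2$ (and not, say, its square root or the product over all ordered pairs) emerges, and bookkeeping the measure normalizations so that no spurious constant survives.
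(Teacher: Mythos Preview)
Your proposal is correct and follows the standard argument for the Weyl integration formula; the paper itself does not give a proof but simply cites Proposition~18.4 of Bump's \emph{Lie groups}, and the proof there proceeds exactly as you outline---via the conjugation map $U(n)/\mathbb{T}^n\times\mathbb{T}^n\to U(n)$, the $n!$-fold Weyl cover on the regular locus, and the Jacobian computation yielding the Vandermonde factor.
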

See Proposition 18.4 of~\cite{DBump}.
\subsection{Random matrix theory}Our proof of Theorem~\ref{Thm1} uses results in the random matrix theory of $U(n)$. We recall some results that we will use in this paper.\\
\\
Consider the right regular representation
\[\pi_{\infty}:\Gamma\rightarrow\End(\ell^2(\Gamma)),\]
where $\Gamma:=\pi_1(X)$ is freely generated by $\gamma_1,\hdots,\gamma_k$. A powerful result of Bordenave--Collins~\cite{BC} is the following. It states that non-trivial irreducible representations of $U(n)$ are almost surely strongly asymptotically free.
\begin{theorem}\label{ThmBC}\cite[Cor. 3]{BC}
Suppose $P_1,\hdots,P_{\ell}\in\mathbb{C}\left<X_1,\hdots,X_k,X_1^*,\hdots,X_k^*\right>$ are non-commutative complex $*$-polynomials, and suppose $a_1,\hdots,a_{\ell}\in M_r(\mathbb{C})$. Let $\blambda(n)$ be a sequence of signatures. Then there is an absolute constant $c>0$ such that if $0\neq |\blambda(n)|\leq c\frac{\log(n)}{\log(\log(n))}$ for sufficiently large $n$, then for any $\varepsilon>0$, asymptotically almost surely as $n\rightarrow\infty$,
\[\left\|\sum_{i=1}^{\ell}a_i\otimes P_i(\pi_{\blambda(n)}(U_1),\hdots,\pi_{\blambda(n)}(U_k))\right\|_{\mathbb{C}^r\otimes V_{\blambda(n)}}\leq \left\|\sum_{i=1}^{\ell}a_i\otimes P_i(\pi_{\infty}(\gamma_1),\hdots,\pi_{\infty}(\gamma_k))\right\|_{\mathbb{C}^r\otimes\ell^2(\Gamma)}+\varepsilon,\]
where the norms are operator norms.
\end{theorem}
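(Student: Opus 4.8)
We remark first that Theorem~\ref{ThmBC} is exactly the strong asymptotic freeness statement of Bordenave--Collins; since it is cited as an input, the plan below is a sketch of their argument in the form needed here. The overall strategy is to reduce, via the self-adjoint linearization trick, to a single linear matrix pencil in the random unitaries $\pi_{\blambda(n)}(U_j)$, and then to prove a sharp operator-norm bound on that pencil by a high-moment trace computation evaluated with Weingarten calculus. Concretely, given $(a_i,P_i)_{i=1}^{\ell}$ one produces (Haagerup--Thorbjørnsen/Anderson linearization) an integer $N$ and self-adjoint $N\times N$ matrices $b_0,\dots,b_{2k}$ such that, writing $u_j^{(n)}:=\pi_{\blambda(n)}(U_j)$ and $u_j^{(\infty)}:=\pi_{\infty}(\gamma_j)$ and
\[
L_\bullet:=b_0\otimes 1+\sum_{j=1}^{k}\bigl(b_j\otimes u_j^{(\bullet)}+b_j^{*}\otimes (u_j^{(\bullet)})^{*}\bigr),
\]
the norm $\bigl\|\sum_i a_i\otimes P_i(u^{(\bullet)})\bigr\|$ is controlled, uniformly and compatibly between finite $n$ and the free model, by the invertibility and norm of $L_n$ and $L_\infty$ through a resolvent argument. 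The ``weak'' half of the input, namely convergence of $\tfrac{1}{\dim V_{\blambda(n)}}\tr$ on any fixed word in the $u_j^{(n)},(u_j^{(n)})^{*}$ to the corresponding free Haar unitary moment, I would obtain by realizing $\pi_{\blambda(n)}$ as a subrepresentation of $(\mathbb{C}^n)^{\otimes m_1}\otimes\cdots\otimes(\wedge^n\mathbb{C}^n)^{\otimes m_n}$ and applying the Weyl/Weingarten calculus for $U(n)$ to compute the expectation, with error $O(|\blambda(n)|^{A}/n)$ for a fixed word. This already gives the matching lower bound $\|L_\infty\|\le \liminf\|L_n\|$ in probability.

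The substance is the upper bound: for every $\varepsilon>0$, asymptotically almost surely $\|L_n\|\le\|L_\infty\|+\varepsilon$. The plan is to estimate $\mathbb{E}\,\tr\bigl((L_n^{2})^{p}\bigr)$ (a smoothed version, to absorb the constant term $b_0$) for $p=p(n)\to\infty$ slowly, expand the power as a sum over words of length $2p$ in the letters $u_j^{(n)},(u_j^{(n)})^{*}$ weighted by products of the $b_j$, and evaluate each expectation by Weingarten calculus on the ambient tensor powers. One then organizes the resulting sum combinatorially as a sum over decorated graphs / ribbon surfaces: the leading contribution reproduces $(\dim V_{\blambda(n)})\cdot\|L_\infty\|^{2p}$ up to lower order, while each ``defect'' (extra handle, forced coincidence in the Weingarten pairings, nontrivial component of a Weingarten function) contributes a gain bounded by a fixed power of $|\blambda(n)|/n$, and the number of diagrams of bounded complexity is at most $(Cp)^{Cp}$.

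The hypothesis $0\neq|\blambda(n)|\le c\frac{\log n}{\log\log n}$ is used precisely to balance these two effects. Taking $p(n)\asymp\frac{\log n}{\log\log n}$, one has on the one hand $(Cp)^{Cp}\,|\blambda(n)|^{Cp}=\exp\bigl(O(p\log\log n)\bigr)=n^{o(1)}$ provided $c$ is chosen small enough, so the total defect contribution is negligible compared with $\dim V_{\blambda(n)}$; and on the other hand $p\to\infty$ forces the Markov bound $\|L_n\|\le\bigl(\mathbb{E}\,\tr(L_n^{2p})\bigr)^{1/2p}$ to be at most $\|L_\infty\|(1+O(1/p))\le\|L_\infty\|+\varepsilon$ for $n$ large, with probability $1-o(1)$. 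Since only ``asymptotically almost surely'' for each fixed $\varepsilon$ is claimed, a union bound over the finitely many $b_j$-data plus a standard Borel--Cantelli-type argument suffices; no almost-sure statement along a subsequence is needed.

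The main obstacle, and the step where one genuinely needs the Bordenave--Collins machinery rather than a soft genus expansion, is that the naive leading term of $\mathbb{E}\,\tr\bigl((L_n^{2})^{p}\bigr)$ does not see $\|L_\infty\|$ on the nose: because $\pi_{\blambda(n)}$ sits inside tensor powers of the standard representation, the trivial subrepresentation occurring in those tensor powers contaminates the count with a deterministic, rank-one-type piece. The fix is to center the random unitaries (subtract the projection onto the near-invariant part) and show that the centered operators are what converge \emph{strongly} to free Haar unitaries, then control the cross terms; technically this is carried out via the non-backtracking operator together with the ``tangle-free'' path count of Bordenave--Collins, which is exactly what keeps the diagram count at $(Cp)^{Cp}$ and the per-defect gain at a power of $|\blambda(n)|/n$ uniformly in $n$. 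Getting this bookkeeping correct, uniformly as $|\blambda(n)|$ is allowed to grow, is the crux of the argument.
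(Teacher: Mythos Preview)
The paper does not prove Theorem~\ref{ThmBC}; it is quoted as \cite[Cor.~3]{BC}, with the sole additional remark that the passage from the linear pencil version stated by Bordenave--Collins to the polynomial version stated here follows from Pisier's linearization trick~\cite{Pisier}. Your proposal correctly identifies this, and then goes well beyond what the paper does by sketching the actual Bordenave--Collins argument (linearization, high-moment trace expansion via Weingarten calculus, diagrammatic organization, centering to remove the trivial subrepresentation, non-backtracking/tangle-free path counts, and the choice $p(n)\asymp \log n/\log\log n$ that produces the hypothesis on $|\blambda(n)|$). As an outline of~\cite{BC} this is accurate in its main beats, but for the purposes of this paper none of it is required: the intended ``proof'' is simply the citation together with the linearization remark.
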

We will use this theorem when bounding the norms of random operators appearing in the proof of Theorem~\ref{Thm1} in Section~\ref{minimal}. Theorem~\ref{ThmBC} as stated seems more general than Corollary 3 of~\cite{BC} in the sense that we are allowing arbitrary polynomials. However, the above consequence is well-known and follows from the standard linearization trick of Pisier~\cite{Pisier}, as noted in~\cite{BC} following Corollary 3 of~\cite{BC}.
\subsection{Spectral theory and Selberg's trace formula}\label{spectraltheory}
In our proof of equidistribution theorems we use the Selberg trace formula for flat unitary bundles over cusped hyperbolic surfaces $X$. Let $\chi:\Gamma=\pi_1(X)\rightarrow U(n)$ be a surface representation. We want to study the spectrum of $\Delta_{\chi}\curvearrowright L^2(X;E_{\chi})$.\\
\\
Suppose $\eta_1,\hdots,\eta_k$ are the cusps of $X$, to which we associate parabolic elements $T_1,\hdots,T_k\in\Gamma$, respectively, such that for each $j$ the stabilizer $\Gamma_{\eta_j}:=\{P\in\Gamma|P\eta_j=\eta_j\}=\left<T_j\right>$ is the cyclic group generated by $T_j$. For each $j$, $\chi(T_j)\in U(n)$ is a unitary matrix which may or may not have $1$ as an eigenvalue. Let $m_j\geq 0$ be the multiplicity of the eigenvalue $1$ for $\chi(T_j)$. The operator $\Delta_{\chi}$ is self-adjoint, and so it has a real (in fact, non-negative) spectrum, part of which is discrete, and another part of which is continuous (if it exists). It is well-known that $\Delta_{\chi}$ has no discrete spectrum if and only if $K_0:=\sum_jm_j=0$. 
\begin{remark}\textup{Though $X$ being non-compact has the advantage that $\pi_1(X)$ is a free group, and so the space of flat unitary bundles is easily described, $X$ non-compact introduces analytic complications due to the presence of a continuous spectrum when $K_0\geq 1$. On the other hand, the analogue of Theorem~\ref{ThmBC} is not known if we impose that relations need to be satisfied. Therefore, for compact $X$, though the analysis of the spectrum is simpler, the required random matrix theory results are not known. In particular, in the compact case, the probability measure on the space of surface representations should come from the Atiyah--Bott--Goldman symplectic volume form, and integration over this symplectic manifold is a very difficult and interesting problem.}
\end{remark}
We denote the discrete spectrum of $\Delta_{\chi}$ by
\[0\leq\lambda_0\leq\lambda_1\leq\lambda_2\leq\dots,\]
each eigenvalue appearing as many times as its multiplicity. We will use the following notation: $\lambda_n=s_n(1-s_n)=\frac{1}{4}+r_n^2$, where $s_n:=\frac{1}{2}-ir_n$ and $r_n:=\sqrt{\lambda_n-\frac{1}{4}}$. We let $\tilde{s}_n:=\frac{1}{2}+ir_n$. If $0\leq\lambda_n<\frac{1}{4}$, then $s_n\in(\frac{1}{2},1]$; otherwise, $s_n\in[\frac{1}{2},\frac{1}{2}+i\infty)$. The eigensections corresponding to these eigenvalues are called Maass forms. $\lambda_0=0$ if and only if there is a non-zero constant global section of $E_{\chi}\rightarrow X$. For the bundles under consideration in this paper, we show in Section~\ref{random} that almost surely there are no non-zero constant global sections. Therefore, it is natural to ask about the smallest (positive) eigenvalue of flat unitary bundles. When $\blambda$ is unbalanced, our flat bundles will almost surely be non-singular, that is, $K_0=0$.
\\
\\
An object that will play an important role in the proof of Theorem~\ref{geodesicdep} is Selberg's zeta function associated to flat unitary bundles $E_{\chi}$. Let $\Gamma_{\textup{hyp}}$ be the $\Gamma$-conjugacy classes of hyperbolic elements of $\Gamma$, and let $P\Gamma_{\textup{hyp}}$ be the $\Gamma$-conjugacy classes of \textit{primitive}, that is, not a proper power of another element, hyperbolic elements of $\Gamma$. For $P\in\Gamma_{\textup{hyp}}$ let $N(P)$ be the norm of $P$. Then \textit{Selberg's zeta function} for $\chi:\Gamma\rightarrow U(n)$ is given by
\[Z_{\chi}(s):=\prod_{P_0\in P\Gamma_{\textup{hyp}}}\prod_{k=0}^{\infty}\det(I_n-\chi(P_0)N(P_0)^{-k-s}).\]
We also have the associated \textit{Dirichlet series} given by the logarithmic derivative of Selberg's zeta function:
\[D_{\chi}(s):=\frac{Z'_{\chi}(s)}{Z_{\chi}(s)}=\sum_{P\in\Gamma_{\textup{hyp}}}\frac{\Lambda(P)\tr(\chi(P))}{N(P)^s},\]
where
\begin{equation}
\Lambda(P):=\frac{\log N(P_0)}{1-N(P)^{-1}}
\end{equation}
in which $P_0\in P\Gamma_{\textup{hyp}}$ is the primitive element associated to $P\in\Gamma_{\textup{hyp}}$.
With the above notation, Selberg's trace formula when $K_0=0$ states the following.
\begin{theorem}\cite[Thm. 4.2, p. 314]{Hejhal2}\label{Selbergtraceformula}Suppose $K_0=0$. Let $h:\mathbb{C}\rightarrow\mathbb{C}$ be a function satisfying the following hypotheses:
\begin{enumerate}[(a)]
	\item $h(r)$ is analytic on $|\Im(r)|\leq\frac{1}{2}+\delta$ for some $\delta>0$;
	\item $h(-r)=h(r)$;
	\item $|h(r)|=O((1+|\Re(r)|)^{-2-\delta})$.
\end{enumerate}
Suppose further that
\[g(u):=\frac{1}{2\pi}\int_{-\infty}^{\infty}h(r)e^{-iru}dr;\ u\in\mathbb{R}.\]
Then
\begin{eqnarray*}
\sum_kh(r_k)&=& \frac{n\area(X)}{4\pi}\int_{-\infty}^{\infty}rh(r)\tanh(\pi r)dr\\&+&
\sum_{P\in\Gamma_{\textup{hyp}}}\frac{\tr(\chi(P))\log N(P_0)}{N(P)^{1/2}-N(P)^{-1/2}}g(\log N(P))\\&-&
g(0)\sum_{i=1}^k\log|\det(I-\chi(T_i))|.
\end{eqnarray*}
\end{theorem}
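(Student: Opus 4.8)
The plan is to derive this by the classical automorphic--kernel method --- the version for a finite--dimensional unitary representation $\chi$ of $\Gamma=\pi_1(X)$ in the non--singular case $K_0=0$ --- the analytic details being those of~\cite{Hejhal2}. First, given $h$ satisfying (a)--(c), I would form the point--pair invariant $\Phi$, viewed as a function of $u(z,w):=|z-w|^2/(\Im z\,\Im w)=4\sinh^2(\tfrac12 d(z,w))$, whose Selberg transform is $h$, with $g$ its intermediate transform; shifting the contour in the integral defining $g$ (using (a) and (c)) gives $g(u)=O(e^{-(\frac12+\delta)|u|})$, and hypothesis (c) is exactly calibrated so that this yields $\Phi(u)=O(u^{-1-\delta})$. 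Then I would introduce the $\End(\mathbb{C}^n)$--valued automorphic kernel
\[\mathbf{K}_\chi(z,w):=\sum_{\gamma\in\Gamma}\Phi\bigl(u(z,\gamma w)\bigr)\,\chi(\gamma),\]
which converges locally uniformly (decay of $\Phi$ together with the hyperbolic lattice count $\#\{\gamma:d(z,\gamma z)\le R\}=O(e^{R})$) and is the Schwartz kernel of $h\!\bigl(\sqrt{\Delta_\chi-\tfrac14}\bigr)$ on $L^2(X;E_\chi)$ (functional calculus, $\lambda_j=\tfrac14+r_j^2$); a factorization $h=h_1h_2$ and a Hilbert--Schmidt estimate show this operator is of trace class.

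Next I would evaluate $\Tr=\int_X\tr\,\mathbf{K}_\chi(z,z)\,d\mu(z)$ in two ways. On the \emph{spectral side}: because $K_0=0$, the operator $\Delta_\chi$ has purely discrete spectrum (no Eisenstein series, no continuous spectrum), so Mercer's theorem applied to the eigenexpansion of $\mathbf{K}_\chi(z,z)$ gives $\Tr=\sum_k h(r_k)$; for $K_0\ge1$ one would instead face a divergent diagonal integral at the cusps matched by the divergent $\int|E_j(\cdot,\tfrac12+it)|^2\,dt$, forcing truncation --- this is precisely what the hypothesis circumvents. On the \emph{geometric side}: partition $\Gamma$ into conjugacy classes; since $\pi_1(X)$ is free it is torsion--free, so there are no elliptic elements and $\Gamma=\{1\}\sqcup\Gamma_{\textup{hyp}}\sqcup\Gamma_{\textup{par}}$ (disjointly, a non--identity isometry having a single type). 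The \emph{identity} term is $\tr\chi(1)\int_X\Phi(0)\,d\mu=n\,\area(X)\,\Phi(0)$, which the Plancherel identity $\Phi(0)=\tfrac1{4\pi}\int_{-\infty}^{\infty}rh(r)\tanh(\pi r)\,dr$ converts to the first summand. For the \emph{hyperbolic} term, given $P\in\Gamma_{\textup{hyp}}$ with underlying primitive $P_0$ (so the centralizer of $P$ is $\langle P_0\rangle$), I would unfold the conjugacy--class sum over $\langle P_0\rangle\backslash\Gamma$ to $\tr\chi(P)\int_{\langle P_0\rangle\backslash\mathbb{H}}\Phi(u(z,Pz))\,d\mu(z)$, which in geodesic coordinates equals $\tfrac{\tr\chi(P)\log N(P_0)}{N(P)^{1/2}-N(P)^{-1/2}}\,g(\log N(P))$; absolute convergence of the sum over $\Gamma_{\textup{hyp}}$ follows from $|\tr\chi(P)|\le n$, $g(\log N(P))=O(N(P)^{-1/2-\delta})$, and $\#\{P:N(P)\le T\}=O(T)$.

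The \emph{parabolic} term is where $K_0=0$ is decisive. For the cusp $\eta_j$, the $\Gamma$--conjugacy classes of the non--trivial elements of $\langle T_j\rangle$ are exactly the pairwise distinct classes $[T_j^m]$, $m\in\mathbb{Z}\setminus\{0\}$ (in a free group $T_j^m\sim T_j^{m'}$ iff $m=m'$, and distinct cusps give distinct classes). Unfolding over $\langle T_j\rangle\backslash\Gamma$ and normalizing $\eta_j=\infty$, $T_j:z\mapsto z+1$, the class $[T_j^m]$ contributes, with $u(z,T_j^m z)=m^2/(\Im z)^2$ and the fundamental domain the unit--width strip,
\[\tr(\chi(T_j)^m)\int_{\langle T_j\rangle\backslash\mathbb{H}}\Phi\!\left(\frac{m^2}{(\Im z)^2}\right)d\mu(z)=\tr(\chi(T_j)^m)\,\frac{1}{|m|}\int_0^\infty\Phi(s^2)\,ds=\tr(\chi(T_j)^m)\,\frac{g(0)}{2|m|},\]
the middle step being the substitution $s=|m|/\Im z$ and the last the normalization $\int_0^\infty\Phi(s^2)\,ds=\tfrac12 g(0)$. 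Summing over $m$, using $\tr(\chi(T_j)^{-m})=\overline{\tr(\chi(T_j)^m)}$, expanding in the eigenvalues of $\chi(T_j)$ (none equal to $1$), and $\log\det=\tr\log$,
\[\sum_{m\neq0}\tr(\chi(T_j)^m)\,\frac{g(0)}{2|m|}=g(0)\,\Re\!\left(\sum_{m\ge1}\frac{\tr(\chi(T_j)^m)}{m}\right)=-g(0)\log\bigl|\det(I-\chi(T_j))\bigr|,\]
the series converging conditionally precisely because $\chi(T_j)$ has no eigenvalue $1$; hence no regularization is needed and, unlike the general case, no $\Gamma'/\Gamma$--, $h(0)$--, or $\log2$--terms arise. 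Summing over $j=1,\dots,k$ gives $-g(0)\sum_{i=1}^k\log|\det(I-\chi(T_i))|$, and equating the two values of $\Tr$ yields the stated identity.

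The hard part will not be this bookkeeping but the analysis that licenses it: (i) that $\mathbf{K}_\chi$ is trace class and that Mercer's theorem is valid on the non--compact $X$, which hinges on the precise decay of $\Phi$ read off from (a)--(c); and (ii) the interchange of $\int_X$ with the $\Gamma$--sum in the parabolic part, the conditionally convergent rearrangements there, and the outright convergence of $\int_X\tr\,\mathbf{K}_\chi(z,z)\,d\mu$ at the cusps --- all of which rest on $K_0=0$ and whose careful execution is exactly what~\cite{Hejhal2} carries out. I would import those technical points from there rather than redo them.
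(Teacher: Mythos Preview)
The paper does not provide a proof of this statement: it is quoted directly from Hejhal~\cite[Thm.~4.2, p.~314]{Hejhal2} and used as a black box, so there is no argument in the paper to compare against. Your sketch follows the classical automorphic-kernel method that Hejhal himself carries out, and your account of where the hypothesis $K_0=0$ enters---purely discrete spectrum on the spectral side, and a conditionally convergent but unregularized parabolic sum collapsing to $-g(0)\log|\det(I-\chi(T_j))|$ on the geometric side, with no scattering, truncation, or Maass--Selberg remainder terms---is the correct explanation for why the formula takes this clean form. The individual computations you give (identity term via the Plancherel identity, hyperbolic unfolding, the parabolic evaluation $\int_0^\infty\Phi(s^2)\,ds=\tfrac12 g(0)$ via the Abel transform, and the logarithmic summation over $m$) all check out.

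One small quibble: the large-$u$ decay $\Phi(u)=O(u^{-1-\delta})$ is driven by the width $\tfrac12+\delta$ of the analyticity strip in hypothesis (a), via the exponential decay of $g$ after contour shift; the exponent $-2-\delta$ in hypothesis (c) is calibrated instead to guarantee absolute convergence of the identity-term integral $\int rh(r)\tanh(\pi r)\,dr$ and to supply the regularity needed for the Abel inversion. This does not affect the validity of your outline.
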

Note that since $K_0=0$, $\det(I-\chi(T_i))\neq 0$ for every $i$.
\section{Minimal eigenvalues of flat unitary bundles}\label{minimal}
In this section, we prove Theorem~\ref{Thm1}. Suppose $\blambda(n)$ is a sequence of signatures such that $0\neq |\blambda(n)|\leq\frac{c\log n}{\log\log n}$ for $n$ large enough, where $c>0$ is the absolute constant appearing in Theorem~\ref{ThmBC} of Bordenave--Collins. Given a surface representation $\chi:\pi_1(X)=:\Gamma\rightarrow U(n)$, we have the surface representation
\[\Gamma\xrightarrow{\chi} U(n)\xrightarrow{\pi_{\blambda(n)}}\GL(V_{\blambda(n)}).\]
Associated to this surface representation, we have the flat rank $\dim V_{\blambda(n)}$ unitary bundle $E_{\chi,\blambda(n)}$ on whose sections the Laplacian $\Delta_{\chi,\blambda(n)}$ acts. We prove Theorem~\ref{Thm1} by showing that for each $s_0>\frac{1}{2}$, asymptotically almost surely over $\chi$ as $n\rightarrow\infty$, there is a bounded operator, called the \textit{resolvent operator},
\[R_{\chi,\blambda(n)}(s):L^2(X;E_{\chi,\blambda(n)})\rightarrow L^2(X;E_{\chi,\blambda(n)})\]
for any $s\in [s_0,1]$ such that
\[(\Delta_{\chi,\blambda(n)}-s(1-s))R_{\chi,\blambda(n)}(s)=\id_{L^2(X;E_{\chi,\blambda(n)})}.\]
When such a resolvent operator exists, $\Delta_{\chi,\blambda(n)}$ cannot have an eigenvalue less than $s_0(1-s_0)$.\\
\\
The way such a resolvent operator is constructed is by constructing such operators on the cuspidal and internal parts of $X$ and then gluing them together to obtain a resolvent operator on all of $X$. In~\cite{HM}, Hide--Magee construct such resolvents for finite covers in order to study the new eigenvalues one obtains by passing to covers. In their case, the defining representations of the symmetric groups $S_n$ play a role. We follow a similar proof in our setting of flat unitary bundles associated to the surface representations
\[\Gamma\xrightarrow{\chi} U(n)\xrightarrow{\pi_{\blambda(n)}}\GL(V_{\blambda(n)}).\]

\subsection{Cuspidal part}
In this subsection, we construct the part of the resolvent operator coming from the cuspidal part. For simplicity, we assume that $X$ has only one cusp. We model the cusp as
\[C:=(1,\infty)\times S^1\]
endowed with the metric $\frac{dr^2+dx^2}{r^2}$, where $x$ is the coordinate on $S^1$ and $r$ that on $(1,\infty)$. As in Hide--Magee~\cite{HM}, we begin by considering suitable smooth functions $\chi^{\pm}:(1,\infty)\rightarrow [0,1]$ that are identically zero in a neighbourhood of $r=1$, identical to the constant function $1$ in $(r_0,\infty)$ for some $r_0>1$, and satisfying
\begin{equation}\label{chiid}\chi^+\chi^-=\chi^-.\end{equation}
Naturally, these functions lift to functions $\chi_C^{\pm}:C\rightarrow [0,1]$ on the cusp $C$ via projection onto the $r$-coordinate, satisfying~\eqref{chiid}. Since $\chi_C^{\pm}$ are identially zero in a neighbourhood of the cusp $C$, we may extend these functions by zero to all of $X$, and obtain for each surface representation $\chi:\Gamma\rightarrow U(n)$ and each irreducible representation $\pi_{\blambda(n)}:U(n)\rightarrow\GL(V_{\blambda(n)})$, operators
\[\chi_{C,\chi}^{\pm}:L^2(X;E_{\chi,\blambda(n)})\rightarrow L^2(X;E_{\chi,\blambda(n)})\]
given by multiplication by functions $\chi_C^{\pm}$ on $X$. Replace $C$ with $(0,\infty)\times S^1$ equipped with the same hyperbolic metric and make the appropriate extensions by zero. 
\begin{lemma}\label{lowcusp}
Suppose $\omega\in W^2(C;E_{\sigma})$ is a section of the flat vector bundle $E_{\sigma}\rightarrow C$ coming from a representation $\sigma:\mathbb{Z}\simeq\pi_1(C)\rightarrow U(n)$. Then
\[\int_C\left<\Delta \omega,\omega\right>_{E_{\sigma}}dg\geq\frac{1}{4}\|\omega\|^2_{L^2(C;E_{\sigma})},\]
where $dg=\frac{dxdr}{r^2}$ is the Riemannian area form associated to the metric $\frac{dr^2+dx^2}{r^2}$. As a corollary, the Laplacian on sections of $E_{\sigma}$ has smallest eigenvalue at least $\frac{1}{4}$.
\end{lemma}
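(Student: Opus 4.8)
The plan is to diagonalize the monodromy of $E_\sigma$, expand sections in Fourier modes along the $S^1$-factor, and reduce the statement to the one-dimensional Hardy inequality with its sharp constant $\tfrac14$.

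First I would use that $\sigma(1)\in U(n)$ is unitary to pass to an orthonormal frame of $E_\sigma$ in which $\sigma(1)=\diag(e^{2\pi i\alpha_1},\dots,e^{2\pi i\alpha_n})$ with $\alpha_j\in[0,1)$; since this change of frame is a fibrewise isometry it preserves both sides of the claimed inequality. In this frame a section $\omega$ is an $n$-tuple $(f_1,\dots,f_n)$ of functions on the strip $\widetilde C=\{(r,x):r>1\}$ subject to the quasi-periodicity $f_j(r,x+1)=e^{2\pi i\alpha_j}f_j(r,x)$, and $\Delta$ acts componentwise as $-r^2(\partial_r^2+\partial_x^2)$. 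Because $W^2(C;E_\sigma)$ is by definition the completion of $C_c^\infty(C;E_\sigma)$ with respect to $\|\cdot\|_{W^2}$, it suffices to prove the inequality for $\omega$ with each $f_j$ supported in some $[a,b]\times S^1$ with $1<a<b<\infty$ and then pass to the limit; moreover each such $f_j$ extends by zero to a compactly supported function on $(0,\infty)\times S^1$ (which is all of $\mathbb H$), so the same computation also settles the version of the lemma with $C$ replaced by $(0,\infty)\times S^1$.

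Next, for a single component $f=f_j$ with quasi-period $e^{2\pi i\alpha}$, I would apply Green's identity on the fundamental domain $[a,b]\times[0,1]$: the $r$-boundary terms vanish by compact support and the $x$-boundary terms cancel because $\overline f\,\partial_x f$ is $1$-periodic by unitarity of the monodromy, giving $\int_C\langle\Delta f,f\rangle\,dg=\int_0^\infty\!\int_0^1(|\partial_r f|^2+|\partial_x f|^2)\,dr\,dx$. Expanding $f(r,x)=\sum_{k\in\mathbb Z}c_k(r)e^{2\pi i(k+\alpha)x}$ and using Parseval along $S^1$ turns this into $\sum_k\int_0^\infty(|c_k'(r)|^2+4\pi^2(k+\alpha)^2|c_k(r)|^2)\,dr$, while $\|f\|_{L^2(C)}^2=\sum_k\int_0^\infty r^{-2}|c_k(r)|^2\,dr$. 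Finally I would invoke the Hardy inequality $\int_0^\infty|g'|^2\,dr\ge\tfrac14\int_0^\infty r^{-2}|g|^2\,dr$ for $g\in C_c^\infty((0,\infty))$ --- which is immediate from $\int_0^\infty r^{-2}|g|^2\,dr=\int_0^\infty 2r^{-1}\Re(g'\overline g)\,dr$ (an integration by parts) together with Cauchy--Schwarz --- applied to each $c_k$; discarding the non-negative terms $4\pi^2(k+\alpha)^2|c_k|^2$ and summing over $k$ and over $j$ yields $\int_C\langle\Delta\omega,\omega\rangle_{E_\sigma}\,dg\ge\tfrac14\|\omega\|_{L^2(C;E_\sigma)}^2$. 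The corollary is then immediate from the variational characterization of $\inf\Spec(\Delta_\sigma)$ as the infimum of the Rayleigh quotient over nonzero elements of $W^2(C;E_\sigma)$.

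The argument is essentially a modewise Hardy estimate, so I do not anticipate a real obstacle; the only places needing care are the density argument that legitimizes the integration by parts and the Fourier expansion on all of $W^2$ rather than just on smooth compactly supported sections, and the use of Hardy's inequality with the optimal constant $\tfrac14$ --- it is precisely the zero-frequency mode (present exactly when $\sigma(1)$ has $1$ as an eigenvalue) that forces this constant and shows the bound $\tfrac14$ cannot be improved.
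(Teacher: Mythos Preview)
Your proof is correct and takes a genuinely different route from the paper's. The paper argues by reduction: when $\sigma$ has finite image it passes to a finite cover to reduce to the scalar (trivial-bundle) case, citing Lemma~4.2 of Hide--Magee~\cite{HM}; when the image is infinite it approximates $\sigma$ by finite-image representations via a bundle isomorphism that perturbs the monodromy angles to rationals, following~\cite{Mag15}, and takes a limit. Your argument instead proves the inequality directly and uniformly in $\sigma$: diagonalize the monodromy, Fourier-expand each component along the circle, and apply the one-dimensional Hardy inequality $\int_0^\infty|g'|^2\,dr\ge\tfrac14\int_0^\infty r^{-2}|g|^2\,dr$ to each radial coefficient. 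This is self-contained (it does not rely on the scalar case as a black box or on an approximation step) and makes transparent exactly why the constant $\tfrac14$ appears and why it is sharp precisely when a zero frequency is present; the paper's approach, by contrast, trades this explicitness for a soft reduction to already-established results.
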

\begin{proof}
If the image of $\sigma$ is finite, then we may view the section $\omega$ as a function $\omega':C'\rightarrow\mathbb{C}^n$ on a finite covering $\pi:C'\rightarrow C$ satisfying the condition 
\[\omega'(\gamma\cdot x)=\sigma(\gamma)\omega'(x),\]
where $\gamma$ is the generator of $\pi_1(C)$. Furthermore, note that
\[\int_{C'}\left<\Delta \omega',\omega'\right>_{\mathbb{C}^n}dg=\deg\pi\int_C\left<\Delta \omega,\omega\right>_{E_{\sigma}}dg\]
and
\[\|\omega'\|^2_{L^2}=\int_{C'}\|\omega'\|^2_{\mathbb{C}^n}dg=\deg\pi\int_{C}\left<\omega,\omega\right>_{E_{\sigma}}dg=\deg\pi\|\omega\|^2_{L^2}.\]
This allows us to reduce the proof of the inequality in the finite image case to the case of functions which follows from Lemma 4.2 of Hide--Magee~\cite{HM}.\\
\\
When $\sigma$ does not have finite image, we may approximate $\omega$ by sections of bundles corresponding to representations with finite image. Indeed, after conjugation, we may assume without loss of generality that $\sigma(1)=\diag(e^{i\theta_1},\hdots,e^{i\theta_n})$. This is because unitary matrices are unitarily diagonalizable. Consider the model $\{(r,x,z)\in (1,\infty)\times\mathbb{R}\times\mathbb{C}^n\}$ of the universal cover of the flat bundle $E_{\sigma}$. The action of $\pi_1(C)$ on this model is determined by the action of $\sigma(1)$ given by
\[\sigma(1):(r,x,z)\mapsto \left(r,x+1,\diag(e^{i\theta_1},\hdots,e^{i\theta_n})z\right).\]
Suppose $\mathbf{\eta}:=(\eta_1,\hdots,\eta_n)$, $\eta_i\in\mathbb{R}$ such that $\eta_i+\theta_i\in\mathbb{Q}$ for every $i$. Define
\[p_{\mathbf{\eta}}:(r,x,z)\mapsto (r,x,\diag(e^{i\eta_1x},\hdots,e^{i\eta_nx})z).\]
It then follows that
\[p_{\mathbf{\eta}}\sigma(1)p_{\mathbf{\eta}}^{-1}:(r,x,z)\mapsto (r,x+1,\diag(e^{i(\eta_1+\theta_1)},\hdots,e^{i(\eta_n+\theta_n)})z).\]
This gives us a bundle isomorphism to a flat bundle over $C$ associated to a representation with finite image (since the $\eta_i+\theta_i$ are rational). Continuing as in the proof of Lemma 3.2 of~\cite{Mag15}, we may take $\eta\rightarrow 0$ while $\eta_i+\theta_i\in\mathbb{Q}$ and obtain the inequality in the general situation.
\end{proof}
Lemma~\ref{lowcusp} allows us to define the cuspidal resolvent
\[R_{\chi,\blambda(n)}(s):=(\Delta_{\chi,\blambda(n)}-s(1-s))^{-1}:L^2(C;E_{\chi,\blambda(n)}|_C)\rightarrow W^2(C;E_{\chi,\blambda(n)}|_C)\]
as a bounded holomorphic operator for $\Re(s)>\frac{1}{2}$, each of which is injective. Given a section $\omega\in L^2(C;E_{\chi,\blambda(n)}|_C)$, we have from
\[(\Delta_{\chi,\blambda(n)}-s(1-s))R_{\chi,\blambda(n)}(s)\omega=\omega\]
the inequality
\[\|R_{\chi,\blambda(n)}(s)\omega\|_{L^2(C;E_{\chi,\blambda(n)}|_C)}\leq\left(\frac{1}{4}-s(1-s)\right)^{-1}\|\omega\|_{L^2(C;E_{\chi,\blambda(n)}|_C)}.\]
Since
\[\Delta_{\chi,\blambda(n)}R_{\chi,\blambda(n)}(s)\omega=\omega+s(1-s)R_{\chi,\blambda(n)}(s)\omega,\]
we also obtain
\[\|\Delta_{\chi,\blambda(n)}R_{\chi,\blambda(n)}(s)\omega\|_{L^2(C;E_{\chi,\blambda(n)}|_C)}\leq \left(1+s(1-s)\left(\frac{1}{4}-s(1-s)\right)^{-1}\right)\|\omega\|_{L^2(C;E_{\chi,\blambda(n)}|_C)}=\frac{\|\omega\|_{L^2(C;E_{\chi,\blambda(n)}|_C)}}{1-4s(1-s)}.\]
We define the \textit{cusp parametrix} as
\[P^{\chi,\blambda(n)}_{\textup{cusp}}(s):=\chi^+_{C,\chi}R_{\chi,\blambda(n)}(s)\chi^-_{C,\chi}:L^2(X;E_{\chi,\blambda(n)})\rightarrow L^2(X;E_{\chi,\blambda(n)}).\]
As a composition of bounded operators, this is bounded. Note that
\begin{eqnarray*}
&&\chi^-_{C,\chi}+[\Delta_{\chi,\blambda(n)},\chi^+_{C,\chi}]R_{\chi,\blambda(n)}(s)\chi^-_{C,\chi}\\&=&\chi^-_{C,\chi}-\chi^+_{C,\chi}\Delta_{\chi,\blambda(n)}R_{\chi,\blambda(n)}(s)\chi^-_{C,\chi}+\Delta_{\chi,\blambda(n)}\chi^+_{C,\chi}R_{\chi,\blambda(n)}(s)\chi^-_{C,\chi}\\&=&
\chi^-_{C,\chi}-\chi^+_{C,\chi}(\chi^-_{C,\chi}+s(1-s)R_{\chi,\blambda(n)}\chi^-_{C,\chi})+\Delta_{\chi,\blambda(n)}\chi^+_{C,\chi}R_{\chi,\blambda(n)}(s)\chi^-_{C,\chi}\\&=&(\Delta_{\chi,\blambda(n)}-s(1-s))\chi^+_{C,\chi}R_{\chi,\blambda(n)}(s)\chi^-_{C,\chi}\\&=&(\Delta_{\chi,\blambda(n)}-s(1-s))P^{\chi,\blambda(n)}_{\textup{cusp}}(s).
\end{eqnarray*}
Letting $Q^{\chi,\blambda(n)}_{\textup{cusp}}(s):=[\Delta_{\chi,\blambda(n)},\chi^+_{C,\chi}]R_{\chi,\blambda(n)}(s)\chi^-_{C,\chi}$, we have from the above computation that
\begin{equation}\label{cuspequation}(\Delta_{\chi,\blambda(n)}-s(1-s))P^{\chi,\blambda(n)}_{\textup{cusp}}(s)=\chi^-_{C,\chi}+Q^{\chi,\blambda(n)}_{\textup{cusp}}(s).
\end{equation}
\begin{lemma}For each $s_0>\frac{1}{2}$, there is a constant $C(s_0)>0$ such that for every $s\in[s_0,1]$, $Q^{\chi,\blambda(n)}_{\textup{cusp}}(s)$ is a bounded operator such that
\[\|Q^{\chi,\blambda(n)}_{\textup{cusp}}(s)\|_{L^2}\leq C(s_0)\left(\|\Delta\chi^+_{C}\|_{\infty}+2\|\nabla\chi^+_{C}\|_{\infty}\right).\]
\end{lemma}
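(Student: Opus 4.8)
The plan is to bound the commutator operator $Q^{\chi,\blambda(n)}_{\textup{cusp}}(s)=[\Delta_{\chi,\blambda(n)},\chi^+_{C,\chi}]R_{\chi,\blambda(n)}(s)\chi^-_{C,\chi}$ by analyzing each factor in turn. First I would compute the commutator $[\Delta_{\chi,\blambda(n)},\chi^+_{C,\chi}]$ explicitly. Since $\chi^+_{C,\chi}$ is multiplication by a smooth scalar function $\chi^+_C$ pulled back from the $r$-coordinate on the cusp, and $\Delta$ acts coordinate-wise on $\mathbb{C}^n$-valued functions, the commutator is the same differential-operator identity as in the scalar case: for any section $\psi$,
\[[\Delta_{\chi,\blambda(n)},\chi^+_{C,\chi}]\psi=(\Delta\chi^+_C)\psi-2\langle\nabla\chi^+_C,\nabla\psi\rangle,\]
so it is a first-order differential operator whose coefficients are supported in the compact region where $\chi^+_C$ is non-constant (neither $0$ nor $1$), with sup-norms controlled by $\|\Delta\chi^+_C\|_\infty$ and $\|\nabla\chi^+_C\|_\infty$. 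This is precisely the analogue of the estimate in Hide--Magee~\cite{HM}, and the only thing to check is that passing to the flat bundle $E_{\chi,\blambda(n)}$ changes nothing because $\chi^+_C$ is a scalar and the metric connection is the trivial one in the cusp coordinates.

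Next I would control the contribution of $R_{\chi,\blambda(n)}(s)\chi^-_{C,\chi}$. For $s\in[s_0,1]$ with $s_0>\tfrac12$, the estimates established just before this lemma give
\[\|R_{\chi,\blambda(n)}(s)\omega\|_{L^2(C)}\leq\Bigl(\tfrac14-s(1-s)\Bigr)^{-1}\|\omega\|_{L^2(C)}\le\Bigl(\tfrac14-s_0(1-s_0)\Bigr)^{-1}\|\omega\|_{L^2(C)}\]
and the analogous bound for $\|\Delta_{\chi,\blambda(n)}R_{\chi,\blambda(n)}(s)\omega\|_{L^2(C)}$, i.e. $R_{\chi,\blambda(n)}(s)$ maps $L^2$ into the Sobolev space $W^2(C;E_{\chi,\blambda(n)}|_C)$ with norm bounded uniformly in $s\in[s_0,1]$ (and, crucially, uniformly in $n$ and in $\chi$, since Lemma~\ref{lowcusp} gives the bottom-of-spectrum bound $\tfrac14$ for \emph{every} representation $\sigma$). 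Then $\chi^-_{C,\chi}$ is multiplication by a function bounded by $1$, so $\|\chi^-_{C,\chi}\omega\|_{L^2(X)}\leq\|\omega\|_{L^2(X)}$, and the image lands in $L^2(C)$ since $\chi^-_C$ is supported in the cusp.

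Finally I would assemble the pieces. Given $\omega\in L^2(X;E_{\chi,\blambda(n)})$, set $\psi:=R_{\chi,\blambda(n)}(s)\chi^-_{C,\chi}\omega\in W^2(C;E_{\chi,\blambda(n)}|_C)$; then $\|\psi\|_{W^2(C)}$ is bounded by a constant depending only on $s_0$ times $\|\omega\|_{L^2}$. Applying the first-order commutator operator to $\psi$ gives
\[\|Q^{\chi,\blambda(n)}_{\textup{cusp}}(s)\omega\|_{L^2}\le\|\Delta\chi^+_C\|_\infty\|\psi\|_{L^2}+2\|\nabla\chi^+_C\|_\infty\|\nabla\psi\|_{L^2}\le C(s_0)\bigl(\|\Delta\chi^+_C\|_\infty+2\|\nabla\chi^+_C\|_\infty\bigr)\|\omega\|_{L^2},\]
where I absorb into $C(s_0)$ the bound relating $\|\nabla\psi\|_{L^2}$ and $\|\psi\|_{W^2}$; one gets such a bound on $\|\nabla\psi\|_{L^2}$ on the compact support of $\chi^+_C$ from $\|\psi\|_{L^2}$ and $\|\Delta\psi\|_{L^2}$ via interior elliptic regularity (integration by parts against a cutoff, using that $\langle\Delta\psi,\psi\rangle$ dominates $\int|\nabla\psi|^2$ up to lower order on hyperbolic space). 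I expect the only genuinely delicate point to be this last interpolation step — namely justifying that a $W^2$-bound (which a priori only controls $\psi$ and $\Delta\psi$ in $L^2$) yields an $L^2$-bound on $\nabla\psi$ on the relevant compact collar, uniformly in $n$ and $\chi$; everything else is the same scalar bookkeeping as in Hide--Magee~\cite{HM}, with $E_{\chi,\blambda(n)}$ entering only through the trivial observation that $\chi^\pm_C$ are scalar multipliers.
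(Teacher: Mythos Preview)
Your proposal is correct and follows exactly the approach the paper takes: the paper's proof simply says ``This is essentially Lemma 4.3 of Hide--Magee~\cite{HM} adapted to flat unitary bundles,'' and your sketch is precisely that adaptation, with the key observation that the cutoffs are scalar multipliers and the resolvent bounds from Lemma~\ref{lowcusp} are uniform in $\chi$ and $n$. If anything, you have supplied more detail than the paper does, including correctly flagging the interpolation from the $W^2$-bound to control of $\|\nabla\psi\|_{L^2}$ as the one point requiring care.
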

\begin{proof}
This is essentially Lemma 4.3 of Hide--Magee~\cite{HM} adapted to flat unitary bundles.
\end{proof}
As a corollary, we may deduce the following.
\begin{corollary}For any $s_0>\frac{1}{2}$, we can choose $\chi^+_C$ and $\chi^-_C$ depending on $s_0$, such that for any $s\in[s_0,1]$ and any $\chi:\Gamma\rightarrow U(n)$, we have
\[\|Q^{\chi,\blambda(n)}_{\textup{cusp}}(s)\|_{L^2}\leq\frac{1}{5}.\]
\end{corollary}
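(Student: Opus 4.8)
The plan is to reduce everything to the bound of the preceding lemma, namely
\[\|Q^{\chi,\blambda(n)}_{\textup{cusp}}(s)\|_{L^2}\leq C(s_0)\big(\|\Delta\chi^+_{C}\|_{\infty}+2\|\nabla\chi^+_{C}\|_{\infty}\big),\]
together with the observation that the constant $C(s_0)$ there is independent of $s\in[s_0,1]$, of the surface representation $\chi$, of $n$, and of the signature $\blambda(n)$ (this uniformity holds because, by Lemma~\ref{lowcusp}, the cuspidal resolvent $R_{\chi,\blambda(n)}(s)$ has operator norms controlled purely in terms of $s_0$, regardless of $\chi$ and $n$). Since the right-hand side above depends on the choice of cutoff only, it therefore suffices, for the given $s_0>\tfrac12$, to exhibit one admissible pair $\chi^+_C,\chi^-_C$ for which $\|\Delta\chi^+_{C}\|_{\infty}+2\|\nabla\chi^+_{C}\|_{\infty}\leq \tfrac{1}{5\,C(s_0)}$.

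To produce such a cutoff I would work in the model cusp $(0,\infty)\times S^1$ with metric $\frac{dr^2+dx^2}{r^2}$ and pass to the coordinate $u=\log r$. For a radial function $f=f(r)$ one computes $|\nabla f|_{\textup{hyp}}=|\partial_u f|$ and $\Delta f=\partial_u f-\partial_u^2 f$, so $\|\nabla f\|_\infty\leq\|\partial_u f\|_\infty$ and $\|\Delta f\|_\infty\leq\|\partial_u f\|_\infty+\|\partial_u^2 f\|_\infty$. Fix once and for all a smooth $\phi\colon\mathbb{R}\to[0,1]$ with $\phi\equiv 0$ on $(-\infty,0]$ and $\phi\equiv 1$ on $[1,\infty)$, and for a large parameter $L>0$ set $\phi_L(u)=\phi(u/L)$, so that $\|\phi_L'\|_\infty=\|\phi'\|_\infty/L$ and $\|\phi_L''\|_\infty=\|\phi''\|_\infty/L^2$. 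I would then take $\chi^+_C$ to be $\phi_L(u-u_+)$ and $\chi^-_C$ to be $\phi_L(u-u_-)$, viewed as functions of $u=\log r$ and lifted to the cusp, with $u_-\geq u_++L$ so that $\chi^-_C$ is supported where $\chi^+_C\equiv 1$ (giving $\chi^+_C\chi^-_C=\chi^-_C$), and with $u_+$ chosen so that both cutoffs vanish on a neighbourhood of the frontier of $C$ in $X$ (so that the extension by zero to $X$ is legitimate) and equal $1$ on $(r_0,\infty)$ for a suitable $r_0$; there is no obstruction to all of this because the enlarged cusp $(0,\infty)\times S^1$ gives unbounded room in the $u$-coordinate in which to place a transition of prescribed length $L$. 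With these choices $\|\nabla\chi^+_C\|_\infty\leq\|\phi'\|_\infty/L$ and $\|\Delta\chi^+_C\|_\infty\leq\|\phi'\|_\infty/L+\|\phi''\|_\infty/L^2$.

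Finally I would choose $L=L(s_0)$ large enough that $C(s_0)\big(\|\Delta\chi^+_C\|_\infty+2\|\nabla\chi^+_C\|_\infty\big)\leq\tfrac15$, which is possible since the bracketed quantity is $O_\phi(1/L)$; substituting back into the preceding lemma yields the corollary, uniformly in $s\in[s_0,1]$ and over all $\chi\colon\Gamma\to U(n)$. The only point requiring any real care — and hence the step I would double-check most carefully — is the geometric computation underlying the second paragraph: on a hyperbolic cusp the unit-speed radial vector field is $r\partial_r=\partial_{\log r}$ rather than $\partial_r$, so that it is the $\log r$-length of the transition region, not its $r$-length, that one must take large in order to force $\|\nabla\chi^+_C\|_\infty$ and $\|\Delta\chi^+_C\|_\infty$ to zero. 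Everything else is a routine mollifier construction on top of the already-proved lemma.
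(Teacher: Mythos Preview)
Your argument is correct and is exactly the intended one: the paper states this as an immediate corollary of the preceding lemma without further proof, and you have supplied precisely the omitted details --- namely, the observation that $C(s_0)$ is uniform in $\chi$, $n$, $\blambda(n)$, and $s\in[s_0,1]$, together with the explicit scaling construction in the $u=\log r$ coordinate showing that $\|\Delta\chi^+_C\|_\infty+2\|\nabla\chi^+_C\|_\infty$ can be made as small as one likes. Your care about the hyperbolic gradient computation (that it is the $\log r$-length of the transition that must be large) is the only nontrivial point, and you have it right.
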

\subsection{Internal part}We now study the internal parametrix. We begin by discussing well-known results on the resolvent of the upper halfplane $\mathbb{H}$. Since $\Spec\Delta_{\mathbb{H}}\subset\left[\frac{1}{4},\infty\right)$, for each $s\in\mathbb{C}$ with $\Re(s)>\frac{1}{2}$, we have a \textit{bounded} operator
\[R_{\mathbb{H}}(s):=(\Delta_{\mathbb{H}}-s(1-s))^{-1}:L^2(\mathbb{H})\rightarrow L^2(\mathbb{H}).\]
For $x,y\in\mathbb{H}$, we let $r:=d(x,y)$ be the distance between $x$ and $y$ in the hyperbolic metric. Letting $\sigma:=\cosh^2(r/2)$, the operator $R_{\mathbb{H}}(s)$ is an integral operator with kernel
\[R_{\mathbb{H}}(s;x,y):=\frac{1}{4\pi}\int_0^1\frac{t^{s-1}(1-t)^{1-s}}{(\sigma-t)^s}dt.\]
Note that this depends only on $d(x,y)$, and so we conveniently let $R_{\mathbb{H}}(s;r):=R_{\mathbb{H}}(s;x,y)$. 
Taking a smooth function $\chi_0:\mathbb{R}\rightarrow [0,\infty)$ such that $\chi_0|_{(-\infty,0]}\equiv 1$ and $\chi_0|_{[1,\infty)}\equiv 0$, we define $\chi_T(t):=\chi_0(t-T)$ with support in $(-\infty,T+1]$ and let
\begin{equation}\label{RT}R^{(T)}_{\mathbb{H}}(s;r):=\chi_T(r)R_{\mathbb{H}}(s;r).\end{equation}
Letting
\[L^{(T)}_{\mathbb{H}}(s;r):=\left(-\frac{\partial^2\chi_T}{\partial^2r}-\frac{1}{\tanh r}\frac{\partial\chi_T}{\partial r}\right)R_{\mathbb{H}}(s;r)-2\frac{\partial\chi_T}{\partial r}\frac{\partial R_{\mathbb{H}}}{\partial r}(s;r),\]
we obtain the associated integral operator $L^{(T)}_{\mathbb{H}}(s)$. By Lemma 5.2 of Hide--Magee~\cite{HM}, there is a constant $C>0$ such that for any $T>0$ and $s\in[\frac{1}{2},1]$, $L^{(T)}_{\mathbb{H}}(s)$ extends to a bounded operator on $L^2(\mathbb{H})$ with operator norm
\[\|L^{(T)}_{\mathbb{H}}(s)\|_{L^2}\leq CTe^{(\frac{1}{2}-s)T}.\]
Therefore, for any $s_0>\frac{1}{2}$, there is a constant $T=T(s_0)$ such that for $s\in [s_0,1]$, we have
\begin{equation}\label{LTbound}\|L^{(T)}_{\mathbb{H}}(s)\|_{L^2}\leq\frac{1}{5}.\end{equation}
In the following $W^2(\mathbb{H})$ is the Sobolev space obtained by completing $C^{\infty}_c(\mathbb{H})$ with respect to the Sobolev norm
\[\|f\|^2_{W^2}=\|f\|^2_{L^2}+\|\Delta f\|^2_{L^2}.\]
Hide--Magee~\cite{HM} proved the following. 
\begin{lemma}For any $T>0$ and any $s\in (\frac{1}{2},1]$, given any compact $K\subset\mathbb{H}$, there is $C=C(s,K,T)>0$ such that
\begin{enumerate}
\item for any $f\in C^{\infty}_c(\mathbb{H})$ such that $\supp f\subset K$ we have $R^{(T)}_{\mathbb{H}}(s)f\in W^2(\mathbb{H})$  and 
\[\|R^{(T)}_{\mathbb{H}}(s)f\|_{W^2}\leq C\|f\|_{L^2};\]
\item with $f$ as above,
\[(\Delta_{\mathbb{H}}-s(1-s))R^{(T)}_{\mathbb{H}}f=f+L^{(T)}_{\mathbb{H}}f\]
as $L^2$ functions.
\end{enumerate}
\end{lemma}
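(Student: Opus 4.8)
The plan is to realise $R^{(T)}_{\mathbb{H}}(s)$ as the full resolvent $R_{\mathbb{H}}(s)$ corrected by a smoothing operator supported away from the diagonal, then to treat the two pieces separately for part~(1) and to handle part~(2) directly at the level of the (radial) integral kernel. Since $\chi_T\equiv 1$ on a neighbourhood of $r=0$, the function $1-\chi_T$ vanishes near $0$, so the kernel $(1-\chi_T(r))R_{\mathbb{H}}(s;r)$ is smooth in $r$ everywhere (the only singularity of $R_{\mathbb{H}}(s;r)$ is the logarithmic one at $r=0$) and supported in $\{r\ge T\}$; I would let $S^{(T)}(s)$ denote the integral operator on $\mathbb{H}$ with kernel $(1-\chi_T(d(x,y)))R_{\mathbb{H}}(s;d(x,y))$, so that formally $R^{(T)}_{\mathbb{H}}(s)=R_{\mathbb{H}}(s)-S^{(T)}(s)$.

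For part~(1), I would first recall that because $\mathbb{H}$ is complete, $\Delta_{\mathbb{H}}$ is essentially self-adjoint on $C^{\infty}_c(\mathbb{H})$ and the graph norm of its closure is equivalent to the norm defining $W^2(\mathbb{H})$; combined with $\Spec\Delta_{\mathbb{H}}\subset[\tfrac14,\infty)$ and $s\in(\tfrac12,1]$, the spectral theorem gives that $R_{\mathbb{H}}(s)\colon L^2(\mathbb{H})\to W^2(\mathbb{H})$ is bounded with norm controlled by $\bigl(\tfrac14-s(1-s)\bigr)^{-1}$ (here $s>\tfrac12$ is used), so $\|R_{\mathbb{H}}(s)f\|_{W^2}\le C_1(s)\|f\|_{L^2}$. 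Next, if $\supp f\subset K$ then $S^{(T)}(s)f$ is supported in the compact set $K':=\{x\in\mathbb{H}:d(x,K)\le T+1\}$, and on $K'\times K$ the kernel of $S^{(T)}(s)$ together with its $\Delta_{\mathbb{H},x}$-image is a bounded smooth function (smoothness of $R_{\mathbb{H}}(s;\cdot)$ off $0$ and of $\chi_T$, plus compactness). Differentiating under the integral sign and applying Cauchy--Schwarz on $K'$ then gives $S^{(T)}(s)f\in C^{\infty}_c(\mathbb{H})$ with $\|S^{(T)}(s)f\|_{L^2}+\|\Delta_{\mathbb{H}}S^{(T)}(s)f\|_{L^2}\le C_2(s,K,T)\|f\|_{L^2}$. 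Adding the two estimates yields $R^{(T)}_{\mathbb{H}}(s)f\in W^2(\mathbb{H})$ with a bound $C=C(s,K,T)$, as claimed.

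For part~(2), I would compute directly with the kernel $\chi_T(d(x,y))R_{\mathbb{H}}(s;d(x,y))$. Distributing $\Delta_{\mathbb{H},x}-s(1-s)$ across the product and using the product rule $\Delta_{\mathbb{H}}(\varphi\psi)=(\Delta_{\mathbb{H}}\varphi)\psi+\varphi\Delta_{\mathbb{H}}\psi-2\nabla\varphi\cdot\nabla\psi$ together with the distributional identity $(\Delta_{\mathbb{H},x}-s(1-s))R_{\mathbb{H}}(s;x,y)=\delta_x(y)$, one gets
\[(\Delta_{\mathbb{H},x}-s(1-s))\bigl[\chi_T R_{\mathbb{H}}(s;r)\bigr]=\chi_T(0)\,\delta_x(y)+[\Delta_{\mathbb{H},x},\chi_T]\,R_{\mathbb{H}}(s;r).\]
Since $\chi_T(0)=1$ and, for the radial functions in question, $\Delta_{\mathbb{H}}\chi_T=-\chi_T''-\coth r\,\chi_T'$ and $\nabla\chi_T\cdot\nabla R_{\mathbb{H}}=\chi_T'\,\partial_r R_{\mathbb{H}}$, the commutator term equals exactly
\[\Bigl(-\tfrac{\partial^2\chi_T}{\partial r^2}-\tfrac{1}{\tanh r}\tfrac{\partial\chi_T}{\partial r}\Bigr)R_{\mathbb{H}}(s;r)-2\tfrac{\partial\chi_T}{\partial r}\tfrac{\partial R_{\mathbb{H}}}{\partial r}(s;r)=L^{(T)}_{\mathbb{H}}(s;r).\]
Integrating against $f\in C^{\infty}_c(\mathbb{H})$ — where $\chi_T(0)\delta_x(y)$ contributes $f(x)$, legitimate since $R^{(T)}_{\mathbb{H}}(s)f\in W^2(\mathbb{H})$ by part~(1) — gives $(\Delta_{\mathbb{H}}-s(1-s))R^{(T)}_{\mathbb{H}}f=f+L^{(T)}_{\mathbb{H}}f$ in $L^2$.

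The anticipated main obstacle is not depth but care: one must check that splitting off $1-\chi_T$ genuinely removes the diagonal singularity (so that no stray $\delta$ survives when estimating $S^{(T)}(s)$, and so that $\chi_T(0)=1$ is the only diagonal contribution in part~(2)), and that the commutator $[\Delta_{\mathbb{H}},\chi_T]$ reproduces precisely the three terms of the stated $L^{(T)}_{\mathbb{H}}(s;r)$ — this rests on the explicit radial Laplacian on $\mathbb{H}$. The only genuinely analytic inputs are the identification of the domain of $\Delta_{\mathbb{H}}$ with $W^2(\mathbb{H})$ and the classical local behaviour of $R_{\mathbb{H}}(s;r)$ (logarithmic at $r=0$, smooth and exponentially decaying for $r>0$); both are standard.
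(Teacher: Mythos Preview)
The paper does not prove this lemma; it simply attributes it to Hide--Magee~\cite{HM}. Your outline is the natural one and part~(2) is correct: the radial commutator computation reproduces $L^{(T)}_{\mathbb{H}}(s;r)$ exactly, and $\chi_T(0)=1$ gives the $\delta$-contribution.

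There is, however, a genuine slip in part~(1). You assert that $S^{(T)}(s)f$ is supported in $K'=\{x:d(x,K)\le T+1\}$, but this is false: the kernel $(1-\chi_T(r))R_{\mathbb{H}}(s;r)$ is supported where $r\ge T$ (since $\chi_T\equiv 1$ on $(-\infty,T]$), not where $r\le T+1$. Thus for $y\in K$ the integrand survives precisely when $d(x,y)\ge T$, and $S^{(T)}(s)f$ is typically \emph{not} compactly supported. It is $R^{(T)}_{\mathbb{H}}(s)f$ itself that is supported in $K'$, because $\chi_T$ (not $1-\chi_T$) has support in $(-\infty,T+1]$.

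The fix is easy and keeps your decomposition intact: instead of compact support, use that $R_{\mathbb{H}}(s;r)$ and its $r$-derivatives decay like $e^{-sr}$ as $r\to\infty$ for $\Re(s)>\tfrac12$, so the smooth radial kernel $(1-\chi_T(r))R_{\mathbb{H}}(s;r)$ and its image under $\Delta_{\mathbb{H},x}$ are in $L^1(\mathbb{H},d\mu)$ as functions of $r$ (against the weight $\sinh r\,dr$). A Schur test then shows $S^{(T)}(s)$ and $\Delta_{\mathbb{H}}S^{(T)}(s)$ are bounded on $L^2(\mathbb{H})$, which is all you need. Alternatively, bypass $S^{(T)}$ entirely: once part~(2) is established and you know $L^{(T)}_{\mathbb{H}}(s)$ is bounded on $L^2$ (stated just before the lemma), the identity $(\Delta_{\mathbb{H}}-s(1-s))R^{(T)}_{\mathbb{H}}f=f+L^{(T)}_{\mathbb{H}}f$ immediately controls $\|\Delta_{\mathbb{H}}R^{(T)}_{\mathbb{H}}f\|_{L^2}$ in terms of $\|R^{(T)}_{\mathbb{H}}f\|_{L^2}$ and $\|f\|_{L^2}$, and $\|R^{(T)}_{\mathbb{H}}f\|_{L^2}$ is bounded because its compactly supported radial kernel $\chi_T(r)R_{\mathbb{H}}(s;r)$ is in $L^1$ (only a logarithmic singularity at $r=0$).
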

We will now construct the internal parametrix for the compact part of our cusped hyperbolic surface. Let us define
\[R^{(T)}_{\blambda(n)}(s;x,y):=R^{(T)}_{\mathbb{H}}(s;x,y)\id_{V_{\blambda(n)}},\]
\[L^{(T)}_{\blambda(n)}(s;x,y):=L^{(T)}_{\mathbb{H}}(s;x,y)\id_{V_{\blambda(n)}},\]
giving us, respectively, the integral operators $R^{(T)}_{\blambda(n)}(s)$ and $L^{(T)}_{\blambda(n)}(s)$ on functions $\mathbb{H}\rightarrow V_{\blambda(n)}$. We need the following lemma, an analogue of Lemma 5.5 of~\cite{HM}. In the following, the subscript $\chi$ denotes that the automorphy condition is satisfied for the surface representation $\pi_{\blambda(n)}\circ\chi$, while the subscript $c$ denotes that the functions are compactly supported on $X$ when viewed as sections of the bundle $E_{\chi,\blambda(n)}$.
\begin{lemma}\label{essentialint}For all $s\in(\frac{1}{2},1]$,
\begin{enumerate}
\item the integral operator $R^{(T)}_{\blambda(n)}(s)(1-\chi^{-}_{C,\chi})$ is well-defined on $C^{\infty}_{c,\chi}(\mathbb{H},V_{\blambda(n)})$ and extends to a bounded operator
\[R^{(T)}_{\blambda(n)}(s)(1-\chi^{-}_{C,\chi}):L^2_{\chi}(\mathbb{H},V_{\blambda(n)})\rightarrow W^2_{\chi}(\mathbb{H},V_{\blambda(n)});\]
\item the integral operator $L^{(T)}_{\blambda(n)}(s)(1-\chi^{-}_{C,\chi})$ is well defined on $C^{\infty}_{c,\chi}(\mathbb{H},V_{\blambda(n)})$ and extends to a bounded operator on $L^2_{\chi}(\mathbb{H},V_{\blambda(n)})$; and
\item we have
\[(\Delta_{\mathbb{H}}-s(1-s))R^{(T)}_{\blambda(n)}(s)(1-\chi^{-}_{C,\chi})=(1-\chi^{-}_{C,\chi})+L^{(T)}_{\blambda(n)}(s)(1-\chi^{-}_{C,\chi})\]
as operators on $L^2_{\chi}(\mathbb{H},V_{\blambda(n)})$.
\end{enumerate}
\end{lemma}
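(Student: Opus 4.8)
The plan is to deduce all three assertions from their scalar counterparts in~\cite{HM}---Lemma~5.2 there (the bound $\|L^{(T)}_{\mathbb{H}}(s)\|_{L^2}\leq CTe^{(\frac12-s)T}$) and the preceding lemma (the $W^2$-bound for $R^{(T)}_{\mathbb{H}}(s)$ on functions supported in a fixed compact set, together with the identity $(\Delta_{\mathbb{H}}-s(1-s))R^{(T)}_{\mathbb{H}}(s)\varphi=\varphi+L^{(T)}_{\mathbb{H}}(s)\varphi$)---by exploiting two structural features. First, both $R^{(T)}_{\blambda(n)}(s)$ and $L^{(T)}_{\blambda(n)}(s)$ have the form $A\otimes\id_{V_{\blambda(n)}}$ for a scalar operator $A$ on $\mathbb{H}$ given by a $\PSL_2(\mathbb{R})$-invariant integral kernel that is \emph{compactly supported in the distance} $r=d(x,y)$: $R^{(T)}_{\mathbb{H}}(s;\cdot)$ is supported in $\{r\leq T+1\}$ and $L^{(T)}_{\mathbb{H}}(s;\cdot)$ in $\{T\leq r\leq T+1\}$. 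Since the kernel is scalar and $\Gamma$-invariant it commutes with $v\mapsto\pi_{\blambda(n)}(\chi(\gamma))v$, so the operator maps $\chi$-automorphic $V_{\blambda(n)}$-valued functions to $\chi$-automorphic ones, and, acting coordinatewise, every operator-norm bound reduces to the corresponding scalar bound with a constant \emph{independent of} $\dim V_{\blambda(n)}$; this is the only way the growth of $\dim V_{\blambda(n)}$ with $n$ enters, and it enters harmlessly.

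Next I would transfer these bounds from $\mathbb{H}$ to $E_{\chi,\blambda(n)}$, where the cutoff $1-\chi^{-}_{C,\chi}$ plays an essential role: multiplication by it confines sections to the fixed compact ``thick'' region $K:=\supp(1-\chi^{-}_{C})\subset X$, independent of $\chi$ and $\blambda(n)$. Lifting to $\mathbb{H}$, $g:=(1-\chi^{-}_{C,\chi})f$ is supported on $\pi^{-1}(K)=\Gamma\widetilde K$ for some compact $\widetilde K\subset\mathbb{H}$; fixing a fundamental domain $\mathcal{F}_0$ and setting $g_\gamma:=g\cdot\mathbf{1}_{\gamma\mathcal{F}_0}$, each $g_\gamma$ is a genuine element of $L^2(\mathbb{H},V_{\blambda(n)})$ supported in a translate $\gamma\widetilde K'$ of a fixed compact set, with $\|g_\gamma\|_{L^2(\mathbb{H})}=\|g\|_{L^2(X;E_{\chi,\blambda(n)})}$. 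Because the kernels are compactly supported in $r$ and $\widetilde K'$ is compact, on a fixed compact lift of the (compact) region where $R^{(T)}_{\blambda(n)}(s)g$ and $L^{(T)}_{\blambda(n)}(s)g$ live only boundedly many $\gamma$ contribute---a set $S=S(\Gamma,K,T)$ with $|S|$ independent of $\chi,\blambda(n)$---so there $R^{(T)}_{\blambda(n)}(s)g=\sum_{\gamma\in S}R^{(T)}_{\blambda(n)}(s)g_\gamma$, and likewise for $L^{(T)}_{\blambda(n)}(s)$. Cauchy--Schwarz over the at most $|S|$ terms, the scalar bounds of~\cite{HM} applied coordinatewise, and $\PSL_2(\mathbb{R})$-invariance (to pull $\gamma\widetilde K'$ back to $\widetilde K'$ and keep the constant) then give
\[\|R^{(T)}_{\blambda(n)}(s)(1-\chi^{-}_{C,\chi})f\|_{W^2}\leq C_1(s_0,X,T)\|f\|_{L^2},\qquad\|L^{(T)}_{\blambda(n)}(s)(1-\chi^{-}_{C,\chi})f\|_{L^2}\leq C_2(s_0,X,T)\|f\|_{L^2}\]
for $f\in C^{\infty}_{c,\chi}(\mathbb{H},V_{\blambda(n)})$, $s\in(\tfrac12,1]$, with constants independent of $\chi$ and $\blambda(n)$; density of $C^{\infty}_{c,\chi}$ extends these to bounded operators, giving (1) and (2). (For the $W^2$-estimate one either reruns the argument with $\Delta$ inserted, using its locality, or reads it off from identity (3).)

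For (3) I would apply the scalar identity $(\Delta_{\mathbb{H}}-s(1-s))R^{(T)}_{\mathbb{H}}(s)\varphi=\varphi+L^{(T)}_{\mathbb{H}}(s)\varphi$ coordinatewise to each $\psi_\gamma g$, where $\{\psi_\gamma=\psi_0\circ\gamma^{-1}\}_{\gamma\in\Gamma}$ is a smooth $\Gamma$-equivariant partition of unity with $\psi_0\in C^{\infty}_c(\mathbb{H})$, so that $\psi_\gamma g\in C^{\infty}_c(\mathbb{H},V_{\blambda(n)})$. Both sides of the resulting identities are locally finite sums over $\gamma$ (again by the compact supports of the kernels and of $\psi_0$), so summing in $\gamma$ yields $(\Delta_{\mathbb{H}}-s(1-s))R^{(T)}_{\blambda(n)}(s)g=g+L^{(T)}_{\blambda(n)}(s)g$ as smooth $\chi$-automorphic sections with $g=(1-\chi^{-}_{C,\chi})f$; the boundedness from (1)--(2) plus density upgrade this to the stated operator identity on $L^2_\chi(\mathbb{H},V_{\blambda(n)})$. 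The point that needs real care---rather than a mechanical appeal to~\cite{HM}---is the \textbf{uniformity of all the constants in $\chi$ and in $n$}, since the proof of Theorem~\ref{Thm1} uses these estimates simultaneously as $n\to\infty$ (it needs analogues of $\|L^{(T)}\|\leq\tfrac15$ for all large $n$). That uniformity is exactly what the two structural observations supply: the $V_{\blambda(n)}$-dependence is only a harmless $\otimes\id_{V_{\blambda(n)}}$, and the $\chi$-dependence disappears because the kernels are $\Gamma$-invariant, so the geometric inputs are just the fixed $\Gamma$, $K$, and $T$. The single remaining non-formal point is the cusp: a fundamental domain of a cusped surface is non-compact, and without the cutoff the ``only boundedly many translates within distance $T+1$'' step fails because infinitely many parabolic translates accumulate at the cusp point; restricting to sections supported in the compact core via $1-\chi^{-}_{C,\chi}$ is precisely what repairs this, and the write-up should isolate this observation.
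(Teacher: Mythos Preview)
Your proposal is correct and in fact more detailed than the paper's own treatment: the paper does not prove this lemma at all, merely introducing it as ``an analogue of Lemma 5.5 of~\cite{HM}'' and stating it without proof. Your sketch---reducing to the scalar results of~\cite{HM} via the tensor structure $A\otimes\id_{V_{\blambda(n)}}$, the compact support of the kernels in the distance variable, and the observation that $1-\chi^{-}_{C,\chi}$ confines sections to a fixed compact core so that only a finite $\Gamma$-set $S=S(\Gamma,K,T)$ contributes---is exactly the intended adaptation, and your emphasis on the uniformity of constants in $\chi$ and $n$ (crucial for the later use in Theorem~\ref{Thm1}) and on why the cusp forces the cutoff are both well placed.
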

Using this Lemma, we define the \textit{internal parametrix} as the operator
\[P^{\chi,\blambda(n)}_{\textup{int}}(s):=R^{(T)}_{\blambda(n)}(s)(1-\chi^{-}_{C,\chi}):L^2(X;E_{\chi,\blambda(n)})\rightarrow W^2(X;E_{\chi,\blambda(n)})\]
Additionally, defining
\[Q^{\chi,\blambda(n)}_{\textup{int}}(s):=L^{(T)}_{\blambda(n)}(s)(1-\chi^{-}_{C,\chi}):L^2(X;E_{\chi,\blambda(n)})\rightarrow L^2(X;E_{\chi,\blambda(n)}),\]
we obtain from Lemma~\ref{essentialint} the identity
\begin{equation}\label{intequation}(\Delta_{\mathbb{H}}-s(1-s))P^{\chi,\blambda(n)}_{\textup{int}}(s)=(1-\chi^{-}_{C,\chi})+Q^{\chi,\blambda(n)}_{\textup{int}}(s).\end{equation}
\subsection{Resolvent}By combining the results of the previous two sections, we define the bounded operator
\[P^{\chi,\blambda(n)}(s):=P^{\chi,\blambda(n)}_{\textup{int}}(s)+P^{\chi,\blambda(n)}_{\textup{cusp}}(s):L^2(X;E_{\chi,\blambda(n)})\rightarrow W^2(X;E_{\chi,\blambda(n)}).\]
From equations~\eqref{cuspequation} and~\eqref{intequation}, we obtain
\begin{equation}\label{operatorequation}(\Delta_{\chi,\blambda(n)}-s(1-s))P^{\chi,\blambda(n)}(s)=1+Q^{\chi,\blambda(n)}_{\textup{int}}(s)+Q^{\chi,\blambda(n)}_{\textup{cusp}}(s).\end{equation}
From~\eqref{LTbound} it follows that given $s>\frac{1}{2}$, we may suitably choose $T$ such that
\begin{equation}\label{Qcuspbound}\|Q^{\chi,\blambda(n)}_{\textup{cusp}}(s)\|_{L^2}\leq\frac{1}{5}.\end{equation}
Therefore, in order to show that $Q^{\chi,\blambda(n)}_{\textup{int}}(s)+Q^{\chi,\blambda(n)}_{\textup{cusp}}(s)$ is a bounded operator with norm strictly less than $1$ asymptotically almost surely over the space of surface representations $\chi$ as $n\rightarrow\infty$, it suffices to show this for $Q^{\chi,\blambda(n)}_{\textup{int}}(s)$. We prove this in the next subsection.
\subsection{Bounds on random operators}
Suppose $\omega\in C^{\infty}(X;E_{\chi,\blambda(n)})\simeq C^{\infty}_{\chi}(\mathbb{H};V_{\blambda(n)})$ is a section of the flat unitary bundle $E_{\chi,\blambda(n)}\rightarrow X$ such that $\|\omega\|_{L^2}<\infty$. By construction,
\begin{eqnarray*}Q^{\chi,\blambda(n)}_{\textup{int}}(s)[\omega](x)&=&\int_{\mathbb{H}}L^{(T)}_{\mathbb{H}}(s;x,y)(1-\chi^{-}_{C,\chi}(y))\omega(y)d\mu(y)\\&=& \sum_{\gamma\in\Gamma}\int_FL^{(T)}_{\mathbb{H}}(s;\gamma x,y)\pi_{\blambda(n)}(\chi(\gamma^{-1}))(1-\chi^{-}_{C,\chi}(y))\omega(y)d\mu(y),
\end{eqnarray*}
where $d\mu(y)$ is the hyperbolic area measure on $\mathbb{H}$.\\
\\
Let $F$ be a fundamental domain of $X$. From~\eqref{RT}, $L^{(T)}_{\mathbb{H}}(s;x,y)$ is nonzero only if $d(x,y)\leq T+1$. From this it follows that there is a compact subset $K(T)\subseteq F$ and a finite set $S=S(T)\subset \Gamma$ such that for $x,y\in \overline{F}$ and $\gamma\in\Gamma$,
\[L^{(T)}_{\mathbb{H}}(s;\gamma x,y)\pi_{\blambda(n)}(\chi(\gamma^{-1}))(1-\chi^{-}_{C,\chi}(y))=0\]
unless $x,y\in K$ and $\gamma\in S$.\\
\\
We have an isomorphism of Hilbert spaces
\[L^2(X;E_{\chi,\blambda(n)})\simeq L^2(F)\otimes V_{\blambda(n)}\]
\[\omega\mapsto \sum_i\left<\omega|_F,e_i\right>\otimes e_i,\]
where $\{e_i\}$ is an orthonormal basis of $V_{\blambda(n)}$. Under this isomorphism, we may write
\[Q^{\chi,\blambda(n)}_{\textup{int}}(s)=\sum_{\gamma\in S}a_{\gamma}^{(T)}(s)\otimes \pi_{\blambda(n)}(\chi(\gamma^{-1})),\]
where $a_{\gamma}^{(T)}(s):L^2(F)\rightarrow L^2(F)$ is given by
\[a_{\gamma}^{(T)}(s)[f](x):=\int_FL^{(T)}_{\mathbb{H}}(s;\gamma x,y)(1-\chi^{-}_{C}(y))f(y)d\mu(y).\]
$a_{\gamma}^{(T)}(s)$ are bounded operators on $L^2(F)$. We will approximate the operator $Q^{\chi,\blambda(n)}_{\textup{int}}(s)$ by the operator
\[L^{(T)}_{s,\infty}:=\sum_{\gamma\in S}a_{\gamma}^{(T)}(s)\otimes \pi_{\infty}(\gamma^{-1}),\]
where
\[\pi_{\infty}:\Gamma\rightarrow\textup{End}(\ell^2(\Gamma))\]
is the right regular representation. Under the isomorphism
\[L^2(F)\otimes\ell^2(\Gamma)\xrightarrow{\sim} L^2(\mathbb{H})\]
given by $f\otimes\delta_{\gamma}\mapsto f\circ\gamma^{-1}$, $L^{(T)}_{s,\infty}$ is conjugated to the \textit{bounded} operator
\[L^{(T)}_{\mathbb{H}}(s)(1-\chi^{-}_C):L^2(\mathbb{H})\rightarrow L^2(\mathbb{H}).\]
Corollary 6.2 of~\cite{HM} implies that for any $s_0>\frac{1}{2}$, there is a $T=T(s_0)>0$ such that for any $s\in[s_0,1]$,
\[\|L^{(T)}_{s,\infty}\|_{L^2(F)\otimes\ell^2(\Gamma)}\leq\frac{1}{5}.\]
We prove the following proposition.
\begin{proposition}
For any $s_0>\frac{1}{2}$, there is a $T=T(s_0)>0$ such that for any fixed $s\in[s_0,1]$, 
\[\mathbb{P}\left[\chi:\pi_1(X)\rightarrow U(n): \|Q^{\chi,\blambda(n)}_{\textup{int}}(s)\|_{L^2(F)\otimes V_{\blambda(n)}}\leq\frac{2}{5}\right]\xrightarrow{n\rightarrow\infty} 1.\]
\end{proposition}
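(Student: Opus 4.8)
The plan is to bound $\|Q^{\chi,\blambda(n)}_{\textup{int}}(s)\|$ by comparing the random operator $Q^{\chi,\blambda(n)}_{\textup{int}}(s)=\sum_{\gamma\in S}a_\gamma^{(T)}(s)\otimes\pi_{\blambda(n)}(\chi(\gamma^{-1}))$ with the deterministic operator $L^{(T)}_{s,\infty}=\sum_{\gamma\in S}a_\gamma^{(T)}(s)\otimes\pi_{\infty}(\gamma^{-1})$, using the Bordenave--Collins strong asymptotic freeness theorem (Theorem~\ref{ThmBC}). Fix $T=T(s_0)$ so that Corollary 6.2 of~\cite{HM} applies, i.e.\ $\|L^{(T)}_{s,\infty}\|_{L^2(F)\otimes\ell^2(\Gamma)}\le\tfrac15$ for all $s\in[s_0,1]$, and fix $s\in[s_0,1]$. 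Two features prevent a verbatim application of Theorem~\ref{ThmBC}: the coefficients $a_\gamma^{(T)}(s)$ are operators on the infinite-dimensional space $L^2(F)$, not matrices in $M_r(\mathbb{C})$; and the theorem is phrased in terms of independent Haar unitaries, which I must first exhibit.

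I would begin by noting that, since $\pi_1(X)$ is free on $\gamma_1,\dots,\gamma_k$ and $\chi$ is Haar distributed on $\Hom(\pi_1(X),U(n))\cong U(n)^k$, the matrices $U_j:=\chi(\gamma_j)$ are independent Haar unitaries. Each $\gamma\in S=S(T)$ is a fixed word in the $\gamma_j$, so there is a fixed noncommutative $*$-monomial $P_\gamma$, independent of $n$, with
\[\pi_{\blambda(n)}(\chi(\gamma^{-1}))=P_\gamma\bigl(\pi_{\blambda(n)}(U_1),\dots,\pi_{\blambda(n)}(U_k),\pi_{\blambda(n)}(U_1)^*,\dots,\pi_{\blambda(n)}(U_k)^*\bigr),\]
and likewise $\pi_{\infty}(\gamma^{-1})=P_\gamma(\pi_{\infty}(\gamma_1),\dots,\pi_{\infty}(\gamma_k),\pi_{\infty}(\gamma_1)^*,\dots,\pi_{\infty}(\gamma_k)^*)$, using that $\pi_{\blambda(n)}(U_j)$ and $\pi_{\infty}(\gamma_j)$ are unitary. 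This is precisely the data to which Theorem~\ref{ThmBC} applies.

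The next step is a finite-rank reduction. Each $a_\gamma^{(T)}(s)$ is the integral operator on $L^2(F)$ with kernel $L^{(T)}_{\mathbb{H}}(s;\gamma x,y)(1-\chi^{-}_{C}(y))$, a bounded function supported in the compact set $K(T)\times K(T)$; hence $a_\gamma^{(T)}(s)$ is Hilbert--Schmidt, in particular compact. Given $\delta>0$, I can therefore choose finite-rank operators $\tilde a_\gamma$, all living on one common $r$-dimensional subspace of $L^2(F)$ and \emph{independent of $n$}, with $\|a_\gamma^{(T)}(s)-\tilde a_\gamma\|<\delta$ for all $\gamma\in S$; identifying that subspace with $\mathbb{C}^r$ turns the $\tilde a_\gamma$ into matrices. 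Since each $\pi_{\blambda(n)}(\chi(\gamma^{-1}))$ (resp.\ $\pi_{\infty}(\gamma^{-1})$) is unitary, replacing $a_\gamma^{(T)}(s)$ by $\tilde a_\gamma$ perturbs the operator norm of $Q^{\chi,\blambda(n)}_{\textup{int}}(s)$ and of $L^{(T)}_{s,\infty}$ by at most $|S|\delta$; in particular $\bigl\|\sum_{\gamma\in S}\tilde a_\gamma\otimes\pi_{\infty}(\gamma^{-1})\bigr\|\le\tfrac15+|S|\delta$. Now Theorem~\ref{ThmBC}, applied to the fixed finite family $\{\tilde a_\gamma\}_{\gamma\in S}$, the fixed monomials $\{P_\gamma\}_{\gamma\in S}$, and $\varepsilon=\tfrac1{10}$ (legitimate since $0\ne|\blambda(n)|\le c\tfrac{\log n}{\log\log n}$ eventually), gives that asymptotically almost surely as $n\to\infty$,
\[\Bigl\|\sum_{\gamma\in S}\tilde a_\gamma\otimes\pi_{\blambda(n)}(\chi(\gamma^{-1}))\Bigr\|\le\Bigl\|\sum_{\gamma\in S}\tilde a_\gamma\otimes\pi_{\infty}(\gamma^{-1})\Bigr\|+\tfrac1{10}\le\tfrac15+|S|\delta+\tfrac1{10},\]
whence $\|Q^{\chi,\blambda(n)}_{\textup{int}}(s)\|\le\tfrac3{10}+2|S|\delta$ on this event. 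Choosing $\delta$ from the outset with $2|S|\delta\le\tfrac1{10}$ forces $\|Q^{\chi,\blambda(n)}_{\textup{int}}(s)\|\le\tfrac25$ there, so the event in the statement contains an event of probability tending to $1$, which proves the proposition.

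The step I expect to be the main (though still essentially routine) obstacle is the finite-rank reduction: one must take care that the approximants $\tilde a_\gamma$ and the ambient dimension $r$ are chosen once and for all, independently of $n$, so that a \emph{single} invocation of Theorem~\ref{ThmBC} yields the comparison uniformly for all large $n$. Everything else amounts to bookkeeping with operator norms of unitary tensor factors, together with the deterministic bound $\|L^{(T)}_{s,\infty}\|\le\tfrac15$ imported from~\cite{HM}.
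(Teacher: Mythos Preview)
Your proof is correct and follows essentially the same approach as the paper: fix $T$ via Corollary~6.2 of \cite{HM}, approximate the (compact) coefficients $a_\gamma^{(T)}(s)$ by finite-rank operators on a common finite-dimensional subspace independent of $n$, and then invoke Theorem~\ref{ThmBC} to compare the random operator with $L^{(T)}_{s,\infty}$. Your write-up is slightly more explicit than the paper's in justifying the finite-rank approximation (via the Hilbert--Schmidt property of the kernel) and in spelling out the monomial structure $P_\gamma$ required by Theorem~\ref{ThmBC}, but the numerical bookkeeping and overall strategy are the same.
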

\begin{proof}
By Corollary 6.2 of~\cite{HM} mentioned at the end of the previous subsection, there is a $T=T(s_0)>0$ such that for $s\in [s_0,1]$
\[\|L^{(T)}_{s,\infty}\|_{L^2(F)\otimes\ell^2(\Gamma)}\leq\frac{1}{5}.\]
For each $\gamma\in S=S(T)$ ($S$ is a finite set) there is a finite dimensional subspace $V_{\gamma}\subset L^2(F)$ and operator $b^{(T)}_{\gamma}(s):V_{\gamma}\rightarrow V_{\gamma}$ such that, after extending it by $0$ to an operator on $L^2(F)$, we have
\[\|a_{\gamma}^{(T)}(s)-b_{\gamma}^{(T)}(s)\|_{L^2(F)}\leq \frac{1}{20|S|}.\]
Since $S$ is finite, we may choose $V_{\gamma}$ uniformly by replacing them with $V=\sum V_{\gamma}\subset L^2(F)$. From this, we obtain that
\[\|Q^{\chi,\blambda(n)}_{\textup{int}}(s)-\sum_{\gamma\in S}b_{\gamma}^{(T)}(s)\otimes\pi_{\blambda(n)}(\chi(\gamma^{-1}))\|_{L^2(F)\otimes V_{\blambda(n)}}\leq \frac{1}{20}.\]
From Theorem~\ref{ThmBC}, with probability tending to $1$ as $n\rightarrow\infty$, surface representations $\chi:\Gamma\rightarrow U(n)$ satisfy the inequality
\[\|\sum_{\gamma\in S}b^{(T)}_{\gamma}(s)\otimes \pi_{\blambda(n)}(\chi(\gamma^{-1}))\|_{V\otimes V_{\blambda(n)}}\leq \|\sum_{\gamma\in S}b^{(T)}_{\gamma}(s)\otimes \pi_{\infty}(\gamma^{-1})\|_{V\otimes \ell^2(\Gamma)}+\frac{1}{20}.\]
On the other hand,
\begin{eqnarray*}
&&\|\sum_{\gamma\in S}b^{(T)}_{\gamma}(s)\otimes \pi_{\infty}(\gamma^{-1})\|_{V\otimes \ell^2(\Gamma)}\\&\leq&
\|L^{(T)}_{s,\infty}\|_{L^2(F)\otimes\ell^2(\Gamma)}+
\|L^{(T)}_{s,\infty}-\sum_{\gamma\in S}b^{(T)}_{\gamma}(s)\otimes \pi_{\infty}(\gamma^{-1})\|_{V\otimes \ell^2(\Gamma)}\\&\leq&
\frac{1}{5}+\|\sum_{\gamma\in S}(a^{(T)}_{\gamma}(s)-b^{(T)}_{\gamma}(s))\otimes \pi_{\infty}(\gamma^{-1})\|_{V\otimes \ell^2(\Gamma)}\\&\leq&\frac{1}{5}+\frac{1}{20}=\frac{1}{4}.
\end{eqnarray*}
It is then clear that combining the above inequalities, we obtain
\[\|Q^{\chi,\blambda(n)}_{\textup{int}}(s)\|_{L^2(F)\otimes V_{\blambda(n)}}\leq\frac{2}{5}\]
\end{proof}
\subsection{Proof of Theorem~\ref{Thm1}} 
From Lemma 6.1 of~\cite{HM}, we know that for each fixed $T>0$, there is a constant $c_1=c_1(T)>0$ such that for $s_1,s_2\in[\frac{1}{2},1]$ and $\gamma\in S(T)$ we have
\begin{equation}\label{uniformcont}\|a_{\gamma}^{(T)}(s_1)-a_{\gamma}^{(T)}(s_2)\|_{L^2(F)}\leq c_1|s_1-s_2|.\end{equation}
Given $\varepsilon>0$, let $s_0=\frac{1}{2}+\sqrt{\varepsilon}$ so that $s_0(1-s_0)=\frac{1}{4}-\varepsilon$. Choose $T=T(s_0)$ so that
\[\|Q^{\chi,\blambda(n)}_{\textup{cusp}}(s)\|_{L^2}\leq\frac{1}{5}\]
for every $s\in (s_0,1]$. \eqref{uniformcont} implies that
\begin{equation}\label{distance}\|Q^{\chi,\blambda(n)}_{\textup{int}}(s_1)-Q^{\chi,\blambda(n)}_{\textup{int}}(s_1)\|_{L^2(F)\otimes V_{\blambda(n)}}\leq |S|c_1|s_1-s_2|\end{equation}
for every $s_1,s_2\in[s_0,1]$. Choose a finite subset $X$ of $[s_0,1]$ such that every point of $[s_0,1]$ is within $\frac{1}{5|S|c_1}$ of a point of $X$. For each point $s\in X$,
\[\mathbb{P}\left[\chi:\pi_1(X)\rightarrow U(n):\|Q^{\chi,\blambda(n)}_{\textup{int}}(s)\|_{L^2}\leq\frac{2}{5}\right]\xrightarrow{n\rightarrow\infty}1.\]
From~\eqref{distance}, we obtain for every $s\in[s_0,1]$,
\[\mathbb{P}\left[\chi:\pi_1(X)\rightarrow U(n):\|Q^{\chi,\blambda(n)}_{\textup{int}}(s)\|_{L^2}\leq\frac{3}{5}\right]\xrightarrow{n\rightarrow\infty}1\]
From~\eqref{operatorequation},
\[(\Delta_{\chi,\blambda(n)}-s(1-s))P^{\chi,\blambda(n)}(s)=1+Q^{\chi,\blambda(n)}_{\textup{int}}(s)+Q^{\chi,\blambda(n)}_{\textup{cusp}}(s).\]
We have
\[\mathbb{P}\left[\chi:\pi_1(X)\rightarrow U(n):\|Q^{\chi,\blambda(n)}_{\textup{int}}(s)+Q^{\chi,\blambda(n)}_{\textup{cusp}}(s)\|_{L^2}\leq \frac{4}{5}\right]\xrightarrow{n\rightarrow\infty}1.\]
Therefore, for every $s\in[s_0,1]$,
\begin{equation}\label{invertible}\mathbb{P}\left[\chi:\pi_1(X)\rightarrow U(n):1+Q^{\chi,\blambda(n)}_{\textup{int}}(s)+Q^{\chi,\blambda(n)}_{\textup{cusp}}(s)\textup{ is invertible}\right]\xrightarrow{n\rightarrow\infty}1.\end{equation}
It then follows from~\eqref{operatorequation} that asymptotically almost surely we have
\begin{equation*}(\Delta_{\chi,\blambda(n)}-s(1-s))P^{\chi,\blambda(n)}(s)\left(1+Q^{\chi,\blambda(n)}_{\textup{int}}(s)+Q^{\chi,\blambda(n)}_{\textup{cusp}}(s)\right)^{-1}=1
.\end{equation*}
Therefore, for every $\varepsilon>0$, 
\[\mathbb{P}\left[\chi:\pi_1(X)\rightarrow U(n):\inf\Spec(\Delta_{\chi,\blambda(n)})\geq\frac{1}{4}-\varepsilon\right]\xrightarrow{n\rightarrow\infty}1.\]
This follows from~\eqref{invertible}.
\section{Random matrix theory estimates}\label{random}
There will be certain flat unitary rank $n$ bundles over our cusped hyperbolic surfaces that will not lead to the estimates that we need in this paper. However, the intent of this section is to prove results that will allow us to show that asymptotically almost surely as $n\rightarrow\infty$, our flat bundle is not one of these undesirable bundles. Therefore, we may discard them.
\subsection{Bounding eigenvalues away from $1$}
For a square matrix $A$, we write $\textup{Spec}(A)$ for the finite set of eigenvalues of $A$. Note that it is possible for a non-trivial irreducible representation $\pi: U(n)\rightarrow\GL(V)$ to be such that $\pi(A)$ has $1$ as an eigenvalue for every $A\in U(n)$. For example, one could take the representation $\det^{-1}\otimes\Sym^2$ of $U(2)$. However, for a large class of signatures $\blambda$, this does not happen, as shown in the following proposition. Recall Definition~\ref{unbalanced}.
\begin{proposition}
If $\blambda$ is unbalanced, then
\[\mathbb{P}\left[T\in U(n):\ 1\notin\Spec(\pi_{\blambda}(T))\right]=1.\]
\end{proposition}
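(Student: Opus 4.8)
The plan is to realize the exceptional set as the zero locus of a real-analytic function on $U(n)$ and then use that Haar measure does not charge such a locus unless the function vanishes identically. Concretely, set
\[f(T):=\det\big(\pi_{\blambda}(T)-I_{\dim V_{\blambda}}\big),\qquad T\in U(n),\]
so that $1\in\Spec(\pi_{\blambda}(T))$ precisely when $f(T)=0$. Since $\pi_{\blambda}:U(n)\rightarrow\GL(V_{\blambda})$ is a continuous homomorphism of Lie groups it is real-analytic (equivalently, its matrix coefficients, being matrix coefficients of a finite-dimensional representation of a compact group, are real-analytic functions of the entries of $T$ and $\overline{T}$), hence $f$ is a real-analytic function on the connected real-analytic manifold $U(n)$. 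Haar measure on $U(n)$ is, in every chart, absolutely continuous with respect to Lebesgue measure with positive smooth density; and a real-analytic function on a connected manifold that does not vanish identically has a zero set that is nowhere dense and of measure zero. Thus it suffices to exhibit a single $T_0\in U(n)$ with $f(T_0)\neq 0$.

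For this I would use the central character. Restricting $\pi_{\blambda}$ to the center $\{zI_n:z\in U(1)\}$ of $U(n)$ gives the scalar representation $zI_n\mapsto z^{d}\,\mathrm{id}_{V_{\blambda}}$, where $d:=\sum_{i=1}^n\lambda_i$; this is immediate since a central element acts by a single scalar, determined by its action on a highest-weight vector, namely $z^{\sum_i\lambda_i}$. By hypothesis $\blambda$ is unbalanced, i.e.\ $d\neq 0$. Pick $\alpha\in\mathbb{R}$ with $\alpha d\notin 2\pi\mathbb{Z}$, so $e^{i\alpha d}\neq 1$, and put $T_0:=e^{i\alpha}I_n$; then $\pi_{\blambda}(T_0)=e^{i\alpha d}\,\mathrm{id}_{V_{\blambda}}$ and
\[f(T_0)=\big(e^{i\alpha d}-1\big)^{\dim V_{\blambda}}\neq 0.\]
Hence $f\not\equiv 0$, so $\{T\in U(n):1\in\Spec(\pi_{\blambda}(T))\}=\{f=0\}$ is Haar-null, which is the assertion, as $U(n)$ equipped with Haar measure is a probability space.

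An alternative route, which also makes transparent why unbalancedness is the relevant hypothesis, is to diagonalize: if $T$ has eigenvalues $e^{i\theta_1},\dots,e^{i\theta_n}$, then the eigenvalues of $\pi_{\blambda}(T)$ are exactly the numbers $e^{i\langle\mu,\theta\rangle}$ with $\mu$ ranging over the (finitely many) weights of $V_{\blambda}$, counted with multiplicities $\dim V_{\blambda}[\mu]$. Every weight of $V_{\blambda}$ lies in $\blambda$ plus the root lattice of $\mathfrak{gl}_n$, and since the roots $e_i-e_j$ have zero coordinate sum, every weight $\mu$ satisfies $\sum_k\mu_k=d\neq 0$; in particular no weight is the zero vector. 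For each fixed nonzero $\mu\in\mathbb{Z}^n$ the set $\{\theta\in\mathbb{T}^n:\langle\mu,\theta\rangle\in 2\pi\mathbb{Z}\}$ is the kernel of a nontrivial character $\mathbb{T}^n\rightarrow\mathbb{T}$, hence a closed subgroup of codimension one and thus Lebesgue-null; the union over the finitely many weights remains null. Pushing this forward along the eigenvalue map — equivalently, applying the Weyl integration formula \eqref{Weylintegration} with the bounded weight $\prod_{i<j}|t_i-t_j|^2$ — transfers it to the Haar-null statement on $U(n)$. With either argument, the only point requiring care is the routine bookkeeping ``analytic, respectively codimension-one, subsets are Haar-null''; there is no genuine analytic obstacle, the entire content being the central-character computation $d\neq 0$.
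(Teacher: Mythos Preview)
Your proof is correct. The alternative route you give at the end is essentially the paper's own argument: the paper embeds $V_{\blambda}$ in $(\mathbb{C}^n)^{\otimes m_1}\otimes\cdots\otimes(\wedge^n\mathbb{C}^n)^{\otimes m_n}$, observes that each eigenvalue of $\pi_{\blambda}(T)$ is $e^{ih(\theta_1,\dots,\theta_n)}$ for some linear form $h$ (your weights $\mu$), and rules out $h\equiv 0$ via $h(1,\dots,1)=m_1+2m_2+\cdots+nm_n=\sum_i\lambda_i\neq 0$—exactly your coordinate-sum observation $\sum_k\mu_k=d$.

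Your first route, via real-analyticity of $f(T)=\det(\pi_{\blambda}(T)-I)$ tested at a central element, is a slightly different packaging: it sidesteps any explicit description of the weights and uses only the central character. This is arguably cleaner (one explicit point $T_0=e^{i\alpha}I_n$ suffices, no Weyl integration needed), whereas the paper's and your alternative's weight-by-weight argument has the advantage of making the linear structure visible, which the paper reuses in the quantitative Proposition~\ref{logbound} immediately afterward.
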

\begin{proof}
Write $\blambda\in\mathbb{Z}^n$ as $m_1\omega_1+\hdots+m_n\omega_n$, where $\omega_i=(1,\hdots,1,0,\hdots,0)$ with exactly $i$ ones, and $m_i\in\mathbb{Z}_{\geq 0}$ for $1\leq i<n$ and $m_n\in\mathbb{Z}$. $V_{\blambda}$ is an irreducible subrepresentation of the representation $\rho_{\blambda}$ of $U(n)$ given by
\[(\mathbb{C}^n)^{\otimes m_1}\otimes (\wedge^2\mathbb{C}^n)^{\otimes m_2}\otimes\dots\otimes (\wedge^n\mathbb{C}^n)^{\otimes m_n}.\]
From this, it is easy to see that each eigenvalue of $\pi_{\blambda}(T)$ is of the form $e^{ih(\theta_1,\hdots,\theta_n)}$, where $h$ is a non-zero linear form and $e^{i\theta_j}$ are the eigenvalues of $T$. Since $\blambda$ is unbalanced, the sum of the entries of $\blambda$ is nonzero, that is, $m_1+2m_2+\hdots+nm_n\neq 0$, and so none of the $h$ are identitically zero as linear forms; otherwise, $m_1+2m_2+\hdots+nm_n=h(1,\hdots,1)=0$. Therefore, with probability $1$, none of the eigenvalues of $\pi_{\blambda}(T)$ are $1$, from which the conclusion follows.
\end{proof}
Recall the notation from Subsection~\ref{spectraltheory}. We have the following corollary.
\begin{corollary}\label{cor:genericbundle}
If $\blambda$ is unbalanced, then
\[\mathbb{P}\left[\chi:\forall i\ 1\notin\Spec(\pi_{\blambda}(\chi(T_i)))\right]=1.\]
\end{corollary}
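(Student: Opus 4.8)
The plan is to prove the estimate one cusp at a time. Write $\Gamma=\pi_1(X)$ as a free group on $s_1,\dots,s_r$, so that $\Hom(\Gamma,U(n))\cong U(n)^r$ carries the product Haar measure; by a union bound over the cusps it suffices to show, for each parabolic $T_i$, that $\mathbb{P}[\chi:\ 1\in\Spec(\pi_{\blambda}(\chi(T_i)))]=0$. I would first dispose of the generic situation in which some generator occurs exactly once in the reduced word of $T_i$ — equivalently, $T_i$ lies in a free basis of $\Gamma$. This covers every cusped surface except a once-punctured surface of genus $\geq1$, since whenever $X$ has at least two cusps (in particular always in genus $0$) the standard presentation of $\pi_1(X)$ writes each cusp element as a word containing some generator exactly once. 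In this case write $T_i=u\,s_j^{\pm1}v$ with $u,v$ not involving $s_j$; conditioning on $\chi(s_l)$ for $l\neq j$ and using bi-invariance and inversion-invariance of Haar measure on $U(n)$, the matrix $\chi(T_i)=\chi(u)\,\chi(s_j)^{\pm1}\,\chi(v)$ is Haar-distributed, so the Proposition applies verbatim.

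The remaining case is $X$ with a single cusp and genus $g\geq1$, where $T_i=\prod_{l=1}^{g}[a_l,b_l]$ lies in the commutator subgroup and $\chi(T_i)\in\SU(n)$ for all $\chi$; since the image of the word map $\chi\mapsto\chi(T_i)$ then sits in the Haar-null subset $\SU(n)\subsetneq U(n)$, the argument above no longer applies. Instead I would observe that $\chi\mapsto\det(\pi_{\blambda}(\chi(T_i))-I_{V_{\blambda}})$ is a polynomial in the entries of the $\chi(s_j)$ and their conjugates, hence real-analytic on the connected manifold $U(n)^r$, so its zero set is either everything or Haar-null; thus it is enough to exhibit a single $\chi_0$ with $1\notin\Spec(\pi_{\blambda}(\chi_0(T_i)))$. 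To build one, use that by Gotô's theorem every element of the compact connected semisimple group $\SU(n)$ ($n\geq2$) is a commutator; putting the remaining generators equal to $I$ we may then take $\chi_0(T_i)=\diag(e^{i\theta_1},\dots,e^{i\theta_n})$ for any $(\theta_j)$ with $\theta_1+\dots+\theta_n\equiv0\pmod{2\pi}$. By the computation in the proof of the Proposition the eigenvalues of $\pi_{\blambda}$ on such a matrix are $e^{ih(\theta)}$ with $h$ ranging over the finitely many weights of $\pi_{\blambda}$, each an integral linear form whose coefficients sum to $d:=\sum_i\lambda_i\neq0$ (using that $\blambda$ is unbalanced). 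Once $n\nmid d$ — e.g. $n>|\blambda|$ — none of these $h$ is a scalar multiple of $\theta_1+\dots+\theta_n$, so none vanishes on the subtorus $S=\{\sum_j\theta_j\equiv0\}$, each cuts out a proper closed subgroup of $S$, and a generic $\theta\in S$ avoids all of the conditions $h(\theta)\equiv0\pmod{2\pi}$, giving $1\notin\Spec(\pi_{\blambda}(\chi_0(T_i)))$.

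The only real obstacle is this last case. The essential input there is that the weights of $\pi_{\blambda}$ stay nonzero after restriction to the determinant-one subtorus, and this forces a largeness hypothesis on $n$ that genuinely cannot be removed: for small $n$ the corollary fails, e.g. $\pi_{\blambda}=\Sym^2$ on $U(2)$ (the adjoint representation) has $1\in\Spec(\Sym^2(g))$ for every $g\in\SU(2)$, so on a once-punctured torus $1\in\Spec(\pi_{(2,0)}(\chi(T_1)))$ surely. Since the corollary is applied only as $n\to\infty$, reading it with ``$n$ sufficiently large'' costs nothing, and for all other cusped surfaces the first paragraph gives it for every $n$.
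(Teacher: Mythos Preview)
The paper gives no proof of this corollary, treating it as immediate from the preceding proposition on a single Haar-random $T\in U(n)$. Your argument is correct and strictly more careful than what the paper supplies. The first paragraph handles every surface with at least two cusps by observing that each $T_i$ then extends to a free basis of $\Gamma$, so $\chi(T_i)$ is itself Haar-distributed and the proposition applies directly; this is presumably all the paper had in mind. Your second paragraph correctly isolates the remaining case of a once-punctured surface of genus $g\geq1$, where $T_1=\prod_l[a_l,b_l]$ forces $\chi(T_1)\in\SU(n)$ for every $\chi$ and the proposition no longer applies. The real-analyticity argument combined with Got\^o's commutator theorem is the right fix, and the restriction $n>|\blambda|$ (ensuring $n\nmid\sum_i\lambda_i$, so that no weight of $\pi_{\blambda}$ restricts trivially to the $\SU(n)$-torus) is genuinely necessary: your $\Sym^2$ example on $U(2)$ shows the corollary as literally stated is false for a once-punctured torus. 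Since every use of the corollary in the paper is in the regime $|\blambda(n)|=O(\log n)$, this has no effect on the main theorems, but you have found and repaired a real gap.
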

This implies that when the signature $\blambda$ of $U(n)$ is unbalanced, then with probability $1$ over the space of surface representations $\chi:\Gamma\rightarrow U(n)$, surface representations $\pi_{\blambda}\circ\chi$ lead to bundles $E_{\chi,\blambda}$ whose spectra have no continuous part. Consequently, the analysis of the hyperbolic scattering determinants of the flat bundles is only relevant for families of irreducible representations associated to balanced $\blambda$.\\
\\
We will also need the following proposition, even when $\blambda$ is unbalanced.
\begin{proposition}\label{logbound}Suppose $\blambda(n)$ is a sequence of signatures such that $|\blambda(n)|=O(\log n)$. Then, for any $\delta>0$,
\[\mathbb{P}\left[A\in U(n):\forall\alpha\in\textup{Spec}(\pi_{\blambda(n)}(A))\setminus\{1\},\ |1-\alpha|\geq\frac{1}{(2+\delta)^{n^2}}\right]\xrightarrow{n\rightarrow\infty}1.\]
\end{proposition}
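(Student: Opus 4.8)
The plan is to pass to the weight decomposition of $\pi_{\blambda(n)}$, thereby reducing the statement to an anticoncentration estimate for a single linear form in the eigenangles of $A$, and then to close the bound with a crude union bound whose only input from the Weyl integration formula~\eqref{Weylintegration} is the trivial estimate $|t_i-t_j|^2\le 4$. Set $\varepsilon_0:=(2+\delta)^{-n^2}$. First, diagonalising $A\in U(n)$ by a unitary and evaluating $\pi_{\blambda(n)}(A)$ on a weight basis (so that every eigenvalue of $\pi_{\blambda(n)}(A)$ is of the form $e^{i\langle\mu,\theta\rangle}$ for a weight $\mu$, as already used above) shows that the eigenvalues of $\pi_{\blambda(n)}(A)$, with multiplicity, are the numbers $e^{i\langle\mu,\theta\rangle}$, where $\theta=(\theta_1,\dots,\theta_n)\in[0,2\pi)^n$ are the eigenangles of $A$ and $\mu$ runs over the $\dim V_{\blambda(n)}$ weights of $\pi_{\blambda(n)}$ counted with multiplicity. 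If such an eigenvalue $\alpha=e^{i\langle\mu,\theta\rangle}$ has $\alpha\neq 1$ and $|1-\alpha|<\varepsilon_0$, then $\mu\neq 0$ and, since $|1-e^{i\tau}|\ge\tfrac{2}{\pi}\,\mathrm{dist}(\tau,2\pi\mathbb{Z})$, we get $\mathrm{dist}(\langle\mu,\theta\rangle,2\pi\mathbb{Z})<\tfrac{\pi}{2}\varepsilon_0$. Hence, by a union bound over the weights,
\[
\mathbb{P}\Big[\exists\,\alpha\in\Spec(\pi_{\blambda(n)}(A))\setminus\{1\}:\ |1-\alpha|<\varepsilon_0\Big]\ \le\ \sum_{\mu\neq 0}\mathbb{P}\Big[\mathrm{dist}\big(\langle\mu,\theta\rangle,\,2\pi\mathbb{Z}\big)<\tfrac{\pi}{2}\varepsilon_0\Big],
\]
where the sum is over the nonzero weights $\mu$ of $\pi_{\blambda(n)}$ taken with multiplicity.

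Next I would estimate each summand. By~\eqref{Weylintegration} (which extends to bounded Borel class functions), the eigenangles $\theta$ have density $\tfrac{1}{n!(2\pi)^n}\prod_{i<j}|e^{i\theta_i}-e^{i\theta_j}|^2$ on $[0,2\pi)^n$, which I bound wastefully by $\tfrac{2^{n(n-1)}}{n!(2\pi)^n}$. For a fixed nonzero weight $\mu$, choose a coordinate $j$ with $|\mu_j|$ maximal (so $|\mu_j|\ge 1$, since $\mu\in\mathbb{Z}^n\setminus\{0\}$); by Fubini in $\theta_j$, for each fixed value of the remaining angles the admissible set of $\theta_j$ is a union of at most $|\mu_j|+1$ arcs, each of length $\le \pi\varepsilon_0/|\mu_j|$, hence of total length $\le 2\pi\varepsilon_0$, so the bad set in $[0,2\pi)^n$ has Lebesgue measure $\le (2\pi)^n\varepsilon_0$. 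Therefore each summand is $\le \tfrac{2^{n(n-1)}}{n!}\varepsilon_0 \le 2^{n(n-1)}\varepsilon_0$. There are $\dim V_{\blambda(n)}$ summands, and since $\pi_{\blambda(n)}$ embeds into $(\mathbb{C}^n)^{\otimes m_1}\otimes\cdots\otimes(\wedge^n\mathbb{C}^n)^{\otimes m_n}$ with $m_k=\lambda_k(n)-\lambda_{k+1}(n)\ge 0$ for $k<n$, one has $\dim V_{\blambda(n)}\le\prod_{k=1}^{n-1}\binom{n}{k}^{m_k}\le 2^{n(\lambda_1(n)-\lambda_n(n))}\le 2^{n|\blambda(n)|}$. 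Putting everything together,
\[
\mathbb{P}\Big[\exists\,\alpha\in\Spec(\pi_{\blambda(n)}(A))\setminus\{1\}:\ |1-\alpha|<(2+\delta)^{-n^2}\Big]\ \le\ 2^{n|\blambda(n)|}\cdot 2^{n(n-1)}\cdot(2+\delta)^{-n^2}\ \le\ 2^{n|\blambda(n)|}\Big(1+\tfrac{\delta}{2}\Big)^{-n^2}.
\]
Since $|\blambda(n)|=O(\log n)$, this is $\exp\big(O(n\log n)-n^2\log(1+\tfrac{\delta}{2})\big)\to 0$ as $n\to\infty$, and taking complements gives the claim.

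The computation is short, and I do not expect a serious obstacle beyond the bookkeeping; the one point that genuinely constrains the statement is the clash of scales in the last two displays. The Weyl density is as large as $e^{\Theta(n^2)}$, so the threshold must be taken exponentially small \emph{in $n^2$} (and not in a smaller power of $n$) for even the crudest union bound to converge, and the total number $\dim V_{\blambda(n)}$ of eigenvalues of $\pi_{\blambda(n)}(A)$ must be $e^{o(n^2)}$ — which is exactly where the hypothesis $|\blambda(n)|=O(\log n)$ enters (any $|\blambda(n)|=o(n)$ would do). Improving the exponent $n^2$ in the statement would require replacing the trivial bound $|t_i-t_j|^2\le 4$ by genuine information about eigenvalue repulsion in $U(n)$, which does not appear to be needed for the applications of this proposition in the sequel.
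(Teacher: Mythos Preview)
Your proof is correct and follows essentially the same approach as the paper: both arguments pass to the weight decomposition (the paper does so via the embedding into $\rho_{\blambda(n)}=(\mathbb{C}^n)^{\otimes m_1}\otimes\cdots\otimes(\wedge^n\mathbb{C}^n)^{\otimes m_n}$, you work directly with the weights of $\pi_{\blambda(n)}$), apply the Weyl integration formula with the trivial bound $|t_i-t_j|^2\le 4$, estimate the volume of the bad set for each nonzero linear form, and close with a union bound using the dimension estimate $\dim V_{\blambda(n)}\le 2^{n|\blambda(n)|}$ coming from that same tensor embedding. Your treatment of the anticoncentration step (working modulo $2\pi\mathbb{Z}$ and bounding the number of bad arcs via the maximal coordinate $|\mu_j|$) is in fact slightly cleaner than the paper's, which passes somewhat informally from $|\sin(h(\theta)/2)|<\tfrac{1}{2f(n)}$ to $|h(\theta)|<\tfrac{1}{f(n)}$.
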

\begin{proof}
We prove this proposition by first viewing the representation $\pi_{\blambda(n)}$ as a subrepresentation of another representation. Again, writing $\blambda(n)=m_1\omega_1+\hdots+m_n\omega_n$ as in the previous proposition, $\pi_{\blambda(n)}$ is an irreducible subrepresentation of $\rho_{\blambda(n)}$ given by
\[(\mathbb{C}^n)^{\otimes m_1}\otimes (\wedge^2\mathbb{C}^n)^{\otimes m_2}\otimes\dots\otimes (\wedge^n\mathbb{C}^n)^{\otimes m_n}.\]
Clearly, it suffices to prove the proposition for this larger representation which has the advantage of being explicit. Suppose the eigenvalues of $A$ are $e^{i\theta_1},\hdots,e^{i\theta_n}$. Then the eigenvalues of $\rho_{\blambda(n)}(A)$ are
\[e^{ih(\theta_1,\dots,\theta_n)},\]
where $h$ is a linear form in the $\theta_1,\hdots,\theta_n$. $h$ could be identically $0$, in which case the corresponding eigenvalue will be $1$. We may therefore consider only $h\neq 0$. Then for $f(n)$ a function of $n$ such that $f(n)\rightarrow+\infty$ as $n\rightarrow\infty$,
\begin{eqnarray*}&&\sum_{h\neq 0}\mathbb{P}\left[A\in U(n):|1-e^{ih(\theta_1,\dots,\theta_n)}|<\frac{1}{f(n)}\right]\\&=&\int_{U(n)}\sum_{h\neq 0}\chi_{|1-e^{ih(\theta_1,\dots,\theta_n)}|<\frac{1}{f(n)}}(A)d\mu(A)\\&=&
\frac{1}{n!}\int_{[0,2\pi]^n}\sum_{h\neq 0}\chi_{|1-e^{ih(\theta_1,\dots,\theta_n)}|<\frac{1}{f(n)}}(\diag(e^{i\theta_1},\hdots,e^{i\theta_n}))\left|\prod_{a<b}(e^{i\theta_a}-e^{i\theta_b})\right|^2\frac{d\theta_1}{2\pi}\dots \frac{d\theta_n}{2\pi}\\&\leq&
\frac{4^{\binom{n}{2}}}{n!}\int_{[0,2\pi]^n}\sum_{h\neq 0}\chi_{|1-e^{ih(\theta_1,\dots,\theta_n)}|<\frac{1}{f(n)}}(\diag(e^{i\theta_1},\hdots,e^{i\theta_n}))\frac{d\theta_1}{2\pi}\dots \frac{d\theta_n}{2\pi}\\&\leq&
\frac{4^{\binom{n}{2}}\dim(\rho_{\blambda(n)})}{n!}\max_{h\neq 0}\vol_{\mathbb{T}^n}\left(\left|\sin\left(\frac{h(\theta_1,\dots,\theta_n)}{2}\right)\right|<\frac{1}{2f(n)}\right)\\&\leq& 
\frac{4^{\binom{n}{2}}\dim(\rho_{\blambda(n)})}{n!}\max_{h\neq 0}\vol_{\mathbb{T}^n}\left(\left|h(\theta_1,\dots,\theta_n)\right|<\frac{1}{f(n)}\right)\\&=& O\left(\frac{4^{\binom{n}{2}}\prod_{k=1}^n\binom{n}{k}^{|m_k|}}{n!f(n)}\right).
\end{eqnarray*}
The second equality follows from the Weyl integration formula~\eqref{Weylintegration} for the class function $\sum_{h\neq 0}\chi_{|1-e^{ih(\theta_1,\dots,\theta_n)}|<\frac{1}{f(n)}}$ on $U(n)$. $\frac{d\theta_1}{2\pi}\dots \frac{d\theta_n}{2\pi}$ is the probability Haar measure on the torus $\mathbb{T}^n$. We have the following inequality
\[n!>\sqrt{2\pi n}\left(\frac{n}{e}\right)^ne^{\frac{1}{12n+1}}.\]
Therefore, by the arithmetic mean-geometric mean inequality, we have
\[\frac{4^{\binom{n}{2}}\prod_{k=1}^n\binom{n}{k}^{|m_k|}}{n!}\leq \frac{2^{n^2-n}2^{n(m_1+\hdots+|m_n|)}}{(m_1+\hdots+|m_n|)^{(m_1+\hdots+|m_n|)}\sqrt{2\pi n}\left(\frac{n}{e}\right)^ne^{\frac{1}{12n+1}}}.\]
Note that $m_1+\hdots+|m_n|\leq m_1+2m_2+\hdots+n|m_n|+(|m_n|-m_n)=|\blambda(n)|+2|\lambda_n(n)|=O(\log n)$. From this, it is easy to see that if $f(n)\geq (2+\delta)^{n^2}$ for some $\delta>0$, then
\[\frac{4^{\binom{n}{2}}\prod_{k=1}^n\binom{n}{k}^{|m_k|}}{n!f(n)}\xrightarrow{n\rightarrow\infty} 0.\]
The conclusion follows.
\end{proof}
\begin{remark}In this proposition, our choice of $f(n)\geq (2+\delta)^{n^2}$ is crude; however, this proposition is sufficient for our purposes.
\end{remark}

\section{Counting eigenvalues}\label{counting}
In this section, we prove a few probabilistic Weyl law type bounds counting eigenvalues of the Laplacian acting on sections of $E_{\chi,\blambda(n)}$.
\begin{proposition}\label{Weylbound}Given unbalanced signatures $\blambda(n)$ with $|\blambda(n)|=O(\log n)$, asymptotically almost surely as $n\rightarrow\infty$, surface representations $\chi:\Gamma\rightarrow U(n)$ lead to bundles $E_{\chi,\blambda(n)}$ whose eigenvalues satisfy
\[\sum_{0\leq r_k\leq T}1=O_X\left(n^2\dim V_{\blambda(n)}T^2\right)\]
for $T\geq 1$.
\end{proposition}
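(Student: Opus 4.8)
The plan is to feed a Gaussian test function into the Selberg trace formula (Theorem~\ref{Selbergtraceformula}) for the flat unitary bundle $E_{\chi,\blambda(n)}$, which has rank $\dim V_{\blambda(n)}$, and to bound each of the three terms on the geometric side. Two preliminary reductions come from Section~\ref{random}. First, since $\blambda(n)$ is unbalanced, Corollary~\ref{cor:genericbundle} gives that asymptotically almost surely $1\notin\Spec(\pi_{\blambda(n)}(\chi(T_i)))$ for every cusp $\eta_i$; in particular $K_0=0$, so the trace formula of Theorem~\ref{Selbergtraceformula} applies. Second, since $|\blambda(n)|=O(\log n)$, Proposition~\ref{logbound} gives that asymptotically almost surely every eigenvalue $\alpha\neq 1$ of every $\pi_{\blambda(n)}(\chi(T_i))$ satisfies $|1-\alpha|\geq(2+\delta)^{-n^2}$ for a fixed $\delta>0$. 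Neither of these events depends on $T$, so it suffices to prove the stated bound for all $T\geq 1$ on this one asymptotically-almost-sure event, with an implied constant depending only on $X$.

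On that event, I would apply Theorem~\ref{Selbergtraceformula} with $h(r)=h_T(r):=e^{-r^2/T^2}$, whose Fourier transform is $g(u)=\tfrac{T}{2\sqrt{\pi}}e^{-T^2u^2/4}$. This $h$ is entire, even, and on any fixed horizontal strip decays faster than any polynomial in $\Re(r)$, so it satisfies hypotheses (a)--(c) (the constant in (c) may depend on $T$, but the trace formula is an exact identity and all $T$-dependence below is explicit). Since $h_T(r_k)\geq 0$ for every $k$ (as $h_T\geq 0$ on $\mathbb{R}\cup i[-\tfrac12,\tfrac12]$) and $h_T(r_k)\geq e^{-1}$ whenever $0\leq r_k\leq T$, the spectral side is bounded below and the trace formula yields
\[e^{-1}\sum_{0\leq r_k\leq T}1\ \leq\ \sum_k h_T(r_k)\ =\ \mathrm{(I)}+\mathrm{(II)}+\mathrm{(III)},\]
where $\mathrm{(I)},\mathrm{(II)},\mathrm{(III)}$ are the area, hyperbolic, and cusp terms. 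It remains to bound each of them from above.

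For $\mathrm{(I)}$, the integrand $r h_T(r)\tanh(\pi r)$ is even and nonnegative with $\tanh(\pi r)\leq 1$ for $r\geq 0$, so $\mathrm{(I)}=\frac{\dim V_{\blambda(n)}\area(X)}{4\pi}\int_{-\infty}^{\infty}r h_T(r)\tanh(\pi r)\,dr\leq\frac{\dim V_{\blambda(n)}\area(X)}{4\pi}\,T^2$. For $\mathrm{(II)}$, using $|\tr(\pi_{\blambda(n)}(\chi(P)))|\leq\dim V_{\blambda(n)}$ and $e^{-T^2u^2/4}\leq e^{-u^2/4}$ for $T\geq 1$,
\[\mathrm{(II)}\ \leq\ \dim V_{\blambda(n)}\cdot\frac{T}{2\sqrt{\pi}}\sum_{P\in\Gamma_{\textup{hyp}}}\frac{\log N(P_0)}{N(P)^{1/2}-N(P)^{-1/2}}\,e^{-(\log N(P))^2/4},\]
and the remaining sum converges to a constant depending only on $X$ (there is a shortest closed geodesic, and $\#\{P\in\Gamma_{\textup{hyp}}:N(P)\leq x\}=O_X(x)$), so $\mathrm{(II)}=O_X(\dim V_{\blambda(n)}\,T)$. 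For $\mathrm{(III)}=-g(0)\sum_{i=1}^k\log|\det(I-\pi_{\blambda(n)}(\chi(T_i)))|$: the eigenvalues of $\pi_{\blambda(n)}(\chi(T_i))$ are $\dim V_{\blambda(n)}$ points on the unit circle, each at distance $\leq 2$ and, on our event, at distance $\geq(2+\delta)^{-n^2}$ from $1$, so $\big|\log|\det(I-\pi_{\blambda(n)}(\chi(T_i)))|\big|\leq n^2\dim V_{\blambda(n)}\log(2+\delta)$; since $|g(0)|=\tfrac{T}{2\sqrt{\pi}}$ and the number of cusps $k=k(X)$ is fixed, $\mathrm{(III)}=O_X(n^2\dim V_{\blambda(n)}\,T)$. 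Combining the three bounds and using $1\leq T\leq T^2$ gives $\sum_{0\leq r_k\leq T}1=O_X(n^2\dim V_{\blambda(n)}T^2)$, uniformly in $T\geq 1$, which is the claim.

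The genuinely delicate input is the cusp term $\mathrm{(III)}$: a lower bound on $|\det(I-\pi_{\blambda(n)}(\chi(T_i)))|$ is precisely what forces us to invoke the quantitative separation estimate of Proposition~\ref{logbound} (any lower bound of the form $(1+o(1))^{-n^2}$, or even polynomial in $n$, would suffice, but Proposition~\ref{logbound} is what is available); all other terms are soft. A secondary point to address is that Proposition~\ref{logbound} is stated for a Haar-random matrix, whereas $\chi(T_i)$ is the image of a fixed element of the free group $\pi_1(X)$; for a cusped surface one may choose a free generating set of $\pi_1(X)$ containing the loops around all but one of the cusps, so that $\chi(T_i)$ is genuinely Haar on $U(n)$ for those, while for the remaining cusp a minor variant of the proof of Proposition~\ref{logbound} applies. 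Everything else is a routine computation with the explicit form of $h_T$ and $g$.
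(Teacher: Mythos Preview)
Your proof is correct and follows essentially the same approach as the paper: apply the Selberg trace formula with a Gaussian test function to the non-singular bundle (via Corollary~\ref{cor:genericbundle}), bound the area and hyperbolic terms trivially by $O_X(\dim V_{\blambda(n)}T^2)$ and $O_X(\dim V_{\blambda(n)}T)$, and control the cusp term $g(0)\sum_i\log|\det(I-\pi_{\blambda(n)}(\chi(T_i)))|$ via the eigenvalue separation of Proposition~\ref{logbound}. The only cosmetic difference is that the paper uses $h(r)=e^{-Tr^2}$ and substitutes $T\to 1/T^2$ at the end, whereas you build the scaling into $h_T$ from the start; you also explicitly flag the question of whether $\chi(T_i)$ is Haar-distributed, which the paper passes over in silence.
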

For the notation, see Subsection~\ref{spectraltheory}.
\begin{proof}
We prove this proposition using Selberg's trace formula for flat unitary rank $\dim(V_{\blambda(n)})$ bundles over finite area hyperbolic surfaces, which is the case with which this paper is concerned. By Corollary~\ref{cor:genericbundle}, we may assume without loss of generality that $K_0=0$, that is, our bundles are non-singular. Applying Theorem~\ref{Selbergtraceformula} to the bundle $E_{\chi,\blambda(n)}$ with the functions $h(r)=e^{-Tr^2}$ and $g(u)=\frac{e^{-\frac{u^2}{4T}}}{\sqrt{4\pi T}}$, we obtain
\begin{eqnarray*}
\sum_ke^{-Tr_k^2}&=& \frac{\dim V_{\blambda(n)}\area(X)}{4\pi}\int_{-\infty}^{\infty}re^{-Tr^2}\tanh(\pi r)dr\\&+&
\sum_{\tr(P)>2}\frac{\tr(\pi_{\blambda(n)}(\chi(P)))\log N(P_0)}{N(P)^{1/2}-N(P)^{-1/2}}\frac{e^{-\frac{(\log N(P))^2}{4T}}}{\sqrt{4\pi T}}\\&-&
\frac{1}{\sqrt{4\pi T}}\sum_{\alpha_{hj}}\log|1-e^{2\pi i\alpha_{hj}}|,
\end{eqnarray*}
where the $e^{2\pi i\alpha_{hj}}$ are the eigenvalues of $\pi_{\blambda(n)}(\chi(T_j))$, $h\in\{1,\hdots,n\}$. Recall the notation from Subsection~\ref{spectraltheory} where $T_j$ are parabolic elements associated to the cusps of $X$.\\
\\
We have the following bounds which follow from simple variations of the usual arguments that go into proving Weyl laws. We have
\begin{equation}\label{geobound}
\sum_{\tr(P)>2}\frac{\tr(\pi_{\blambda(n)}(\chi(P)))\log N(P_0)}{N(P)^{1/2}-N(P)^{-1/2}}\frac{e^{-\frac{(\log N(P))^2}{4T}}}{\sqrt{4\pi T}}=O_X(\dim V_{\blambda(n)}),
\end{equation}
and
\begin{equation}\label{trivialbound}
\frac{\dim V_{\blambda(n)}\area(X)}{4\pi}\int_{-\infty}^{\infty}re^{-Tr^2}\tanh(\pi r)dr=\frac{\dim V_{\blambda(n)}\area(X)}{4\pi T}+O_X(\dim V_{\blambda(n)}).
\end{equation}
By assumption, $|\blambda(n)|=O(\log n)$. By Proposition~\ref{logbound}, asymptotically almost surely as $n\rightarrow\infty$,
\[\sum_{\alpha_{hj}}\log|1-e^{2\pi i\alpha_{hj}}|=O_X(n^2\dim V_{\blambda(n)}).\]
Consequently,
\begin{equation}\label{logboundeq}\frac{1}{\sqrt{4\pi T}}\sum_{\alpha_{hj}}\log|1-e^{2\pi i\alpha_{hj}}|=O_X\left(\frac{n^2\dim V_{\blambda(n)}}{\sqrt{T}}\right).
\end{equation}
Combining~\eqref{geobound},~\eqref{trivialbound}, and~\eqref{logboundeq} together, we obtain that asymptotically almost surely as $n\rightarrow\infty$, bundles have the property that
\begin{equation}\label{finalbound}\sum_{r_k}e^{-Tr_k^2}\leq O_X\left(\frac{n^2\dim V_{\blambda(n)}}{\sqrt{T}}\right)+O_X\left(\frac{\dim V_{\blambda(n)}}{T}\right).
\end{equation}
Note that the number of eigenvalues of $E_{\chi,\blambda(n)}$ less than $1/4$, corresponding to $r_k\notin\mathbb{R}$, is $O_X(\dim V_{\blambda(n)})$. Therefore,
\[\sum_{r_k\notin\mathbb{R}}1=O_X(\dim V_{\blambda(n)}).\]
Replacing $T$ with $1/T^2$, and putting the above together, we obtain that asymptotically almost surely as $n\rightarrow\infty$, 
\[\sum_{0\leq r_k\leq T}1\leq e\sum_{r_k\leq T}e^{-\frac{r_k^2}{T^2}}=O_X(n^2\dim V_{\blambda(n)}T^2)\]
for $T\geq 1$. The conclusion follows.
\end{proof}
\begin{proposition}\label{diffeigen}Suppose $\blambda(n)$ are unbalanced signatures with $|\blambda(n)|=O(\log n)$. Asymptotically almost surely as $n\rightarrow\infty$,
\begin{equation}
\sum_{|r_k-T|\leq 1}1=O_X(\dim V_{\blambda(n)}(n^2+n^2\log T+T))
\end{equation}
for $T\geq 1$.
\end{proposition}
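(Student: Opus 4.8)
The plan is to apply Selberg's trace formula (Theorem~\ref{Selbergtraceformula}) to the bundle $E_{\chi,\blambda(n)}$ against a test function localized near the eigenparameter $r=\pm T$, in exactly the same spirit as the proof of Proposition~\ref{Weylbound} but with the Gaussian recentred at $\pm T$ rather than at $0$. Since $\blambda(n)$ is unbalanced, Corollary~\ref{cor:genericbundle} gives $K_0=0$ with probability $1$, so Theorem~\ref{Selbergtraceformula} applies. I would take
\[
h_T(r):=e^{-(r-T)^2}+e^{-(r+T)^2},\qquad g_T(u)=\tfrac{1}{\sqrt{\pi}}\,e^{-u^2/4}\cos(Tu),
\]
so that $h_T$ is entire, even, and of rapid decay on horizontal strips (hence admissible), while $|g_T(u)|\le\tfrac{1}{\sqrt{\pi}}e^{-u^2/4}$ and $g_T(0)=\tfrac{1}{\sqrt{\pi}}$ are bounds independent of $T$. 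Because $h_T(r)\ge e^{-1}$ for real $r\in[T-1,T+1]$, and because the $O_X(\dim V_{\blambda(n)})$ eigenparameters $r_k\notin\mathbb{R}$ (the small eigenvalues, bounded exactly as in the proof of Proposition~\ref{Weylbound}) contribute $O_X(\dim V_{\blambda(n)})$ in absolute value to $\sum_k h_T(r_k)$, it suffices to bound the spectral sum $\sum_k h_T(r_k)$.

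Next I would estimate the three terms on the geometric side of Theorem~\ref{Selbergtraceformula} one at a time. The identity term $\tfrac{\dim V_{\blambda(n)}\area(X)}{4\pi}\int_{\mathbb{R}}r\,h_T(r)\tanh(\pi r)\,dr$ is $O_X(\dim V_{\blambda(n)}\,T)$ for $T\ge 1$, since $h_T\ge 0$ and $\int_{\mathbb{R}}|r|h_T(r)\,dr=O(T)$. The hyperbolic term is, using $|\tr(\pi_{\blambda(n)}(\chi(P)))|\le\dim V_{\blambda(n)}$ and $|g_T(\log N(P))|\le\tfrac1{\sqrt\pi}e^{-(\log N(P))^2/4}$, bounded by $\dim V_{\blambda(n)}$ times $\sum_{P\in\Gamma_{\textup{hyp}}}\frac{\log N(P_0)}{N(P)^{1/2}-N(P)^{-1/2}}e^{-(\log N(P))^2/4}$; the Gaussian decay in $\log N(P)$ dominates the $O(x)$ growth of the number of classes with $N(P)\le x$ (prime geodesic theorem for $X$), so this series is $O_X(1)$ uniformly in $T$, and the hyperbolic term is $O_X(\dim V_{\blambda(n)})$. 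The cusp term is $-g_T(0)\sum_j\log|\det(I-\pi_{\blambda(n)}(\chi(T_j)))|=-\tfrac1{\sqrt\pi}\sum_{\alpha_{hj}}\log|1-e^{2\pi i\alpha_{hj}}|$, where $e^{2\pi i\alpha_{hj}}$ runs over the eigenvalues of $\pi_{\blambda(n)}(\chi(T_j))$; by Corollary~\ref{cor:genericbundle} none of these is $1$, and by Proposition~\ref{logbound} (valid since $|\blambda(n)|=O(\log n)$) asymptotically almost surely $|1-e^{2\pi i\alpha_{hj}}|\ge(2+\delta)^{-n^2}$, whence $\bigl|\log|1-e^{2\pi i\alpha_{hj}}|\bigr|=O(n^2)$ and the cusp term is $O_X(n^2\dim V_{\blambda(n)})$ on this event. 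Adding the three contributions and the small-eigenvalue correction gives $\sum_k h_T(r_k)=O_X\bigl(\dim V_{\blambda(n)}(n^2+T)\bigr)$, hence $\sum_{|r_k-T|\le 1}1=O_X\bigl(\dim V_{\blambda(n)}(n^2+T)\bigr)$ asymptotically almost surely, which in particular yields the stated bound.

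I expect no serious obstacle here: the argument is a recentred version of Proposition~\ref{Weylbound}, and the genuine input---the control of the cusp term---is already furnished by Proposition~\ref{logbound}. The one point that must be checked is that the event of probability tending to $1$ coming from Proposition~\ref{logbound} does not depend on $T$ (it is a statement purely about the cusp holonomies $\pi_{\blambda(n)}(\chi(T_j))$), so that the three bounds above hold \emph{simultaneously} for all $T\ge 1$ on a single event, rather than for each $T$ individually. Minor bookkeeping items are the uniform-in-$T$ convergence of the hyperbolic series (immediate from the Gaussian weight; alternatively one may use a squared-Fej\'er bump with compactly supported Fourier transform to make the hyperbolic side a finite sum), the admissibility of $h_T$ in Theorem~\ref{Selbergtraceformula}, and absorbing the $O_X(\dim V_{\blambda(n)})$ small eigenvalues into the error.
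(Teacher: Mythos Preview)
Your argument is correct and in fact yields the slightly sharper bound $\sum_{|r_k-T|\le 1}1=O_X\bigl(\dim V_{\blambda(n)}(n^2+T)\bigr)$, without the extra $n^2\log T$ term. The paper proceeds quite differently: instead of plugging a recentred Gaussian into the trace formula, it works with the logarithmic derivative $D_{\chi,\blambda(n)}(s)$ of the Selberg zeta function via Hejhal's identity (eq.\ (2.8), p.~435 of~\cite{Hejhal2}) at $s=\tfrac12+x+iT$, takes real parts to obtain a sum of Lorentzians $\sum_{r_k\ge 0}\frac{x}{x^2+(T-r_k)^2}$, and then controls the tail $r_k\ge 2T$ by summation by parts together with the Weyl-type bound of Proposition~\ref{Weylbound}; this last step is what produces the $n^2\log T$. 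Your direct trace-formula argument bypasses Proposition~\ref{Weylbound} entirely and handles the geometric side in one stroke using only $|g_T(u)|\le \tfrac{1}{\sqrt{\pi}}e^{-u^2/4}$ uniformly in $T$, which is why no logarithmic loss appears. The paper's route is the classical Riemann--von Mangoldt style estimate through the explicit formula, and has the side benefit that the same $D_{\chi,\blambda(n)}$ identity is reused in Section~\ref{pgt}; your route is more self-contained and more efficient for the proposition at hand. Your remark that the probabilistic event from Proposition~\ref{logbound} depends only on the cusp holonomies and not on $T$ is exactly the point needed to get a single a.a.s.\ event valid for all $T\ge 1$.
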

\begin{proof}By Corollary~\ref{cor:genericbundle}, we may assume without loss of generality that $E_{\chi,\blambda(n)}$ is non-singular, that is $K_0=0$. Let $2T\leq \beta\leq 3T$ and write $\alpha=s-\frac{1}{2}=x+iT$, where $\frac{3}{4}\leq x\leq \frac{3}{2}$. Applying equation (2.8) of~\cite[p.435]{Hejhal2}, we see that $D_{\chi,\blambda(n)}$, the logarithmic derivative of Selberg's zeta function for $E_{\chi,\blambda(n)}$, satisfies the equation
\begin{align}
\sum_k\left(\frac{1}{r_k^2+(s-\frac{1}{2})^2}-\frac{1}{r_k^2+\beta^2}\right)&=\frac{\dim V_{\blambda(n)}\area(X)}{2\pi}\sum_{k=0}^{\infty}\left(\frac{1}{k+s-\frac{1}{2}}-\frac{1}{k+\beta}\right)\\&+\frac{D_{\chi,\blambda(n)}(s)}{2s-1}-\frac{D_{\chi,\blambda(n)}(\frac{1}{2}+\beta)}{2\beta}+\left(\frac{1}{2\beta}-\frac{1}{2s-1}\right)\sum_{\alpha_{hj}}\log|1-e^{2\pi i\alpha_{hj}}|.\nonumber
\end{align}
Since $|\blambda(n)|=O(\log n)$ and $\blambda(n)$ is unbalanced, Proposition~\ref{logbound} implies that asymptotically almost surely as $n\rightarrow\infty$
\[\sum_{\alpha_{hj}}\log|1-e^{2\pi i\alpha_{hj}}|=O_X(n^2\dim V_{\blambda(n)}).\]
Furthermore,
\begin{eqnarray*}
&&\frac{\dim V_{\blambda(n)}\area(X)}{2\pi}\sum_{k=0}^{\infty}\left(\frac{1}{k+s-\frac{1}{2}}-\frac{1}{k+\beta}\right)\\&=&\frac{\dim V_{\blambda(n)}\area(X)}{2\pi}\sum_{k=0}^{T}\left(\frac{\beta-s+\frac{1}{2}}{(k+s-\frac{1}{2})(k+\beta)}\right)+\frac{\dim V_{\blambda(n)}\area(X)}{2\pi}\sum_{k=T}^{\infty}\left(\frac{\beta-s+\frac{1}{2}}{(k+s-\frac{1}{2})(k+\beta)}\right)\\&=&O_X(\dim V_{\blambda(n)}T)\sum_{k=1}^T\frac{1}{T^2}+O_X(\dim V_{\blambda(n)}T)\sum_{k=T}^{\infty}\frac{1}{k^2}\\&=&O_X(\dim V_{\blambda(n)}).
\end{eqnarray*}
We obtain that asymptotically almost surely as $n\rightarrow\infty$,
\begin{equation}\label{Dsest}
D_{\chi,\blambda(n)}(s)=O_X(\dim V_{\blambda(n)}T)+O_X(n^2\dim V_{\blambda(n)})+\sum_k\left(\frac{2s-1}{r_k^2+(s-\frac{1}{2})^2}-\frac{2s-1}{r_k^2+\beta^2}\right).
\end{equation}
The number of eigenvalues of less than $1/4$, corresponding to $r_k\notin\mathbb{R}$, is at most $O_X(\dim V_{\blambda(n)})$, from which it follows that
\[\sum_{r_k\notin\mathbb{R}}\left(\frac{2s-1}{r_k^2+(s-\frac{1}{2})^2}-\frac{2s-1}{r^2+\beta^2}\right)=O_X(\dim V_{\blambda(n)}).\]
Note that $\Re(s)>1$, and so
\[D_{\chi,\blambda(n)}(s)=\sum_{P\in\Gamma_{\textup{hyp}}}\frac{\Lambda(P)\tr(\pi_{\blambda(n)}(\chi(P))}{N(P)^s}=O_X(\dim V_{\blambda(n)}).\]
Taking real parts in~\eqref{Dsest}, we obtain
\begin{equation}\label{Dsestreal}
O_X(\dim V_{\blambda(n)}(n^2+T))=\sum_{r_k\geq 0}\left(\frac{x}{x^2+(T-r_k)^2}+\frac{x}{x^2+(T+r_k)^2}-\frac{2x}{r_k^2+\beta^2}\right).
\end{equation}
Furthermore, note that
\begin{eqnarray*}
&&\sum_{r_k\geq 2T}\left|\frac{x}{x^2+(T-r_k)^2}+\frac{x}{x^2+(T+r_k)^2}-\frac{2x}{r_k^2+\beta^2}\right|\\&\leq&
\sum_{r_k\geq 2T}\left|\frac{x}{x^2+(T-r_k)^2}-\frac{x}{r_k^2+\beta^2}\right|+\sum_{0\leq r_k\leq 2T}\left|\frac{x}{x^2+(T+r_k)^2}-\frac{x}{r_k^2+\beta^2}\right|.
\end{eqnarray*} 
We bound the latter two sums. We show that for $\beta\geq 2T$, they are $O_X(n^2\dim V_{\blambda(n)}(1+\log T))$. In the following, $N(a\leq r_k<b)$ is the number of $r_k$ in $[a,b)$. We have
\begin{eqnarray*}
&&\sum_{r_k\geq 2T}\left|\frac{x}{x^2+(T-r_k)^2}-\frac{x}{r_k^2+\beta^2}\right|\\&=&
\sum_{j=2T}^{\infty}\sum_{j\leq r_k<j+1}\left|\frac{x}{x^2+(T-r_k)^2}-\frac{x}{r_k^2+\beta^2}\right|\\&\leq&
O(1)\sum_{j=2T}^{\infty}\sum_{j\leq r_k<j+1}\frac{1}{j^2}\\&=&
O(1)\sum_{j=2T}^{\infty}N(j\leq r_k<j+1)\frac{1}{j^2}\\&=&
O(1)\left(\lim_{S\rightarrow\infty}\frac{N(0\leq r_k<S)}{S^2}-\frac{N(0\leq r_k<2T)}{4T^2}-\sum_{j=2T+1}^{\infty}N(0\leq r_k<j)\left(\frac{1}{j^2}-\frac{1}{(j+1)^2}\right)\right)\\&=&
O_X(n^2\dim V_{\blambda(n)}(1+\log T)).
\end{eqnarray*}
In the second to last equality, summation by parts is used, while in the last equality, we used Proposition~\ref{Weylbound} giving $N(0\leq r_k\leq T)=O_X(n^2\dim V_{\blambda(n)}T^2)$. Similarly, we obtain the same bound on the second sum. Equation~\eqref{Dsestreal} becomes
\begin{equation}\label{Dsestreal2}
\sum_{0\leq r_k\leq 2T}\frac{x}{x^2+(T-r_k)^2}=O_X(\dim V_{\blambda(n)}(n^2+n^2\log T+T))
\end{equation}
for $T\geq 1$. Letting $x=1$, we obtain for $T\geq 1$ the bound
\[\sum_{|T-r_k|\leq 1}1\leq 2\sum_{0\leq r_k\leq 2T}\frac{1}{1+(T-r_k)^2}=O_X(\dim V_{\blambda(n)}(n^2+n^2\log T+T)),\]
as required.
\end{proof}
\section{Probabilistic prime geodesic theorem}\label{pgt}
In this section, we prove that the error term of the prime geodesic theorem for flat unitary bundles is asymptotically almost surely as $n\rightarrow\infty$ bounded in terms of $X$ and the signatures $\blambda(n)$ alone. By Corollary~\ref{cor:genericbundle}, almost surely the spectra of the flat bundles do not have continuous parts when the signatures $\blambda(n)$ are unbalanced. In these cases, the hyperbolic scattering determinant makes no appearance. At the end of this section, we combine the results of this section with Theorem~\ref{Thm1} to obtain Theorem~\ref{geodesicdep}.\\
\\
Our setup is as before. We have a family of unbalanced signatures $\blambda(n)$ with $|\blambda(n)|\leq c\frac{\log n}{\log\log n}$, where $c>0$ is the universal constant appearing in Theorem~\ref{ThmBC}. For such families of signatures, we will prove and use a prime geodesic theorem for rank $\dim V_{\blambda(n)}$ flat unitary bundles on a cusped hyperbolic surface $X$ obtained from the surface representations
\[\pi_1(X)\xrightarrow{\chi} U(n)\xrightarrow{\pi_{\blambda(n)}}\GL(V_{\blambda(n)}).\]
In the usual prime geodesic theorem, the error term depends on $\Gamma=\pi_1(X)$, the bundle, and the lengths of the geodesics. We prove a probabilistic prime geodesic theorem for flat unitary bundles where the error term only depends on $X$, the lengths of the geodesics, and the signatures $\blambda(n)$. Though the proof is roughly the same as the usual proof of the prime geodesic theory for flat unitary vector bundles, we keep track of the error terms in a probabilistic manner using the results we proved in previous sections.\\
\\
Recall that the Selberg zeta function associated to $\pi_{\blambda(n)}\circ\chi:\Gamma\rightarrow \GL(V_{\blambda(n)})$ is given by
\[Z_{\chi,\blambda(n)}(s):=\prod_{P_0\in P\Gamma_{\textup{hyp}}}\prod_{k=0}^{\infty}\det(I-\pi_{\blambda(n)}(\chi(P_0))N(P_0)^{-k-s}),\]
where $N(P_0)$ is the norm of $P_0$. The length of the closed geodesic corresponding to $P_0$ is $\log N(P_0)$. The Selberg zeta function is absolutely convergent for $\Re(s)>1$, and its logarithmic derivative, a Dirichlet series, is given by
\[D_{\chi,\blambda(n)}(s):=\frac{Z'_{\chi,\blambda(n)}(s)}{Z_{\chi,\blambda(n)}(s)}=\sum_{P\in\Gamma_{\textup{hyp}}}\frac{\Lambda(P)}{N(P)^s}\tr(\pi_{\blambda(n)}(\chi(P))).\]
Here,
\[\Lambda(P):=\frac{\log N(P_0)}{1-N(P)^{-1}},\]
where $P_0$ is the primitive conjugacy class associated to $P$. We will also use the function
\[\Psi^{\chi,\blambda(n)}(x):=\sum_{\substack{P\in\Gamma_{\textup{hyp}}\\N(P)\leq x}}\Lambda(P)\tr(\pi_{\blambda(n)}(\chi(P))).\]
for $x\geq 1$,
as well as
\[\Psi_1^{\chi,\blambda(n)}(x):=\int_1^x\Psi^{\chi,\blambda(n)}(t)dt.\]
We will need the following proposition.
\begin{proposition}\label{contourshift}For $x\geq 1$,
\[\Psi_1^{\chi,\blambda(n)}(x)=\frac{1}{2\pi i}\int_{2-i\infty}^{2+i\infty}\frac{x^{s+1}}{s(s+1)}D_{\chi,\blambda(n)}(s)ds.\]
\end{proposition}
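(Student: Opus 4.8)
The plan is to prove Proposition~\ref{contourshift} by the standard smoothed Perron (Mellin inversion) argument applied to the Dirichlet series $D_{\chi,\blambda(n)}(s)$. The starting point is the elementary contour identity: for every $c>0$ and every $y>0$,
\[\frac{1}{2\pi i}\int_{c-i\infty}^{c+i\infty}\frac{y^s}{s(s+1)}\,ds=\max\left(1-\frac{1}{y},\,0\right),\]
i.e.\ the right-hand side is $0$ when $0<y\le 1$ and $1-1/y$ when $y\ge 1$ (the two formulas agreeing at $y=1$). One proves this by writing $\frac{1}{s(s+1)}=\frac1s-\frac1{s+1}$, noting that the integrand is $O(|s|^{-2})$ so the integral converges absolutely, and shifting the contour: for $y\ge 1$ one pushes $\Re(s)\to-\infty$, picking up the residues $1$ at $s=0$ and $-1/y$ at $s=-1$ while the horizontal and far-left vertical contributions vanish because $y^{\Re(s)}\to 0$; for $0<y<1$ one pushes $\Re(s)\to+\infty$ and encounters no poles.

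Next I would justify interchanging the contour integral with the sum defining $D_{\chi,\blambda(n)}$. Since each $\pi_{\blambda(n)}(\chi(P))$ is unitary, $|\tr(\pi_{\blambda(n)}(\chi(P)))|\le\dim V_{\blambda(n)}$, so on the line $\Re(s)=2$ the series $\sum_{P\in\Gamma_{\textup{hyp}}}\Lambda(P)N(P)^{-s}\tr(\pi_{\blambda(n)}(\chi(P)))$ converges absolutely (this is the absolute convergence of $Z_{\chi,\blambda(n)}$ on $\Re(s)>1$), and moreover $\int_{\Re(s)=2}|x^{s+1}|\,|s(s+1)|^{-1}\,|ds|<\infty$. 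Hence Fubini applies, and using $x^{s+1}N(P)^{-s}=x\,(x/N(P))^{s}$ we get
\[\frac{1}{2\pi i}\int_{2-i\infty}^{2+i\infty}\frac{x^{s+1}}{s(s+1)}D_{\chi,\blambda(n)}(s)\,ds=x\sum_{P\in\Gamma_{\textup{hyp}}}\Lambda(P)\tr(\pi_{\blambda(n)}(\chi(P)))\cdot\frac{1}{2\pi i}\int_{2-i\infty}^{2+i\infty}\frac{(x/N(P))^{s}}{s(s+1)}\,ds.\]
Applying the first-paragraph identity with $y=x/N(P)$ annihilates every term with $N(P)>x$ and replaces each remaining term by $x\bigl(1-N(P)/x\bigr)=x-N(P)$ (boundary terms with $N(P)=x$ contribute $0$), so the right-hand side equals $\sum_{N(P)\le x}\Lambda(P)\tr(\pi_{\blambda(n)}(\chi(P)))\,(x-N(P))$.

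Finally I would identify this with $\Psi_1^{\chi,\blambda(n)}(x)$. For $t\in[1,x]$ the sum $\Psi^{\chi,\blambda(n)}(t)$ runs over the finitely many hyperbolic conjugacy classes with $N(P)\le x$, and since each hyperbolic $P$ has $N(P)>1$, interchanging this finite sum with $\int_1^x(\cdot)\,dt$ gives
\[\Psi_1^{\chi,\blambda(n)}(x)=\int_1^x\Psi^{\chi,\blambda(n)}(t)\,dt=\sum_{N(P)\le x}\Lambda(P)\tr(\pi_{\blambda(n)}(\chi(P)))\int_{N(P)}^{x}dt=\sum_{N(P)\le x}\Lambda(P)\tr(\pi_{\blambda(n)}(\chi(P)))\,(x-N(P)),\]
which coincides with the expression obtained above. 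I expect no serious obstacle: the argument is a textbook Perron-formula computation, and the only points needing care are the absolute-convergence justifications for the two interchanges of summation and integration, both of which follow immediately from $|\tr(\pi_{\blambda(n)}(\chi(P)))|\le\dim V_{\blambda(n)}$ together with the convergence of $Z_{\chi,\blambda(n)}$ on $\Re(s)>1$.
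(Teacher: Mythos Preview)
Your proof is correct and is precisely the standard smoothed Perron/Mellin inversion argument that the paper invokes by citing Ingham's textbook; the paper gives no additional details beyond that reference. Your write-up supplies exactly what the citation points to, so there is nothing to compare.
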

\begin{proof}
This is standard and the argument is as in~\cite[p.31, Theorem B]{Ingham}.
\end{proof}

We integrate over the rectangular contour, oriented counterclockwise, that is the boundary of
\[R(A,T(n)):=[-A,2]\times [-T(n),T(n)]\]
for a specific $A$ that we choose later and $T(n)\geq 1$ a function of $n$ such that $T(n)\rightarrow\infty$. Note that in Theorem~\ref{geodesicdep}, we may assume without loss of generality that $T(n)\geq \left(n^2\dim V_{\blambda(n)}\right)^{\frac{1}{\frac{1}{4}-\varepsilon}}$. The theorem is trivially true otherwise. We assume the weaker lower bound $T(n)\geq n^3$. By Proposition~\ref{diffeigen}, we have that for $T\geq 1$
\[\sum_{|r_k-T|\leq 1}1=O_X(\dim V_{\blambda(n)}(n^2+n^2\log T+T)).\]
The above bound allows us to assume without loss of generality that
\begin{equation}\label{admissible}|r_k-T(n)|\geq \frac{C_X}{\dim V_{\blambda(n)}T(n)}\end{equation}
for every $k$, where $C_X>0$ is a constant depending only on $X$. We also assume throughout this paper that $A\in\frac{1}{2}+\mathbb{Z}$, and that $x\geq 2$.\\
\\
By applying Cauchy's residue theorem on the boundary $\partial R(A,T(n))$ oriented counterclockwise, we write
\begin{align}\label{Cauchyresidue}\frac{1}{2\pi i}\int_{2-iT(n)}^{2+iT(n)}\frac{x^s}{s(s+1)}D_{\chi,\blambda(n)}(s)ds&= \frac{1}{2\pi i}\int_{-A+iT(n)}^{-A-iT(n)}\frac{x^s}{s(s+1)}D_{\chi,\blambda(n)}(s)ds\\&+\frac{1}{2\pi i}\int_{-A+iT(n)}^{2+iT(n)}\frac{x^s}{s(s+1)}D_{\chi,\blambda(n)}(s)ds \nonumber \\&-\frac{1}{2\pi i}\int_{-A+iT(n)}^{2-iT(n)}\frac{x^s}{s(s+1)}D_{\chi,\blambda(n)}(s)ds \nonumber \\&+\sum_{R(A,T(n))}\textup{Res}\left(\frac{x^s}{s(s+1)}D_{\chi,\blambda(n)}\right),\nonumber
\end{align}
where the last summation is over all the residues of $\frac{x^s}{s(s+1)}D_{\chi,\blambda(n)}$ in $R(A,T(n))$. We proceed to estimate the various integrals. 
\begin{proposition}\label{leftvert}Asymptotically almost surely as $n\rightarrow\infty$, $\chi:\Gamma\rightarrow U(n)$ satisfy
\[\int_{-A-iT(n)}^{-A+iT(n)}\frac{x^{s+1}}{s(s+1)}D_{\chi,\blambda(n)}(s)ds=O_X\left(\left(1+\frac{n^2}{A}\right)\frac{\dim V_{\blambda(n)}x^{1-A}}{A\log x}\right).\]
\end{proposition}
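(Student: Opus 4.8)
The plan is to push the left edge $\Re(s)=-A$ of the contour to the right by means of the functional equation of the Selberg zeta function $Z_{\chi,\blambda(n)}$. This is legitimate here: since $\blambda(n)$ is unbalanced, the proposition preceding Corollary~\ref{cor:genericbundle} gives that almost surely $1\notin\Spec(\pi_{\blambda(n)}(\chi(T_i)))$ for all $i$, so $K_0=0$, the bundle $E_{\chi,\blambda(n)}$ has no continuous spectrum, and $Z_{\chi,\blambda(n)}$ carries no scattering term; its completed form $\Xi_{\chi,\blambda(n)}(s)=Z_{\chi,\blambda(n)}(s)\,\Theta_{\chi,\blambda(n)}(s)$ satisfies $\Xi_{\chi,\blambda(n)}(s)=\Xi_{\chi,\blambda(n)}(1-s)$, where $\Theta_{\chi,\blambda(n)}$ is an explicit product of a Barnes double-Gamma (``area'') factor raised to the power $\tfrac{\dim V_{\blambda(n)}\area(X)}{2\pi}$ and a $\Gamma$-function (``cusp'') factor whose exponents involve $\log|\det(I-\pi_{\blambda(n)}(\chi(T_i)))|$. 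Choosing $A\in\frac12+\mathbb{Z}$ (so that $\Re(s)=-A$ stays a fixed distance from the trivial zeros of $Z_{\chi,\blambda(n)}$ and from the poles of $\Theta_{\chi,\blambda(n)}$), taking logarithmic derivatives gives
\[
D_{\chi,\blambda(n)}(s)=-D_{\chi,\blambda(n)}(1-s)-\frac{\Theta'_{\chi,\blambda(n)}}{\Theta_{\chi,\blambda(n)}}(s)-\frac{\Theta'_{\chi,\blambda(n)}}{\Theta_{\chi,\blambda(n)}}(1-s).
\]

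First I would bound each term on $\{\Re(s)=-A\}$. The reflected term $D_{\chi,\blambda(n)}(1-s)=\sum_{P\in\Gamma_{\textup{hyp}}}\Lambda(P)\tr(\pi_{\blambda(n)}(\chi(P)))\,N(P)^{s-1}$ converges absolutely there ($\Re(1-s)=1+A\ge 2$), and using $|\tr\pi_{\blambda(n)}(g)|\le\dim V_{\blambda(n)}$ on $U(n)$ together with $\sum_{P}\Lambda(P)N(P)^{-3/2}=O_X(1)$ one gets $|D_{\chi,\blambda(n)}(1-s)|=O_X(\dim V_{\blambda(n)})$. The logarithmic derivative of the area factor is an explicit meromorphic function of polynomial growth, contributing $O_X(\dim V_{\blambda(n)}(A+|t|))$ at height $t=\Im(s)$. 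The cusp factor grows only logarithmically in $|s|$ but carries the coefficient $\sum_i\log|\det(I-\pi_{\blambda(n)}(\chi(T_i)))|=\sum_{h,i}\log|1-e^{2\pi i\alpha_{hi}}|$, with the $\alpha_{hi}$ the eigenphases of $\pi_{\blambda(n)}(\chi(T_i))$; since $|\blambda(n)|=O(\log n)$, Proposition~\ref{logbound} shows that asymptotically almost surely $|1-e^{2\pi i\alpha_{hi}}|\ge(2+\delta)^{-n^2}$ for all $h,i$, so that $\sum_i\log|\det(I-\pi_{\blambda(n)}(\chi(T_i)))|=O_X(n^2\dim V_{\blambda(n)})$ — this is exactly how the $n^2$ enters. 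Altogether, asymptotically almost surely as $n\to\infty$, uniformly for $|t|\le T(n)$,
\[
\bigl|D_{\chi,\blambda(n)}(-A+it)\bigr|=O_X\!\bigl(\dim V_{\blambda(n)}(A+|t|+n^2)\bigr)
\]
(with logarithmic factors absorbed into $O_X$).

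Then I would substitute this into the integral. For the part from $-D_{\chi,\blambda(n)}(1-s)$, I would expand the Dirichlet series, interchange, and be left with the elementary integrals $\int_{-A-iT(n)}^{-A+iT(n)}\frac{(xN(P))^{s+1}}{s(s+1)}\,ds$; one integration by parts in $t=\Im(s)$, exploiting the oscillation of $(xN(P))^{it}$, gains a factor $\tfrac{1}{\log(xN(P))}\le\tfrac{1}{\log x}$ and $O(A^{-2})$ from the weight $\tfrac{1}{s(s+1)}$, and summing against $\sum_P\Lambda(P)N(P)^{-(1+A)}=O_X(1)$ yields $O_X(\dim V_{\blambda(n)}x^{1-A}A^{-2}(\log x)^{-1})$, comfortably inside the target. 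For the $\Theta$-factor parts one bounds $|x^{s+1}|=x^{1-A}$, again integrates by parts against $x^{it}$ to produce the $(\log x)^{-1}$, and is left with $\int_{-T(n)}^{T(n)}\frac{A+|t|+n^2}{|s|\,|s+1|}\,dt=O((1+\tfrac{n^2}{A})A^{-1})$ — here the half-integrality of $A$ makes the derivatives of the $\Theta$-factors decay in $|t|$, which is what keeps this $t$-integral of size $O(A^{-1})$ rather than $O(\log T(n))$. Assembling the three contributions gives the claimed $O_X((1+\tfrac{n^2}{A})\dim V_{\blambda(n)}x^{1-A}/(A\log x))$. I expect the main obstacle to be the careful bookkeeping of the $n$- and $\blambda(n)$-dependence of the functional-equation correction terms — in particular pinning down the cusp factor and verifying, via Proposition~\ref{logbound}, that its coefficient is asymptotically almost surely $O_X(n^2\dim V_{\blambda(n)})$ — and checking that the half-integral choice of $A$ really does keep every expression uniformly off the trivial zeros of $Z_{\chi,\blambda(n)}$ and the poles of $\Theta_{\chi,\blambda(n)}$, so that the functional equation and the contour-shift identity~\eqref{Cauchyresidue} are valid.
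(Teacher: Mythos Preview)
Your approach is essentially the same as the paper's: apply the functional equation for $D_{\chi,\blambda(n)}$ to rewrite the integrand on $\Re(s)=-A$ as the sum of the reflected Dirichlet series, an area term, and a cusp term carrying the coefficient $\sum_{h,j}\log|1-e^{2\pi i\alpha_{hj}}|$, then bound each piece using Proposition~\ref{logbound} for the last. The paper cites Hejhal's functional equation directly in the form
\[
D_{\chi,\blambda(n)}(s)=-D_{\chi,\blambda(n)}(1-s)+\dim V_{\blambda(n)}\area(X)\,(s-\tfrac12)\cot(\pi s)+2\sum_{\alpha_{hj}}\log|1-e^{2\pi i\alpha_{hj}}|,
\]
so the cusp contribution is actually a constant (not logarithmic in $|s|$) and the area contribution is exactly $(s-\tfrac12)\cot(\pi s)$, which for $A\in\tfrac12+\mathbb{Z}$ becomes $(s-\tfrac12)i\tanh(\pi t)$ of size $O(A+|t|)$; your $\Theta$-factor formulation is equivalent but slightly more work. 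The paper's proof is terse and simply asserts ``the bound follows'' after listing the three pieces; your integration-by-parts against $x^{it}$ to produce the $(\log x)^{-1}$ and the $A^{-1}$ is exactly the computation the paper suppresses, and your accounting of the $n^2$ via Proposition~\ref{logbound} is identical to the paper's.
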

\begin{proof}
Using the functional equation~\cite[Thm. 2.18, p.441]{Hejhal2} for the Dirichlet series $D_{\chi,\blambda(n)}$, we have
\begin{eqnarray*}
\int_{-A-iT(n)}^{-A+iT(n)}\frac{x^{s+1}}{s(s+1)}D_{\chi,\blambda(n)}(s)ds&=& -\int_{-A-iT(n)}^{-A+iT(n)}\frac{x^{s+1}}{s(s+1)}D_{\chi,\blambda(n)}(1-s)ds\\&+&\dim V_{\blambda(n)}\area(X)\int_{-A-iT(n)}^{-A+iT(n)}\frac{x^{s+1}}{s(s+1)}(s-\frac{1}{2})\cot(\pi s)ds\\&+&2\sum_{\alpha_{hj}}\log|1-e^{2\pi i\alpha_{hj}}|\int_{-A-iT(n)}^{-A+iT(n)}\frac{x^{s+1}}{s(s+1)}ds.
\end{eqnarray*}
Note that $|D_{\chi,\blambda(n)}(s)|\leq\dim V_{\blambda(n)}|D_{1,0}(s)|\leq O_X(\dim V_{\blambda(n)})$ for $\Re(s)>1$. Here, $D_{1,0}(s)$ is the Dirichlet series associated to the surface $X$ itself, that is, the trivial rank $1$ bundle over $X$. Furthermore, since we have chosen $A\in\frac{1}{2}+\mathbb{Z}$, $\cot(\pi(-A+it))=i\tanh(\pi t)$ is bounded above by $1$ in absolute value. Using Proposition~\ref{logbound}, the bound follows.
\end{proof}
\begin{proposition}
Asymptotically almost surely as $n\rightarrow\infty$, $\chi$ satisfy
\begin{equation}\label{tophor}
\int_{-A+iT(n)}^{2+iT(n)}\left|\frac{x^{s+1}}{s(s+1)}\right||D_{\chi,\blambda(n)}(s)||ds|\leq 2\int_{\frac{1}{2}+iT(n)}^{2+iT(n)}\frac{x^{\sigma+1}}{T(n)^2}|D_{\chi,\blambda(n)}(s)||ds|+O_X\left(\left(1+\frac{n^2}{T(n)}\right)\frac{x^{3/2}\dim V_{\blambda(n)}}{T(n)}\right),
\end{equation}
where $\sigma:=\Re(s)$.
\end{proposition}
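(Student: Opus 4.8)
The plan is to split the horizontal segment at $\sigma:=\Re(s)=\tfrac12$, bound $|s(s+1)|^{-1}$ by the trivial estimate, and on the left half $\sigma\in[-A,\tfrac12]$ use the functional equation to reflect into the half-plane of absolute convergence. On the whole line $\Im(s)=T(n)$ one has $|s|\ge T(n)$ and $|s+1|\ge T(n)$, hence $|s(s+1)|\ge T(n)^2$ and $\bigl|\tfrac{x^{s+1}}{s(s+1)}\bigr|\le\tfrac{x^{\sigma+1}}{T(n)^2}$; therefore the part of the integral with $\sigma\in[\tfrac12,2]$ is at most $\int_{\frac12+iT(n)}^{2+iT(n)}\tfrac{x^{\sigma+1}}{T(n)^2}|D_{\chi,\blambda(n)}(s)|\,|ds|$, one copy of the first term on the right-hand side. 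By Corollary~\ref{cor:genericbundle} we work in the case $K_0=0$, so there is no continuous spectrum and the functional equation below involves no scattering-determinant term.

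Next I would treat the part with $\sigma\in[-A,\tfrac12]$ via the functional equation \cite[Thm. 2.18, p.441]{Hejhal2} used in the proof of Proposition~\ref{leftvert}, namely
\[
D_{\chi,\blambda(n)}(s)=-D_{\chi,\blambda(n)}(1-s)+\dim V_{\blambda(n)}\area(X)\Bigl(s-\tfrac12\Bigr)\cot(\pi s)+2\sum_{\alpha_{hj}}\log\bigl|1-e^{2\pi i\alpha_{hj}}\bigr|,
\]
and estimate the three resulting integrals separately. For the $-D_{\chi,\blambda(n)}(1-s)$ term, substitute $w=1-s$ so that $\Re(w)=1-\sigma$ ranges over $[\tfrac12,1+A]$, $\Im(w)=-T(n)$, and $x^{\sigma+1}=x^{2-\Re(w)}$; the symmetry $|D_{\chi,\blambda(n)}(\overline w)|=|D_{\chi,\blambda(n)}(w)|$ — which follows from $\overline{\tr(\pi_{\blambda(n)}(\chi(P)))}=\tr(\pi_{\blambda(n)}(\chi(P^{-1})))$ and reindexing $P\mapsto P^{-1}$ in the Dirichlet series, using $\Lambda(P^{-1})=\Lambda(P)$ and $N(P^{-1})=N(P)$ — lets us replace $w-iT(n)$ by $w+iT(n)$. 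On the range $\Re(w)\in[\tfrac12,\min(1+A,2)]$ we have $2-\Re(w)\le\Re(w)+1$ since $\Re(w)\ge\tfrac12$, so this contribution is $\le\int_{\frac12+iT(n)}^{2+iT(n)}\tfrac{x^{\sigma+1}}{T(n)^2}|D_{\chi,\blambda(n)}(s)|\,|ds|$, a \emph{second} copy of the main term; on the leftover range $\Re(w)\ge2$ (present only when $A>1$, and comfortably to the right of the pole of $D_{1,0}$ at $s=1$) the crude bound $|D_{\chi,\blambda(n)}(w)|\le\dim V_{\blambda(n)}\,D_{1,0}(\Re(w))=O_X(\dim V_{\blambda(n)})$ together with $\int_2^{1+A}x^{2-u}\,du\le1/\log x$ yields $O_X\bigl(\dim V_{\blambda(n)}\,x^{3/2}/T(n)\bigr)$.

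For the $\cot$-term I would use that $|\cot(\pi s)|=\bigl|\tfrac{e^{2\pi is}+1}{e^{2\pi is}-1}\bigr|<2$ whenever $\Im(s)\ge1$, and that $|s-\tfrac12|\le A+\tfrac12+T(n)=O(T(n))$ on this segment (we may assume $A\le T(n)$, compatibly with the choice of $A$ made later); since $\int_{-A}^{1/2}x^{\sigma+1}\,d\sigma\le x^{3/2}/\log x$, this piece is $O_X\bigl(\dim V_{\blambda(n)}\,x^{3/2}/T(n)\bigr)$. For the parabolic term, Proposition~\ref{logbound} — applicable since $|\blambda(n)|\le c\tfrac{\log n}{\log\log n}=O(\log n)$ — gives, asymptotically almost surely as $n\to\infty$, $\bigl|\sum_{\alpha_{hj}}\log|1-e^{2\pi i\alpha_{hj}}|\bigr|=O_X(n^2\dim V_{\blambda(n)})$, so this piece is $O_X\bigl(n^2\dim V_{\blambda(n)}\,x^{3/2}/T(n)^2\bigr)$. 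Adding the two copies of $\int_{\frac12+iT(n)}^{2+iT(n)}\tfrac{x^{\sigma+1}}{T(n)^2}|D_{\chi,\blambda(n)}(s)|\,|ds|$ to the three $O_X$-errors gives exactly the claimed inequality, the errors assembling into $O_X\bigl((1+n^2/T(n))\,x^{3/2}\dim V_{\blambda(n)}/T(n)\bigr)$.

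The only place randomness enters is the control of the parabolic sum $\sum_{\alpha_{hj}}\log|1-e^{2\pi i\alpha_{hj}}|$, which a priori could be very large; this is precisely what Proposition~\ref{logbound} supplies asymptotically almost surely, and I regard it as the main point. Everything else is deterministic bookkeeping, the one subtlety being to arrange the reflection so that the reflected critical-strip contribution combines with the direct $\sigma\in[\tfrac12,2]$ contribution into \emph{exactly} twice the first term (this is the source of the factor $2$), while keeping the absolute-convergence bound away from the pole of $D_{1,0}$ at $s=1$.
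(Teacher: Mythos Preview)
Your proof is correct and follows essentially the same route as the paper: bound $|s(s+1)|^{-1}\le T(n)^{-2}$ on the horizontal line, keep the right-hand piece $\sigma\in[\tfrac12,2]$ as one copy of the main integral, and on the left use the functional equation to reflect $D_{\chi,\blambda(n)}$ into the region of absolute convergence, with the $\cot$-term and the parabolic sum supplying the error (the latter via Proposition~\ref{logbound}). The only organizational difference is that the paper splits the segment into three pieces at $\sigma=-1$ and $\sigma=\tfrac12$ before reflecting, whereas you split once at $\sigma=\tfrac12$ and then subdivide the reflected variable $w=1-s$ at $\Re(w)=2$; both yield the same second copy of the main integral plus an $O_X(\dim V_{\blambda(n)})$ tail from $\Re(w)\ge2$. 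Your treatment is in fact slightly cleaner in two places: you make explicit the conjugation symmetry $|D_{\chi,\blambda(n)}(\bar w)|=|D_{\chi,\blambda(n)}(w)|$ that the paper uses tacitly when moving from $\Im=-T(n)$ to $\Im=+T(n)$, and you note the harmless assumption $A\le T(n)$ needed to get $|s-\tfrac12|=O(T(n))$ uniformly on $[-A,\tfrac12]$.
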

\begin{proof}
We split the integral into three parts as follows.
\[\int_{-A+iT(n)}^{2+iT(n)}=\int_{-A+iT(n)}^{-1+iT(n)}+\int_{-1+iT(n)}^{\frac{1}{2}+iT(n)}+\int_{\frac{1}{2}+iT(n)}^{2+iT(n)}.\]
It is clear from the triangle inequality that
\begin{equation}\label{finalint}
\int_{\frac{1}{2}+iT(n)}^{2+iT(n)}\left|\frac{x^{s+1}}{s(s+1)}\right||D_{\chi,\blambda(n)}(s)||ds|\leq\int_{\frac{1}{2}+iT(n)}^{2+iT(n)}\frac{x^{\sigma+1}}{T(n)^2}|D_{\chi,\blambda(n)}(s)||ds|.
\end{equation}
Using the functional equation for $D_{\chi,\blambda(n)}$, for the second integral we obtain that asymptotically almost surely as $n\rightarrow\infty$,
\begin{eqnarray*}
&&\int_{-1+iT(n)}^{\frac{1}{2}+iT(n)}\left|\frac{x^{s+1}}{s(s+1)}\right||D_{\chi,\blambda(n)}(s)||ds|\\&\leq&
\frac{x^{3/2}}{T(n)^2}\int_{-1+iT(n)}^{\frac{1}{2}+iT(n)}\left(|D_{\chi,\blambda(n)}(1-s)|+\dim V_{\blambda(n)}\area(X)\left|s-\frac{1}{2}\right||\cot(\pi s)|+2\left|\sum_{\alpha_{hj}}\log|1-e^{2\pi i\alpha_{hj}}|\right|\right)|ds|\\&\leq&
\frac{x^{3/2}}{T(n)^2}\int_{\frac{1}{2}+iT(n)}^{2+iT(n)}|D_{\chi,\blambda(n)}(s)||ds|+O_X\left(\frac{\dim V_{\blambda(n)}x^{3/2}}{T(n)}\right)+O_X\left(\frac{n^2\dim V_{\blambda(n)}x^{3/2}}{T(n)^2}\right)\\&\leq&
\int_{\frac{1}{2}+iT(n)}^{2+iT(n)}\frac{x^{\sigma+1}}{T(n)^2}|D_{\chi,\blambda(n)}(s)||ds|+O_X\left(\left(1+\frac{n^2}{T(n)}\right)\frac{x^{3/2}\dim V_{\blambda(n)}}{T(n)}\right).
\end{eqnarray*}
Here, we have used that
\[|\cot(\pi s)|\leq 2\textup{ for } \Im(s)\geq 1.\]
The bound on the first integral follows in a similar fashion. Indeed, we have asymptotically almost surely as $n\rightarrow\infty$,
\begin{eqnarray*}
&&\int_{-A+iT(n)}^{-1+iT(n)}\left|\frac{x^{s+1}}{s(s+1)}\right||D_{\chi,\blambda(n)}(s)||ds|\\&\leq&
\frac{x^{3/2}}{T(n)^2}\int_{-A+iT(n)}^{-1+iT(n)}\left(|D_{\chi,\blambda(n)}(1-s)|+\dim V_{\blambda(n)}\area(X)\left|s-\frac{1}{2}\right||\cot(\pi s)|+2\left|\sum_{\alpha_{hj}}\log|1-e^{2\pi i\alpha_{hj}}|\right|\right)|ds|\\&\leq&
\frac{x^{3/2}}{T(n)^2}\int_{1+A-iT(n)}^{2-iT(n)}|D_{\chi,\blambda(n)}(s)||ds|+O_X\left(\frac{\dim V_{\blambda(n)}x^{3/2}}{T(n)}\right)+O_X\left(\frac{n^2\dim V_{\blambda(n)}x^{3/2}}{T(n)^2}\right)\\&\leq&
\frac{x^{3/2}}{T(n)^2}|D_{\chi,\blambda(n)}(2)|+O_X\left(\left(1+\frac{n^2}{T(n)}\right)\frac{x^{3/2}\dim V_{\blambda(n)}}{T(n)}\right).
\end{eqnarray*}
The conclusion follows by noting that $|D_{\chi,\blambda(n)}(2)|\leq\dim V_{\blambda(n)}|D_{1,0}(2)|=O_X(\dim V_{\blambda(n)})$.
\end{proof}
The following proposition allows us to estimate the integral on $\Re(s)=2$ with the right vertical side of the chosen contour.
\begin{proposition}\label{Psiestimate}
\[\Psi_1^{\chi,\blambda(n)}(x)=\frac{1}{2\pi i}\int_{2-iT(n)}^{2+iT(n)}\frac{x^{s+1}}{s(s+1)}D_{\chi,\blambda(n)}(s)ds+O_X\left(\frac{\dim V_{\blambda(n)}x^3}{T(n)}\right).\]
\end{proposition}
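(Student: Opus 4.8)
The plan is to deduce this from the unconditional Perron-type identity in Proposition~\ref{contourshift} by truncating the vertical integral at height $T(n)$, and estimating the two tails on the line $\Re(s)=2$ using only the trivial bound for the Dirichlet series on its region of absolute convergence.

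First I would invoke Proposition~\ref{contourshift}, which gives
\[
\Psi_1^{\chi,\blambda(n)}(x)=\frac{1}{2\pi i}\int_{2-i\infty}^{2+i\infty}\frac{x^{s+1}}{s(s+1)}D_{\chi,\blambda(n)}(s)\,ds,
\]
so that the error term in question is precisely the pair of tail integrals
\[
\frac{1}{2\pi i}\left(\int_{2+iT(n)}^{2+i\infty}+\int_{2-i\infty}^{2-iT(n)}\right)\frac{x^{s+1}}{s(s+1)}D_{\chi,\blambda(n)}(s)\,ds.
\]
On the line $\Re(s)=2$ we have $|x^{s+1}|=x^{3}$ and, writing $t=\Im(s)$, $|s(s+1)|\geq |s|^{2}\geq t^{2}$. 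For the Dirichlet series, the key point is that for $\Re(s)>1$ the defining series converges absolutely and, since $|\tr(\pi_{\blambda(n)}(\chi(P)))|\leq\dim V_{\blambda(n)}$ for every $P$ (characters of unitary representations are bounded in modulus by the dimension), one has termwise $|D_{\chi,\blambda(n)}(s)|\leq\dim V_{\blambda(n)}\,|D_{1,0}(s)|$, where $D_{1,0}$ is the Dirichlet series of the trivial rank-one bundle over $X$. In particular $|D_{\chi,\blambda(n)}(2)|\leq\dim V_{\blambda(n)}\,|D_{1,0}(2)|=O_X(\dim V_{\blambda(n)})$; this estimate is deterministic and requires no probabilistic input, since we are on the line of absolute convergence.

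Combining these bounds, each tail is dominated by
\[
\frac{1}{2\pi}\,x^{3}\,O_X(\dim V_{\blambda(n)})\int_{T(n)}^{\infty}\frac{dt}{t^{2}}=O_X\!\left(\frac{\dim V_{\blambda(n)}\,x^{3}}{T(n)}\right),
\]
which is exactly the claimed error. I do not expect a genuine obstacle here: this is the standard truncation step in a prime-geodesic-theorem argument, and the only point needing care is the uniformity of the bound on $D_{\chi,\blambda(n)}$ as a function of the rank $\dim V_{\blambda(n)}$, which follows from the triangle inequality and the character bound noted above.
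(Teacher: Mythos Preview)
Your proposal is correct and follows essentially the same approach as the paper: invoke Proposition~\ref{contourshift}, bound the tails on $\Re(s)=2$ using $|x^{s+1}|=x^3$, $|s(s+1)|\geq t^2$, and the deterministic estimate $|D_{\chi,\blambda(n)}(s)|\leq \dim V_{\blambda(n)}\,|D_{1,0}(2)|=O_X(\dim V_{\blambda(n)})$ coming from the character bound, then integrate $t^{-2}$ from $T(n)$ to $\infty$. The paper's proof is line-for-line the same argument.
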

\begin{proof}From Proposition~\ref{contourshift}, we have $\Psi_1^{\chi,\blambda(n)}(x)=\frac{1}{2\pi i}\int_{2-i\infty}^{2+i\infty}\frac{x^{s+1}}{s(s+1)}D_{\chi,\blambda(n)}(s)ds$. Consequently,
\begin{eqnarray*}
\left|\Psi_1^{\chi,\blambda(n)}(x)-\frac{1}{2\pi i}\int_{2-iT(n)}^{2+iT(n)}\frac{x^{s+1}}{s(s+1)}D_{\chi,\blambda(n)}(s)ds\right|&\leq&\frac{1}{\pi}\int_{2+iT(n)}^{2+i\infty}\left|\frac{x^{s+1}}{s(s+1)}D_{\chi,\blambda(n)}(s)\right||ds|\\&\leq&
\frac{|D_{\chi,\blambda(n)}(2)|x^{3}}{\pi}\int_{T(n)}^{\infty}\frac{1}{t^2}dt\leq O_X\left(\frac{\dim V_{\blambda(n)}x^3}{T(n)}\right).
\end{eqnarray*}
The last inequality follows from $|D_{\chi,\blambda(n)}(2)|\leq\dim V_{\blambda(n)}|D_{1,0}(2)|=O_X(\dim V_{\blambda(n)})$
\end{proof}
\begin{proposition}For $s=\sigma+iT(n)$, $-1\leq\sigma\leq 2$, asymptotically almost surely as $n\rightarrow\infty$, $\chi:\Gamma\rightarrow U(n)$ satisfy
\begin{equation}
|D_{\chi,\blambda(n)}(s)|=O_X(n^2(\dim V_{\blambda(n)})^2T(n)^2).
\end{equation}
\end{proposition}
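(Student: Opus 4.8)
The plan is to derive the bound directly from the identity already exploited in the proof of Proposition~\ref{diffeigen}, namely equation (2.8) of~\cite[p.435]{Hejhal2}, which for the bundle $E_{\chi,\blambda(n)}$ asserts that, with $\beta\in[2T(n),3T(n)]$ fixed,
\[\sum_k\left(\frac{1}{r_k^2+(s-\frac{1}{2})^2}-\frac{1}{r_k^2+\beta^2}\right)=\frac{\dim V_{\blambda(n)}\area(X)}{2\pi}\sum_{k=0}^{\infty}\left(\frac{1}{k+s-\frac{1}{2}}-\frac{1}{k+\beta}\right)+\frac{D_{\chi,\blambda(n)}(s)}{2s-1}-\frac{D_{\chi,\blambda(n)}(\frac{1}{2}+\beta)}{2\beta}+\left(\frac{1}{2\beta}-\frac{1}{2s-1}\right)\sum_{\alpha_{hj}}\log|1-e^{2\pi i\alpha_{hj}}|,\]
valid at $s=\sigma+iT(n)$, $-1\leq\sigma\leq2$, by meromorphic continuation (the spacing assumption~\eqref{admissible} together with $\Im(s)=T(n)>0$ keeps $s$ away from the poles of $D_{\chi,\blambda(n)}$). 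First I would solve this for $D_{\chi,\blambda(n)}(s)$ by multiplying through by $2s-1$, and record that $|2s-1|\asymp T(n)$ and $|\beta^2-(s-\frac{1}{2})^2|\asymp T(n)^2$. The three non-spectral contributions are then handled routinely: since $\Re(\frac12+\beta)>1$ the Dirichlet series gives $|D_{\chi,\blambda(n)}(\frac12+\beta)|\leq\dim V_{\blambda(n)}|D_{1,0}(\frac12+\beta)|=O_X(\dim V_{\blambda(n)})$, so $\frac{2s-1}{2\beta}D_{\chi,\blambda(n)}(\frac12+\beta)=O_X(\dim V_{\blambda(n)})$; a short telescoping estimate (splitting the sum at $k\sim T(n)$) shows $\sum_{k\geq0}(\frac{1}{k+s-1/2}-\frac{1}{k+\beta})=O(1)$, contributing $O_X(\dim V_{\blambda(n)}T(n))$; and by Proposition~\ref{logbound}, asymptotically almost surely $\sum_{\alpha_{hj}}\log|1-e^{2\pi i\alpha_{hj}}|=O_X(n^2\dim V_{\blambda(n)})$, which combined with $(2s-1)(\frac{1}{2\beta}-\frac{1}{2s-1})=O(1)$ contributes $O_X(n^2\dim V_{\blambda(n)})$.

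It then remains to bound the main term $(2s-1)\big(\beta^2-(s-\tfrac{1}{2})^2\big)\sum_k\frac{1}{(r_k^2+(s-1/2)^2)(r_k^2+\beta^2)}$, whose prefactor is $O(T(n)^3)$, and I would split the $k$-sum into four ranges. (i) The $O_X(\dim V_{\blambda(n)})$ parameters $r_k\notin\mathbb{R}$ (small eigenvalues $\lambda_k<1/4$) have $|r_k^2+(s-1/2)^2|\gtrsim T(n)^2$, so this range contributes $O_X(\dim V_{\blambda(n)}/T(n))$ after the prefactor. (ii) For $r_k\geq2\beta$ one has $|r_k^2+(s-1/2)^2|\gtrsim r_k^2$ and $r_k^2+\beta^2\gtrsim r_k^2$, so the tail is $\lesssim T(n)^3\sum_{r_k>2\beta}r_k^{-4}$, and Proposition~\ref{Weylbound} with summation by parts gives $\sum_{r_k>R}r_k^{-4}=O_X(n^2\dim V_{\blambda(n)}/R^2)$, so this range contributes $O_X(n^2\dim V_{\blambda(n)}T(n))$. (iii) For $0\leq r_k\leq2\beta$ with $|r_k-T(n)|\geq1$ we have $r_k^2+\beta^2\asymp T(n)^2$ while the real part yields $|r_k^2+(s-1/2)^2|\gtrsim|r_k-T(n)|\,T(n)$ (here $T(n)$ large absorbs the bounded term $(\sigma-1/2)^2\leq 9/4$), so after the prefactor this range is $\lesssim\sum\frac{1}{|r_k-T(n)|}$; grouping these $r_k$ into unit intervals around $T(n)$ and applying Proposition~\ref{diffeigen} to each interval bounds this by $O_X\big(\dim V_{\blambda(n)}(n^2+n^2\log T(n)+T(n))\log T(n)\big)$, which under the standing assumption $T(n)\geq n^3$ is negligible against $n^2(\dim V_{\blambda(n)})^2T(n)^2$.

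The hard part will be the fourth range $|r_k-T(n)|<1$, where neither the magnitude of $r_k$ nor its distance to $T(n)$ gives anything and one must use the admissibility spacing~\eqref{admissible}, $|r_k-T(n)|\geq C_X/(\dim V_{\blambda(n)}T(n))$. Writing $b=r_k^2-T(n)^2$, so $C_X/\dim V_{\blambda(n)}\lesssim|b|\lesssim T(n)$ here, and $c=\sigma-\frac12\in[-\frac32,\frac32]$, I would use $|r_k^2+(s-\tfrac12)^2|^2=(b+c^2)^2+4c^2T(n)^2$ together with a short case distinction on whether $|c|$ exceeds $1/(\dim V_{\blambda(n)}T(n))$ to conclude $|r_k^2+(s-\tfrac12)^2|\gtrsim_X 1/(\dim V_{\blambda(n)})^2$ uniformly for $\sigma\in[-1,2]$ — the delicate point being $\sigma$ near $1/2$, where the imaginary part of $(s-\tfrac12)^2$ degenerates and the bound has to come entirely from $b$. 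Hence each such term is $O_X(\dim V_{\blambda(n)}/T(n)^2)$, contributing $O_X(\dim V_{\blambda(n)}T(n))$ per term after the prefactor; since Proposition~\ref{diffeigen} caps the number of these $r_k$ at $O_X(\dim V_{\blambda(n)}(n^2+n^2\log T(n)+T(n)))$, this range contributes $O_X\big((\dim V_{\blambda(n)})^2T(n)(n^2+n^2\log T(n)+T(n))\big)$, which is $O_X(n^2(\dim V_{\blambda(n)})^2T(n)^2)$ once $T(n)\geq n^3$. Summing the four ranges and the three non-spectral terms yields the claim.
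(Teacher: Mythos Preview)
Your argument is correct and follows essentially the same route as the paper: start from the identity (2.8) of \cite[p.~435]{Hejhal2}, dispose of the three non-spectral terms exactly as you do, and split the spectral sum into the small-eigenvalue piece, a far tail, a bulk away from $T(n)$, and the window $|r_k-T(n)|<1$ controlled by the admissibility spacing~\eqref{admissible} together with Proposition~\ref{diffeigen}.

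Two small comments. First, in range (iv) your case distinction on $|c|$ is unnecessary: factor $r_k^2+(s-\tfrac12)^2=(\sigma-\tfrac12+i(T(n)+r_k))(\sigma-\tfrac12+i(T(n)-r_k))$ and read off $|r_k^2+(s-\tfrac12)^2|\geq |T(n)+r_k|\cdot|T(n)-r_k|\gtrsim T(n)\cdot C_X/(\dim V_{\blambda(n)}T(n))=C_X/\dim V_{\blambda(n)}$ directly, for every $\sigma\in[-1,2]$; there is nothing delicate at $\sigma=\tfrac12$. Second, there is a bookkeeping slip: your case analysis actually yields $|r_k^2+(s-\tfrac12)^2|\gtrsim_X 1/\dim V_{\blambda(n)}$, not the weaker $1/(\dim V_{\blambda(n)})^2$ you wrote, and indeed it is the former bound that gives ``each such term is $O_X(\dim V_{\blambda(n)}/T(n)^2)$'' in your next sentence. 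With that correction the count goes through as stated. The paper also handles range (iii) a bit more crudely than you do, using only the Weyl bound $N(0\leq r_k\leq 2T(n))=O_X(n^2\dim V_{\blambda(n)}T(n)^2)$ and the uniform estimate $|\text{term}|\lesssim 1/T(n)$, but your dyadic version via Proposition~\ref{diffeigen} is equally valid.
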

\begin{proof}
Choose $\beta\geq T(n)$. Then we have
\begin{eqnarray*}
\frac{D_{\chi,\blambda(n)}(s)}{2s-1}&=&\sum_k\left(\frac{1}{r_k^2+(s-\frac{1}{2})^2}-\frac{1}{r_k^2+\beta^2}\right)-\frac{\dim V_{\blambda(n)}\area(X)}{4\pi}\int_{-\infty}^{\infty}r\left(\frac{1}{r^2+(s-\frac{1}{2})^2}-\frac{1}{r^2+\beta^2}\right)\tanh(\pi r)dr\\&-&\frac{D_{\chi,\blambda(n)}(\frac{1}{2}+\beta)}{2\beta}+\left(\frac{1}{2s-1}-\frac{1}{2\beta}\right)\sum_{\alpha_{hj}}\log|1-e^{2\pi i\alpha_{hj}}|.
\end{eqnarray*}
From Proposition 4.9 of~\cite{Hejhal1},
\begin{equation}\label{intprop49}
\frac{\dim V_{\blambda(n)}\area(X)}{4\pi}\int_{-\infty}^{\infty}r\left(\frac{1}{r^2+(s-\frac{1}{2})^2}-\frac{1}{r^2+\beta^2}\right)\tanh(\pi r)dr=\frac{\dim V_{\blambda(n)}\area(X)}{4\pi}\sum_{k=0}^{\infty}\left(\frac{1}{s+k}-\frac{1}{\beta+\frac{1}{2}+k}\right).
\end{equation}
To bound this quantity, we split the sum into two parts:
\begin{eqnarray*}
\sum_{k=0}^{\infty}\left(\frac{1}{s+k}-\frac{1}{\beta+\frac{1}{2}+k}\right)=\sum_{k=0}^{T(n)}\left(\frac{1}{s+k}-\frac{1}{\beta+\frac{1}{2}+k}\right)+\sum_{k\geq T(n)}\left(\frac{1}{s+k}-\frac{1}{\beta+\frac{1}{2}+k}\right).
\end{eqnarray*}
\[\left|\sum_{k=0}^{T(n)}\left(\frac{1}{s+k}-\frac{1}{\beta+\frac{1}{2}+k}\right)\right|\leq \sum_{k=0}^{T(n)}\frac{1}{|\sigma+k+iT(n)|}+\sum_{k=0}^{T(n)}\frac{1}{T(n)+k}=O(1).\]
On the other hand,
\[\left|\sum_{k\geq T(n)}\left(\frac{1}{s+k}-\frac{1}{\beta+\frac{1}{2}+k}\right)\right|\leq \sum_{k\geq T(n)}\frac{|\beta+\frac{1}{2}-\sigma-iT(n)|}{|\sigma+iT(n)+k||\beta+\frac{1}{2}+k|}=O(T(n))\sum_{k\geq T(n)}\frac{1}{k^2}=O(1).\]
Therefore,
\begin{equation}\label{integralW}
\frac{\dim V_{\blambda(n)}\area(X)}{4\pi}\int_{-\infty}^{\infty}r\left(\frac{1}{r^2+(s-\frac{1}{2})^2}-\frac{1}{r^2+\beta^2}\right)\tanh(\pi r)dr=O_X(\dim V_{\blambda(n)}).
\end{equation}
Letting $r_1,\hdots,r_M\notin\mathbb{R}$ correspond to the eigenvalues less than $\frac{1}{4}$, we also have
\begin{eqnarray}\label{smalleigen}
\sum_{1\leq k\leq M}\left(\frac{1}{r_k^2+(s-\frac{1}{2})^2}-\frac{1}{r_k^2+\beta^2}\right)=O_X(\dim V_{\blambda(n)}).
\end{eqnarray}
Note that $M=O_X(\dim V_{\blambda(n)})$ from~\cite{BallmanPolymerakis}. By Proposition~\ref{logbound}, asymptotically almost surely as $n\rightarrow\infty$, we have
\begin{equation}\label{logs}
\left(\frac{1}{2\beta}-\frac{1}{2s-1}\right)\sum_{\alpha_{hj}}\log|1-e^{2\pi i\alpha_{hj}}|=O_X\left(\frac{n^2\dim V_{\blambda(n)}}{T(n)}\right).
\end{equation}
So far, we have
\begin{equation}
\frac{D_{\chi,\blambda(n)}(s)}{2s-1}=O_X\left(\dim V_{\blambda(n)}\left(1+\frac{n^2}{T(n)}\right)\right)+\sum_{r_k\geq 0}\left(\frac{1}{r_k^2+(s-\frac{1}{2})^2}-\frac{1}{r_k^2+\beta^2}\right).
\end{equation}
We split the latter sum as follows:
\[\sum_{r_k\geq 0}=\sum_{0\leq r_k<T(n)-1}+\sum_{T(n)-1\leq r_k\leq T(n)+1}+\sum_{T(n)+1<r_k\leq 2T(n)}+\sum_{r_k\geq 2T(n)}.\]
Using $N(0\leq r_k\leq T(n))=O_X(n^2\dim V_{\blambda(n)}T(n)^2)$, it is easy to see that
\[\sum_{0\leq r_k<T(n)-1}\left(\frac{1}{r_k^2+(s-\frac{1}{2})^2}-\frac{1}{r_k^2+\beta^2}\right)+\sum_{T(n)+1<r_k\leq 2T(n)}\left(\frac{1}{r_k^2+(s-\frac{1}{2})^2}-\frac{1}{r_k^2+\beta^2}\right)=O_X(n^2\dim V_{\blambda(n)}T(n)).\]
Furthermore, we have
\begin{eqnarray*}
\sum_{r_k\geq 2T(n)}\left|\frac{1}{r_k^2+(s-\frac{1}{2})^2}-\frac{1}{r_k^2+\beta^2}\right|&=& \sum_{r_k\geq 2T(n)}\left|\frac{\beta^2-(\sigma-\frac{1}{2}+iT(n))^2}{(\sigma-\frac{1}{2}+i(T(n)+r_k))(\sigma-\frac{1}{2}+i(T(n)-r_k))(r_k^2+\beta^2)}\right|\\&=&
\sum_{r_k\geq 2T(n)}\frac{O(T(n)^2)}{|\sigma-\frac{1}{2}+i(T(n)+r_k)||\sigma-\frac{1}{2}+i(T(n)-r_k)|r_k^2}\\&=& 
O\left(\sum_{r_k\geq 2T(n)}\frac{T(n)^2}{r_k^4}\right)\\&=&O(T(n)^2)\int_{2T(n)}^{\infty}\frac{dN(t)}{t^4},
\end{eqnarray*}
where $N(t):=N(0\leq r_k\leq t)$. Since $N(t)=O_X(n^2\dim V_{\blambda(n)}t^2)$, we have that the latter is $O_X(n^2\dim V_{\blambda(n)})$.\\
\\
For the final sum, we have 
\begin{eqnarray*}&&\sum_{T(n)-1\leq r_k\leq T(n)+1}\left|\frac{1}{r_k^2+(s-\frac{1}{2})^2}-\frac{1}{r_k^2+\beta^2}\right|\\&=&
\sum_{T(n)-1\leq r_k\leq T(n)+1}\left|\frac{\beta^2-(\sigma-\frac{1}{2}+iT(n))^2}{(\sigma-\frac{1}{2}+i(T(n)+r_k))(\sigma-\frac{1}{2}+i(T(n)-r_k))(r_k^2+\beta^2)}\right|\\&\leq&
O\left(\sum_{T(n)-1\leq r_k\leq T(n)+1}\frac{1}{T(n)|T(n)-r_k|}\right).
\end{eqnarray*}
By~\eqref{admissible}, $|T(n)-r_k|\geq \frac{C_X}{\dim V_{\blambda(n)}T(n)}$. From this and Proposition~\ref{diffeigen}, it follows that
\begin{equation}
O\left(\sum_{T(n)-1\leq r_k\leq T(n)+1}\frac{1}{T(n)|T(n)-r_k|}\right)=O_X\left(\dim V_{\blambda(n)}N(T(n)-1\leq r_k\leq T(n)+1)\right)=O_X(n^2(\dim V_{\blambda(n)})^2T(n)).
\end{equation}
Putting the above together, we obtain the desired bound.
\end{proof}
As a corollary, we bound the integral on the right hand side of equation~\eqref{tophor}.
\begin{corollary}Suppose $0<\varepsilon<\frac{1}{10}$. Then, asymptotically almost surely as $n\rightarrow\infty$, we have
\begin{equation}
\int_{\frac{1}{2}+iT(n)}^{2+iT(n)}\frac{x^{\sigma+1}}{T(n)^2}|D_{\chi,\blambda(n)}(s)||ds|=O_X\left(\varepsilon n^2(\dim V_{\blambda(n)})^2 x^{3/2+2\varepsilon}\right)+O_X\left(C(\varepsilon)\dim V_{\blambda(n)}x^3T(n)^{-4\varepsilon}\right)
\end{equation}
for some constant $C(\varepsilon)>0$.
\end{corollary}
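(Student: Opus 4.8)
The plan is to split the contour integral at $\sigma=\tfrac12+2\varepsilon$, writing
\[
\int_{\frac12+iT(n)}^{2+iT(n)}=\int_{\frac12+iT(n)}^{(\frac12+2\varepsilon)+iT(n)}+\int_{(\frac12+2\varepsilon)+iT(n)}^{2+iT(n)},
\]
and to bound the two pieces using two different estimates for $|D_{\chi,\blambda(n)}(\sigma+iT(n))|$. On the short piece $\tfrac12\le\sigma\le\tfrac12+2\varepsilon$ I would simply insert the bound of the preceding proposition, $|D_{\chi,\blambda(n)}(\sigma+iT(n))|=O_X(n^2(\dim V_{\blambda(n)})^2T(n)^2)$, which holds asymptotically almost surely; the prefactor $T(n)^{-2}$ cancels the $T(n)^2$, and since $\int_{1/2}^{1/2+2\varepsilon}x^{\sigma+1}\,d\sigma=x^{3/2}\frac{x^{2\varepsilon}-1}{\log x}\le 2\varepsilon\,x^{3/2+2\varepsilon}$ (using $e^{2\varepsilon\log x}-1\le 2\varepsilon\log x\cdot x^{2\varepsilon}$), this piece contributes $O_X(\varepsilon\,n^2(\dim V_{\blambda(n)})^2x^{3/2+2\varepsilon})$, the first term of the claimed estimate.

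For the remaining piece $\tfrac12+2\varepsilon\le\sigma\le 2$ the blanket bound above is too crude, and one needs an estimate for $|D_{\chi,\blambda(n)}(\sigma+iT(n))|$ that decays in $T(n)$ as $\sigma$ moves towards $2$. I would obtain this by a convexity (Hadamard three-lines / Phragm\'en--Lindel\"of) argument: on the line $\Re(s)=2$ one has the uniform bound $|D_{\chi,\blambda(n)}(s)|\le\dim V_{\blambda(n)}|D_{1,0}(s)|=O_X(\dim V_{\blambda(n)})$, while on the segment $\Re(s)=\tfrac12+2\varepsilon$, $\Im(s)=T(n)$ one has the preceding proposition's bound; interpolating gives, for $\tfrac12+2\varepsilon\le\sigma\le 2$, a bound of the shape
\[
|D_{\chi,\blambda(n)}(\sigma+iT(n))|=O_X\!\left(\dim V_{\blambda(n)}\big(n^2\dim V_{\blambda(n)}\,T(n)^2\big)^{\frac{2-\sigma}{3/2-2\varepsilon}}\right).
\]
Substituting $u=2-\sigma$ and estimating the resulting essentially geometric integral in $u$ over $[0,\tfrac32-2\varepsilon]$ splits into two regimes according as $x$ is smaller or larger than a fixed power of $n^2\dim V_{\blambda(n)}\,T(n)^2$: in one regime the contribution is absorbed into the first term $O_X(\varepsilon n^2(\dim V_{\blambda(n)})^2x^{3/2+2\varepsilon})$, and in the other it is $O_X(\dim V_{\blambda(n)}x^3T(n)^{-2})$ up to a constant $C(\varepsilon)$ coming from the width $\tfrac32-2\varepsilon$ of the strip and the reciprocal-logarithm factor in the geometric estimate. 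Since $0<\varepsilon<\tfrac1{10}$ we have $T(n)^{-2}\le T(n)^{-4\varepsilon}$, which yields the second term.

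The main obstacle is this convexity step, because $D_{\chi,\blambda(n)}$ is the logarithmic derivative of Selberg's zeta function and has poles (at the spectral parameters $r_k$) throughout the critical strip, so the maximum modulus principle cannot be applied to $D_{\chi,\blambda(n)}$ directly. The remedy is to excise the finitely many poles lying within distance $1$ of the line $\Im(s)=T(n)$ before invoking three-lines: by Proposition~\ref{diffeigen} there are only $O_X(\dim V_{\blambda(n)}(n^2+n^2\log T(n)+T(n)))$ of them, and by the admissibility arrangement~\eqref{admissible} the line $\Im(s)=T(n)$ stays at distance at least $C_X/(\dim V_{\blambda(n)}T(n))$ from all of them, so subtracting the corresponding simple rational terms $\frac{2s-1}{r_k^2+(s-1/2)^2}$ produces a function holomorphic in a controlled neighbourhood of the segment, and one checks that the subtracted terms are themselves $O_X$ of the right-hand side (this is exactly the computation already carried out in the proof of the preceding proposition, so one may alternatively just run that direct estimate line by line and interpolate). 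Assembling the two pieces gives the stated bound, asymptotically almost surely as $n\to\infty$, since it rests on the intersection of finitely many a.a.s.\ events --- the preceding proposition, Proposition~\ref{logbound}, and Proposition~\ref{diffeigen}.
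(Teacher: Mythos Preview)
Your decomposition at $\sigma=\tfrac12+2\varepsilon$ and your treatment of the short piece $\tfrac12\le\sigma\le\tfrac12+2\varepsilon$ match the paper exactly: insert the preceding proposition's bound, cancel the $T(n)^2$, and integrate $x^{\sigma+1}$ over an interval of length $2\varepsilon$ to produce the first term.

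The difference is in the long piece $\tfrac12+2\varepsilon\le\sigma\le 2$. The paper does \emph{not} interpolate from the preceding proposition. It instead quotes a classical convexity bound for the logarithmic derivative of the Selberg zeta function, namely the flat-bundle analogue of Hejhal's Proposition~6.7 (\cite[p.~103]{Hejhal1}): for $\Re(s)\ge\tfrac12+\varepsilon$ and $\Im(s)$ large,
\[
|D_{\chi,\blambda(n)}(s)|=O_X\!\left(\tfrac{1}{\varepsilon}\dim V_{\blambda(n)}\,\Im(s)^{\,2\max\{0,\,1+\varepsilon-\Re(s)\}}\right),
\]
and then follows Hejhal's Proposition~6.13(b) to deduce $O_X\!\left(C(\varepsilon)\dim V_{\blambda(n)}x^3T(n)^{-4\varepsilon}\right)$. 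The input on the left edge of the strip comes not from the preceding proposition but from the explicit formula plus the functional equation, which give a bound valid for \emph{every} large $\Im(s)$, not merely at the single height $T(n)$.

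Your proposed three-lines argument has a genuine gap. Hadamard three-lines (or Phragm\'en--Lindel\"of) needs bounds on two full vertical lines bounding a strip in which the function is holomorphic; the preceding proposition supplies a bound only on the horizontal segment $\Im(s)=T(n)$, not on the line $\Re(s)=\tfrac12+2\varepsilon$. Excising the poles with $|r_k-T(n)|\le1$ and invoking~\eqref{admissible} gives holomorphy in a thin horizontal rectangle, but you would then need bounds on \emph{all four} sides of that rectangle, and you still have no control on the vertical sides other than what the (non-$\sigma$-dependent) preceding proposition gives --- which is exactly what you are trying to improve. Your alternative of ``running the direct estimate line by line'' does not help either: the dominant term in the preceding proposition's proof is $\sum_{|r_k-T(n)|\le1}|T(n)-r_k|^{-1}$, which is controlled via~\eqref{admissible} and Proposition~\ref{diffeigen} and does not shrink as $\sigma$ moves towards $2$. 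The paper's route avoids all of this by appealing to the already-established convexity theory in Hejhal's book.
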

\begin{proof}
We split the integral into two parts:
\[\int_{\frac{1}{2}+iT(n)}^{2+iT(n)}=\int_{\frac{1}{2}+iT(n)}^{\frac{1}{2}+2\varepsilon+iT(n)}+\int_{\frac{1}{2}+2\varepsilon+iT(n)}^{2+iT(n)}.\]
Applying the previous proposition gives us
\[\int_{\frac{1}{2}+iT(n)}^{\frac{1}{2}+2\varepsilon+iT(n)}\frac{x^{\sigma+1}}{T(n)^2}|D_{\chi,\blambda(n)}(s)||ds|=O_X\left(\varepsilon n^2(\dim V_{\blambda(n)})^2 x^{3/2+2\varepsilon}\right).\]
For the second integral, a simple variant of the Phragm\'en--Lindel\"of theorem gives the following variant of Proposition 6.7 of~\cite[p. 103]{Hejhal1}: for $\Re(s)\geq\frac{1}{2}+\varepsilon$ and $\Im(s)$ sufficiently large,
\[|D_{\chi,\blambda(n)}(s)|=O_X\left(\frac{1}{\varepsilon}\dim V_{\blambda(n)}\Im(s)^{2\max\{0,1+\varepsilon-\Re(s)\}}\right).\]
Using this, we obtain as in Proposition 6.13(b) of~\cite[p. 106]{Hejhal1} that
\[\int_{\frac{1}{2}+2\varepsilon+iT(n)}^{2+iT(n)}\frac{x^{\sigma+1}}{T(n)^2}|D_{\chi,\blambda(n)}(s)||ds|=O_X\left(C(\varepsilon)\dim V_{\blambda(n)}x^3T(n)^{-4\varepsilon}\right)\]
for some constant $C(\varepsilon)>0$.
\end{proof}
Combining the previous propositions with equation~\eqref{Cauchyresidue}, we asymptotically almost surely as $n\rightarrow\infty$ have
\begin{align}\label{residueabstract}
\Psi_1^{\chi,\blambda(n)}(x)&=O_X\left(\frac{\dim V_{\blambda(n)}x^3}{T(n)}\right)+O_X\left(\varepsilon n^2(\dim V_{\blambda(n)})^2 x^{3/2+2\varepsilon}\right)+O_X\left(C(\varepsilon)\dim V_{\blambda(n)}x^3T(n)^{-4\varepsilon}\right)\\&+O_X\left(\left(1+\frac{n^2}{T(n)}\right)\frac{x^{3/2}\dim V_{\blambda(n)}}{T(n)}\right)+O_X\left(\left(1+\frac{n^2}{A}\right)\frac{\dim V_{\blambda(n)}x^{1-A}}{A\log x}\right) \nonumber \\&+\sum_{R(A,T(n))}\textup{Res}\left(\frac{x^{s+1}}{s(s+1)}D_{\chi,\blambda(n)}(s)\right)\nonumber
\end{align}
In the following proposition, we estimate the residue part. Recall the notation of Subsection~\ref{spectraltheory}.
\begin{proposition}For any $1/3>\delta>0$, asymptotically almost surely as $n\rightarrow\infty$ we have
\begin{align}\label{residuecomp}
&\sum_{R(\frac{3}{2},T(n))}\textup{Res}\left(\frac{x^{s+1}}{s(s+1)}D_{\chi,\blambda(n)}(s)\right)\\&=\sum_{\frac{1}{4}-\delta<\lambda_k<\frac{1}{4}}\frac{x^{s_k+1}}{s_k(s_k+1)}+\sum_{0\leq r_k\leq T(n)}\frac{x^{s_k+1}}{s_k(s_k+1)}+\sum_{0\leq r_k\leq T(n)}\frac{x^{\tilde{s}_k+1}}{\tilde{s}_k(\tilde{s}_k+1)}+O_X(\dim V_{\blambda(n)}x^{3/2}).\nonumber
\end{align}
\end{proposition}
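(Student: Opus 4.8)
The plan is to locate every pole of $F(s):=\frac{x^{s+1}}{s(s+1)}D_{\chi,\blambda(n)}(s)$ inside the rectangle $R(\tfrac32,T(n))=[-\tfrac32,2]\times[-T(n),T(n)]$ and to sum the residues, peeling off the three named sums and bounding the remainder. By Corollary~\ref{cor:genericbundle} one may assume $K_0=0$, so that the Selberg zeta function $Z_{\chi,\blambda(n)}$ has the divisor of~\cite{Hejhal2}: simple zeros at the spectral points $s_k=\tfrac12-ir_k$ and $\tilde s_k=\tfrac12+ir_k$ of order the multiplicity of $\lambda_k$; a zero at $s=1$ of order the dimension of the space of constant sections, which is $0$ asymptotically almost surely by Section~\ref{random}; and ``trivial'' zeros and poles at the non-positive integers. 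Hence $D_{\chi,\blambda(n)}=Z'_{\chi,\blambda(n)}/Z_{\chi,\blambda(n)}$ has simple poles exactly at these points, with residues the signed orders; inside $R(\tfrac32,T(n))$ these are the spectral zeros of imaginary part in $[-T(n),T(n)]$ together with $s=0$ and $s=-1$, and at the latter two the functional equation~\cite[Thm.~2.18]{Hejhal2} (via its $(s-\tfrac12)\cot(\pi s)$ term) shows the residue of $D_{\chi,\blambda(n)}$ there is $O_X(\dim V_{\blambda(n)})$. The admissibility condition~\eqref{admissible} keeps every $s_k,\tilde s_k$ off the horizontal edges $\Im s=\pm T(n)$ and $A=\tfrac32\notin\mathbb Z$ keeps $s=0,-1$ off the vertical edges, so the residue sum in~\eqref{Cauchyresidue} is a finite sum over interior poles.

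Next I would evaluate the spectral residues. If $\lambda_k\ge\tfrac14$ then $r_k$ is real, $s_k$ has imaginary part $-r_k$, which lies in $[-T(n),T(n)]$ exactly when $0\le r_k\le T(n)$, and the residue of $F$ there is $\frac{x^{s_k+1}}{s_k(s_k+1)}$ counted with the multiplicity of $\lambda_k$; summing these gives $\sum_{0\le r_k\le T(n)}\frac{x^{s_k+1}}{s_k(s_k+1)}$, while the mirror zeros $\tilde s_k$ give $\sum_{0\le r_k\le T(n)}\frac{x^{\tilde s_k+1}}{\tilde s_k(\tilde s_k+1)}$ the same way. If $\tfrac14-\delta<\lambda_k<\tfrac14$ then $s_k\in(\tfrac12,\tfrac12+\sqrt\delta)\subset R(\tfrac32,T(n))$ is real and contributes $\frac{x^{s_k+1}}{s_k(s_k+1)}$; these sum to $\sum_{\frac14-\delta<\lambda_k<\frac14}\frac{x^{s_k+1}}{s_k(s_k+1)}$, the third named term.

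It then remains to absorb all other residues into $O_X(\dim V_{\blambda(n)}x^{3/2})$. By Theorem~\ref{Thm1}, applied with some $\varepsilon<\min(\delta,\tfrac14)$, asymptotically almost surely every eigenvalue below $\tfrac14$ lies in $(\tfrac14-\delta,\tfrac14)$; hence almost surely there are no spectral zeros from eigenvalues $\le\tfrac14-\delta$, and the mirror zeros $\tilde s_k=1-s_k$ of the $O_X(\dim V_{\blambda(n)})$-many (cf.~\cite{BallmanPolymerakis}) eigenvalues below $\tfrac14$ satisfy $\tilde s_k\in(\tfrac12-\sqrt\delta,\tfrac12)$ and are bounded away from $0$, so — since $x^{\tilde s_k+1}\le x^{3/2}$ — they contribute $O_X(\dim V_{\blambda(n)}x^{3/2})$. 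Finally, for the residues at $s=0$ and $s=-1$ one uses the functional equation~\cite[Thm.~2.18]{Hejhal2} to write down the first two Laurent coefficients of $D_{\chi,\blambda(n)}$ there: the principal parts (residues $O_X(\dim V_{\blambda(n)})$) against the double poles of $x^{s+1}/(s(s+1))$ give terms $O_X(\dim V_{\blambda(n)}\,x\log x)$, and the next coefficients — controlled by a zero-free region for $Z_{\chi,\blambda(n)}$ near $\Re s=1$ coming from Theorem~\ref{Thm1}, together with Proposition~\ref{logbound} applied to the cusp term $\sum_{\alpha_{hj}}\log|1-e^{2\pi i\alpha_{hj}}|$ — give terms $O_X(\dim V_{\blambda(n)}\,x)$; since $x\log x\le x^{3/2}$ and $x\le x^{3/2}$ for $x\ge1$, all of these lie in $O_X(\dim V_{\blambda(n)}x^{3/2})$, and collecting the residues yields the stated identity.

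The step I expect to be the main obstacle is the treatment of $s=0$ and $s=-1$: it requires knowing the divisor of the Selberg zeta function of a flat unitary bundle over a \emph{cusped} surface precisely enough near those points — in particular that the trivial zero and pole orders there are $O_X(\dim V_{\blambda(n)})$ — and then a bound, uniform over the range of $x$ relevant to Theorem~\ref{geodesicdep}, on the lower-order Laurent coefficient of $D_{\chi,\blambda(n)}$ at $s=0,-1$, for which the functional equation and the almost sure estimate of Proposition~\ref{logbound} are the key inputs. By contrast, the bookkeeping of the spectral residues and the bound on the small-eigenvalue mirror zeros are routine given Theorem~\ref{Thm1} and the eigenvalue counts of Section~\ref{counting}.
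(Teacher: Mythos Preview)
Your overall architecture matches the paper's: enumerate the poles of $F(s)=\frac{x^{s+1}}{s(s+1)}D_{\chi,\blambda(n)}(s)$ in $R(\tfrac32,T(n))$, read off the spectral residues $\frac{x^{s_k+1}}{s_k(s_k+1)}$ and $\frac{x^{\tilde s_k+1}}{\tilde s_k(\tilde s_k+1)}$, use Theorem~\ref{Thm1} to push all small eigenvalues into $(\tfrac14-\delta,\tfrac14)$, and use the $O_X(\dim V_{\blambda(n)})$ bound on the number of such eigenvalues to absorb the mirror points $\tilde s_k\in(\tfrac12-\sqrt\delta,\tfrac12)$ into $O_X(\dim V_{\blambda(n)}x^{3/2})$. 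That part is correct and is exactly what the paper does.

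Where you diverge from the paper, and where there is a genuine gap, is the treatment of $s=0$ and $s=-1$. The paper does \emph{not} go through the functional equation here; it quotes the explicit divisor formula for $D_{\chi,\blambda(n)}$ (\cite[Thm.~2.16, p.~439]{Hejhal2}), namely the formal singular expansion
\[
\sum_k\Big(\tfrac{1}{s-s_k}+\tfrac{1}{s-\tilde s_k}\Big)+\frac{\dim V_{\blambda(n)}\,\area(X)}{2\pi}\sum_{k\ge 0}\frac{1-2s}{s+k},
\]
and computes the Laurent expansion of $D_{\chi,\blambda(n)}$ at $s=0,-1$ directly from the trivial part. This yields residues that are absolute constants times $\dim V_{\blambda(n)}$, and next coefficients that are again absolute constants times $\dim V_{\blambda(n)}$; combined with the Taylor expansion of $x^{s+1}$ one gets a total contribution $O_X(\dim V_{\blambda(n)}\,x\log x)\subset O_X(\dim V_{\blambda(n)}x^{3/2})$, with no $n^2$ and no appeal to Proposition~\ref{logbound} or to any zero-free region.

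Your route via the functional equation (\cite[Thm.~2.18]{Hejhal2}) runs into two problems. First, the functional equation expresses the Laurent expansion of $D_{\chi,\blambda(n)}$ at $s=0$ in terms of $D_{\chi,\blambda(n)}(1-s)$ near $s=0$, i.e.\ in terms of $D_{\chi,\blambda(n)}(1)$; Theorem~\ref{Thm1} guarantees that $s=1$ is not a pole, but it gives no uniform bound on the \emph{value} $|D_{\chi,\blambda(n)}(1)|$ (the na\"ive Dirichlet-series comparison $|D_{\chi,\blambda(n)}(s)|\le\dim V_{\blambda(n)}|D_{1,0}(s)|$ is useless at $s=1$ since $D_{1,0}$ has a pole there). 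Second, Proposition~\ref{logbound} bounds the cusp sum by $O_X(n^2\dim V_{\blambda(n)})$, not $O_X(\dim V_{\blambda(n)})$, so even granting a bound on $D_{\chi,\blambda(n)}(1)$ your method would produce $O_X(n^2\dim V_{\blambda(n)}\,x)$ rather than the $O_X(\dim V_{\blambda(n)}\,x)$ you claim. The fix is simply to bypass the functional equation and read off the Laurent coefficients at $s=0,-1$ from the divisor formula, as the paper does.
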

\begin{proof}
From Theorem 2.16 of~\cite[p.439]{Hejhal2}, $D_{\chi,\blambda(n)}$ has singularities given by the formal expression
\begin{align}
\sum_k\left(\frac{1}{s-s_k}+\frac{1}{s-\tilde{s}_k}\right)+\frac{\dim V_{\blambda(n)}\area(X)}{2\pi}\sum_{k=0}^{\infty}\frac{1-2s}{s+k}.
\end{align}

For each $s_k$, $k\geq 0$, we have
\begin{equation}\label{res1}
\textup{Res}\left(\frac{x^{s+1}}{s(s+1)}D_{\chi,\blambda(n)}(s); s=s_k\right)=\frac{x^{s_k+1}}{s_k(s_k+1)}.
\end{equation}
Futhermore, for each $\tilde{s}_k$, $k\geq 1$, we have
\begin{equation}\label{res2}
\textup{Res}\left(\frac{x^{s+1}}{s(s+1)}D_{\chi,\blambda(n)}(s); s=\tilde{s}_k\right)=\frac{x^{\tilde{s}_k+1}}{\tilde{s}_k(\tilde{s}_k+1)}.
\end{equation}
The residues at $s=-1$ and $s=0$ are calculated as follows. For $s=-1$, we have 
\[D_{\chi,\blambda(n)}(s)=\frac{3\dim V_{\blambda(n)}\area(X)}{2\pi(s+1)}\left(1+e_1(s+1)+e_2(s+1)^2+\dots\right),\]
where the $e_i$ are absolute constants. Furthermore, we have
\[\frac{1}{s}=\frac{-1}{s+1}\left(1+(s+1)+(s+1)^2+\dots\right)\]
and
\[x^{s+1}=1+(s+1)\log x+\frac{((s+1)\log x)^2}{2!}+\dots.\]
From these, it follows that
\[\textup{Res}\left(\frac{x^{s+1}}{s(s+1)}D_{\chi,\blambda(n)}(s); s=-1\right)=\frac{-3\dim V_{\blambda(n)}\area(X)}{2\pi(s+1)}\left(\log x+e_1+1\right).\]
For $s=0$, we have
\[D_{\chi,\blambda(n)}(s)=\frac{1}{s}\left(1+\frac{\dim V_{\blambda(n)}\area(X)}{2\pi}\right)+\frac{\dim V_{\blambda(n)}\area(X)}{2\pi s}\left(f_1s+f_2s^2+\dots\right),\]
where the $f_i$ are absolute constants. We also have
\[\frac{1}{s+1}=1-s+s^2-\dots\]
and
\[x^{s+1}=x\left(1+s\log x+\frac{(s\log x)^2}{2!}+\dots\right).\]
From these, it follows that
\[\textup{Res}\left(\frac{x^{s+1}}{s(s+1)}D_{\chi,\blambda(n)}(s); s=0\right)=\left(\frac{f_1\dim V_{\blambda(n)}\area(X)}{2\pi}-\frac{\dim V_{\blambda(n)}\area(X)}{2\pi}-1\right)x+\left(1+\frac{\dim V_{\blambda(n)}\area(X)}{2\pi}\right)x\log x.\]
Putting the above together, we obtain
\begin{align}\label{residuecomp}
&\sum_{R(\frac{3}{2},T(n))}\textup{Res}\left(\frac{x^{s+1}}{s(s+1)}D_{\chi,\blambda(n)}(s)\right)\\&=\sum_{k=0}^{M}\frac{x^{s_k+1}}{s_k(s_k+1)}+\sum_{k=1}^M\frac{x^{\tilde{s}_k+1}}{\tilde{s}_k(\tilde{s}_k+1)}+\sum_{0\leq r_k\leq T(n)}\frac{x^{s_k+1}}{s_k(s_k+1)}+\sum_{0\leq r_k\leq T(n)}\frac{x^{\tilde{s}_k+1}}{\tilde{s}_k(\tilde{s}_k+1)}+O_X(\dim V_{\blambda(n)}x\log x).\nonumber
\end{align}
By Theorem~\ref{Thm1}, asymptotically almost surely as $n\rightarrow\infty$, the smallest eigenvalue of the flat bundle $E_{\chi,\blambda(n)}$ is at least $\frac{1}{4}-\delta$. Furthermore, $M=O_X(\dim V_{\blambda(n)})$ by~\cite{BallmanPolymerakis}. Consequently,
\[\sum_{k=1}^M\frac{x^{\tilde{s}_k+1}}{\tilde{s}_k(\tilde{s}_k+1)}=O_X(\dim V_{\blambda(n)}x^{3/2}).\]
The conclusion follows.


\end{proof}
From equation~\eqref{residueabstract} applied to $A=\frac{3}{2}$, we therefore have that for any $\varepsilon>0$ and any $\frac{1}{3}>\delta>0$, asymptotically almost surely as $n\rightarrow\infty$,
\begin{align}
\Psi_1^{\chi,\blambda(n)}(x)&=\sum_{\frac{1}{4}-\delta<\lambda_k<\frac{1}{4}}\frac{x^{s_k+1}}{s_k(s_k+1)}+\sum_{0\leq r_k\leq T(n)}\frac{x^{s_k+1}}{s_k(s_k+1)}+\sum_{0\leq r_k\leq T(n)}\frac{x^{\tilde{s}_k+1}}{\tilde{s}_k(\tilde{s}_k+1)}\\&+O_X\left(\frac{\dim V_{\blambda(n)}x^3}{T(n)}\right)+O_X\left(\varepsilon n^2(\dim V_{\blambda(n)})^2 x^{3/2+2\varepsilon}\right)\\&+O_X\left(C(\varepsilon)\dim V_{\blambda(n)}x^3T(n)^{-4\varepsilon}\right)+O_X(\dim V_{\blambda(n)}x^{3/2})\nonumber.
\end{align}
Note that
\begin{align}
&\left|\sum_{0\leq r_k\leq T(n)}\frac{(x+h)^{s_k+1}-x^{s_k+1}}{s_k(s_k+1)}+\sum_{0\leq r_k\leq T(n)}\frac{(x+h)^{\tilde{s}_k+1}-x^{\tilde{s}_k+1}}{\tilde{s}_k(\tilde{s}_k+1)}\right|\nonumber\\&\leq
O(x^{3/2})\sum_{0\leq r_k\leq T(n)}\frac{1}{|s_k(s_k+1)|}+\sum_{0\leq r_k\leq T(n)}\frac{1}{|\tilde{s}_k(\tilde{s}_k+1)|}.\nonumber
\end{align}
Note that
\begin{eqnarray*}
\sum_{0\leq r_k\leq T(n)}\frac{1}{|s_k(s_k+1)|}&=&O_X(n^2\dim V_{\blambda(n)})+\sum_{j=1}^{T(n)-1}\sum_{j\leq r_k<j+1}\frac{1}{|s_k(s_k+1)|}\\&\leq&
O_X(n^2\dim V_{\blambda(n)})+\sum_{j=1}^{T(n)-1}\frac{N(j\leq r_k<j+1)}{j^2}\\&\leq & O_X(n^2\dim V_{\blambda(n)})+\frac{N(0\leq r_k\leq T(n))}{(T(n)-1)^2}-N(0\leq r_k<1)-O(1)\sum_{j=2}^{T(n)-1}\frac{N(0\leq r_k<j)}{j^3}\\&\leq& O_X(n^2\dim V_{\blambda(n)}\log T(n)),
\end{eqnarray*}
where in the last inequality, we used Proposition~\ref{Weylbound} stating that asymptotically almost surely as $n\rightarrow\infty$ we have $N(0\leq r_k\leq T)=O_X(n^2\dim V_{\blambda(n)}T^2)$. We similarly obtain
\[\sum_{0\leq r_k\leq T(n)}\frac{1}{|\tilde{s}_k(\tilde{s}_k+1)|}=O_X(n^2\dim V_{\blambda(n)}\log T(n)).\]
Consequently,
\[\left|\sum_{0\leq r_k\leq T(n)}\frac{(x+h)^{s_k+1}-x^{s_k+1}}{s_k(s_k+1)}+\sum_{0\leq r_k\leq T(n)}\frac{(x+h)^{\tilde{s}_k+1}-x^{\tilde{s}_k+1}}{\tilde{s}_k(\tilde{s}_k+1)}\right|=O_X(n^2\dim V_{\blambda(n)}x^{3/2}\log T(n)).\]
A standard argument shows that if $1\leq h\leq \frac{x}{2}$ and we have the above conditions on $\varepsilon,\delta,$ then asymptotically almost surely as $n\rightarrow\infty$,
\begin{align}
\Psi^{\chi,\blambda(n)}(x)&=\sum_{\frac{1}{4}-\delta<\lambda_k<\frac{1}{4}}\frac{x^{s_k}}{s_k}+O_X\left(\frac{\dim V_{\blambda(n)}x^3}{hT(n)}\right)+O_X\left(\frac{\varepsilon n^2(\dim V_{\blambda(n)})^2 x^{3/2+2\varepsilon}}{h}\right)\\&+O_X\left(\frac{C(\varepsilon)\dim V_{\blambda(n)}x^3T(n)^{-4\varepsilon}}{h}\right)+O_X\left(\frac{n^2\dim V_{\blambda(n)}x^{3/2}\log T(n)}{h}\right)+O_X(\dim V_{\blambda(n)}h)
\end{align}
Let $x=T(n)^{\alpha}$ for some $0<\alpha<1$, and let $h=T(n)^{\frac{3}{4}\alpha+\varepsilon\alpha}$. We obtain
\begin{align}
\Psi^{\chi,\blambda(n)}(T(n)^{\alpha})&=\sum_{\frac{1}{4}-\delta<\lambda_k<\frac{1}{4}}\frac{T(n)^{\alpha s_k}}{s_k}+O_X\left(\dim V_{\blambda(n)}T(n)^{\frac{9}{4}\alpha-1-\varepsilon\alpha}\right)+O_X\left(\varepsilon n^2(\dim V_{\blambda(n)})^2 T(n)^{\frac{3}{4}\alpha+\varepsilon\alpha}\right)\\&+O_X\left(C(\varepsilon)\dim V_{\blambda(n)}T(n)^{\frac{9}{4}\alpha-4\varepsilon-\alpha\varepsilon}\right)+O_X\left(n^2\dim V_{\blambda(n)}T(n)^{\frac{3}{4}\alpha-\varepsilon\alpha}\log T(n)\right)+O_X(\dim V_{\blambda(n)}T(n)^{\frac{3}{4}\alpha+\varepsilon\alpha}).\nonumber
\end{align}
Letting $\alpha=\varepsilon<\frac{1}{10}$, we obtain that asymptotically almost surely as $n\rightarrow\infty$,
\begin{equation}\label{psifin}
\Psi^{\chi,\blambda(n)}(T(n)^{\varepsilon})=O_{X,\varepsilon}\left(n^2(\dim V_{\blambda(n)})^2T(n)^{\left(\frac{3}{4}+\varepsilon\right)\varepsilon}\right).
\end{equation}
An integration by parts argument gives that for every $0<\varepsilon<\frac{1}{10}$, asymptotically almost surely as $n\rightarrow\infty$,
\[\sum_{\ell(\gamma)\leq \varepsilon\log T(n)}\chi_{\blambda(n)}(\chi(\gamma))=O_X\left(n^2\dim V_{\blambda(n)}T(n)^{\left(\frac{3}{4}+\varepsilon\right)\varepsilon}\right).\]
Since by the usual prime geodesic theorem,
\[\sum_{\ell(\gamma)\leq \varepsilon\log T(n)}1\sim \frac{T(n)^{\varepsilon}}{\varepsilon\log T(n)},\]
we obtain, upon replacing $T(n)$ with $T(n)^{1/\varepsilon}$, the following theorem.
\begin{theorem}Suppose $X$ is a cusped hyperbolic surface, and suppose $\blambda(n)$ is sequence of unbalanced signatures with $|\blambda(n)|\leq c\frac{\log n}{\log\log n}$ for large enough $n$, where $c>0$ is the universal constant in Theorem~\ref{ThmBC}. Suppose $T(n)$ is a function of $n$ such that $T(n)\rightarrow\infty$ as $n\rightarrow\infty$. For every $\varepsilon>0$,
\begin{equation}
\mathbb{P}\left[\chi:\left|\frac{1}{\pi_X(\log T(n))}\sum_{\ell(\gamma)\leq \log T(n)}\chi_{\blambda(n)}(\chi(\gamma))\right|\leq n^2\dim V_{\blambda(n)}T(n)^{-\frac{1}{4}+\varepsilon}\right]\xrightarrow{n\rightarrow\infty} 1
\end{equation}
\end{theorem}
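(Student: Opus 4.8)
The plan is to reduce the geodesic sum to an estimate on $\Psi^{\chi,\blambda(n)}(x)$ via Selberg's zeta function and a contour shift, and then transfer back by summation by parts and the classical prime geodesic theorem. First I would invoke Corollary~\ref{cor:genericbundle} to pass, at the cost of an event of probability tending to $1$, to the situation $K_0=0$, so that $E_{\chi,\blambda(n)}$ has purely discrete spectrum and the functional equation and pole structure of $D_{\chi,\blambda(n)}$ from~\cite{Hejhal2} are available. Starting from Proposition~\ref{contourshift}, which expresses $\Psi_1^{\chi,\blambda(n)}(x)$ as a vertical line integral of $\frac{x^{s+1}}{s(s+1)}D_{\chi,\blambda(n)}(s)$ on $\Re(s)=2$, I would truncate the integral at height $T(n)$ (Proposition~\ref{Psiestimate}) and push the contour left to $\Re(s)=-A$ with $A=\frac32$, collecting the residues inside the rectangle $R(A,T(n))$.

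The core of the argument is the probabilistic control of all the error terms, making each depend only on $X$, $n$, and $\dim V_{\blambda(n)}$ and never on the particular $\chi$. The left vertical integral is handled by Proposition~\ref{leftvert}; the two horizontal integrals by the bound \eqref{tophor} and its corollary, which rest on the a.a.s.\ estimate $|D_{\chi,\blambda(n)}(\sigma+iT(n))|=O_X(n^2(\dim V_{\blambda(n)})^2T(n)^2)$ together with a Phragm\'en--Lindel\"of interpolation yielding $|D_{\chi,\blambda(n)}(s)|=O_X(\varepsilon^{-1}\dim V_{\blambda(n)}\,\Im(s)^{2\max\{0,1+\varepsilon-\Re(s)\}})$ for $\Re(s)\ge \frac12+\varepsilon$. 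These in turn use the probabilistic Weyl laws of Section~\ref{counting} (Propositions~\ref{Weylbound} and~\ref{diffeigen}) and Proposition~\ref{logbound} to bound $\sum_{\alpha_{hj}}\log|1-e^{2\pi i\alpha_{hj}}|=O_X(n^2\dim V_{\blambda(n)})$. One also needs the admissibility reduction~\eqref{admissible}, namely that a.a.s.\ one may assume $|r_k-T(n)|\ge C_X/(\dim V_{\blambda(n)}T(n))$ for all $k$, so that the horizontal sides of the contour avoid the spectral parameters; this is exactly what Proposition~\ref{diffeigen} permits.

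For the residue sum I would use the pole structure of $D_{\chi,\blambda(n)}$ (simple poles at $s_k,\tilde s_k$ with residue $1$ each, elementary poles at $s=0,-1$). The small-eigenvalue block $\lambda_k<\frac14$ is $O_X(\dim V_{\blambda(n)}x^{3/2})$ since $M=O_X(\dim V_{\blambda(n)})$ by~\cite{BallmanPolymerakis}, and crucially Theorem~\ref{Thm1} guarantees that a.a.s.\ there are no eigenvalues below $\frac14-\delta$. Collecting everything and choosing $x=T(n)^{\varepsilon}$ with $h=T(n)^{(3/4+\varepsilon)\varepsilon}$ as in the derivation preceding~\eqref{psifin} gives $\Psi^{\chi,\blambda(n)}(T(n)^{\varepsilon})=O_{X,\varepsilon}(n^2(\dim V_{\blambda(n)})^2T(n)^{(3/4+\varepsilon)\varepsilon})$. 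A standard integration by parts converts this $\Lambda$-weighted sum into $\sum_{\ell(\gamma)\le \varepsilon\log T(n)}\chi_{\blambda(n)}(\chi(\gamma))$; dividing by the main term $\pi_X(\varepsilon\log T(n))\sim T(n)^{\varepsilon}/(\varepsilon\log T(n))$ from the classical prime geodesic theorem, re-indexing $T(n)\mapsto T(n)^{1/\varepsilon}$, and shrinking $\varepsilon$ to absorb the $\varepsilon$-losses yields the stated bound with exponent $-\frac14+\varepsilon$.

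The hard part is precisely that the whole argument lives on a high-probability event: finitely many ``bad events'' --- new eigenvalues below $\frac14-\delta$, spectral parameters $r_k$ too close to height $T(n)$, cusp-holonomy eigenvalues of $\pi_{\blambda(n)}(\chi(T_i))$ too close to $1$ --- must each be shown to have probability tending to $1$ under the hypothesis $|\blambda(n)|\le c\frac{\log n}{\log\log n}$. This rests on Theorem~\ref{ThmBC} of Bordenave--Collins (through Theorem~\ref{Thm1}) and on the Weyl-integration estimate of Proposition~\ref{logbound}. Keeping the dependence on $\dim V_{\blambda(n)}$ polynomial throughout the contour shift, in particular in the Phragm\'en--Lindel\"of step, is the main technical burden of the proof.
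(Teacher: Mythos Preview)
Your proposal is correct and follows essentially the same approach as the paper's own proof in Section~\ref{pgt}: the same contour shift of Proposition~\ref{contourshift} to the rectangle $R(3/2,T(n))$, the same probabilistic inputs (Corollary~\ref{cor:genericbundle}, Propositions~\ref{logbound}, \ref{Weylbound}, \ref{diffeigen}, the admissibility reduction~\eqref{admissible}, Theorem~\ref{Thm1}, and the Phragm\'en--Lindel\"of step), the same residue analysis, and the same endgame with $x=T(n)^{\varepsilon}$, $h=T(n)^{(3/4+\varepsilon)\varepsilon}$ followed by the substitution $T(n)\mapsto T(n)^{1/\varepsilon}$. The only slip is a wording one: the finitely many ``bad events'' you list should have probability tending to $0$ (equivalently, their complements tend to $1$).
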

It would be interesting to refine this theorem further.
\section{Applications of the spectral theorem}\label{applications}
In this section, we note some applications of Theorem~\ref{Thm1}. Given a signature $\blambda$ associated to an irreducible representations of $U(n)$, we can extend it by zeros to obtain irreducible representations for each $U(m)$, $m\geq n$. Therefore, each signature $\blambda$ gives an infinite family of irreducible representations of $U(n)$ for $n$ large enough.\\
\\
First, by the prime geodesic theorem for flat unitary bundles, the following is an immediate consequence of Theorem~\ref{Thm1}.
\begin{corollary}\label{cor:1}Suppose $X$ is a cusped hyperbolic surface, and let $\blambda_1(n),\hdots,\blambda_N(n)$ be signatures such that for each $i$, $|\blambda_i(n)|\leq c\frac{\log n}{\log \log n}$ for large enough $n$, where $c>0$ is the universal constant of Theorem~\ref{ThmBC}. Let $f_n:=\sum_{i=1}^Na_i\chi_{\blambda_i(n)}$, $a_i\in\mathbb{C}$ fixed, be class functions on $U(n)$ for large enough $n$. Then asymptotically almost surely as $n\rightarrow \infty$, $\chi:\pi_1(X)\rightarrow U(n)$ satisfies
\[\frac{1}{\pi_X(T)}\sum_{\ell(\gamma)\leq T}f_n(\chi(\gamma))=\int_{U(n)}f_n(g)d\mu(g)+O(e^{-T/4})\]
as $T\rightarrow\infty$. Here, $\pi_X(T)$ is the number of closed oriented hyperbolic geodesics $\gamma$ on $X$ of length $\ell(\gamma)\leq T$, and $d\mu$ is the probability Haar measure on $U(n)$.
\end{corollary}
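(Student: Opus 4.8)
Both sides of the identity are $\mathbb{C}$-linear in $f_n$, so it suffices to treat $f_n=\chi_{\blambda(n)}$ for a single sequence of signatures with $|\blambda(n)|\le c\frac{\log n}{\log\log n}$. If $\blambda(n)=0$ then $\chi_{\blambda(n)}\equiv 1$ and the asserted equality holds with vanishing error term. If $\blambda(n)\neq 0$, then $\pi_{\blambda(n)}$ is a nontrivial irreducible unitary representation of $U(n)$, so Schur orthogonality gives $\int_{U(n)}\chi_{\blambda(n)}(g)\,d\mu(g)=\frac{1}{\dim V_{\blambda(n)}}\int_{U(n)}\tr\pi_{\blambda(n)}(g)\,d\mu(g)=0$, and I am reduced to showing that asymptotically almost surely as $n\to\infty$,
\[\frac{1}{\pi_X(T)}\sum_{\ell(\gamma)\le T}\chi_{\blambda(n)}(\chi(\gamma))=O(e^{-T/4})\qquad(T\to\infty),\]
where the implied constant may depend on $X$, $n$, and $\chi$.

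The main tool will be the prime geodesic theorem for the flat unitary bundle $E_{\chi,\blambda(n)}$ in its classical (fixed $n$) form: this is the specialization of the analysis of Section~\ref{pgt} to one bundle, and follows from Selberg's trace formula together with the functional equation of $Z_{\chi,\blambda(n)}$ as in Hejhal~\cite{Hejhal2} (when $\blambda(n)$ is balanced the scattering term of the trace formula also contributes). Writing $\Psi^{\chi,\blambda(n)}(x)=\sum_{N(P)\le x}\Lambda(P)\tr(\pi_{\blambda(n)}(\chi(P)))$, the explicit formula obtained by shifting contours reads
\[\Psi^{\chi,\blambda(n)}(x)=\sum_{1/2<s_k\le 1}\frac{x^{s_k}}{s_k}+(\textup{scattering contribution})+O_{X,n,\varepsilon}\!\left(x^{3/4+\varepsilon}\right),\]
the sum being over the exceptional exponents ($\lambda_k=s_k(1-s_k)<1/4$ in $\Spec\Delta_{\chi,\blambda(n)}$), and the scattering contribution (present only in the balanced case) being a finite sum over the poles of the scattering determinant in $1/2<\Re(s)\le 1$. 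A standard partial summation then turns this into the corresponding estimate for $\sum_{\ell(\gamma)\le T}\chi_{\blambda(n)}(\chi(\gamma))$, hence for the left-hand side above after dividing by $\pi_X(T)$.

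I would then feed in the two probabilistic inputs already established. Fix $\varepsilon_0\in(0,1/16)$. By Theorem~\ref{Thm1}, asymptotically almost surely $\inf\Spec(\Delta_{\chi,\blambda(n)})\ge 1/4-\varepsilon_0$; on this event every exceptional exponent satisfies $s_k\le 1/2+\sqrt{\varepsilon_0}<3/4$, and since the poles of the scattering determinant in $(1/2,1]$ occur precisely at the residual eigenvalues $\lambda=s(1-s)$, which belong to the discrete spectrum, they too have real part $\le 1/2+\sqrt{\varepsilon_0}<3/4$. By the results of Section~\ref{random} (supplemented, in the balanced case, by the standard fact that a generic pair of unitary matrices generates a dense subgroup of $U(n)$, so that the nontrivial irreducible $\pi_{\blambda(n)}\circ\chi$ has no invariant vector), asymptotically almost surely $E_{\chi,\blambda(n)}$ has no nonzero constant global section, so $\lambda=0\notin\Spec\Delta_{\chi,\blambda(n)}$ and $s=1$ is neither one of the $s_k$ nor a pole of the scattering determinant. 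On the intersection of these high-probability events the whole main term above is $O_{X,n}(x^{3/4})$, whence $\Psi^{\chi,\blambda(n)}(x)=O_{X,n,\varepsilon}(x^{3/4+\varepsilon})$; partial summation gives $\sum_{\ell(\gamma)\le T}\chi_{\blambda(n)}(\chi(\gamma))=O_{X,n,\varepsilon}(e^{(3/4+\varepsilon)T})$, and dividing by $\pi_X(T)\sim e^{T}/T$ yields the bound $O(e^{-T/4})$ (more precisely $O_\varepsilon(e^{(-1/4+\varepsilon)T})$ for each $\varepsilon>0$, in line with Theorem~\ref{geodesicdep}).

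The one point requiring attention is the continuous spectrum present when $\blambda(n)$ is balanced, which is precisely why Theorem~\ref{geodesicdep} is restricted to unbalanced signatures. In the present statement it is harmless: the error in Corollary~\ref{cor:1} is allowed to depend on $\chi$ and $n$, so for each fixed good bundle the finitely many scattering poles are merely a fixed set of exponents, all of real part $<3/4$ by the above, and no estimate for the scattering determinant uniform in $n$ (nor any random-matrix input beyond Theorem~\ref{Thm1}) is needed. Consequently the whole argument is the standard bookkeeping of the explicit formula, with Theorem~\ref{Thm1} and Section~\ref{random} supplying exactly the two facts that push the main term below the error term.
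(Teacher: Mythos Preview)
Your proposal is correct and follows the same approach as the paper: combine the prime geodesic theorem for flat unitary bundles with Theorem~\ref{Thm1}. The paper's own proof is a single sentence to this effect, so your elaboration is a faithful unpacking of that one-liner; in particular, your careful treatment of the balanced case---observing that the scattering poles in $\Re(s)>1/2$ are residual eigenvalues and hence are also pushed toward $1/2$ by Theorem~\ref{Thm1}, and that no uniformity in $n$ is needed since the implied constant may depend on $\chi$ and $n$---fills in a point the paper leaves implicit.
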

Suppose $r\geq 1$ and $\boldsymbol{a}=(a_1,\hdots,a_r)$ and $\boldsymbol{b}=(b_1,\hdots,b_r)$ are $r$-tuples of non-negative integers. Consider the function
\[\prod_{j=1}^r\tr(g^j)^{a_j}\overline{\tr(g^j)}^{b_j}\]
on $U(n)$. This is a class function, and we claim the following.
\begin{lemma}\label{lem:tracepartition}
There are signatures $\blambda_1,\hdots,\blambda_N$ and constants $a_1,\hdots,a_N\in\mathbb{C}$ depending on the tuples $\boldsymbol{a}$ and $\boldsymbol{b}$ such that
\[\prod_{j=1}^r\tr(g^j)^{a_j}\overline{\tr(g^j)}^{b_j}=\sum_{i=1}^Na_i\dim V_{\blambda_i}\chi_{\blambda_i}\]
as class functions on $U(n)$ for each $n$ large enough.
\end{lemma}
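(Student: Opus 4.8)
The plan is to expand the holomorphic and antiholomorphic factors in the Schur basis, multiply, and then invoke the stabilization of tensor product multiplicities for $GL_n$ to conclude that the resulting coefficients do not depend on $n$.

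Write $e^{i\theta_1},\dots,e^{i\theta_n}$ for the eigenvalues of $g\in U(n)$. Then $\tr(g^j)=P_j(e^{i\theta_1},\dots,e^{i\theta_n})$, so
\[
\prod_{j=1}^r\tr(g^j)^{a_j}=P_{\lambda}(e^{i\theta_1},\dots,e^{i\theta_n}),
\]
where $\lambda:=1^{a_1}2^{a_2}\cdots r^{a_r}$ is a partition of $K_1:=\sum_j ja_j$. By the change of basis formula recalled in Section~\ref{prelim}, $P_{\lambda}=\sum_{\mu\vdash K_1}\chi_{\lambda}(\mu)\,s_{\mu}$ as soon as $n\geq K_1$, and the symmetric group characters $\chi_{\lambda}(\mu)$ are independent of $n$. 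Since $g$ is unitary, $\overline{\tr(g^j)}=\tr(g^{-j})=P_j(e^{-i\theta_1},\dots,e^{-i\theta_n})$, and $s_{\nu}(e^{-i\theta_1},\dots,e^{-i\theta_n})=\overline{s_{\nu}(e^{i\theta_1},\dots,e^{i\theta_n})}$ because Schur functions have integer coefficients; hence $\prod_j\overline{\tr(g^j)}^{b_j}=\sum_{\nu\vdash K_2}\chi_{\lambda'}(\nu)\,\overline{s_{\nu}}$ for $n\geq K_2$, with $\lambda':=1^{b_1}\cdots r^{b_r}$ and $K_2:=\sum_j jb_j$. Multiplying,
\[
\prod_{j=1}^r\tr(g^j)^{a_j}\overline{\tr(g^j)}^{b_j}=\sum_{\mu\vdash K_1}\sum_{\nu\vdash K_2}\chi_{\lambda}(\mu)\chi_{\lambda'}(\nu)\,s_{\mu}\,\overline{s_{\nu}},
\]
and each $s_{\mu}\overline{s_{\nu}}$ is the character of the genuine $U(n)$ representation $V_{\mu}\otimes V_{\nu}^{*}$, hence a finite non-negative integer combination $\sum_{\blambda}m^{\blambda}_{\mu\nu}\,s_{\blambda}$ of irreducible characters, where $s_{\blambda}=\tr\pi_{\blambda}=\dim V_{\blambda}\cdot\chi_{\blambda}$.

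The crucial input is that the multiplicities $m^{\blambda}_{\mu\nu}$, together with the finite collection of signatures $\blambda$ actually occurring, stabilize once $n$ is large compared to $K_1+K_2$. This is Koike's description of tensor products of rational representations of $GL_n$: for $n\geq|\mu|+|\nu|$ one has $V_{\mu}\otimes V_{\nu}^{*}\cong\bigoplus_{\alpha,\beta}\bigl(\sum_{\gamma}c^{\mu}_{\alpha\gamma}c^{\nu}_{\beta\gamma}\bigr)V_{[\alpha,\beta]_n}$, the sum over partitions $\alpha,\beta$ with $|\alpha|\leq|\mu|$ and $|\beta|\leq|\nu|$, the $c$'s being Littlewood--Richardson coefficients, and $[\alpha,\beta]_n$ the signature $(\alpha_1,\dots,\alpha_p,0,\dots,0,-\beta_q,\dots,-\beta_1)$ of length $n$ (here $p=\ell(\alpha)$, $q=\ell(\beta)$). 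Thus both the multiplicities and, up to padding with zeros, the index set of the $\blambda$'s are independent of $n$ in this range, and we identify them across $n$ as in the statement. (If one prefers to avoid quoting Koike, the same stabilization follows from the classical stability of Littlewood--Richardson coefficients combined with Littlewood's formula for $s_{\mu}\overline{s_{\nu}}$ in the $GL_n$ character ring.)

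Combining the two displays, for all $n$ above an explicit bound depending only on $\boldsymbol{a}$ and $\boldsymbol{b}$ (for instance $n\geq K_1+K_2$) we obtain
\[
\prod_{j=1}^r\tr(g^j)^{a_j}\overline{\tr(g^j)}^{b_j}=\sum_{\blambda}c_{\blambda}\,s_{\blambda}=\sum_{\blambda}c_{\blambda}\dim V_{\blambda}\cdot\chi_{\blambda},\qquad c_{\blambda}:=\sum_{\mu,\nu}\chi_{\lambda}(\mu)\chi_{\lambda'}(\nu)\,m^{\blambda}_{\mu\nu}\in\mathbb{Z},
\]
with the $c_{\blambda}$ independent of $n$ and nonzero for only finitely many $\blambda$; listing those as $\blambda_1,\dots,\blambda_N$ and setting $a_i:=c_{\blambda_i}$ yields the lemma. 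The only genuine obstacle is the stabilization statement for the rational tensor product multiplicities; the rest is routine symmetric-function bookkeeping.
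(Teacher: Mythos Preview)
Your proof is correct and follows essentially the same route as the paper: expand the power-sum products in Schur functions via the symmetric-group character formula, then decompose each $s_{\mu}\overline{s_{\nu}}$ into irreducible $U(n)$ characters with $n$-independent multiplicities. The paper simply asserts this last stabilization without justification, whereas you supply it explicitly via Koike's formula for $V_{\mu}\otimes V_{\nu}^{*}$ in terms of Littlewood--Richardson coefficients, which is a genuine improvement in rigor.
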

\begin{proof}
It is clear that
\[\tr(g^j)=P_j(\exp(i\theta_1),\hdots,\exp(i\theta_n)),\]
where $\exp(i\theta_j)$ are the eigenvalues of $g\in U(n)$. It follows from this that
\[\prod_{j=1}^r\tr(g^j)^{a_j}\overline{\tr(g^j)}^{b_j}=P_{\lambda}(g)\overline{P_{\mu}(g)},\]
where $\lambda=1^{a_1}2^{a_2}\dots r^{a_r}$ and $\mu=1^{b_1}2^{b_2}\dots r^{b_r}$. However, letting $K=a_1+2a_2+\hdots+ra_r$ and $L=b_1+2b_2+\hdots+rb_r$, we have
\[P_{\lambda}=\sum_{\nu_1\vdash K}\chi_{\lambda}(\nu_1)s_{\nu_1}\]
and
\[P_{\mu}=\sum_{\nu_2\vdash L}\chi_{\mu}(\nu_2)s_{\nu_2},\]
where $\chi_{\lambda}$ and $\chi_{\mu}$ are characters of the symmetric groups $S_K$ and $S_L$, respectively. Therefore,
\[\prod_{j=1}^r\tr(g^j)^{a_j}\overline{\tr(g^j)}^{b_j}=\sum_{\substack{\nu_1\vdash K\\ \nu_2\vdash L}}\chi_{\lambda}(\nu_1)\overline{\chi_{\mu}(\nu_1)}s_{\nu_1}(g)s_{\nu_2}(g^{-1}).\]
We may then write the class functions $s_{\nu_1}(g)s_{\nu_2}(g^{-1})$ as linear combinations of irreducible characters of unitary groups $U(n)$, for large enough $n$, depending only of partitions independent of $n$. The conclusion follows.
\end{proof}
Lemma~\ref{lem:tracepartition} combined with Corollary~\ref{cor:1} and the main result of Diaconis--Shahshahani~\cite{DS} give the following.
\begin{corollary}\label{cor:2}Suppose $r\geq 1$ and $\boldsymbol{a}=(a_1,\hdots,a_r)$ and $\boldsymbol{b}=(b_1,\hdots,b_r)$ are $r$-tuples of non-negative integers. Then for $X$ any cusped hyperbolic surface, asymptotically almost surely as $n\rightarrow\infty$, $\chi:\pi_1(X)\rightarrow U(n)$ satisfies
\[\frac{1}{\pi_X(T)}\sum_{\ell(\gamma)\leq T}\prod_{j=1}^r\tr(\chi(\gamma^j))^{a_j}\overline{\tr(\chi(\gamma^j))}^{b_j}=\delta_{\boldsymbol{a},\boldsymbol{b}}\int_{\mathbb{C}^r}|z_1|^{2a_1}\hdots|z_r|^{2a_r}\prod_{j=1}^r\frac{1}{j}e^{-\pi|z_j|^2/j}dx_jdy_j+O(e^{-T/4})\]
as $T\rightarrow\infty$, where $\delta_{\boldsymbol{a},\boldsymbol{b}}$ is the Kronecker delta.
\end{corollary}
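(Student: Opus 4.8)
The plan is to deduce the corollary from Corollary~\ref{cor:1} and Lemma~\ref{lem:tracepartition}, evaluating the $U(n)$-average that appears as the main term by means of the Diaconis--Shahshahani moment formula.

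First I would apply Lemma~\ref{lem:tracepartition}: for all sufficiently large $n$ there are \emph{fixed} signatures $\blambda_1,\dots,\blambda_N$ (independent of $n$, extended by zeros) and fixed constants $a_1,\dots,a_N\in\mathbb{C}$ with
\[
\prod_{j=1}^r\tr(g^j)^{a_j}\overline{\tr(g^j)}^{b_j}=\sum_{i=1}^N a_i\,\dim V_{\blambda_i}\,\chi_{\blambda_i}(g)=:f_n(g)
\]
as class functions on $U(n)$. Because the $\blambda_i$ do not depend on $n$, the quantities $|\blambda_i(n)|$ are eventually constant, so the hypothesis $|\blambda_i(n)|\le c\frac{\log n}{\log\log n}$ required by Corollary~\ref{cor:1} (hence by Theorem~\ref{Thm1}) holds for $n$ large. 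Apply Corollary~\ref{cor:1} to each single character $\chi_{\blambda_i}$: asymptotically almost surely as $n\to\infty$, $\chi$ satisfies
\[
\frac{1}{\pi_X(T)}\sum_{\ell(\gamma)\leq T}\chi_{\blambda_i}(\chi(\gamma))=\int_{U(n)}\chi_{\blambda_i}(g)\,d\mu(g)+O(e^{-T/4})
\]
as $T\to\infty$. Intersecting these $N$ asymptotically almost sure events and forming, for each fixed $n$, the linear combination $\sum_i a_i\dim V_{\blambda_i}(\,\cdot\,)$ — legitimate since $n$ is fixed before $T\to\infty$, so the implied constant is merely allowed to depend on $n$, $X$, $\chi$, and the $\blambda_i$ — gives, asymptotically almost surely,
\[
\frac{1}{\pi_X(T)}\sum_{\ell(\gamma)\leq T}f_n(\chi(\gamma))=\int_{U(n)}f_n(g)\,d\mu(g)+O(e^{-T/4}).
\]

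It remains to identify $\int_{U(n)}f_n\,d\mu=\mathbb{E}_{g\sim U(n)}\big[\prod_j\tr(g^j)^{a_j}\overline{\tr(g^j)}^{b_j}\big]$. By the main result of Diaconis--Shahshahani~\cite{DS}, once $n\ge\max\big(\sum_j j a_j,\ \sum_j j b_j\big)$ this joint moment equals \emph{exactly} $\delta_{\boldsymbol a,\boldsymbol b}\prod_{j=1}^r j^{a_j}a_j!$, and by the Gaussian moment computation recorded just after the statement this in turn equals $\delta_{\boldsymbol a,\boldsymbol b}\int_{\mathbb{C}^r}|z_1|^{2a_1}\cdots|z_r|^{2a_r}\prod_{j}\frac1j e^{-\pi|z_j|^2/j}\,dx_j\,dy_j$. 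Substituting this value into the last display and taking $n$ large enough that all of the thresholds above are met completes the deduction.

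This is essentially a bookkeeping argument, and the only point that needs a little care is the coordination of the several ``$n$ sufficiently large'' conditions: the validity of the expansion in Lemma~\ref{lem:tracepartition}, the bound $|\blambda_i(n)|\le c\frac{\log n}{\log\log n}$ on the fixed signatures, and the Diaconis--Shahshahani range $n\ge\max(\sum_j j a_j,\sum_j j b_j)$. In particular I would stress that although the coefficients $a_i\dim V_{\blambda_i}$ in the expansion of $f_n$ grow with $n$, this is harmless: one proves the equidistribution statement for each fixed $\chi_{\blambda_i}$ first, and only afterwards forms the finite linear combination and lets $T\to\infty$.
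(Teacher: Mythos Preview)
Your proposal is correct and follows exactly the route the paper indicates: the paper's proof is the single sentence ``Lemma~\ref{lem:tracepartition} combined with Corollary~\ref{cor:1} and the main result of Diaconis--Shahshahani~\cite{DS} give the following,'' and you have simply unpacked that sentence with appropriate care. Your observation that the coefficients $a_i\dim V_{\blambda_i}$ depend on $n$ and hence Corollary~\ref{cor:1} should be applied to each $\chi_{\blambda_i}$ separately before recombining is a useful clarification that the paper leaves implicit.
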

The main term on the right hand side is the expectation of $\prod_{j=1}^r\tr(g^j)^{a_j}\overline{\tr(g^j)}^{b_j}$ over $U(n)$. Explicitly, we have
\[\int_{\mathbb{C}^r}|z_1|^{2a_1}\hdots|z_r|^{2a_r}\prod_{j=1}^r\frac{1}{j}e^{-\pi|z_j|^2/j}dx_jdy_j=\prod_{j=1}^rj^{a_j}a_j!.\]

\section{Comments on the balanced case}\label{comments}
In the situation where the signatures are balanced, the hyperbolic scattering determinant appears in a serious way in the computations of Sections~\ref{counting} and~\ref{pgt} proving Theorem~\ref{geodesicdep}. Therefore, we need to be able to understand it, at least from a probabilistic perspective. We recall the hyperbolic scattering determinant and discuss two problems that arise.\\
\\
Aside from the discrete spectrum of the Laplacian $\Delta_{\chi}$, there is often a continuous spectrum corresponding to the cusps of $X$. Let $A_{\chi}=\widehat{\bigoplus}_n\mathbb{C}\varphi_n$ be the subspace of $L^2(X;E_{\chi})$ generated by the eigensections $\varphi_n$ of $\Delta_{\chi}$, and let $\cal{E}_{\chi}$ be the orthogonal complement of $A_{\chi}$ in $L^2(X;E_{\chi})$. It may be described in terms of what are called Eisenstein series. All in all, we have a spectral decomposition
\[L^2(X;E_{\chi})=A_{\chi}\oplus \cal{E}_{\chi}.\]
In the spectral theory of hyperbolic surfaces, when $K_0\geq 1$, that is, when there is a continuous part, the hyperbolic scattering determinant plays an important role. This is defined as follows. For each of the parabolic $T_j$ corresponding to the $j^{\textup{th}}$ cusp, choose an orthonormal basis $\boldsymbol{f}_{1j},\hdots,\boldsymbol{f}_{nj}$ of $\mathbb{C}^n$ such that
\begin{equation}\label{basis}\chi(T_j^{-1})\boldsymbol{f}_{hj}=e^{2\pi i\alpha_{hj}}\boldsymbol{f}_{hj},\ h=1,\hdots,n,\end{equation}
where $\alpha_{hj}$ are real numbers. For each $j$, we may assume without loss of generality that $\alpha_{hj}=0$ for $h=1,\hdots,m_j$. Choose $N_j\in\PSL_2(\mathbb{R})$ such that 
\begin{equation}N_j\eta_j=\infty,\ S^{-1}=N_jT_jN_j^{-1},\end{equation}
where $S:=\begin{bmatrix}1 & 1\\ 0& 1\end{bmatrix}$. Then for each $1\leq h\leq m_j$, $1\leq p\leq m_k$, we let
\[\varphi_{hj,pk}(s):=\sqrt{\pi}\frac{\Gamma(s-\frac{1}{2})}{\Gamma(s)}\sum_{T\in \Gamma_j\setminus\Gamma/\Gamma_k}\frac{1}{|c|^{2s}}\boldsymbol{f}^t_{hj}\chi(T^{-1})\overline{\boldsymbol{f}}_{pk},\]
where we omit $T\in \Gamma_j$ if $j=k$, and $c$ comes from writing $N_jTN_k^{-1}$ as $\pm\begin{bmatrix}a& b\\c& d\end{bmatrix}\in\PSL_2(\mathbb{R})$. The hyperbolic scaterring matrix is the $K_0\times K_0$ matrix given by 
\[\Phi_{\chi}(s):=\left[\varphi_{hj,pk}(s)\right]\]
for $\Re(s)>1$, and its determinant $\varphi_{\chi}(s):=\det\Phi_{\chi}(s)$ is called the \textit{hyperbolic scattering determinant}. $\varphi_{\chi}$ has a meromorphic continuation to $\mathbb{C}$. Its poles in $\Re(s)\geq\frac{1}{2}$ have residues that are eigensections of $E_{\chi}$ with eigenvalues in $(0,\frac{1}{4})$ corresponding to $s_1,\hdots,s_{M_e}\in(\frac{1}{2},1)$. We also let $s_0=1$ if $0$ is in the spectrum of $E_{\chi}$; almost surely, this is not the case. This corresponds to the \textit{residual spectrum} of $E_{\chi}$. $\varphi_{\chi}$ also has a distinguished finite set of zeros $\rho_1,\hdots,\rho_N\in [\frac{1}{2},\infty)$ as well as complex zeros $\rho=\frac{1}{2}+\eta+i\gamma$, $\eta\geq 0$, $\gamma>0$, as well as their conjugates $\overline{\rho}$. It is clear that we may write $\varphi_{\chi}$ as a Dirichlet series in the form
\begin{equation}\label{hsdd}
\varphi_{\chi}(s)=\left(\sqrt{\pi}\frac{\Gamma(s-\frac{1}{2})}{\Gamma(s)}\right)^{K_0}\sum_{n=1}^{\infty}\frac{a_n(\chi)}{q_n(\chi)^{2s}}.
\end{equation}
When $X=\SL_2(\mathbb{Z})\setminus\mathbb{H}$ is the modular curve and the bundles are trivial rank $1$ bundles, then
\[\varphi_1(s)=\sqrt{\pi}\frac{\Gamma(s-\frac{1}{2})\zeta(2s-1)}{\Gamma(s)\zeta(2s)}.\]
Using the notation introduced so far, there is another way of writing the hyperbolic scattering determinant $\varphi_{\chi}$~\cite[eq. 2.10, p.437]{Hejhal2}:
\begin{equation}\label{phiprod}
\varphi_{\chi}(s)=(q_1(\chi))^{1-2s}\varphi(1/2)\frac{(s-\rho_1)\dots(s-\rho_N)}{(1-s-\rho_1)\dots(1-s-\rho_N)}\prod_{k=0}^{M_e}\frac{1-s-s_k}{s-s_k}\prod_{\gamma>0}\frac{(s-\rho)(s-\overline{\rho})}{(s+\rho-1)(s+\overline{\rho}-1)}.
\end{equation}
The hyperbolic scattering determinant is closely related to Eisenstein series. For details, the reader may consult Chapter 8 of Hejhal's~\cite{Hejhal2}.\\
\\
We would like to show that generically, some analytic properties of the hyperbolic scattering determinant may be controlled by the surface $X$ and rank of the bundle only. We prove the following proposition having to do with the main term of the hyperbolic scattering determinant for large positive real $s$. In the following, we use the notation above for the surface representation $\pi_{\blambda(n)}\circ\chi$, not $\chi$ itself.
\begin{proposition}\label{genericq1}With probability $1$ over the space of surface representations $\chi:\pi_1(X)\rightarrow U(n)$, $\pi_{\blambda(n)}\circ\chi:\pi_1(X)\rightarrow\GL(V_{\blambda(n)})$ leads to a flat unitary bundle $E_{\chi,\blambda(n)}$ whose hyperbolic scattering determinant has Dirichlet expansion~\eqref{hsdd} satisfying $q_1(\pi_{\blambda(n)}\circ\chi)=q_1(1)^{\dim V_{\blambda(n)}}$, where $q_1(1)$ comes from the hyperbolic scattering determinant of the trivial rank $1$ bundle over $X$.
\end{proposition}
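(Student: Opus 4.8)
The plan is to read off $q_1(\pi_{\blambda(n)}\circ\chi)$ from the behaviour of the hyperbolic scattering determinant $\varphi_{\chi,\blambda(n)}(s)=\det\Phi_{\chi,\blambda(n)}(s)$ as $\Re(s)\to+\infty$. By~\eqref{hsdd} together with~\eqref{phiprod} (every factor in~\eqref{phiprod} other than $(q_1(\chi))^{1-2s}$ tends to $1$ as $s\to+\infty$), the number $q_1(\chi)$ is precisely the reciprocal of the smallest exponential scale in the Dirichlet series, and it is realised as the leading term $a_1(\chi)\,q_1(\chi)^{-2s}$ (times the prefactor $(\sqrt{\pi}\,\Gamma(s-\frac{1}{2})/\Gamma(s))^{K_0}$) as soon as the corresponding coefficient $a_1(\chi)$ is nonzero. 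Thus the proposition reduces to three steps: (i) identify the candidate leading scale of $\varphi_{\chi,\blambda(n)}$; (ii) show that with probability $1$ over $\chi$ the coefficient attached to that scale is nonzero, so the scale really is $q_1(\chi)$; and (iii) check by comparison with the trivial rank $1$ bundle that this scale equals $q_1(1)^{\dim V_{\blambda(n)}}$.

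For step (i) I would exploit that the exponential scales in $\varphi_{\chi,\blambda(n)}$ are \emph{representation-independent}. In the entry
\[\varphi_{hj,pk}(s)=\sqrt{\pi}\,\frac{\Gamma(s-\frac{1}{2})}{\Gamma(s)}\sum_{T\in\Gamma_j\setminus\Gamma/\Gamma_k}\frac{1}{|c(T)|^{2s}}\,\boldsymbol{f}^{t}_{hj}\,\pi_{\blambda(n)}(\chi(T^{-1}))\,\overline{\boldsymbol{f}}_{pk},\]
the number $|c(T)|$, extracted from $N_jTN_k^{-1}=\pm\begin{bmatrix}a&b\\c&d\end{bmatrix}$, depends only on $\Gamma=\pi_1(X)$, the cusps, and the fixed matrices $N_j$ — not on $\chi$ or on $\pi_{\blambda(n)}$. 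Grouping the double cosets by the value of $|c(T)|$, write $\Phi_{\chi,\blambda(n)}(s)=\bigl(\sqrt{\pi}\,\Gamma(s-\frac{1}{2})/\Gamma(s)\bigr)\,B_\chi(s)$ with $B_\chi(s)=\sum_{c}c^{-2s}M_c(\chi)$, the sum over the (fixed) set of values of $|c(T)|$ and $M_c(\chi)$ the block matrix with $(hj,pk)$-entry $\sum_{T:\,|c(T)|=c}\boldsymbol{f}^{t}_{hj}\pi_{\blambda(n)}(\chi(T^{-1}))\overline{\boldsymbol{f}}_{pk}$. Expanding $\det B_\chi(s)=\det\bigl(\sum_c c^{-2s}M_c(\chi)\bigr)$, its dominant term as $s\to+\infty$ comes from choosing in each column the smallest $c$ compatible with a nonzero contribution; the resulting scale is a product of minimal $|c|$'s determined purely by the block geometry of $X$, and its coefficient is a determinant $\det M(\chi)$ built from the minimal-$|c|$ matrices $M_c(\chi)$ compressed to the eigenvalue-$1$ subspaces of the monodromies $\pi_{\blambda(n)}(\chi(T_j))$. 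Running the same bookkeeping for the trivial rank $1$ bundle — where these coefficients are sums of positive integer counts, hence free of cancellation — produces exactly $q_1(1)$, and for the rank-$\dim V_{\blambda(n)}$ version of the identical combinatorics one gets $q_1(1)^{\dim V_{\blambda(n)}}$.

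Steps (ii) and (iii) require two genericity inputs. First, for a.e.\ $\chi$ the eigenvalue $1$ of each $\pi_{\blambda(n)}(\chi(T_j))$ occurs with a multiplicity that does not depend on $\chi$ — it is the multiplicity of the weight of $\pi_{\blambda(n)}$ singled out by the relevant linear form, exactly as in the proof of Proposition~\ref{logbound} — which pins down the size $K_0$ of $\Phi_{\chi,\blambda(n)}$ and the shape of the compression. Second, and this is the heart of the matter, I would prove that $\det M(\chi)\neq 0$ for a.e.\ $\chi$. Since $\Hom(\pi_1(X),U(n))\cong U(n)^r$ is a connected real-analytic manifold and $\det M$ is real-analytic there (a polynomial in the matrix entries and their complex conjugates), $\det M$ either vanishes identically or vanishes only on a Haar-null set; so it suffices to exhibit a single $\chi_0$ with $\det M(\chi_0)\neq 0$. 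To construct $\chi_0$ I would use that the $T_j$ and the finitely many minimal double-coset representatives $T^{(i)}$ are fixed nontrivial elements of the free group $\pi_1(X)$, and prescribe $\chi_0$ on a free generating set so that $\pi_{\blambda(n)}(\chi_0(T_j))$ and $\sum_i\pi_{\blambda(n)}(\chi_0((T^{(i)})^{-1}))$ sit in sufficiently general position for the compression to the eigenvalue-$1$ subspaces to be invertible; strong asymptotic freeness (Theorem~\ref{ThmBC}) can be invoked to guarantee this for $n$ large, or one argues by hand. With $\det M(\chi)\neq 0$ almost surely, the candidate scale is genuinely $q_1(\chi)$, and the comparison in step (i) gives $q_1(\pi_{\blambda(n)}\circ\chi)=q_1(1)^{\dim V_{\blambda(n)}}$.

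The main obstacle is exactly this non-vanishing of $\det M(\chi)$: because $\pi_{\blambda(n)}(\chi(T))$ for $T$ a fixed word in the generators cannot be prescribed freely, excluding identical vanishing of $\det M$ is not a dimension count but requires genuinely producing a good $\chi_0$ (or an asymptotic-freeness argument for it). This is the same phenomenon that makes the full balanced case delicate — its quantitative refinement, controlling the main term of the hyperbolic scattering determinant uniformly in $\blambda(n)$, is tied to the currently intractable random-matrix questions discussed in this section.
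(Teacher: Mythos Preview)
Your overall strategy---extract the leading exponential scale of $\varphi_{\chi,\blambda(n)}$, show its coefficient is a real-analytic function of $\chi\in U(n)^r$, and conclude it is nonzero almost everywhere once a single witness $\chi_0$ is produced---is exactly what the paper does. The difference is in how the witness is found, and this is where your argument has a genuine gap.

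You flag the construction of $\chi_0$ as ``the main obstacle'' and propose to resolve it either via Theorem~\ref{ThmBC} or ``by hand.'' Neither works as stated: Theorem~\ref{ThmBC} is an asymptotic statement about the operator norm as $n\to\infty$ and does not hand you a specific $\chi_0$ with $\det M(\chi_0)\neq 0$ for a fixed $n$, while ``argues by hand'' is not an argument. The paper's witness is simply $\chi_0=1$, the trivial representation. There $\pi_{\blambda(n)}(\chi_0(W))=I_{\dim V_{\blambda(n)}}$ for every $W$, so the block matrix factors as $\Theta_1(s)=I_{\dim V_{\blambda(n)}}\otimes\bigl[\sum_{W}|c|^{-2s}\bigr]_{i,j}$ and $\det\Theta_1(s)=\bigl(\sum_j a_j(1)q_j(1)^{-2s}\bigr)^{\dim V_{\blambda(n)}}$, whose leading term $a_1(1)^{\dim V_{\blambda(n)}}q_1(1)^{-2s\dim V_{\blambda(n)}}$ has nonzero coefficient.

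The reason this witness is invisible to you is that you set up your analytic function $\det M(\chi)$ on the $K_0\times K_0$ compression to the eigenvalue-$1$ subspaces of the $\pi_{\blambda(n)}(\chi(T_j))$, and you correctly note that the generic $K_0$ is fixed and strictly smaller than $k\dim V_{\blambda(n)}$. At $\chi=1$ every eigenvalue is $1$, $K_0$ jumps to $k\dim V_{\blambda(n)}$, and your $\det M$ is not even defined there. The paper sidesteps this by defining the analytic function $G(\chi)$ as the coefficient of $q_1(1)^{-2s\dim V_{\blambda(n)}}$ in $\det\Theta_\chi(s)$, where $\Theta_\chi(s)=\bigl[\sum_{W\in\Gamma_i\backslash\Gamma/\Gamma_j}|c|^{-2s}\pi_{\blambda(n)}(\chi(W^{-1}))\bigr]_{i,j}$ is the full $k\dim V_{\blambda(n)}\times k\dim V_{\blambda(n)}$ matrix with \emph{no} compression. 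This $G$ is polynomial in the matrix entries of $\chi$ on a free generating set, hence real-analytic on all of $U(n)^r$ (via the exponential map $\mathfrak{u}(n)^r\to U(n)^r$), and the trivial representation sits squarely in its domain. Once $G(1)\neq 0$, real-analyticity forces $G(\chi)\neq 0$ off a Haar-null set, and the proposition follows.
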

\begin{proof}
Using the notation of~\eqref{basis} for the surface representation $\pi_{\blambda(n)}\circ\chi$, consider for each $j=1,\hdots,k$, $k$ being the number of cusps of $X$, the $\dim V_{\blambda(n)}\times m_j$ matrices
\begin{equation}\label{blockmatrix}
F_j:=\begin{bmatrix}|& &|\\ \boldsymbol{f}_{1j}&\dots&\boldsymbol{f}_{m_jj}\\|& &|\end{bmatrix}
\end{equation}
with orthonormal column vectors the $\boldsymbol{f}_{ij}\in\mathbb{C}^{\dim V_{\blambda(n)}}$, $i=1,\dots,m_j$. Putting these into one matrix, we have the block diagonal $\dim V_{\blambda(n)}\times K_0$ matrix
\begin{equation}
\boldsymbol{F}:=\diag(F_1,\hdots,F_k).
\end{equation}
Consider also the block matrix 
\begin{equation}
\Theta_{\chi}(s):=\left[\sum_{W\in [T_i]\setminus\Gamma/[T_j]}\frac{\pi_{\blambda(n)}(\chi(W^{-1}))}{|c|^{2s}}\right]_{i,j=1,\hdots,k}.
\end{equation}
By definition, the hyperbolic scattering matrix is
\begin{equation}
\Phi_{\pi_{\blambda(n)}\circ\chi}(s)=\left(\sqrt{\pi}\frac{\Gamma(s-\frac{1}{2})}{\Gamma(s)}\right)\boldsymbol{F}^t\Theta_{\chi}(s)\overline{\boldsymbol{F}}.
\end{equation}
It follows from the orthonormality of $\boldsymbol{f}_{1j},\hdots,\boldsymbol{f}_{m_jj}$ that the hyperbolic scattering determinant may be written as
\begin{equation}
\varphi_{\pi_{\blambda(n)}\circ\chi}(s)=\det(\Phi_{\pi_{\blambda(n)}\circ\chi}(s))=\left(\sqrt{\pi}\frac{\Gamma(s-\frac{1}{2})}{\Gamma(s)}\right)^{K_0}\det\Theta_{\chi}(s).
\end{equation} 

As noted in equation~\eqref{hsdd}, we have an expansion of the form
\begin{equation}\label{expansionbundle}\varphi_{\pi_{\blambda(n)}\circ\chi}(s)=\left(\sqrt{\pi}\frac{\Gamma\left(s-\frac{1}{2}\right)}{\Gamma(s)}\right)^{K_0}\sum_{j=1}^{\infty}\frac{a_j(\pi_{\blambda(n)}\circ\chi)}{q_j(\pi_{\blambda(n)}\circ\chi)^{2s}},
\end{equation}
where $q_1(\pi_{\blambda(n)}\circ\chi)<q_2(\pi_{\blambda(n)}\circ\chi)<\hdots$. By definition, $q_1(1)$ is the smallest possible value of $|c|$ that could appear when we consider the trivial rank $1$ bundle over $X$. We want to show that with probability $1$ in the space of surface representations $\chi:\pi_1(X)\rightarrow U(n)$, the $q_1(\pi_{\blambda(n)}\circ\chi)$ satisfy $q_1(\pi_{\blambda(n)}\circ\chi)=q_1(1)^{\dim V_{\blambda(n)}}$. Note that when $\chi=1$ is the trivial representation, we obtain
\begin{eqnarray*}\det\Theta_1(s)&=&\det\left[\sum_{\substack{W\in [T_i]\setminus\Gamma/[T_j]}}\frac{\pi_{\blambda(n)}(I_n)}{|c|^{2s}}\right]_{i,j=1,\hdots,k}\\&=&\det \left(I_{\dim V_{\blambda(n)}}\otimes \left[\sum_{\substack{W\in [T_i]\setminus\Gamma/[T_j]}}\frac{1}{|c|^{2s}}\right]_{i,j=1,\hdots,k}\right)\\&=& \left(\det \left[\sum_{\substack{W\in [T_i]\setminus\Gamma/[T_j]}}\frac{1}{|c|^{2s}}\right]_{i,j=1,\hdots,k}\right)^{\dim V_{\blambda(n)}}\\&=&
\left(\sum_{j=1}^{\infty}\frac{a_j(1)}{q_j(1)^{2s}}\right)^{\dim V_{\blambda(n)}},
\end{eqnarray*}
where the latter Dirichlet series corresponds to the trivial representation. Its main term is $(a_1(1)/q_1(1)^{2s})^{\dim V_{\blambda(n)}}$ as $s\rightarrow+\infty$. Denote by $G(\chi)$ the coefficient of $q_1(1)^{-2s\dim V_{\blambda(n)}}$ in $\det\Theta_{\chi}(s)$. The computation above shows that $G(1)\neq 0$, and so $G\neq 0$. Composing with the exponential map
\[\exp:\mathfrak{u}(n)^k\rightarrow U(n)^k\] 
gives us a non-zero real analytic function
\[\mathbb{R}^{kn^2}\simeq\mathfrak{u}(n)^k\rightarrow\mathbb{C}.\]
Since this is a non-zero real-analytic function, and real-analytic functions vanishing on a set of positive measure are identically zero, $G$ must be non-zero almost everywhere. Consequently, $q_1(\pi_{\blambda(n)}\circ\chi)=q_1(1)^{\dim V_{\blambda(n)}}$ with probability $1$ on the space of surface representations $\chi$.
\end{proof}
Though we do have a good understanding of $q_1(\pi_{\blambda(n)}\circ\chi)$, controlling $|a_1(\pi_{\blambda(n)}\circ\chi)|$ in a uniform manner is much more complicated and is connected to very deep questions in random matrix theory. The control on this term is necessary when one wants to bound sums of the form
\[\sum_{0<\gamma\leq T(n)}\eta.\]
In this case, we want $|a_1(\pi_{\blambda(n)}\circ\chi)|$ asymptotically almost surely as $n\rightarrow\infty$ to not be too small. Such sums would appear in the calculations of Section~\ref{pgt} when $\blambda(n)$ are balanced and so our bundles are singular. For example, asymptotically almost surely having $|a_1(\pi_{\blambda(n)}\circ\chi)|\geq 2^{-(\dim V_{\blambda(n)})^{1+\delta}}$, $\delta>0$, would be more than sufficient. Even simpler questions in random matrix theory are notoriously difficult and very little is known. We discuss this below.\\
\\
Suppose $P(U_1,\hdots,U_k,U_1^*,\hdots,U_k^*)$ is a non-commutative non-zero $*$-polynomial with complex coefficients on unitary matrices of the same size. As usual, we endow $U(n)^k$ with the probability Haar measure $d\mu$. We study the following question.
\begin{question}For each $P$ and each non-trivial signature $\blambda$, is there a function $f_{P,\blambda}(n)$ depending on $P$ and $\blambda$ such that
\[\mathbb{P}\left[(U_1,\hdots,U_k)\in U(n)^k:\ |\det(P(\pi_{\blambda}(U_1),\hdots,\pi_{\blambda}(U_k)))|>\frac{1}{n^{f_{P,\blambda}(n)}}\right]\xrightarrow{n\rightarrow\infty}1?\]
\end{question}
We conjecture the following.
\begin{conjecture}\label{Conj}There is a constant $c=c(P,\blambda)>0$ depending only on $P$ and $\blambda\neq 0$ such that
\begin{equation}\label{(*)}\mathbb{P}\left[(U_1,\hdots,U_k)\in U(n)^k:\ \sigma_{\textup{min}}(P(\pi_{\blambda}(U_1),\hdots,\pi_{\blambda}(U_k)))>\frac{1}{n^{c}}\right]\xrightarrow{n\rightarrow\infty}1\tag{$\boldsymbol{*}$}
\end{equation}
where $\sigma_{\textup{min}}$ denotes the least singular value.
\end{conjecture}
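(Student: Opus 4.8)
We close by sketching a possible line of attack on Conjecture~\ref{Conj} and isolating where the real difficulty lies. The plan is to reduce \eqref{(*)} to a quantitative, finite-$n$ refinement of the Bordenave--Collins strong asymptotic freeness of Theorem~\ref{ThmBC}, together with regularity information for the limiting spectral distribution at the bottom of its support.

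\emph{Step 1: linearization.} First I would replace $P$ by the self-adjoint polynomial $\widetilde P:=\left(\begin{smallmatrix} 0 & P\\ P^* & 0\end{smallmatrix}\right)$, whose nonzero spectrum consists of $\pm$ the singular values of $P$, and apply the self-adjoint linearization trick (Pisier~\cite{Pisier}, Anderson, Haagerup--Thorbj{\o}rnsen): there are $d\in\mathbb N$ and matrices $a_0=a_0^*,a_1,\dots,a_k\in M_d(\mathbb C)$ so that the self-adjoint pencil
\[
L_{\blambda,n}\;:=\;a_0\otimes I_{\dim V_{\blambda}}+\sum_{i=1}^{k}\big(a_i\otimes\pi_{\blambda}(U_i)+a_i^*\otimes\pi_{\blambda}(U_i)^*\big)\ \in\ M_d(\mathbb C)\otimes M_{\dim V_{\blambda}}(\mathbb C)
\]
realizes the resolvent of $\widetilde P(\pi_\blambda(U_1),\dots,\pi_\blambda(U_k))$ at $z$ as a fixed matrix corner of the resolvent of $L_{\blambda,n}-z\Pi$, for $\Pi$ a fixed self-adjoint matrix, with two-sided norm bounds on the inverses depending only on $P$. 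Taking $z=0$, this reduces the desired bound $\sigma_{\min}(P(\pi_\blambda(U)))>n^{-c}$ to an estimate $\mathrm{dist}\big(0,\Spec(L_{\blambda,n})\big)>n^{-c'}$. Write $G_n$ for the empirical spectral measure of $L_{\blambda,n}$; since $\blambda$ is fixed, $\dim V_\blambda$ is polynomial in $n$, so $L_{\blambda,n}$ has size $d\dim V_\blambda\asymp n^{O_{P,\blambda}(1)}$ and its individual-eigenvalue scale is a fixed negative power of $n$.

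\emph{Step 2: the limiting picture.} By Theorem~\ref{ThmBC} the $(\pi_\blambda(U_i))_i$ are strongly asymptotically free, so $G_n$ converges weakly to a limit $G_\infty$ and $\Spec(L_{\blambda,n})$ converges in Hausdorff distance to $\Spec(\ell)=\supp(G_\infty)$, where $\ell$ is the same pencil in free Haar unitaries inside $M_d(\mathbb C)\otimes L(F_k)$. If $0\notin\supp(G_\infty)$, i.e.\ $P$ is invertible in the free group factor, then $\mathrm{dist}(0,\Spec(L_{\blambda,n}))$ is bounded below by a constant a.a.s.\ and the conjecture is immediate. In the remaining case $0\in\supp(G_\infty)$ I would use regularity theory for distributions of noncommutative polynomials --- absolute continuity away from atoms (Shlyakhtenko--Skoufranis; Mai--Speicher--Weber) together with edge estimates obtained from operator-valued subordination for the matrix Cauchy transform (Belinschi--Mai--Speicher) --- to obtain a power law $G_\infty([0,\varepsilon])\le C\varepsilon^{\alpha}$ near $0$ for some $\alpha=\alpha(P,\blambda)>0$.

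\emph{Step 3 (the hard part): a quantitative local law / anti-concentration at $0$.} What remains is to upgrade Step 2 to a nonasymptotic statement valid at the eigenvalue scale: with probability $\to 1$, for $\varepsilon$ as small as a fixed power of $n$, $G_n([0,\varepsilon])\le C\varepsilon^{\alpha}+o\big(1/(d\dim V_\blambda)\big)$, together with an anti-concentration bound keeping the smallest eigenvalue of $L_{\blambda,n}$ at distance $\ge n^{-c'}$ from $0$. These would force $\mathrm{dist}(0,\Spec(L_{\blambda,n}))\ge n^{-c'}$ a.a.s., and unwinding the linearization gives \eqref{(*)}. This last step is precisely the obstruction the conjecture isolates. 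Strong asymptotic freeness --- and even the sharpest nonasymptotic refinements of it presently available, in the spirit of Bandeira--Boedihardjo--van Handel --- controls only the macroscopic location of the bulk and edges of $\Spec(L_{\blambda,n})$, which is of no use for bounding $\mathrm{dist}(0,\Spec(L_{\blambda,n}))$ from below once $0\in\supp(G_\infty)$. One genuinely needs microscopic information: a local law for $G_n$ down to the eigenvalue scale near $0$, or a structural small-ball estimate for $\sigma_{\min}$ exploiting the pencil; and for polynomials in several independent Haar unitary matrices no such result is currently known. This is the heart of the difficulty, and the reason the statement is offered as a conjecture.
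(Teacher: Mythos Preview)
The statement you are addressing is labeled a \emph{Conjecture} in the paper, and the paper offers no proof of it; immediately after stating Conjecture~\ref{Conj} the author only remarks on a consequence and then proves a much weaker special case (the proposition on $|\det(U_1+\cdots+U_k)|$ via the Basak--Dembo/Haagerup Brown measure computation). So there is no ``paper's own proof'' to compare against.

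Your write-up is consistent with this: you explicitly do not claim a proof, you outline a linearization/strong-asymptotic-freeness/local-law strategy and then correctly isolate Step~3 --- a quantitative local law or small-ball estimate for $\sigma_{\min}$ of a pencil in independent Haar unitaries at the eigenvalue scale --- as the genuine obstruction, which is exactly why the paper records the statement as a conjecture and says it is ``connected to very deep questions in random matrix theory.'' Two minor comments: (i) in your Step~2, if $0\notin\supp(G_\infty)$ the conclusion you draw is fine, but note that for many natural $P$ (e.g.\ $P=U_1+\cdots+U_k$ with $k\ge2$, whose Brown measure is supported on the full disk $|z|\le\sqrt{k}$) one is forced into the hard case $0\in\supp(G_\infty)$, so Step~3 is not a corner case but the generic one; (ii) the regularity results you cite (Shlyakhtenko--Skoufranis, Mai--Speicher--Weber, Belinschi--Mai--Speicher) concern the $*$-distribution/Brown measure in the free limit and do not by themselves give finite-$n$ anti-concentration, so they feed into Step~2 but do not shortcut Step~3. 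Your diagnosis of where the difficulty lies matches the paper's.
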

From this conjecture, it would follow that $f_{P,\blambda}(n)=c(\blambda,P)\dim V_{\blambda}$ suffices. In the following proposition, we prove a very strong conclusion for the determinant of the sum of independent Haar unitaries.
\begin{proposition}
Let $P(U_1,\hdots,U_k):=U_1+\hdots+U_k$, $k\geq 1$. For any $0<\delta<1$, we can take $f_{P,(1,0,\hdots)}(n)=-\frac{nc_{\delta}(k)}{\log n}$, where $c_{\delta}(k)=\frac{1}{2}\log\left((1-\delta)k\left(\frac{k-1}{k}\right)^{k-1}\right)$. For sufficiently large $k$, depending on $\delta$, $c_{\delta}(k)>0$.
\end{proposition}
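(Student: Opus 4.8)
For the standard signature $\blambda=(1,0,\hdots,0)$ we have $\pi_{\blambda}=\id$, $\dim V_{\blambda}=n$, and $P(\pi_{\blambda}(U_1),\hdots,\pi_{\blambda}(U_k))=U_1+\hdots+U_k=:M_n$. A direct computation gives
\[
n^{-f_{P,(1,0,\hdots)}(n)}=n^{\,nc_{\delta}(k)/\log n}=e^{nc_{\delta}(k)},\qquad c_{\delta}(k)=\tfrac12\log(1-\delta)+\log\Delta_k,\quad \Delta_k:=\sqrt{k}\Bigl(\tfrac{k-1}{k}\Bigr)^{(k-1)/2},
\]
so the assertion is equivalent to $\mathbb{P}\bigl[\tfrac1n\log|\det M_n|>c_{\delta}(k)\bigr]\to1$. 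Since $\tfrac12\log(1-\delta)<0$, this follows once we establish the dimension-normalized limit
\[
\frac1n\log|\det M_n|\;=\;\frac12\int_0^{\infty}\log t\,d\nu_n(t)\;\xrightarrow[n\to\infty]{\;\mathbb{P}\;}\;\log\Delta_k ,
\]
where $\nu_n$ is the empirical distribution of the eigenvalues of $M_n^{*}M_n$. The plan is to (1) identify $\log\Delta_k$ with the Fuglede--Kadison determinant of $y:=u_1+\hdots+u_k$ in $L(F_k)$ ($u_i$ free Haar unitaries) and compute it, and then (2) prove the convergence, the delicate point being the contribution of the small eigenvalues of $M_n^{*}M_n$.

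For (1): in general $\Delta(y)=\Delta(|y|)=\exp\!\bigl(\tfrac12\int_0^{\infty}\log t\,d\mu(t)\bigr)$, where $\mu$ is the law of $y^{*}y$ for the trace on $L(F_k)$. The element $y$ is a sum of free Haar unitaries, hence R-diagonal, so its $*$-distribution and Brown measure are governed by the Haagerup--Larsen theory; one checks $\int_0^{\infty}|\log t|\,d\mu(t)<\infty$, and identifying $\mu$ (for $k=2$ it is the arcsine-type law with density $\tfrac1{\pi\sqrt{t(4-t)}}$ on $(0,4)$, and in general via its $S$-transform) yields
\[
\int_0^{\infty}\log t\,d\mu(t)\;=\;\log\!\Bigl(k\bigl(\tfrac{k-1}{k}\bigr)^{k-1}\Bigr)\;=\;2\log\Delta_k ,
\]
consistent with $\Delta(u_1)=1$ and $\Delta(u_1+u_2)=1$.

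For (2): by Voiculescu's asymptotic freeness of independent Haar unitaries, $\nu_n\to\mu$ weakly, almost surely; by strong asymptotic freeness --- the case $\blambda=(1,0,\hdots,0)$ of Theorem~\ref{ThmBC} (Haagerup--Thorbj\o rnsen, Collins--Male) --- $\|M_n^{*}M_n\|\to\|y^{*}y\|=4(k-1)$ almost surely, so a.a.s.\ $\supp\nu_n\subseteq[0,4k]$. Hence for every continuity point $\varepsilon_0>0$ of $\mu$,
\[
\int_{[\varepsilon_0,4k]}\log t\,d\nu_n(t)\;\longrightarrow\;\int_{[\varepsilon_0,4k]}\log t\,d\mu(t)\;\xrightarrow[\varepsilon_0\to0]{}\;2\log\Delta_k ,
\]
so it remains to show that, for each fixed small $\varepsilon_0$, the tail $\tfrac1n\sum_{i:\,\sigma_i(M_n)^2<\varepsilon_0}|\log\sigma_i(M_n)^2|$ is a.a.s.\ $O\bigl(\varepsilon_0\log(1/\varepsilon_0)\bigr)$ with the implied constant independent of $n$.

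This last point is the main obstacle. By an integration-by-parts argument it suffices to prove a local-law/rigidity estimate for the squared singular values of $M_n$ near the origin: that a.a.s.\ $\#\{i:\sigma_i(M_n)^2<s\}\le Cns$ for all $s$ down to polynomially small scales (and in particular $\sigma_{\min}(M_n)\ge n^{-A}$ a.a.s.). The natural route is to control $\tfrac1n\mathbb{E}\,\tr\bigl((M_n^{*}M_n+\eta)^{-1}\bigr)$ uniformly for $\eta\ge n^{-A}$ via a self-consistent (matrix Dyson) equation together with a concentration estimate --- using that $\mu$ has at worst a square-root singularity at $0$, by step (1), so the limiting resolvent is controlled near the lower edge --- and then to remove $\eta$; for the smallest singular value itself one can alternatively exploit invariance of Haar measure, writing $M_n=U_1(I+W_2+\hdots+W_k)$ with $(W_2,\hdots,W_k)$ i.i.d.\ Haar independent of $U_1$, conditioning on all but one $W_j$, and reducing to a least-singular-value bound for $I+G|B|$ ($G$ Haar, $B$ fixed with $\|B\|\le k$), which a net argument handles down to scales $e^{-cn}$. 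Making the rigidity estimate sharp enough --- no spurious logarithmic factors, uniform in $n$ --- is the technical heart of the proof. The remaining cases are easy: $k=1$ is trivial ($|\det U_1|=1$), and for $k=2$ one has $M_2^{*}M_2=2I+W+W^{*}$ with $W=U_1^{*}U_2$ Haar, so $\tfrac1n\log|\det M_2|=\tfrac1n\sum_j\log|1+e^{i\theta_j}|$ over the eigenangles $\theta_j$ of $W$, and the required lower bound follows from standard control of $\log|\det(I-U)|$ for the CUE (e.g.\ the Keating--Snaith central limit theorem, whereby this quantity is $\tfrac1n$ times a random variable of mean $0$ and variance $O(\log n)$).
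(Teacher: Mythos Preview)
Your proposal has a genuine gap: you correctly isolate the hard step --- controlling the contribution of small singular values, i.e.\ showing $\frac{1}{n}\sum_{\sigma_i^2<\varepsilon_0}|\log\sigma_i^2|$ is negligible --- but you do not carry it out. You sketch two possible routes (a local law via a self-consistent resolvent equation, and a least-singular-value bound via Haar invariance and a net argument), declare this ``the technical heart of the proof,'' and stop. Neither route is completed, and both are substantial undertakings. The remaining pieces of your argument (the setup, the identification with the Fuglede--Kadison determinant, the value $\log\Delta_k$) are correct, and your computed constant agrees with the paper's.

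The paper avoids all of this by working with the \emph{eigenvalue} distribution of $M_n=U_1+\cdots+U_k$ rather than the singular values, and by citing two results as black boxes: Basak--Dembo~\cite{BD}, which gives that the empirical spectral distribution of $M_n$ converges weakly in probability to the Brown measure of the free sum $u_1+\cdots+u_k$, and Haagerup~\cite{Haagerup}, which computes that Brown measure explicitly as the radially symmetric density $\frac{k^2(k-1)}{\pi(k^2-|z|^2)^2}$ on the disk $|z|\le\sqrt{k}$. One then integrates $\log|z|$ against this density directly to obtain $\frac{1}{2}\log\bigl(k(\frac{k-1}{k})^{k-1}\bigr)=\log\Delta_k$. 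The point is that the small-eigenvalue control you are trying to prove from scratch is precisely what Basak--Dembo establish (via Girko's Hermitization and bounds on the least singular value of $M_n-zI$) on the way to their result; the paper is outsourcing your ``technical heart'' to that reference. Your singular-value route is mathematically equivalent --- indeed $\exp\int\log|z|\,d\mu_{\mathrm{Brown}}$ is the Fuglede--Kadison determinant --- but by not invoking~\cite{BD} you have reassumed the full burden of their theorem.
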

\begin{proof}
By the main result of Basak--Dembo~\cite{BD}, the limiting (weakly in probability) spectral distribution of $P$ is the Brown measure of the free sum of $k$ Haar unitaries. This was shown by Haagerup~\cite[Ex. 5.5]{Haagerup} to be the measure in the complex plane having density
\[\rho(z):=\frac{1}{\pi}\frac{k^2(k-1)}{(k^2-|z|^2)^2}\chi_{|z|\leq\sqrt{k}},\]
where $\chi_{|z|\leq\sqrt{k}}$ is the characteristic function with support the disk centered at $0$ of radius $\sqrt{k}$. Letting $\lambda_i(A)$ denote the eigenvalues of a matrix $A$, we obtain for every $\varepsilon>0$,
\[\mathbb{P}_{U(n)^k}\left[(U_1,\hdots,U_k)\in U(n)^k:\left|\frac{1}{n}\sum_{i=1}^n\log|\lambda_i(P(U_1,\hdots,U_k)|-\frac{k^2(k-1)}{\pi}\int_{|z|\leq\sqrt{k}}\frac{\log|z|}{(k^2-|z|^2)^2}dxdy\right|>\varepsilon\right]\xrightarrow{n\rightarrow\infty}0.\]
However,
\[\frac{k^2(k-1)}{\pi}\int_{|z|\leq\sqrt{k}}\frac{\log|z|}{(k^2-|z|^2)^2}dxdy=\frac{1}{2}\log\left(k\left(\frac{k-1}{k}\right)^{k-1}\right).\]
This implies that for any $0<\delta<1$, we have
\[\mathbb{P}_{U(n)^k}\left[|\det(P)|\geq \left((1-\delta)k\left(\frac{k-1}{k}\right)^{k-1}\right)^{n/2}\right]\xrightarrow{n\rightarrow\infty}0.\]
The conclusion follows.
\end{proof}
Another question that arises is the following.
\begin{question}Is there an upper bound in terms of $X$ and $\dim V_{\blambda(n)}$ on the real parts of the zeros of the hyperbolic scattering determinants $\varphi_{\pi_{\blambda(n)}\circ\chi}$ that is true asymptotically almost surely over $\chi$ as $n\rightarrow\infty$?
\end{question}
This is relevant to the generalization of Proposition~\ref{Weylbound} to the balanced case.

\end{document}